\theoremstyle{plain}
\newtheorem{thm}{Theorem}[section]
\newtheorem{lem}{Lemma}[section]
\newtheorem{prop}{Proposition}[section]
\newtheorem{rem}{Remark}[section]
\numberwithin{equation}{section}
\newcommand{\dv}{{\rm div}}
\newcommand{\p}{{\partial}}
\newcommand{\pa}{{\partial^m}}
\newcommand{\e}{\epsilon}
\renewcommand{\u}{{\rm u}^{\epsilon}}
\renewcommand{\d}{{\rm d}^{\epsilon}}
\newcommand{\cd}{\dot{\rm d}^{\epsilon}}
\newcommand{\pe}{\phi^{\epsilon}}
\newcommand{\ro}{\rho^{\epsilon}}
\newcommand{\R}{\mathbb{R}}
\newcommand{\eps}{\epsilon}
\newcommand{\na}{\nabla}
\newcommand{\De}{\Delta}
\newcommand{\ct}{\cdot}
\newcommand{\la}{\lambda}
\newcommand{\ka}{\kappa}
\newcommand{\lt}{\langle}
\newcommand{\rt}{\rangle}
\newcommand{\A}{{\rm A}}
\newcommand {\B}{{\rm B}}
\newcommand{\loc}{\rm loc}
\newcommand{\I}{{\mathrm I}}
\newcommand {\dd}{{\rm d}}
\begin{document}

\title[Incompressible limit for parabolic-hyperbolic liquid crystal models]
{Incompressible limit of the Ericksen-Leslie parabolic-hyperbolic liquid crystal model}

\author[Liang Guo]{Liang Guo}
\address[Liang Guo]{School of Mathematics and Statistics, Henan University, Kaifeng 475004, P.R.
China}
\email{guoliang0152@henu.edu.cn}

\author[Ning Jiang]{Ning Jiang}
\address[Ning Jiang]{School of Mathematics and Statistics, Wuhan University, Wuhan
 430072, P. R. China}
\email{njiang@whu.edu.cn}

\author[Fucai Li]{Fucai Li$ ^*$}
\address[Fucai Li]{Department of Mathematics, Nanjing University, Nanjing
 210093, P. R. China}
\email{fli@nju.edu.cn}
\thanks{$^*$ \!\! Corresponding author}

\author[Yi-Long Luo]{Yi-Long Luo}
\address[Yi-Long Luo]{School of Mathematics, South China University of Technology, Guangzhou, 510641, P. R. China}
\email{luoylmath@scut.edu.cn}

\author[Shaojun Tang]{Shaojun Tang}
\address[Shaojun Tang]
{\newline Department of Mathematics, Wuhan University of Technology, Wuhan, 430063, P. R. China}
\email{shaojun.tang@whut.edu.cn}

\begin{abstract}

Ericksen  and Leslie proposed a hydrodynamic model for liquid crystals in the format of conservation laws in the 1960s. Their original model includes inertial and compressibility effects, which makes the model a coupled parabolic-hyperbolic system.
In this paper we build up the connection between  the  compressible  and incompressible parabolic-hyperbolic liquid crystal model in the framework of classical solutions.

We first derive the scaled Ericksen-Leslie system with dimensionless numbers, including Mach, Reynolds, and Ericksen numbers. In particular, we introduce the so-called inertial constant $\chi$ which characterizes the inertial effect of the liquid crystal molecular.
Next, we establish the energy estimates uniform in the Mach number $\eps$ for both the compressible system and its time-derivative system with small data. Then, we pass to the limit $\eps \rightarrow 0$ in the compressible system, so that we establish the global classical solution of the incompressible system by the compactness arguments. Moreover, we also obtain the convergence rates associated with $L^2$-norm in the case of well-prepared initial data. This is the first result on the incompressible limit of   the   compressible parabolic-hyperbolic liquid crystal model and confirms the relations of different parabolic-hyperbolic liquid crystal model rigorously.
\end{abstract}

\keywords{Compressible Ericksen-Leslie parabolic-hyperbolic liquid crystal model;  Incompressible limit;  Uniform estimate; Convergence rate}

\subjclass[2010]{76A15, 35A01, 35B40}

\maketitle

\section{Introduction}\label{s1}

\subsection{Ericksen-Leslie parabolic-hyperbolic liquid crystal model in compressible flow}

Liquid crystals are an orientational well-ordered fluid and an intermediate state which exist in the conversion process between solid and liquid. They thereby have both optical properties of solid crystals and fluidity of liquids. The hydrodynamic theory of liquid crystals was established by Ericksen \cite{Eri1,Eri2} and Leslie \cite{Les} in the 1960s (see also Section 5.1 of \cite{Lin-Liu-2001}) through conservation laws for liquid crystals. In this paper, we choose some appropriate parameters of the model (from \cite{Les}) for simplicity. We consider the following Ericksen-Leslie parabolic-hyperbolic liquid crystal model in compressible flow over $(t,x) \in \R^+ \times \R^n (n = 2, 3)$:
\begin{align}\label{sys1}
\left\{
 \begin{array}{ll}
  \partial_{t} {\rho} + {\rm div} ( {\rho} {\rm u} ) = 0 \,,\\[2mm]
  \partial_{t} ( {\rho} {\rm u} ) + { \rm div} ( {\rho} {\rm u} \otimes {\rm u} ) + \nabla {p} = {\rm div} ( \Sigma_{1} + \Sigma_{2} + \Sigma_{3} ) \,,\\[2mm]
  {\rho} {\ddot{{\rm d}}} = {\kappa} \Delta {{\rm d}} + {\Gamma} {\rm d} + {\lambda}_{1} ( {\dot{{\rm d}}} + {\B} {\rm d} ) + {\lambda}_{2} {\A} {\rm d} \,,
 \end{array}\right.
\end{align}
where ${\rho} (t,x) \geq 0$ denotes the fluid density, $ {\rm u} (t,x) \in \mathbb{R}^n $  the bulk velocity, and $ {\rm d} (t,x) \in \mathbb{R}^n $   the direction field of the liquid molecules with the geometric constraint $|{\rm d}|=l$, respectively. Here $l$ is the averaged length of the rod-like nematic liquid crystal molecular. The pressure ${p} (t,x) \in \R$ satisfies ${p} ({\rho})=\tilde{a} {\rho}^\gamma$ with the constants $\tilde{a}>0$ and $\gamma>1$. Here, ${\dot{{\rm d}}}$ and ${\ddot{{\rm d}}}$ are the first-order and secondary material derivative of ${\rm d}$, respectively, i.e.,
\begin{align*}
    {\dot{{\rm d}}} = \p_t {\rm d} + {\rm u} \ct \na {\rm d} \,, \quad {\ddot{{\rm d}}} = \p_t {\dot{{\rm d}}} + {\rm u} \ct \na {\dot{{\rm d}}} \,.
\end{align*}
The notations
\begin{align*}
    {\A} = \tfrac{1}{2} ( \nabla {\rm u} + \nabla {\rm u}^\top ) \,, \quad {\B} = \tfrac{1}{2} ( \nabla {\rm u} - \nabla {\rm u}^\top )
\end{align*}
represent the rate of strain tensor and skew-symmetric part of the strain rate, respectively.
The notations $\Sigma_i\,(i=1,2,3)$ are defined as follows:
\begin{equation}\label{Tensors-Sigma}
  \begin{aligned}
    & \Sigma_1 := \tfrac{1}{2} {\mu}_4 ( \nabla {\rm u} + \nabla {\rm u}^\top ) + {\xi} \dv \, {\rm u} \, \I, \\
    & \Sigma_2 := \tfrac{1}{2} {\kappa} | \nabla {\rm d} |^2 \, \I - {\kappa} \nabla {\rm d} \odot \nabla {\rm d} \,, \\
    & \Sigma_3 := \tilde{\sigma}_{\bm{{\mu}}} ({\rm u}, {\rm d}, \dot{{\rm d}}) \,,
  \end{aligned}
\end{equation}
where the matrix $\na {\rm d} \odot  \na {\rm d}$ is of the form $(\na {\rm d} \odot \na {\rm d})_{ij} = \p_i {\rm d} \cdot \p_j {\rm d}$ and $\tilde{\sigma}_{\bm{{\mu}}} ({\rm u}, {\rm d}, \dot{{\rm d}})$ is the stress tensor with the forms of entries:
\begin{equation}\label{Tensor-Stress}
  \begin{aligned}
    (\tilde{\sigma}_{\bm{{\mu}}} ({\rm u}, {\rm d}, {\dot{{\rm d}}}))_{ij} = & {\mu}_{1} {\rm d}_{k} {\rm d}_{l} {\A}_{kl} {\rm d}_{i} {\rm d}_{j} + {\mu}_{2} {\rm d}_{i} ( {\dot{{\rm d}}}_{j} + {\B}_{jk} {\rm d}_{k} ) + {\mu}_{3} {\rm d}_{j} ( {\dot{{\rm d}}}_{i} + {\B}_{ik} {\rm d}_{k} )  \\
    & + {\mu}_{5} {\rm d}_{i} {\A}_{jk} {\rm d}_{k} + {\mu}_{6} {\rm d}_{j} {\A}_{ik} {\rm d}_{k} \,.
  \end{aligned}
\end{equation}
Here $\bm{\mu} = (\mu_1, \mu_2, \mu_3, \mu_5, \mu_6)$. These parameters $\bm{\mu}$ are called Leslie coefficients, $\mu_4$ and $\xi$ are the usual viscosities, and $\ka$ is the Frank constant, which measure the elasticity of liquid crystal molecular. Usually, the following relations are frequently introduced in the literature:
\begin{equation}\label{Parodi-1}
  \begin{aligned}
    \la_1 = \mu_2 - \mu_3 \,, \quad \la_2 = \mu_5 - \mu_6 \,, \quad \mu_2 + \mu_3 = \mu_6 - \mu_5 \,.
  \end{aligned}
\end{equation}
The first two relations are necessary conditions in order to satisfy the equation of motion identically, while the third relation is called {\em Parodi's relation}, which is derived from Onsager reciprocal relations expressing the equality of certain relations between flows and forces in thermodynamic systems out of equilibrium. For the analytical effects of Parodi's relation, see \cite{Wu-Xu-Liu-ARMA2013}. Moreover, the Lagrangian multiplier $\Gamma$ has the following form:
\begin{align}\label{Ga}
    \Gamma\equiv \Gamma ( \rho , {\rm u} , {\rm d} , {\dot{\rm d}} ) = \tfrac{1}{{l}^2} \left(- \rho | {\dot{\rm d}} |^2 + \kappa | \nabla {\rm d} |^2 - \lambda_2 {\rm d}^\top \A {\rm d}\right) \,.
\end{align}
For the detailed rewriting from Leslie's paper \cite{Les} to the system \eqref{sys1}, see \cite{JLT}. Note that in \cite{JLT}, the coefficient ${\kappa}$ appeared in $\Sigma_2$ of \eqref{Tensors-Sigma} was taken as $1$. But in this paper, we keep it as $\kappa$ since the scaling of ${\kappa}$ is important. Another important difference with previous literature needed to be pointed out is that in the Lagrangian multiplier $\Gamma$ \eqref{Ga}, we add a factor $\frac{1}{l^2}$, which comes from the geometric constraint $|{\rm d}|=l$. The reason is that the system \eqref{sys1} is dimensional, $|{\rm d}|$ has to have the unit length to make the units of the system \eqref{sys1} match, while in the previous literature, $|{\rm d}|$ was set as $1$, which express the unit  in an implicit way.

\subsection{Nondimensionization}
Now, since the main concern of this paper is about the asymptotic behavior of system \eqref{sys1}, we have to rewrite system \eqref{sys1} into its dimensionless form. First we set the units for the different physical quantities. Let $L_*, T_*$ and $U_*$ be the units for (macroscopic) length, time and bulk velocity, respectively, where $U_*= L_*/T_*$. Let $\rho_*$, $c_*$ and $\mu_*$ be the units of density, sound speed and viscosity, and let $\kappa_* = K/l^2$ be the unit of Frank constant, where $K>0$ is the so-called elastic constant.

To nondimensionalize system \eqref{sys1}, we set
\begin{equation}\label{units}
  \begin{aligned}
    &x=L_* \hat{x}\,, \quad t=T_* \hat{t}\,, \quad {\rm u}=U_* \hat{\rm u}\,,\quad \rho= \rho_* \hat{\rho}\,,\quad p=\rho_* c^2_* \hat{p}\,,
    \quad {\rm d}=l \hat{{\rm d}}\,,\\
    & \kappa=\kappa_* \hat{\kappa}\,,\quad \mu_4=\mu_* \hat{\mu}_4\,,\quad \xi=\mu_* \hat{\xi} \,,\quad \mu_1 = \tfrac{\mu_*}{l^4} \hat{\mu}_1\,,\quad \mu_i = \tfrac{\mu_*}{l^2} \hat{\mu}_i\quad\mbox{for}\quad\! i=2,3,5,6\,.
  \end{aligned}
\end{equation}
We also define the following dimensionless constants:
\begin{equation}\label{dimentionless}
  \begin{aligned}
  {\rm Ma}=\tfrac{U_*}{c_*}\,,\quad {\rm Re}=\tfrac{\rho_*L_* U_*}{\mu_*}\,, \quad {\rm Er}=\tfrac{\mu_*L_* U_*}{\kappa_* l^2}\,,\quad \chi= \tfrac{\rho_*U^2_*}{\kappa_*}\,.
  \end{aligned}
\end{equation}
The above constants are called Mach number, Reynolds Number, Ericksen number and the so-called inertial constant, respectively. We remark here that the inertial constant $\chi$ measures the inertial effect of the liquid crystal molecular. Experiments \cite{Gen} show that it is usually quite small.

By \eqref{units} and \eqref{dimentionless}, we can rewrite the system \eqref{sys1} into the dimensionless form (delete all the hats):
\begin{align}\label{sys-dimensionless}
\left\{
 \begin{array}{ll}
  \partial_{t} \rho + {\rm div} ( \rho {\rm u} ) = 0 \,,\\[2mm]
  \partial_{t} ( \rho {\rm u} ) + { \rm div} ( \rho {\rm u} \otimes {\rm u} ) + \tfrac{1}{{\rm Ma}^2}\nabla {p} = \tfrac{1}{\rm Re}{\rm div}\Sigma_{1} + \tfrac{1}{\rm Re} \tfrac{1}{\rm Er}{\rm div}\Sigma_{2} + \tfrac{1}{\rm Re}{\rm div}\Sigma_{3}  \,,\\[2mm]
  \tfrac{\chi}{\rm Er} \rho {\ddot{\rm d}} = \tfrac{\kappa}{\rm Er} \Delta {\rm d} + \Gamma \rm d + \lambda_1 ( {\dot{\rm d}} + \B {\rm d} ) + \lambda_2 \A \rm d\,,
 \end{array}\right.
\end{align}
where
\begin{equation*}
  \Gamma =  -\tfrac{\chi}{\rm Er} \rho| \dot{\rm d} |^2 + \tfrac{\kappa}{\rm Er} | \nabla {\rm d} |^2 - \lambda_2 {\rm d}^\top \A {\rm d} \,.
\end{equation*}

Based on the above dimensionless form, several asymptotic behavior as the dimensionless numbers tend to zero or infinity could be investigated. For example, as the inertial number $\chi$ goes to zero, the system \eqref{sys-dimensionless} will converge to the much more studied parabolic Ericksen-Leslie system, see \cite{JL-2019, JLT-NA, JLTZ-CMS2019}. In the $Q\mbox{-}$tensor version, say, the Beris-Edwards system, Wu, Xu and Zarnescu \cite{Wu-Xu-Zarnescu-ARMA2019} justified the limit letting the Reynolds and Ericksen numbers go to infinity (in the setting $\chi$=0).

Since we concern with the {\em low Mach number limit} in this paper, we set the coefficients ${\rm Re, Er}$ and $\chi$ as $1$, and  set Mach number  ${\rm Ma} = \eps$. Then the  system \eqref{sys-dimensionless} reads as
\begin{align}\label{sys2}
\left\{
 \begin{array}{ll}
   \p_{t} \ro + {\rm div} ( \ro \u ) = 0 \,, \\[2mm]
   \p_{t} ( \ro \u ) + {\rm div } ( \ro \u \otimes \u ) + \tfrac{1}{\e^{2}} \nabla p ( \ro ) = { \rm div } ( \Sigma_{1}^\e + \Sigma_{2}^\e + \Sigma_{3}^\e ) \,, \\[2mm]
   \ro \ddot{\rm d}^\e = \kappa \Delta \d + \Gamma^\e \d + \lambda_{1} ( \dot{\rm d}^\e + \B^\e \d ) + \lambda_{2} \A^\e \d \,,
  \end{array}\right.
\end{align}
where
\begin{align}\label{Ga-eps}
    \Gamma^\e = - \rho^\e | {\dot{\rm d}}^\e |^2 + \ka | \nabla {\rm d}^\e |^2 - \lambda_2 {{\rm d}^\e}^\top \A^\e {\rm d}^\e \,.
\end{align}

From the singular pressure term $\tfrac{1}{\e^{2}} \nabla p ( \ro )$ second equations in scaled system \eqref{sys2}, it is natural to see that in the limit $\e\rightarrow 0$, the density $\rho^\e$ will tend to a constant. So, without loss of generality, we consider the density with a small perturbation around the equilibrium state $\bar\rho = 1$ as
\begin{align*}
    \ro = 1 + \e \pe \,,
\end{align*}
then the system \eqref{sys2} becomes
\begin{align}\label{csys}
\left\{
 \begin{array}{ll}
    \p_t \pe + \u \cdot \na \pe + \pe \dv \u + \tfrac{1}{\e} \dv \u=0 \,, \\[2mm]
    \p_t \u + \u \cdot \na \u + \tfrac{1}{\e} \frac{1}{\ro} p' ( \ro ) \nabla \pe = \tfrac{1}{\ro} \dv ( \Sigma_{1}^\e + \Sigma_{2}^\e + \Sigma_{3}^\e ) \,, \\[2mm]
    \ddot{\rm d}^\e = \ka \tfrac{1}{\ro} \De \d + \tfrac{1}{\ro} \Gamma^\e \d + \lambda_{1} \tfrac{1}{\ro} ( \dot{\rm d}^\e + \B^\e \d ) + \lambda_{2} \tfrac{1}{\ro} \A^\e \d \,,
 \end{array}\right.
\end{align}
For the system \eqref{csys}, the initial data are given as
\begin{align}\label{civ}
    ( \pe , \u , \d, \cd ) |_{t=0} = ( \pe_{0}, \u_{0}, \d_{0}, \widetilde{\rm d}^\eps_0) \in \R \times \R^n \times \mathbb{S}^{n-1} \times \R^n \,,
\end{align}
and the boundary conditions at infinity are
\begin{align}\label{cbc}
    ( \phi^\eps , \u , \d ) \rightarrow ( 0 , { 0} , \bar{\rm d} ) , \quad {\rm{as}} \; \; |x| \rightarrow \infty \,,
\end{align}
where $\bar{\rm d}$ is a constant vector with $|\bar{\rm d}|=1$.

Formally, supposing that the limit $(\ro, \u, \d)\rightarrow (1, {\rm u}, {\rm d})$ exists and initially $\rho^\eps_0 = 1 + \eps \phi^\eps_0 \rightarrow 1$, $\u_0 \rightarrow {\rm u}_0$, $\d_0 \rightarrow {\rm d}_0$, $\widetilde{\rm d}^\eps_0 \rightarrow \widetilde{\rm d}_0$ as $\epsilon\rightarrow0$, then the limiting system will be
\begin{align}\label{isys}
\left\{
 \begin{array}{ll}
    {\rm div} \, {\rm u} = 0 \,, \\[2mm]
    \p_{t} {\rm u} + ( {\rm u} \ct \na ) {\rm u} - \tfrac{1}{2} \mu_4 \De {\rm u} + \na \pi = - \ka {\rm div} ( \na {\rm d} \odot \na {\rm d} ) + \dv \tilde{\sigma}_{\bm{\mu}} ({\rm u}, {\rm d}, \dot{\rm d}) \,, \\[2mm]
    \ddot{\rm d} = \kappa \Delta {\rm d} + {\Gamma} {\rm d} + \lambda_{1} ( \dot{\rm d} + {\rm B} {\rm d} ) + \lambda_{2} {\rm A} {\rm d} \,,
 \end{array}\right.
\end{align}
where $\na\pi$ is the ``limit'' of $\frac{1}{\e^2} \frac{\tilde a}{\ro} \na [  ( \ro )^\gamma - 1 ] - \ka \frac{1}{\ro} \na ( \frac{|\na\d|^2}{2} ) $, and ${\Gamma}$ is the formal limit of $\Gamma^\e$:
\begin{align}\label{Gal}
    {\Gamma} = - | \dot{\rm d} |^2 + \kappa | \nabla {\rm d} |^2 - \lambda_2 {\rm d}^\top \A {\rm d} \,.
\end{align}
Moreover, the system \eqref{isys} is endowed with the initial data:
\begin{align}\label{iiv}
    ( {\rm u} , {\rm d}, \dot{\rm d} ) |_{t=0} = ( {\rm u}_{0} , {\rm d}_{0}, \widetilde{\rm d}_0) \in \R^n \times \mathbb{S}^{n-1} \times \R^n \,,
\end{align}
and the boundary conditions at infinity
\begin{align}\label{ibc}
    ( {\rm u} , {\rm d} ) \rightarrow ( { 0} , {\bar{\rm d}} ) , \quad {\rm{as}} \;\; |x| \rightarrow \infty \,.
\end{align}
The main goal of our paper is to justify the above limit {\em rigorously} in the framework of classical solutions.

\subsection{Historical remarks}
In fluid (and related) models, Mach number measures the compressibility of fluids. As Mach number goes to zero, the compressible models behave asymptotically as the incompressible models. The rigorous mathematical justifications were initialized from Klainerman and Majda \cite{KM-81, KM-82} in the framework of classical solutions, which addressed the limit from compressible Euler to incompressible Euler equations. After then, there has been vast of literature on this topic in different models (Navier-Stokes, MHD, etc.), different contexts of solutions (classical solutions, weak solutions, strong solutions, etc.) and different domains (whole space, periodic domain, bounded domain, etc.). Among them, we mention Ukai \cite{Ukai} (initial layer in $\mathbb{R}^n$), Schochet \cite{Schochat-1994} and Grenier \cite{Grenier} (fast acoustic waves on torus), Lions and Masmoudi \cite{Lions-Masmoudi-1998} (incompressible limit for global in time weak solutions of isentropic compressible Navier-Stokes equations),  M\'etivier and Schochet \cite{Metivier-Schochet,sch86} (incompressible limit for non-isentropic Euler equations),
Alazard \cite{Al} (low Mach number limit of the full Navier-Stokes equations in $\mathbb{R}^n$), Jiang, Ju and Li \cite{JJL16} (incompressible limit for non-isentropic MHD equations in $\mathbb{R}^n$), and
Jiang et al. \cite{JJLX} (low Mach number limit of the full MHD equations in $\mathbb{R}^n$).
The interested readers can refer the survey papers \cite{d,ma} and the monograph \cite{fn} for more references.

We now review some related results on the incompressible limits for the parabolic-type Ericksen-Leslie's system without the inertia $\rho^\eps \ddot{\rm d}^\eps$, which is
\begin{align}\label{pcsys}
\left\{
 \begin{array}{ll}
    \p_t \ro + \dv ( \ro\u )=0, \\
    \p_t (\ro\u) + \dv (\ro\u\otimes\u) + \frac{1}{\e^2} \na p(\ro) \\
\qquad  = \mu \De \u - \la \dv \Big ( \na \d \odot \na \d - \big( \frac{1}{2} | \na \d |^2 + F(\d) \big) \I \Big), \\
    \p_t\d+\u\ct\na\d=\theta(\De\d-f(\d)),
 \end{array}\right.
\end{align}
where $f({\rm d})$ and $F({\rm d})$ denote the penalty function and the bulk part of the elastic energy, respectively, satisfying the relation $f({\rm d})=\na_{\rm d}F({\rm d})$. The constants $\mu,\la,\theta$ denote the viscosity, the competition between kinetic energy and potential energy, and the microscopic elastic relation time for the molecular orientation field, respectively. When taking the typical form
 $$
 F({\rm d})=\frac{1}{4\sigma_0^2}(|{\rm d}|^2-1)^2, \quad f({\rm d})=\frac{1}{2\sigma_0^2}(|{\rm d}|^2-1){\rm d}
 $$
  for a constant $\sigma_0>0$, Wang and Yu \cite{WY} studied weak solutions to weak solutions of the incompressible limit for the system \eqref{pcsys} with  general initial data in a bounded domain by employing the method of spectral analysis and Duhamel's principle,
  see also \cite{HL}.
  By adding the geometric constraint $ |{\rm d}|=1 $ and choosing $F({\rm d})=0$ and $f({\rm d})=|\na {\rm d}|^2{\rm d}$, the incompressible limit
  was studied by several authors, see Ding et al. \cite{DHWZ} (small classical solutions on torus), Yang \cite{yang}, Liu and Dou \cite{ld} and  Zeng, Ni, and Ai \cite{zeng}( small classical solutions on bounded domain),
  and Qi and Xu \cite{QX}, Bie et al. \cite{bie} ( small strong solutions in Besov space). For the studies of the well-posedness to \eqref{pcsys}, see \cite{DHWZ} and its references cited therein.  The interested readers can refer the nice summary  papers \cite{LW,HP} and the monographs \cite{Gen,Ste}
  for more results on the parabolic-type  Ericksen-Leslie's system.

  For the inertial case, the research on the Ericksen-Leslie system is much less.  When taking $\eps = 1$ and $\ka = 1$ in the compressible flow \eqref{sys2}, Jiang, Luo and Tang \cite{JLT} established both the local existence of the classical solution with finite initial energy under $\la_1 \leq 0$, and the global well-posedness with small size of the initial data under a damping effect $\la_1 < 0$. Recently, Huang et al. \cite{HJLZ-Cmp} proved the global well-posedness of the compressible system without damping effect ($\la_1 = 0$) under small size of initial data but without the kinematic transport.

  For the incompressible  Ericksen-Leslie hyperbolic liquid crystal model \eqref{isys} with $\ka = 1$ and the inertial term $\ddot{\rm d}$ replaced by $\rho_1 \ddot{\rm d}$ for an inertial constant $\rho_1 > 0$, Jiang and Luo \cite{JL} not only verified the local existence and uniqueness of the classical solution with finite initial energy and $\la_1 \leq 0$, but also obtained the global classical solution under the small initial energy with an additional damping effect (say, $\la_1 < 0$). Later on, Huang et al. \cite{HJLZ-Incmp} and Cai-Wang \cite{CW} proved the global existence of the incompressible hyperbolic Ericksen-Leslie's liquid crystal model without damping effect (i.e. $\la_1 = 0$) and kinematic transport under the assumption of small initial data. The main concern of this paper is to connect the solutions of \cite{JLT} and \cite{JL} of  the inertial Ericksen-Leslie systems in the context of classical solutions.

  The incompressible hyperbolic liquid crystal model in the Q-tensor framework is called Qian-Sheng model, which couples a hyperbolic type equation involving a second order material derivative with a forced incompressible Navier--Stokes equation. De Anna and Zarnescu \cite{DeAnna-Zarnescu-2018} derived the
  energy law and proved the local well-posedness for bounded initial data and global well-posedness under the assumptions that the initial data are small in some suitable norm and the coefficients satisfy some further damping property. To the best knowledge of the authors, \cite{DeAnna-Zarnescu-2018} might be the first work that treats the second-order material derivative for multidimensional case in the field of liquid crystal equations. In \cite{DeAnna-Zarnescu-2018}, they also provided an example of twist-wave solutions, which are solutions of the
  coupled system for which the flow vanishes for all times. Furthermore, for the inviscid
  version of the Qian--Sheng model, Feireisl et al. \cite{Feireisl-etal-2018} proved a global existence of the dissipative solution which is inspired from that of incompressible Euler equation
  defined by P.-L. Lions \cite{Lions}.

  \subsection{Notations and main results}

In order to state our main results and simplify the presentation, we introduce some notations. For $p\in [1,+\infty]$, we introduce the weighted $L^p_w$ $(:= L^p_w (\R^n))$ spaces endowed with the norms
\begin{equation*}
  \begin{aligned}
    \| \phi \|_{L^p_w} = \Big( \int_{\R^n} w (x) | \phi (x) |^p {\dd} x \Big)^\frac{1}{p}  \, (1 \leq p < +\infty) \,, \;\; \| \phi \|_{L^\infty_w} = \textrm{ess} \sup_{x \in \R^n}  | \phi (x) w (x)| \,,
  \end{aligned}
\end{equation*}
for some positive weight function $w : \R^n \rightarrow \R^+$. If the weight function $w \equiv 1$, $L^p_1$ stands for the usual $L^p$ space. For $p = 2$, we use the symbol $\langle \cdot , \cdot \rangle$ to represent the $L^2$-inner product in $L^2$. We denote $\A = (a_{ij})$ to mean that $a_{ij}$ is the $(ij)^{\mathrm{th}}$ element of $n \times n$ matrix $\A$. The notation ${\rm u} \otimes {\rm v}$ is the tensor product of vectors ${\rm u}$ and {\rm v}, which reads as ${\rm u} \otimes {\rm v} = (u_i v_j)$. The symbol $\A:\B$ stands for the scalar product of the matrixes $\A$ and $\B$, specifically, $\A:\B = \sum_{i,j} a_{ij} b_{ij}$. And we use the notation $\lt \A : \B \rt$ to substitute for $\int_{\R^n} \A : \B \dd x $.

For the multi-index $m = (m_1, m_2 , \cdots , m_n) \in \mathbb{N}^n$, we denote the $m$-{th} derivative operator $\p^m f$ as
\begin{align*}
    &\p^m f : =\frac{\p^{|m|}f}{\p_{x_1}^{m_1}\p_{x_2}^{m_2}\cdots\p_{x_n}^{m_n}},
\end{align*}
where $|m| = m_1 + m_2 + \cdots + m_n$. The notation $m \leq \tilde{m}$ means that each component of $m$ is not greater than that of $\tilde{m}$. In addition, $m < \tilde{m}$ represents $m_i \leq \tilde{m}_i(i=1,\cdots,n)$ and $|m| < |\tilde{m}|$. Let $s \geq 0$ be an integer. The symbols $H^{s}_w$ and $\dot{H}^{s}_w$ stand for the weighted Sobolev space $H^{s}_w(\mathbb{R}^{n})$ and the weighted homogeneous Sobolev space $\dot{H}^{s}_w(\mathbb{R}^{n})$, respectively, with the norms
\begin{align*}
    \| f \|_{H^s_w} = \bigg( \sum_{ |m| \leq s} \| \pa f \|^2_{L^2_w} \bigg)^{\frac{1}{2}} \,, \quad \| f \|_{\dot{H}^s_w} = \bigg( \sum_{1 \leq |m| \leq s} \| \pa f \|^2_{L^2_w} \bigg)^{\frac{1}{2}}
\end{align*}
for some positive weighted function $w : \R^n \rightarrow \R^+$. If $w \equiv 1$, $H^s = H^s_1$ and $\dot{H}^s = \dot{H}^s_1$ are the usual Sobolev space and homogeneous Sobolev space, respectively. Moreover, if $s = 0$, we have $L^2_w = H^0_w$. Furthermore, we define the local Sobolev space $H^s_{\loc}$ in the mean of $ \int_\Omega |\p^m f|^2 {\dd} x < + \infty$ for all multi-index $m \in \mathbb{N}^n$ with $|m| \leq s$ and any compact domain $\Omega \subseteq \R^n$.

For convenience, the symbol $[ \p^m , f ] g$ denotes $\p^m ( f g ) - f \p^m g$. The same letter $C$ denotes various positive constants independent of the Mach number $\epsilon$. The notation $a\lesssim b$ means that there exists two constant $C>0$ such that $a\leq Cb$. Moreover, if there are some positive constants $C_1$ and $C_2$, independent of $\eps$, such that $C_1 a \leq b \leq C_2 a$, we denote it by $a \approx b$.

Now we state our main results in the following.
\begin{thm}\label{Thm-global}
	Let $0 < \eps \leq 1$ and the integer $s > \tfrac{n}{2} + 1$ with $n=2,3$. The initial data $\big( \phi^\eps_0 , \u_0, \d_0 , \widetilde{\rm d}^\eps_0  \big) \in \R \times \R^n \times \mathbb{S}^{n-1} \times \R^n$ are assumed to satisfy
\begin{equation}\label{d-initial}
   \widetilde{\rm d}^\eps_0 \cdot \d_0 = 0\,,
\end{equation}
and
\begin{equation}\label{phi-initial}		
\| \phi^\eps_0 \|_{L^\infty} \leq \tfrac{1}{2}\,,\quad\!\phi^\eps_0 \in H^s_{p' (\rho^\eps_0)}\,,\quad\!\u_0, \,\widetilde{\rm d}^\eps_0 \in H^s_{\rho^\eps_0}\,,\quad\!\na \d_0 \in H^s\,,
\end{equation}
 where $\rho^\eps_0 = 1 + \eps \phi^\eps_0$. We further assume that the Leslie coefficients satisfy \eqref{Parodi-1} and
	\begin{equation}\label{Coefficients}
	  \ka > 0 \,, \ \mu_1 \geq 0 \,, \ \mu_4 > 0 \,, \ \tfrac{1}{2} \mu_4 + \xi \geq 0 \,, \ \la_1 < 0 \,, \ \mu_5 + \mu_6 + \tfrac{\la_2^2}{\la_1} \geq 0 \,.
	\end{equation}
Then following two statements hold:
 \begin{itemize}
	\item There exists a small $\delta > 0$, independent of $\eps \in (0,1]$, such that if
	\begin{equation}\label{IC-1}
	  \begin{aligned}
	    \| \phi^\eps_0 \|^2_{H^s_{p'(\rho^\eps_0)}} + \| \u_0 \|^2_{H^s_{\rho^\eps_0}} + \| \widetilde{\rm d}^\eps_0 \|^2_{H^s_{\rho^\eps_0}} + \ka \| \na \d_0 \|^2_{H^s} \leq \delta \,,
	  \end{aligned}
	\end{equation}
	then the Cauchy problem \eqref{csys}-\eqref{civ} admits a unique global solution $(\phi^\eps, \u , \d)$ with
	\begin{equation*}
	  \begin{aligned}
	 \ \ \quad  \quad  \phi^\eps \in L^\infty (\R^+; H^s_{p' (\rho^\eps)}) , \ \u, \cd \in L^\infty (\R^+; H^s_{\rho^\eps}) , \ \na \u \in L^2 (\R^+; H^s) , \ \na \d \in L^\infty (\R^+; H^s) ,
	  \end{aligned}
	\end{equation*}
	where $\rho^\eps = 1 + \eps \phi^\eps$. Moreover, the following uniform $\mathrm{(}$in $\eps \in (0,1]$$\mathrm{)}$ energy bounds hold:
	\begin{equation}\label{Unif-Bnd-1}
	  \begin{aligned}
	    \| \phi^\eps \|^2_{L^\infty(\R^+; H^s_{p'(\rho^\eps)} )} & + \| \u \|^2_{L^\infty (\R^+; H^s_{\rho^\eps})} + \| \cd \|^2_{L^\infty (\R^+; H^s_{\rho^\eps})} \\
	    & + \ka \| \na \d \|^2_{L^\infty (\R^+; H^s)} + \tfrac{1}{2} \mu_4 \| \na \u \|^2_{L^2 (\R^+; H^s)} \lesssim \delta \,,
	  \end{aligned}
	\end{equation}
	\begin{equation}\label{Unif-Bnd-rho}
	  \begin{aligned}
	   \| \rho^\eps \|_{L^\infty (0, T_1; L^\infty)} = \| 1 + \eps \phi^\eps \|_{L^\infty (0, T_1; L^\infty)} \approx 1 \,,
	  \end{aligned}
	\end{equation}
for any finite $ 0 < T_1 < +\infty $, and
	\begin{equation}\label{Unif-Bnd-2}
	  \begin{aligned}
	    \| \p_t \cd \|^2_{L^\infty (\R^+; H^{s-1})} + \| \p_t \d \|^2_{L^\infty(\R^+; H^s)} \leq C_{\rm d} \,,
	  \end{aligned}
	\end{equation}
	where the constant $C_{\rm d} > 0$ is independent of $\eps$.

    \item If besides the smallness condition \eqref{IC-1}, the initial data are ``well-prepared", i.e. $\phi^\eps_0$ and $\u_0$ further satisfy
	\begin{equation}\label{IC-2}
	  \begin{aligned}
	    \| \na \phi^\eps_0 \|_{H^{s-2}} \leq C_\phi \eps \,\,\,\,\, \textrm{and}\,\,\,\, \| \dv \u_0 \|_{H^{s-2}} \leq C_{\rm u} \eps
	  \end{aligned}
	\end{equation}
     for some $\eps$-independent constants $C_\phi, C_{\rm u} > 0$, then, $\phi^\eps$ and $\u$ have better regularity, i.e.  for any $T > 0$,
	\begin{equation}\label{Unif-Bnd-3}
	  \begin{aligned}
	    \| \p_t \phi^\eps \|^2_{L^\infty(0,T; H^{s-2}_{p'(\rho^\eps)})} + \| \p_t \u \|^2_{L^\infty(0,T; H^{s-2}_{\rho^\eps})} \leq C_{\phi {\rm u}} (T)
	  \end{aligned}
	\end{equation}
	and
	\begin{equation}\label{Unif-Bnd-4}
	  \begin{aligned}
	    \tfrac{1}{\eps} \| \dv \u \|_{L^\infty(0;T;H^{s-2})} + \tfrac{1}{\eps} \| p' (\rho^\eps) \na \phi^\eps \|_{L^\infty(0,T; H^{s-2})} \leq C_{\phi {\rm u}}' (T) \,,
	  \end{aligned}
	\end{equation}
	where the constants $ C_{\phi {\rm u}} (T), C_{\phi {\rm u}}' (T) > 0$ are all independent of $\eps \in (0,1]$.
\end{itemize}
\end{thm}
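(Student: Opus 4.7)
The plan is to prove global existence by a continuation argument built on a uniform-in-$\eps$ a priori estimate: local existence for fixed $\eps$ follows along the lines of the construction in \cite{JLT} (where the $\eps = 1$ case is handled), so the task reduces to closing an energy inequality
\begin{equation*}
\tfrac{\dd}{\dd t} E^\eps_s(t) + D^\eps_s(t) \lesssim P\big(E^\eps_s(t)\big)\, E^\eps_s(t),
\end{equation*}
in which $E^\eps_s$ is equivalent to the left-hand side of \eqref{Unif-Bnd-1} and $D^\eps_s$ is equivalent to $\tfrac12\mu_4\|\na\u\|_{H^s}^2$ plus additional nonnegative Leslie contributions, with $P(\cdot)$ a polynomial vanishing at $0$. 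Smallness of the initial data then propagates and standard continuation yields a global solution together with \eqref{Unif-Bnd-1}. The density bound \eqref{Unif-Bnd-rho} is a direct consequence since $\eps\|\pe\|_{L^\infty}\lesssim \eps\sqrt{\delta}\leq \tfrac12$ for small $\delta$.

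To derive this inequality, I would apply $\pa$ for each multi-index $|m|\leq s$ to the three equations of \eqref{csys} and test the first against $p'(\ro)\pa\pe$, the second against $\ro\pa\u$, and the third against $\ro\pa\cd$. The \emph{singular} contributions
\begin{equation*}
\tfrac{1}{\eps}\lt \pa\dv\u,\,p'(\ro)\pa\pe\rt + \tfrac{1}{\eps}\lt p'(\ro)\na\pa\pe,\,\pa\u\rt
\end{equation*}
cancel after one integration by parts, modulo commutator remainders that are $O(1)$ in $\eps$ and absorbable by $E^\eps_s$. The diffusive term $\dv\Sigma_1^\eps$ produces the dissipation $\tfrac12\mu_4\|\na\u\|_{H^s}^2$, while $\dv\Sigma_2^\eps$ cancels against $\ka\De\d$ after integration by parts and the identity $\cd=\p_t\d+\u\cdot\na\d$, yielding the time derivative of $\tfrac{\ka}{2}\|\na\d\|_{H^s}^2$ on the left. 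The most delicate part is the hyperbolic director equation: testing $\pa$ of \eqref{csys}$_3$ against $\ro\pa\cd$ and combining with the continuity equation reproduces $\tfrac12\tfrac{\dd}{\dd t}\|\pa\cd\|^2_{L^2_{\ro}}$, and the terms $\la_1(\cd+\B^\eps\d)$, $\la_2\A^\eps\d$ couple with the contribution of $\dv\Sigma_3^\eps$ from the momentum equation to produce the nonnegative quadratic form
\begin{equation*}
\mu_1|\d^\top\A^\eps\d|^2 + |\la_1|\,|\cd+\B^\eps\d|^2 + \big(\mu_5+\mu_6+\tfrac{\la_2^2}{\la_1}\big)|\A^\eps\d|^2,
\end{equation*}
whose sign is precisely guaranteed by \eqref{Coefficients} (Parodi's relation \eqref{Parodi-1} is exactly what makes this combination appear). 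The Lagrangian multiplier term $\Gamma^\eps\d$ is handled via the propagated constraint $\d\cdot\cd=0$: this holds initially by \eqref{d-initial}, and the equation \eqref{csys}$_3$ combined with \eqref{Ga-eps} gives $\p_t(\d\cdot\cd)+\u\cdot\na(\d\cdot\cd)=0$.

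Once $E^\eps_s(t)\lesssim\delta$ is in hand, the bound \eqref{Unif-Bnd-2} follows by isolating $\p_t\cd$ in \eqref{csys}$_3$ and writing $\p_t\d=\cd-\u\cdot\na\d$, then estimating each term by Sobolev product inequalities; the resulting constant $C_{\rm d}$ depends only on $\delta$ and $\ka^{-1}$, and is $\eps$-independent. For the well-prepared improvements \eqref{Unif-Bnd-3}--\eqref{Unif-Bnd-4}, I would differentiate \eqref{csys}$_1$--\eqref{csys}$_2$ in time to obtain a closed linear-type system for $(\p_t\pe,\p_t\u)$ whose coefficients and forcing are controlled by $E^\eps_s$; a weighted $H^{s-2}$ energy estimate on this system closes provided $(\p_t\pe,\p_t\u)|_{t=0}$ is $O(1)$ in $H^{s-2}$ uniformly in $\eps$. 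Reading off \eqref{csys} at $t=0$, one has $\p_t\pe|_{t=0}=-\u_0\cdot\na\pe_0-\pe_0\dv\u_0-\tfrac1{\eps}\dv\u_0$ and $\p_t\u|_{t=0}$ contains the singular piece $\tfrac1{\eps}\tfrac{p'(\ro_0)}{\ro_0}\na\pe_0$, both of which are $O(1)$ by precisely \eqref{IC-2}. This yields \eqref{Unif-Bnd-3}. Finally \eqref{Unif-Bnd-4} is obtained simply by rewriting \eqref{csys}$_1$--\eqref{csys}$_2$ as
\begin{equation*}
\tfrac{1}{\eps}\dv\u = -\p_t\pe-\u\cdot\na\pe-\pe\dv\u, \qquad \tfrac{1}{\eps}\tfrac{p'(\ro)}{\ro}\na\pe = -\p_t\u-\u\cdot\na\u+\tfrac{1}{\ro}\dv\big(\Sigma_1^\eps+\Sigma_2^\eps+\Sigma_3^\eps\big),
\end{equation*}
and taking the $H^{s-2}$ norm. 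The principal obstacle I foresee is orchestrating the high-order commutators $[\pa,\tfrac1{\ro}]$, $[\pa,p'(\ro)]$, and those arising from the nonlinear Leslie tensor so that the $1/\eps$ singular terms cancel \emph{exactly} while \eqref{Coefficients} and \eqref{Parodi-1} still deliver a sign-definite dissipation at the $H^s$ level; this is a Mach-number-uniform refinement of the energy framework of \cite{JLT,JL}.
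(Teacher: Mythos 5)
Your overall architecture (local existence by iteration, cancellation of the singular pair $\tfrac{1}{\eps}\dv\u$ and $\tfrac{1}{\eps}\tfrac{p'(\ro)}{\ro}\na\pe$ through the weighted $L^2$ pairing, sign-definite Leslie dissipation from \eqref{Coefficients}--\eqref{Parodi-1}, and the treatment of \eqref{Unif-Bnd-2}--\eqref{Unif-Bnd-4} via the time-differentiated system and reading off the equations) matches the paper. But there is a genuine gap at the center of the argument: the inequality you propose to close,
\begin{equation*}
\tfrac{\dd}{\dd t}E^\eps_s + D^\eps_s \lesssim P\big(E^\eps_s\big)E^\eps_s \,, \qquad D^\eps_s \approx \tfrac12\mu_4\|\na\u\|^2_{H^s} + (\textrm{Leslie terms}) \,,
\end{equation*}
does not yield global existence. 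Its right-hand side is not dominated by the dissipation, so with $P(0)=0$ Gr\"onwall only gives $E^\eps_s(t)\leq E^\eps_s(0)/(1-C E^\eps_s(0)\,t)$, i.e.\ control on a time interval of order $1/\delta$; smallness does not propagate to $t=+\infty$. Concretely, the remainders produced by the singular cancellation and by $\dv\Sigma_2^\eps$ contain factors such as $\|\pe\|^2_{\dot H^s}\|\u\|_{\dot H^s}$ and $\|\na\d\|^2_{\dot H^s}\|\na\u\|_{H^s}$; after Young's inequality these leave behind $E_s^{1/2}\|\pe\|^2_{\dot H^s}$ and $E_s^{1/2}\|\na\d\|^2_{\dot H^s}$, and neither $\|\pe\|^2_{\dot H^s}$ nor $\|\na\d\|^2_{\dot H^s}$ is controlled by your $D^\eps_s$: the director equation is hyperbolic and $\pe$ enjoys no smoothing at all, so these quantities cannot be absorbed.

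The missing idea is to augment the energy with the cross terms $\eps\,\eta\,\lt\p^m\u,\na\p^m\pe\rt$ for $|m|\leq s-1$ and $\eta\,\lt\p^m\cd,\p^m\d\rt$ for $|m|\leq s$, with $\eta$ small. Differentiating the first along the flow and substituting the $\u$-equation generates the dissipation $\|\na\pe\|^2_{H^{s-1}_{w(\ro)}}$ with $w(\ro)=\tfrac{1}{\ro}p'(\ro)$ (the prefactor $\eps$ exactly compensates the $\tfrac{1}{\eps}$ in front of the pressure gradient, which is why this is uniform in $\eps$), at the cost of $-\|\dv\p^m\u\|^2_{L^2}$ and controllable remainders; the second generates $\ka\|\na\d\|^2_{\dot H^s_{1/\ro}}$ at the cost of $-\|\p^m\cd\|^2_{L^2}$, which is controlled by the Leslie dissipation since $\la_1<0$. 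With the enlarged dissipation the right-hand side of the a priori estimate is genuinely of the form $E_s^{1/2}\times(\textrm{full dissipation})$ and is absorbed for $\delta$ small; this is what closes \eqref{Unif-Bnd-1}. Two smaller inaccuracies: the completed square in the Leslie dissipation must be $-\la_1\|\cd+\B^\eps\d+\tfrac{\la_2}{\la_1}\A^\eps\d\|^2_{L^2}$ (your version omits $\tfrac{\la_2}{\la_1}\A^\eps\d$ inside the square, which is precisely what produces the coefficient $\mu_5+\mu_6+\tfrac{\la_2^2}{\la_1}$); and the constraint propagation identity you state is not what the equation gives --- one obtains $\tfrac{D}{Dt}(\d\cdot\cd)=\tfrac{\la_1}{\ro}\,\d\cdot\cd$ modulo terms vanishing with $|\d|^2-1$, which still forces $\d\cdot\cd\equiv0$ and $|\d|\equiv1$, but only via the coupled ODE argument along characteristics.
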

\begin{rem}
In the above Theorem, if setting $\eps=1$, we automatically recover the existence results in \cite{JLT}. However, Theorem \ref{Thm-global} here proved much more precise estimates. In fact, even in Part 1 of Theorem \ref{Thm-global} we justify the uniform in $\eps$ bounds on the perturbations with size $\eps$.
\end{rem}

The next theorem is about the limit from the Ericksen-Leslie hyperbolic liquid crystal model in compressible flow to the corresponding model in incompressible flow.

\begin{thm}\label{Thm-Limit}
	Under the same assumptions as those in Theorem \ref{Thm-global}, i.e. the assumptions \eqref{d-initial}, \eqref{phi-initial}, \eqref{Coefficients}, \eqref{IC-1} and \eqref{IC-2},  we further assume that there exist  ${\rm u}_0, \widetilde{\rm d}_0 \in \R^n$ and ${\rm d}_0 \in \mathbb{S}^{n-1}$ with ${\rm u}_0, \widetilde{\rm d}_0, \na {\rm d}_0 \in H^s$, such that ${\rm d}_0 \cdot \widetilde{\rm d}_0 = 0$, $\dv {\rm u}_0 = 0$, $\| \d_0 - {\rm d}_0 \|_{L^2} \rightarrow 0$ and
	\begin{equation}
	  \begin{aligned}
	    \big( \u_0, \widetilde{\rm d}^\eps_0, \na \d_0 \big) \rightarrow \big( {\rm u}_0, \widetilde{\rm d}_0, \na {\rm d}_0 \big) \textrm{ strongly in } H^s
	  \end{aligned}
	\end{equation}
	as $\eps \rightarrow 0$. Let $\big( \rho^\eps= 1 + \eps \phi^\eps, \u, \d \big)$ be the family of solutions to the system \eqref{csys} constructed in Theorem \ref{Thm-global}. Then there exist ${\rm u} \in \R^n$, $\pi \in \R$ and ${\rm d}\in \mathbb{S}^{n-1}$ with ${\rm u}, \dot{\rm d}, \na {\rm d} \in L^\infty(\R^+; H^s) \cap C(\R^+; H^{s-1}_{\loc})$, $\na {\rm u} \in L^2(\R^+; H^s)$ and $ \pi \in L^\infty (\R^+; H^{s-1}_{\rm loc}) $, such that $\mathrm{(}$in the sense of subsequences$\mathrm{)}$
	\begin{equation}
	  \begin{aligned}
	    & \rho^\eps \rightarrow 1 \textrm{ strongly in } L^\infty (\R^+; H^s) \,, \\
	    & \tfrac{1}{\eps^2} \nabla p (\rho^\eps) - \tfrac{1}{2} \ka \nabla |\na \d|^2 \rightarrow \nabla \pi \textrm{ weakly-} \star \textrm{ for } t > 0, \textrm{ weakly in } H^{s-2}_{\rm loc}
	  \end{aligned}
	\end{equation}
	as $\eps \rightarrow 0$ and
	\begin{equation}
	  \begin{aligned}
	    \big( \u, \na \d , \cd \big) \rightarrow \big( {\rm u}, \na {\rm d}, \dot{\rm d} \big)
	  \end{aligned}
	\end{equation}
	weakly-$\star$ for $t > 0$, weakly in $H^s$ and strongly in $C(\R^+; H^{s-1}_{\loc})$ as $\eps \rightarrow 0$. Here $({\rm u}, \pi, {\rm d})$ is the solution to the Ericksen-Leslie hyperbolic liquid crystal model in incompressible flow \eqref{isys} with initial data \eqref{iiv}. Moreover, the following global energy bound holds:
	\begin{equation}\label{Bnd-Limit}
	  \begin{aligned}
	    \| {\rm u} \|^2_{L^\infty (\R^+; H^s)} + \| \dot{\rm d} \|^2_{L^\infty (\R^+; H^s)} & + \| \na {\rm d} \|^2_{L^\infty (\R^+; H^s)} + \tfrac{1}{2} \mu_4 \| \na {\rm u} \|^2_{L^2 (\R^+; H^s)}  \lesssim \delta \,,
	  \end{aligned}
	\end{equation}
where $\delta$ is the small constant in the assumption \eqref{IC-1}.
\end{thm}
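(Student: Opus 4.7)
The strategy is the standard weak-compactness plus strong-local-compactness scheme, leveraging the uniform bounds of Theorem \ref{Thm-global} together with the well-prepared initial data to avoid an acoustic initial layer. From \eqref{Unif-Bnd-1}, \eqref{Unif-Bnd-rho}, \eqref{Unif-Bnd-2}, \eqref{Unif-Bnd-3}, \eqref{Unif-Bnd-4} and the Banach--Alaoglu theorem, I would extract subsequences such that $\u\rightharpoonup^\star{\rm u}$, $\cd\rightharpoonup^\star\dot{\rm d}$ and $\na\d\rightharpoonup^\star\na{\rm d}$ in $L^\infty(\R^+;H^s)$, while $\na\u\rightharpoonup\na{\rm u}$ in $L^2(\R^+;H^s)$. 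Since $\rho^\eps-1=\eps\phi^\eps$ with $\phi^\eps$ uniformly bounded in $L^\infty H^s$, one gets $\rho^\eps\to 1$ strongly in $L^\infty(\R^+;H^s)$. The time-derivative bounds provided by \eqref{Unif-Bnd-2}--\eqref{Unif-Bnd-3} give $\p_t\u\in L^\infty(0,T;H^{s-2})$, $\p_t\dot{\rm d}\in L^\infty(0,T;H^{s-1})$ and $\p_t\na\d\in L^\infty(0,T;H^{s-1})$ uniformly in $\eps$.

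Next, on any bounded subdomain $\Omega\Subset\R^n$ the embedding $H^s(\Omega)\hookrightarrow H^{s-1}(\Omega)$ is compact, so the Aubin--Lions--Simon lemma yields the strong local convergence
\begin{equation*}
    (\u,\cd,\na\d)\longrightarrow({\rm u},\dot{\rm d},\na{\rm d})\quad\text{in}\quad C([0,T];H^{s-1}_{\loc}).
\end{equation*}
The divergence-free constraint follows at once from \eqref{Unif-Bnd-4}, which implies $\|\dv\u\|_{L^\infty(0,T;H^{s-2})}\le C\eps\to 0$. The geometric constraint $|{\rm d}|=1$ is inherited from $|\d|=1$ together with the local strong convergence and the assumption $\|\d_0-{\rm d}_0\|_{L^2}\to 0$; similarly, $\dot{\rm d}\cdot{\rm d}=0$ is preserved in the limit from \eqref{d-initial}.

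The main obstacle is to identify the limit pressure, since the term $\tfrac{1}{\eps^2}\na p(\rho^\eps)$ is formally singular. To handle it I would rewrite the momentum equation in conservative form and absorb the capillary contribution of $\Sigma_2^\eps$ into a single ``effective pressure'' gradient:
\begin{equation*}
    \p_t(\rho^\eps\u)+\dv(\rho^\eps\u\otimes\u)+\na\Pi^\eps = \tfrac{1}{2}\mu_4\De\u+\bigl(\tfrac{1}{2}\mu_4+\xi\bigr)\na\dv\u-\ka\dv(\na\d\odot\na\d)+\dv\Sigma_3^\eps,
\end{equation*}
with $\Pi^\eps:=\tfrac{1}{\eps^2}\bigl(p(\rho^\eps)-p(1)\bigr)-\tfrac{1}{2}\ka|\na\d|^2$. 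Every term on the right-hand side, as well as the first two terms on the left, is uniformly bounded in $L^\infty(0,T;H^{s-2})$ thanks to the Moser product estimates (valid since $s>n/2+1$ makes $H^{s-1}$ an algebra) and the uniform bounds. Hence $\na\Pi^\eps$ is uniformly bounded in $L^\infty(0,T;H^{s-2})$, and standard elliptic regularity gives $\Pi^\eps\rightharpoonup^\star\pi$ in $L^\infty(\R^+;H^{s-1})$ up to a constant.

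Finally, I would pass to the limit in the weak formulations of the momentum, director and Lagrange-multiplier equations. Each nonlinearity (e.g.\ $\rho^\eps\u\otimes\u$, $\rho^\eps|\cd|^2$ inside $\Gamma^\eps$, $\na\d\odot\na\d$, $\A^\eps\d$, $\B^\eps\d$ and the bilinear products in $\Sigma_3^\eps$) is the product of a factor converging strongly in $C([0,T];H^{s-1}_{\loc})$ with one that is only weakly bounded in $L^\infty H^s$, which is enough to pass to the limit against any compactly supported test function. The initial conditions \eqref{iiv} are recovered from the strong local $C_tH^{s-1}_{\loc}$ convergence and the convergence of the data, while the energy bound \eqref{Bnd-Limit} follows by weak lower semicontinuity of norms, together with the $H^{s-1}$-bound on $\Pi^\eps$ derived above.
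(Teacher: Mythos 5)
Your overall scheme coincides with the paper's: extract weak-$\star$ limits from \eqref{Unif-Bnd-1}--\eqref{Unif-Bnd-4}, upgrade $(\u,\cd,\na\d)$ to strong convergence in $C(\R^+;H^{s-1}_{\loc})$ via Aubin--Lions--Simon, read off $\dv{\rm u}=0$ from \eqref{Unif-Bnd-4}, pass to the limit in each nonlinearity as a product of one strongly locally convergent factor with one weakly bounded factor, and obtain \eqref{Bnd-Limit} by weak lower semicontinuity. Two points, however, need repair.

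The genuine gap is in your treatment of the pressure. You define $\Pi^\eps=\tfrac{1}{\eps^2}\bigl(p(\rho^\eps)-p(1)\bigr)-\tfrac12\ka|\na\d|^2$ and assert that $\Pi^\eps\rightharpoonup^\star\pi$ in $L^\infty(\R^+;H^{s-1})$. This step fails as stated: expanding $p(\rho^\eps)-p(1)=p'(1)\eps\phi^\eps+O(\eps^2|\phi^\eps|^2)$ shows that $\Pi^\eps$ contains the term $\tfrac{p'(1)}{\eps}\phi^\eps$, and the uniform bounds only control $\phi^\eps$ in $H^s$ and $\na\phi^\eps$ in $\eps H^{s-2}$ (via \eqref{Unif-Bnd-4}); there is no uniform bound on $\tfrac{1}{\eps}\phi^\eps$ in any $L^2$-based space, so $\Pi^\eps$ itself is of size $\eps^{-1}$ and has no weak-$\star$ limit. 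What is uniformly bounded is only the \emph{gradient} $\na\Pi^\eps=\tfrac{1}{\eps}p'(\rho^\eps)\na\phi^\eps-\tfrac12\ka\na|\na\d|^2$, which is precisely the content of \eqref{Unif-Bnd-4} and is the quantity appearing in the statement of the theorem. The correct route (and the paper's) is to extract $\tfrac{1}{\eps^2}\na p(\rho^\eps)\rightharpoonup\na\pi_1$ weakly in $H^{s-2}$, set $\pi=\pi_1-\tfrac12\ka|\na{\rm d}|^2$, and recover the regularity of $\pi$ a posteriori from the limiting momentum equation (whose remaining terms lie in $H^{s-2}$), rather than from a weak-$\star$ limit of the potential. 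Your ``elliptic regularity up to a constant'' remark hints at this, but the argument must be run on the gradient, not on $\Pi^\eps$.

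Two smaller omissions: (i) you write $\cd\rightharpoonup\dot{\rm d}$ without identifying the weak limit ${\rm w}$ of $\cd$ with the material derivative $\p_t{\rm d}+{\rm u}\cdot\na{\rm d}$ of the limit director; this requires combining the weak limit of $\p_t\d$ with the strong local convergence of $\u\cdot\na\d$, as in the paper's relation \eqref{w=dot-d}. (ii) For $|{\rm d}|=1$, note that $\d$ and ${\rm d}$ are not in $L^2$, so one must first establish $\|\d-{\rm d}\|_{L^\infty(0,T;L^2)}\to 0$ (which uses the identification in (i)) before concluding pointwise; the paper alternatively invokes Lemma~4.1 of \cite{JL} to propagate the constraint from the limit equation. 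Your route is viable but needs the intermediate $L^2$-difference estimate made explicit.
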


The last result is about the convergence rate of the limit in Theorem \ref{Thm-Limit}.

\begin{thm}\label{Thm-Convergence-Rate}
Under the assumptions of Theorem \ref{Thm-Limit}, we further assume that
 \begin{equation}\label{IC-CVRT}
   \begin{aligned}
     \| \sqrt{\rho^\eps_0} \u_0 - {\rm u}_0 \|^2_{L^2} + \| \sqrt{\rho^\eps_0} \widetilde{\rm d}^\eps_0 - \widetilde{\rm d}_0 \|^2_{L^2} & + \ka \| \na \d_0 - \na {\rm d} \|^2_{L^2} \\
     & + \| \d_0 - {\rm d} \|^2_{L^2} + \lt \Pi^\eps_0 , 1 \rt \lesssim \eps^{\alpha_0}
   \end{aligned}
 \end{equation}
for some constant $\alpha_0 > 0$, where $ \Pi^\eps_0 = \tfrac{1}{\eps^2} \tfrac{\tilde a}{\gamma - 1} \big[ (\rho^\eps_0)^\gamma - \gamma (\rho^\eps_0) - 1 \big ] $.
Then, for any fixed $T>0$, we have
\begin{equation*}
  \begin{aligned}
    \| \sqrt{\rho^\eps} \u - {\rm u} \|^2_{L^2} + \| \sqrt{\rho^\eps} \cd - \dot{\rm d} \|^2_{L^2} + \ka \| \na \d - \na {\rm d} \|^2_{L^2} + \| \d - {\rm d} \|^2_{L^2} + \lt \Pi^\eps , 1 \rt \leq C_T \eps^{\beta_0}
  \end{aligned}
\end{equation*}
for all $t\in[0,T]$, where $\Pi^\eps = \tfrac{1}{\eps^2} \tfrac{\tilde a}{\gamma - 1} \big[ (\rho^\eps)^\gamma - \gamma (\rho^\eps - 1) -1 \big]$, and the constants $\beta_0 = \min \{ 2 , \alpha_0, 1 + \tfrac{\alpha_0}{2} \} > 0$ and $C_T = C (1 + T) \exp (CT) > 0$ for some positive constant $C$, independent of $\eps$.
\end{thm}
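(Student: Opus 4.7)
The natural tool is a \emph{modulated (relative) energy} argument. Introduce
\begin{equation*}
\mathcal{E}^\eps(t) := \int_{\R^n}\Bigl[\tfrac{1}{2}\rho^\eps|\u-{\rm u}|^2 + \tfrac{1}{2}\rho^\eps|\cd-\dot{\rm d}|^2 + \tfrac{\ka}{2}|\na\d-\na{\rm d}|^2 + \tfrac{1}{2}|\d-{\rm d}|^2 + \Pi^\eps\Bigr]\dd x.
\end{equation*}
Because $\rho^\eps=1+\eps\pe$ with $\pe$ uniformly bounded in $L^\infty$ and $\Pi^\eps\approx\tfrac{\tilde a\gamma}{2}|\pe|^2$ by Taylor expansion, the quantity on the left-hand side of the theorem equals $2\mathcal{E}^\eps(t)$ up to terms of the form $\eps\int\pe\,{\rm u}\cdot(\u-{\rm u})\dd x$ and $\eps\int\pe\,\dot{\rm d}\cdot(\cd-\dot{\rm d})\dd x$, which are bounded by $C\eps\,\mathcal{E}^\eps(t)^{1/2}+O(\eps^2)$. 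This conversion error is precisely the mechanism that enforces the ceiling $\beta_0\leq 2$. The target is a differential inequality
\begin{equation*}
\tfrac{d}{dt}\mathcal{E}^\eps(t) + \mathcal{D}^\eps_{\rm diff}(t) \leq C\,\mathcal{E}^\eps(t) + C\eps^{\beta_0},\qquad \mathcal{D}^\eps_{\rm diff}\geq 0,
\end{equation*}
to which Gronwall, together with the initial estimate $\mathcal{E}^\eps(0)\lesssim\eps^{\alpha_0}$ read off from \eqref{IC-CVRT}, yields the claim.

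The plan is to combine the global energy identity of the compressible system \eqref{csys} (already available from the existence proof in \cite{JLT}, now with the conserved quantity $\tfrac{1}{2}\rho^\eps|\u|^2+\tfrac{1}{2}\rho^\eps|\cd|^2+\tfrac{\ka}{2}|\na\d|^2+\Pi^\eps$) with the analogous identity for the limit system \eqref{isys}, and to subtract the four cross-term identities obtained by differentiating $\int\rho^\eps\u\cdot{\rm u}$, $\int\rho^\eps\cd\cdot\dot{\rm d}$, $\ka\int\na\d:\na{\rm d}$ and $\int\d\cdot{\rm d}$ along both systems simultaneously. Three observations then make the computation go through: (i) the singular pressure $\tfrac{1}{\eps^2}\na p(\rho^\eps)$, tested against ${\rm u}$, gives $-\tfrac{1}{\eps^2}\int p(\rho^\eps)\,\dv{\rm u}\,\dd x=0$ by incompressibility of the limit; (ii) the remaining singular contributions involving $\tfrac{1}{\eps}\dv\u$ and $\tfrac{1}{\eps}p'(\rho^\eps)\na\pe$ are bounded in $H^{s-2}$ uniformly in $\eps$ by the well-prepared estimate \eqref{Unif-Bnd-4}, producing residuals bounded by $\eps\,\mathcal{E}^\eps(t)^{1/2}$, i.e., at worst $\eps^{1+\alpha_0/2}$; (iii) the Leslie stress difference $\tilde\sigma_{\bm\mu}(\u,\d,\cd)-\tilde\sigma_{\bm\mu}({\rm u},{\rm d},\dot{\rm d})$ and the Lagrange multiplier difference $\Gamma^\eps\d-\Gamma\,{\rm d}$ are bilinear in the five error quantities $(\u-{\rm u},\na(\u-{\rm u}),\d-{\rm d},\cd-\dot{\rm d},\na\d-\na{\rm d})$ up to $O(\eps)$ defects from $\rho^\eps-1$, and their unsigned parts can be absorbed into $\mathcal{E}^\eps$ plus a small multiple of $\mathcal{D}^\eps_{\rm diff}$ by Cauchy--Schwarz, using the uniform $L^\infty$ bounds of Theorems \ref{Thm-global} and \ref{Thm-Limit}.

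\textbf{Main obstacle.} The delicate step is the hyperbolic director cross-term $\tfrac{d}{dt}\int\rho^\eps\cd\cdot\dot{\rm d}\,\dd x$: here one has to use both director equations together with mass conservation and the constraints $|\d|=|{\rm d}|=1$ (giving $\d\cdot\cd=0$ and $\d\cdot\ddot{\d}^\eps=-|\cd|^2$, and symmetrically for ${\rm d}$), and detect a partial cancellation between $\int\Gamma^\eps\d\cdot\dot{\rm d}$ and its mirror, since none of the $\la_1$-, $\la_2$-, $\B^\eps$- or ${\rm B}$-contributions is sign-definite individually. Under Parodi's relation \eqref{Parodi-1} and the coefficient assumptions \eqref{Coefficients}---in particular $\la_1<0$ and $\mu_5+\mu_6+\la_2^2/\la_1\geq 0$---these terms package into a nonnegative joint dissipation $\mathcal{D}^\eps_{\rm diff}$, exactly mirroring the single-system energy estimates of \cite{JLT,JL}. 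With the differential inequality in hand, Gronwall on $[0,T]$ delivers the rate $\beta_0=\min\{2,\alpha_0,1+\alpha_0/2\}$ with constant $C_T=C(1+T)e^{CT}$.
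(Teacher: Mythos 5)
Your plan is essentially the paper's proof: the same modulated energy (up to the cosmetic difference between $\rho^\eps|\u-{\rm u}|^2$ and $|\sqrt{\rho^\eps}\u-{\rm u}|^2$, which is harmless), the same four cross terms, the same use of $\dv {\rm u}=0$ to kill $\tfrac{1}{\eps^2}\lt \na p(\rho^\eps),{\rm u}\rt$, the same use of $|\d|=|{\rm d}|=1$ (hence $\d\cdot\cd={\rm d}\cdot\dot{\rm d}=0$) to turn the Lagrange-multiplier cross terms into genuine differences, and the same repackaging of the Leslie terms into the nonnegative quadratic form $-\mu_1\|\d^\top(\A^\eps-\A)\d\|^2+\la_1\|\cd-\dot{\rm d}+(\B^\eps-\B)\d+\tfrac{\la_2}{\la_1}(\A^\eps-\A)\d\|^2-(\mu_5+\mu_6+\tfrac{\la_2^2}{\la_1})\|(\A^\eps-\A)\d\|^2$ under \eqref{Parodi-1} and \eqref{Coefficients}, followed by Gr\"onwall.

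The one step I would not accept as written is item (ii). The paper's convergence-rate proof never uses the well-prepared bound \eqref{Unif-Bnd-4}; the quantitative workhorse is instead the elementary inequality $\|\sqrt{\rho^\eps}-1\|_{L^2}\lesssim\eps\lt\Pi^\eps,1\rt^{1/2}$ (Lemma \ref{Lmm-rho-Rt}), which converts \emph{every} density-fluctuation residual into $\eps^2+\eta_1\lt\Pi^\eps,1\rt$ by Young, and the pressure cross term is handled by writing $\lt\rho^\eps\u,\na\pi\rt=-\lt\dv(\rho^\eps\u),\pi\rt=\lt\p_t(\rho^\eps-1),\pi\rt$ and integrating by parts in time before invoking that inequality. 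If instead you estimate the residual as you propose, e.g. $\lt\u,\na\pi\rt=-\lt\dv\u,\pi\rt=O(\|\dv\u\|_{H^{s-2}})=O(\eps)$ via \eqref{Unif-Bnd-4}, you obtain an $O(\eps)$ forcing term in the Gr\"onwall inequality, which only yields the rate $\eps^{\min\{1,\alpha_0\}}$ and is strictly weaker than the claimed $\beta_0=\min\{2,\alpha_0,1+\tfrac{\alpha_0}{2}\}$ whenever $\alpha_0>1$; moreover such residuals are not of the form $\eps\,\mathcal{E}^\eps(t)^{1/2}$, since $\pi$ carries no smallness. So you should replace the appeal to \eqref{Unif-Bnd-4} by the $\sqrt{\rho^\eps}-1$ mechanism (equivalently, your Taylor-expansion remark $\Pi^\eps\approx\tfrac{\tilde a\gamma}{2}|\pe|^2$, made rigorous on the set where $\rho^\eps\in[r_1,r_2]$) together with the time integration by parts for the $\na\pi$ term. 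With that correction the argument closes exactly as in the paper.
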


\begin{rem}
In view of interpolation inequality, the convergence space $ L^2 $ can be raised to the Sobolev space $ H^{s'} (0<s' < s)$, correspondingly, the convergence rate will become slowly.
\end{rem}

\subsection{Sketch of proofs}

The {\em key ideas} of the current paper is that we obtain the uniform estimates for the unknown functions $(\phi^\eps, \u, \d , \cd)$, their time derivatives $(\p_t \phi^\eps, \p_t \u, \p_t \d, \p_t \cd)$ and the singular quantities $( \tfrac{1}{\eps} \dv \u , \tfrac{1}{\eps} \tfrac{1}{\rho^\eps} p' (\rho^\eps) \na \phi^\eps )$. While justifying Theorem \ref{Thm-global}, we divide the proof into four steps as follows:
\begin{enumerate}
	\item We derive the a priori estimate of the system \eqref{csys}. The {\em key point} is to deal with the singular terms $\tfrac{1}{\eps} \dv \u$ in the $\phi^\eps$-equation and $\tfrac{1}{\eps} \tfrac{1}{\rho^\eps} p' (\rho^\eps) \na \phi^\eps $ in the $\u$-equation of \eqref{csys}. More precisely, we utilize the following cancellation under the relation $\rho^\eps = 1 + \eps \phi^\eps$ to overcome the singular terms:
	\begin{equation*}
	  \begin{aligned}
	    & \tfrac{1}{\epsilon} \lt \p^m \dv \u , p' (\rho^\epsilon) \p^m \phi^\epsilon \rt + \tfrac{1}{\epsilon} \lt \p^m \big( \tfrac{1}{\rho^\epsilon} p' (\rho^\epsilon) \na \phi^\epsilon \big) , \rho^\epsilon \p^m \u \rt \\
	    = & - \lt p'' (\rho^\epsilon) \na \phi^\epsilon \p^m \phi^\epsilon , \p^m \u \rt + \tfrac{1}{\epsilon} \lt \big[ \p^m , \tfrac{1}{\rho^\epsilon} p' (\rho^\epsilon) \na \big] \phi^\epsilon , \rho^\epsilon \p^m \u \rt
	  \end{aligned}
	\end{equation*}
	for all multi-index $m \in \mathbb{N}^n$, of which derivations will be given later. We note that $ [ \p^m, \tfrac{1}{\rho^\eps} p' (\rho^\eps) \na ]$ vanishes when $m = 0$, and will generate a coefficient $\eps$ under the relation $\rho^\eps = 1 + \eps \phi^\eps$ when $m \neq 0$. In other word, the term $ \tfrac{1}{\epsilon} \lt \big[ \p^m , \tfrac{1}{\rho^\epsilon} p' (\rho^\epsilon) \na \big] \phi^\epsilon , \rho^\epsilon \p^m \u \rt $ is actually not a singular term.
	
	\item We prove the local well-posedness of the system \eqref{csys} with uniform in $\eps$ small initial data \eqref{civ} by employing the iterative scheme. Actually, we also could obtain a local existence time $T \in (0,1)$ which is independent of $\eps$. Based on the previous constructed local solutions, we seek some more dissipative structures about the density fluctuation of $ \phi^\eps$ and the direction field $ \d$, say, $\| \na \phi^\eps \|^2_{H^{s-1}_{w(\rho^\eps)}}$ with $w(\rho^\eps) = \tfrac{1}{\rho^\eps} p'(\rho^\eps)$ and $\| \na \d \|^2_{\dot{H}^s_{1/\rho^\eps}}$, so that we can globally extend the constructed local solution.
	
	\item We derive the uniform bounds \eqref{Unif-Bnd-2} and \eqref{Unif-Bnd-3}, which are concerned on the time derivatives $\p_t \u, \p_t \phi^\eps, \p_t \d$ and $\p_t \cd$. These uniform bounds will be employed to derive some strong convergences by utilizing the Aubin-Lions-Simon Theorem given in Lemma \ref{Lmm-ALS-Thm}. The key point is still to deal with the singularity. Noticing that the evolution of $\d$ and $\p_t \d = \cd - \u \cdot \na \d$ do not involve singular terms, we can directly deduce \eqref{Unif-Bnd-2} from the uniform bound \eqref{Unif-Bnd-1}, \eqref{Unif-Bnd-rho} and the equations of $\d$ in \eqref{csys}. However, the evolutions of $\phi^\eps$ and $\u$ contain the singular terms $\tfrac{1}{\eps} \dv \u$ and $\tfrac{1}{\eps} \tfrac{1}{\rho^\eps} p'(\rho^\eps) \na \phi^\eps$, respectively, which have not yet controlled uniformly in $\eps \in (0,1]$. Thus, when deriving the uniform bounds of $\p_t \phi^\eps$ and $\p_t \u$, we need to eliminate these singular effects under the following coupled cancellation:
	\begin{equation*}
	  \begin{aligned}
	    & \tfrac{1}{\eps} \lt \p^m \dv \p_t \u , p' (\rho^\eps) \p^m \p_t \phi^\eps \rt + \tfrac{1}{\eps} \lt \p^m ( \tfrac{1}{\rho^\eps} p' (\rho^\eps) \na \p_t \phi^\eps \phi^\eps ) , \rho^\eps \p^m \p_t \u \rt \\
	    = & \tfrac{1}{\eps} \lt \big[ \p^m , \tfrac{1}{\rho^\eps} p' (\rho^\eps) \na \big] \p_t \phi^\eps , \rho^\eps \p^m \p_t \u \rt - \lt p'' (\rho^\eps) \na \phi^\eps \p^m \p_t \phi^\eps , \p^m \p_t \u \rt
	  \end{aligned}
	\end{equation*}
	for all $m \in \mathbb{N}^n$, where the last term in the right-hand side is obviously not a singular term and the first term in the right-hand side is actually not singular as explained before. Therefore, we derive the energy inequality \eqref{dt-Es-1}, i.e.,
	\begin{equation*}
	  \begin{aligned}
	    E_s (\p_t \phi^\eps , \p_t \u) (t) \leq \big( 1 + E_s (\p_t \phi^\eps (0) , \p_t \u (0)) \big) \exp (C_8 T)
	  \end{aligned}
	\end{equation*}
	for all $t \in [0,T]$, where $T>0$ is an any fixed number, and the energy functional $E_s (\p_t \phi^\eps , \p_t \u)$ is defined in \eqref{ED-t-derivative}. In order to obtain the uniform bound of $E_s (\p_t \phi^\eps , \linebreak \p_t \u)$, we only need to ensure the initial energy $E_s (\p_t \phi^\eps (0) , \p_t \u (0))$ is uniformly bounded. So the additional initial conditions \eqref{IC-2} is required.
	
	\item Based on the uniform bounds \eqref{Unif-Bnd-1}, \eqref{Unif-Bnd-rho}, \eqref{Unif-Bnd-2} and \eqref{Unif-Bnd-3}, we easily derive the uniform bound \eqref{Unif-Bnd-4} of the singular terms $\tfrac{1}{\eps} \dv \u$ and $\tfrac{1}{\eps} \tfrac{1}{\rho^\eps} p' (\rho^\eps) \na \phi^\eps$ from the structures of evolutions \eqref{csys}.
\end{enumerate}

In order to prove Theorem \ref{Thm-Limit}, we employ the compactness arguments depended on Aubin-Lions-Simon Theorem stated in Lemma \ref{Lmm-ALS-Thm}, so that the global classical solution of the compressible system \eqref{csys} constructed in Theorem \ref{Thm-global} converges (in the sense of subsequences) to the solution of incompressible equations \eqref{isys}.

While proving Theorem \ref{Thm-Convergence-Rate}, we employ the modulated energy method or, say, the relative entropy approach, motivated by \cite{JJL}. There is an important observation \eqref{Cdisp-equ} in Lemma \ref{Lmm-Contrl-C-disp}, i.e.,
\begin{equation*}
  \begin{aligned}
    \mathscr{C}_{\rm disp} = & -\mu_1 \int_0^t \| {\d}^\top({\A}^\eps-\A)\d \|^2_{L^2} {\dd} \tau + \la_1 \int_0^t \big\| \cd - \dot{\rm d} + ({\B}^\eps - {\B}) \d + \tfrac{\la_2}{\la_1} ({\A}^\eps - {\A}) \d \big\|^2_{L^2} {\dd} \tau \\
    & - ( \mu_5 + \mu_6 + \tfrac{\la_2^2}{\la_1} ) \int_0^t \| (\A^\eps - \A) \d \|^2_{L^2} \mathrm{d} \tau + \mathscr{R}_{\rm{\Sigma}}^\eps \,,
  \end{aligned}
\end{equation*}
where the quantity $\mathscr{C}_{\rm disp}$ is given in \eqref{C-disp}, so that we get the convergence rates for the case of well-prepared initial data.

\subsection{Organizations of current paper}

In next section, we prove the global well-posedness of the compressible system \eqref{csys} under uniformly in $\eps$ small initial data and give the uniform (in $\eps$) bounds of the solutions, their time derivatives and the singular quantities $\tfrac{1}{\eps} \dv \u$, $ \tfrac{1}{\eps} \tfrac{1}{\rho^\eps} p' (\rho^\eps) \na \phi^\eps $. In Section \ref{Sec:Limit}, we rigorously show the limit process between \eqref{csys} and \eqref{isys} by employing compactness arguments. In Section \ref{Sec:Convergence}, based on Theorem \ref{Thm-Limit}, we prove the convergence rates in $L^2$ space with well-prepared initial data.

\section{Global uniform energy bounds: Proof of Theorem \ref{Thm-global}}\label{s3}

In this section, we aim at proving Theorem \ref{Thm-global}, i.e. the global well-posedness of the system \eqref{csys} with small initial data \eqref{civ} for any fixed $\eps \in (0,1]$, and deriving the uniform in $\eps$ global-in-time energy bounds. We first derive the a priori estimates of the system \eqref{csys}. Second, we prove the local well-posedness of the system \eqref{csys} with uniform in $\eps$ small initial data \eqref{civ} by employing the iterative scheme. Third, we derive the uniform bounds \eqref{Unif-Bnd-2} and \eqref{Unif-Bnd-3}, which are concerned on the time derivatives $\p_t \phi^\eps$, $\p_t \u$, $\p_t \cd$ and $\p_t \d$. Finally, based on the uniform bounds \eqref{Unif-Bnd-1}, \eqref{Unif-Bnd-rho}, \eqref{Unif-Bnd-2} and \eqref{Unif-Bnd-3}, we easily derive the uniform bound \eqref{Unif-Bnd-4} of the singular terms $\tfrac{1}{\eps} \dv \u$ and $\tfrac{1}{\eps} \tfrac{1}{\rho^\eps} p' (\rho^\eps) \na \phi^\eps$ from the structures of evolutions \eqref{csys}.

\subsection{Preliminaries} In this subsection, we first list some basic conclusions, which will be used frequently in the procedure of proving Theorem \ref{Thm-global}.

\begin{lem}[Moser-type calculus inequalities \cite{KM-81}]\label{lem1}
Assume $f,g\in H^{s}$. Then for any multi-index $m = ( m_{1} , \cdots , m_{n}) \in \mathbb{N}^n$ with $1 \leq |m|\leq s $, we have
\begin{align*}
    \| \p^m ( f g ) \|_{L^{2}} \lesssim\, & \| f \|_{L^{\infty}} \| g \|_{\dot{H}^s} + \| g \|_{L^{\infty}} \| f \|_{\dot{H}^s} \,, \\
    \| [\p^m , f] g \|_{L^2} \lesssim\, & \|\na f \|_{L^{\infty}} \| g \|_{H^{s-1}} + \| g \|_{L^{\infty}} \| f \|_{ \dot{H}^s } \,.
\end{align*}
In particular, if $s > \tfrac{n}{2}$, then $f g \in H^{s}$ and
\begin{align*}
    \| f g \|_{H^{s}}\lesssim & \| f \|_{H^{s}} \| g \|_{H^{s}} \,.
\end{align*}
\end{lem}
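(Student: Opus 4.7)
The plan is to establish all three inequalities through the classical Leibniz--Hölder--Gagliardo--Nirenberg route from Klainerman--Majda. First I would apply the Leibniz rule to write
\begin{equation*}
\p^m(fg) = \sum_{k \leq m}\binom{m}{k}\p^k f\,\p^{m-k}g,
\end{equation*}
and isolate the two endpoint contributions: $k=0$ yields $f\p^m g$, whose $L^2$ norm is bounded by $\|f\|_{L^\infty}\|g\|_{\dot{H}^s}$, and $k=m$ yields $g\p^m f$, bounded by $\|g\|_{L^\infty}\|f\|_{\dot{H}^s}$. The main work is then to absorb the intermediate multi-indices $0<k<m$ into the same right-hand side.

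For each such intermediate $k$, I would introduce the conjugate exponents $p_k = 2|m|/|k|$ and $q_k = 2|m|/(|m|-|k|)$, apply Hölder's inequality
\begin{equation*}
\|\p^k f\,\p^{m-k}g\|_{L^2} \leq \|\p^k f\|_{L^{p_k}}\|\p^{m-k}g\|_{L^{q_k}},
\end{equation*}
and then invoke the Gagliardo--Nirenberg interpolation
\begin{equation*}
\|\p^k f\|_{L^{p_k}} \lesssim \|f\|_{L^\infty}^{1-|k|/|m|}\|f\|_{\dot{H}^{|m|}}^{|k|/|m|}, \quad \|\p^{m-k}g\|_{L^{q_k}} \lesssim \|g\|_{L^\infty}^{|k|/|m|}\|g\|_{\dot{H}^{|m|}}^{1-|k|/|m|}.
\end{equation*}
The product is of the form $A^{1-\theta}B^{\theta}$ with $\theta = |k|/|m|$, $A = \|f\|_{L^\infty}\|g\|_{\dot{H}^s}$ and $B = \|g\|_{L^\infty}\|f\|_{\dot{H}^s}$, so Young's inequality finishes the first estimate (after using $\|f\|_{\dot{H}^{|m|}} \le \|f\|_{\dot{H}^s}$ since $|m|\le s$).

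For the commutator estimate I would rewrite
\begin{equation*}
[\p^m,f]g = \p^m(fg) - f\p^m g = \sum_{0<k\leq m}\binom{m}{k}\p^k f\,\p^{m-k}g,
\end{equation*}
so that every surviving term already carries at least one derivative on $f$. The endpoint $k=m$ gives $g\p^m f$, bounded by $\|g\|_{L^\infty}\|f\|_{\dot{H}^s}$; the $|k|=1$ terms give $\p f\cdot\p^{m-1}g$, bounded directly by $\|\na f\|_{L^\infty}\|g\|_{H^{s-1}}$. For the remaining range $2\le|k|\le|m|-1$ I would recycle the Hölder plus Gagliardo--Nirenberg argument from above, but this time interpolate $\p^k f = \p^{k-e}(\na f)^{\,}$ (for some unit multi-index $e\le k$) between $\|\na f\|_{L^\infty}$ and $\|f\|_{\dot{H}^s}$; since at least one derivative already sits on $f$ the interpolation is available, and Young's inequality again produces the claimed right-hand side.

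The third inequality reduces to the first: for $s>n/2$, the Sobolev embedding $H^s\hookrightarrow L^\infty$ yields $\|f\|_{L^\infty}+\|g\|_{L^\infty}\lesssim\|f\|_{H^s}+\|g\|_{H^s}$, so summing the first estimate over $1\le|m|\le s$ and adding the trivial bound $\|fg\|_{L^2}\le\|f\|_{L^\infty}\|g\|_{L^2}$ gives $\|fg\|_{H^s}\lesssim\|f\|_{H^s}\|g\|_{H^s}$. The only mild technical nuisance is the bookkeeping of the interpolation exponents as $|k|$ sweeps through $\{1,\dots,|m|-1\}$; since the interpolation scheme is uniform in $k$ and the borderline cases $|k|\in\{0,1,|m|\}$ are peeled off separately, no genuine difficulty arises, and the result is by now classical.
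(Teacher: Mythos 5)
Your proof is correct and follows exactly the classical Leibniz--H\"older--Gagliardo--Nirenberg--Young scheme of Klainerman and Majda; the paper itself does not prove this lemma but simply cites \cite{KM-81}, where this is precisely the argument. The exponent bookkeeping in your intermediate terms checks out (in particular, peeling one derivative off $f$ before interpolating in the commutator estimate is the right move to land on $\|\na f\|_{L^\infty}\|g\|_{H^{s-1}}$), so nothing further is needed.
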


\begin{lem}[Sobolev's embedding inequalities \cite{N}]\label{lem2}
	The following conclusions holds:
	\begin{enumerate}
		\item Assume $f \in H^{s}$ for $s > \tfrac{n}{2}$. Then $ f \in L^{\infty} $ with the bound
		\begin{align*}
		  \| f \|_{L^{\infty}} \lesssim \| f \|_{H^{s}} \,.
		\end{align*}
		
		\item Assume $ f \in H^{1}$. Then $f \in L^p$ with the bound
		\begin{align*}
		 \| f \|_{L^p} \lesssim \| \nabla f \|_{L^{2}}^{\frac{n}{2} - \frac{n}{p}} \| f \|_{L^{2}}^{1-\frac{n}{2} + \frac{n}{p}} \,.
		\end{align*}
		Here $2 < p < + \infty$ for $n = 2$ and $2 < p \leq \frac{2n}{n-2}$ for $n \geq 3$.
	\end{enumerate}
\end{lem}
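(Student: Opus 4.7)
The statement to prove is Lemma \ref{lem2}, a pair of classical Sobolev-type estimates. Since these are standard results (cited here from reference [N]), the plan is just to outline the textbook proofs, using Fourier analysis for Part (1) and an interpolation-based argument for Part (2).

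For Part (1), the plan is to pass to the Fourier side. Writing $f(x) = (2\pi)^{-n/2}\int_{\R^n} e^{ix\cdot\xi}\hat f(\xi)\,{\dd}\xi$, the triangle inequality gives the pointwise bound $\|f\|_{L^\infty}\leq (2\pi)^{-n/2}\|\hat f\|_{L^1}$. I would then split $\hat f(\xi) = (1+|\xi|^2)^{-s/2}\cdot(1+|\xi|^2)^{s/2}\hat f(\xi)$ and apply Cauchy--Schwarz to obtain
\begin{equation*}
\|\hat f\|_{L^1}\leq \Big(\int_{\R^n}(1+|\xi|^2)^{-s}\,{\dd}\xi\Big)^{1/2}\|f\|_{H^s}.
\end{equation*}
Switching to polar coordinates shows that the first factor is finite precisely when $2s>n$, which is exactly the hypothesis $s>n/2$. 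This closes Part (1).

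For Part (2), I would first establish the critical Sobolev inequality $\|f\|_{L^{2^*}}\lesssim\|\nabla f\|_{L^2}$ in dimensions $n\geq 3$ with $2^*=\tfrac{2n}{n-2}$, using the Gagliardo--Nirenberg argument based on the fundamental theorem of calculus along each coordinate axis combined with a Loomis--Whitney-type product estimate. The claimed inequality then follows by Hölder interpolation between $L^2$ and $L^{2^*}$: writing $\|f\|_{L^p}\leq \|f\|_{L^2}^{\alpha}\|f\|_{L^{2^*}}^{1-\alpha}$ with $\tfrac{1}{p}=\tfrac{\alpha}{2}+\tfrac{1-\alpha}{2^*}$, a short computation gives $1-\alpha=\tfrac{n}{2}-\tfrac{n}{p}$, matching the exponent appearing in the statement.

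The main obstacle is the two-dimensional case, because $2^*$ degenerates to $\infty$ but $H^1(\R^2)\not\hookrightarrow L^\infty$, so the interpolation argument above does not carry over verbatim. For $n=2$ I would instead use the Ladyzhenskaya-type approach: apply the one-dimensional identity $|f(x_1,x_2)|^2\leq 2\int_{-\infty}^{x_1}|f\,\p_{x_1}f|\,{\dd}y$ in each variable and combine the two resulting estimates via Hölder to get $\|f\|_{L^4}^4\lesssim\|f\|_{L^2}^2\|\nabla f\|_{L^2}^2$; then iterate/interpolate with $\|f\|_{L^2}$ to recover the full range $2<p<\infty$ with the exponents $\tfrac{n}{2}-\tfrac{n}{p}$ as stated. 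Together these two steps cover all dimensions $n=2,3$ relevant to the paper.
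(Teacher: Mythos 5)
Your proposal is correct. The paper offers no proof of this lemma at all --- it is stated as a classical result and attributed to Nirenberg's paper \cite{N} --- so there is nothing to compare against; your sketches (Fourier transform plus Cauchy--Schwarz against the weight $(1+|\xi|^2)^{s/2}$ for part (1), and Gagliardo--Nirenberg--Sobolev plus H\"older interpolation for $n\geq 3$ with the Ladyzhenskaya-type iteration for $n=2$ in part (2)) are the standard textbook arguments, and the exponent bookkeeping you indicate ($1-\alpha=\tfrac{n}{2}-\tfrac{n}{p}$) checks out. The only points worth tightening if you were to write this out in full are (i) in part (1) you should note that for integer $s$ the norm $\sum_{|m|\le s}\|\partial^m f\|_{L^2}^2$ is equivalent to the Fourier-side norm $\|(1+|\xi|^2)^{s/2}\hat f\|_{L^2}^2$, and (ii) the $n=2$ iteration as sketched only produces even integer exponents directly, so you need a final H\"older interpolation between consecutive even values of $p$ (which preserves the stated exponents) to cover all $2<p<\infty$.
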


\begin{lem}\label{Lmm-Cummutators}
	Let the integer $s > \tfrac{n}{2} + 1$ with $n = 2, 3$. For any functions ${\rm u} , g \in H^s$ and multi-index $m \in \mathbb{N}^n$ with $1 \leq |m| \leq s$, we have
	\begin{equation*}
	  \begin{aligned}
	    \big\| [\p^m , {\rm u} \cdot \na] g \big\|_{L^2} \lesssim \| {\rm u} \|_{\dot{H}^s} \| g \|_{\dot{H}^s} \,.
	  \end{aligned}
	\end{equation*}
\end{lem}

\begin{proof}
	It follows from Lemma \ref{lem2} that
	\begin{equation*}
	  \begin{aligned}
	   \big\| [\p^m , {\rm u} \cdot \na] g \big\|_{L^2} \lesssim\, &\sum_{0 \neq m' \leq m} \| \p^{m'} {\rm u} \cdot \na \p^{m-m'} g \|_{L^2} \\
	    \lesssim\, & \| \p^m {\rm u} \|_{L^2} \| \na g \|_{L^\infty} + \sum_{|m'|=1, m' \leq m} \| \p^{m'} {\rm u} \|_{L^\infty} \| \na \p^{m-m'} g \|_{L^2} \\
	    & + \sum_{|m'| \geq 2, m' < m} \| \p^{m'} {\rm u} \|_{L^4} \| \na \p^{m-m'} g \|_{L^4} \\
	    \lesssim\, & \| \p^m {\rm u} \|_{L^2} \| \na g \|_{H^{s-1}} + \sum_{|m'|=1, m' \leq m} \| \p^{m'} {\rm u} \|_{H^{s-1}} \| \na \p^{m-m'} g \|_{L^2} \\
	    & + \sum_{|m'| \geq 2, m' < m} \| \p^{m'} {\rm u} \|_{H^1} \| \na \p^{m-m'} g \|_{H^1} \\
	    \lesssim\, & \| {\rm u} \|_{\dot{H}^s} \| g \|_{\dot{H}^s} \,.
	  \end{aligned}
	\end{equation*}
\end{proof}

\begin{lem}[\cite{JLT}]\label{Lm-rho}
	Assume that $\underline{\rho} \leq \rho_0 (x) \leq \overline{\rho}$ for some constants $\underline{\rho} \,, \overline{\rho} > 0 $ and $\rho (t,x) \geq 0$ satisfies the continuity equation
	\begin{equation*}
	  \left\{
	    \begin{array}{l}
	      \partial_t \rho + \dv (\rho {\rm u}) = 0 \,, \\[2mm]
	      \rho (0, x) = \rho_0 (x)
	    \end{array}
	  \right.
	\end{equation*}
	for some given velocity ${\rm u }(t,x)$. Then there holds
	\begin{equation*}
	  \underline{\rho} \exp \Big ( - \int_{0}^{t} \| \operatorname{div} {\rm u} \|_{L^{\infty}}(\tau) \mathrm{d} \tau \Big ) \leq \rho(t, x) \leq \overline{\rho} \exp \Big ( \int_{0}^{t} \| \operatorname{div} {\rm u} \|_{L^{\infty}}(\tau) \mathrm{d} \tau \Big )
	\end{equation*}
	for all $(t,x) \in \R^+ \times \R^n$.
\end{lem}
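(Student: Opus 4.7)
The plan is to proceed via the method of characteristics, reducing the PDE for $\rho$ along flow lines to a linear ODE whose explicit solution gives the desired bounds. First, I would rewrite the continuity equation in non-conservative form as
\begin{equation*}
\partial_t \rho + {\rm u} \cdot \nabla \rho = - \rho \, \dv {\rm u} \,,
\end{equation*}
so that the left-hand side is the material derivative along the flow generated by ${\rm u}$.

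Next, for each fixed $x_0 \in \R^n$ I would introduce the characteristic curve $X(t, x_0)$ defined by the ODE $\tfrac{\dd}{\dd t} X(t, x_0) = {\rm u}(t, X(t, x_0))$ with $X(0, x_0) = x_0$. Provided ${\rm u}$ is sufficiently regular (which is guaranteed in our setting by the Sobolev embedding in Lemma \ref{lem2} applied to the regularity class in which we work), this flow is well-defined, smooth, and a diffeomorphism of $\R^n$ onto itself for each $t \geq 0$. Composing $\rho$ with the flow yields
\begin{equation*}
\frac{\dd}{\dd t} \rho(t, X(t, x_0)) = - \rho(t, X(t, x_0)) \, \dv {\rm u}(t, X(t, x_0)) \,,
\end{equation*}
which is a linear scalar ODE in the unknown $t \mapsto \rho(t, X(t, x_0))$. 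Integrating explicitly gives
\begin{equation*}
\rho(t, X(t, x_0)) = \rho_0(x_0) \exp \Bigl( - \int_0^t \dv {\rm u}(\tau, X(\tau, x_0)) \, \dd \tau \Bigr) \,.
\end{equation*}

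Finally, I would estimate the exponent pointwise by $\bigl| \int_0^t \dv {\rm u}(\tau, X(\tau, x_0)) \, \dd \tau \bigr| \leq \int_0^t \| \dv {\rm u} \|_{L^\infty}(\tau) \, \dd \tau$, and combine with the hypothesis $\underline{\rho} \leq \rho_0(x_0) \leq \overline{\rho}$ to obtain the two-sided bound on $\rho(t, X(t,x_0))$. Since the flow map $x_0 \mapsto X(t, x_0)$ is a bijection on $\R^n$ for each $t$, every point $x \in \R^n$ can be written as $x = X(t, x_0)$ for some $x_0$, so the pointwise bound transfers to $\rho(t, x)$ for all $(t,x) \in \R^+ \times \R^n$, yielding the stated inequality.

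The only non-routine point is verifying that the flow is globally defined and a diffeomorphism so that the Lagrangian estimate yields a Eulerian pointwise bound; this is the \emph{main technical obstacle}, but it is standard once ${\rm u}$ enjoys at least $L^1_t \mathrm{Lip}_x$ regularity, which is comfortably ensured by $\nabla {\rm u} \in L^1(0,T; L^\infty)$ inherited from the Sobolev framework in which the lemma will be applied.
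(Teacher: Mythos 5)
Your proposal is correct and is essentially the standard characteristics argument that the cited reference \cite{JLT} uses for this lemma (the present paper itself invokes the same "characteristic method" explicitly when bounding $\rho^{\eps,k+1}$ in the proof of Proposition \ref{Prop-Local}). The explicit Lagrangian formula, the pointwise bound on the exponent by $\int_0^t \|\dv {\rm u}\|_{L^\infty}\,\dd\tau$, and the surjectivity of the flow map to return to Eulerian coordinates are exactly the intended steps, and your remark on the required $L^1_t \mathrm{Lip}_x$ regularity of ${\rm u}$ is adequately covered by the Sobolev framework $s>\tfrac{n}{2}+1$ in which the lemma is applied.
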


\begin{lem}[\cite{JLT}]\label{lem3}
	Let $f(\rho)$ be a smooth function, then for any multi-index $m \in  \mathbb{N}^n$ with $ |m| \geq 1$ and $\rho \in H^{|m|} \cap L^\infty$, we have
	\begin{equation*}
	  \begin{aligned}
	    \p^m f(\rho) = \sum_{i=1}^{|m|} f^{(i)} (\rho) \sum_{\substack{\sum_{\ell = 1}^i m_\ell = m \\ |m_\ell| \geq 1}} \prod_{1 \leq \ell \leq i} \p^{m_\ell} \rho \,.
	  \end{aligned}
	\end{equation*}
	Furthermore, if $s > \tfrac{n}{2}$, $f (\rho) = \tfrac{1}{\rho}$ and $\rho$ satisfies the assumption stated in Lemma \ref{Lm-rho}, we then have
	\begin{equation*}
	  \begin{aligned}
	    \| \p^m f (\rho) \|_{L^2} \leq\, & \sum_{i=1}^{|m|} \tfrac{i !}{\underline{\rho}^{i+1}} \exp \Big ( (i+1) \int_0^t \| \dv {\rm u} \|_{L^\infty} (\tau) {\dd} \tau \Big ) \sum_{\substack{\sum_{\ell = 1}^i m_\ell = m \\ |m_\ell| \geq 1}} \Big\| \prod_{1 \leq \ell \leq i} \p^{m_\ell} \rho \Big\|_{L^2} \\
	    \leq\, & C (\underline{\rho}, m) \exp \Big ( (i+1) \int_0^t \| \dv {\rm u} \|_{L^\infty} (\tau) {\dd} \tau \Big ) P_{|m|} ( \| \rho \|_{\dot{H}^s} ) \\
	    \leq\, & \ka_1 \mathcal{Q}_{\ka_2} ({\rm u}) P_{|m|} ( \| \rho \|_{\dot{H}^s} )
	  \end{aligned}
	\end{equation*}
	for all $ t \in \mathbb{R}^{+} $ with some generic constants $\ka_1, \ka_2 > 0$, where $P_k(y)=\sum_{i=1}^{k} y^i$ and
	\begin{equation}\label{Q(u)}
	  \begin{aligned}
	    \mathcal{Q}_{\ka_2} ({\rm u}) = \exp \Big( \ka_2 \int_0^t \| \dv {\rm u} \|_{L^\infty} (\tau) {\dd} \tau \Big) \,.
	  \end{aligned}
	\end{equation}
\end{lem}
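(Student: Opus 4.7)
The plan is to prove the two parts of the lemma in sequence: first the Faà di Bruno--type combinatorial identity for $\partial^m f(\rho)$, then its quantitative consequence when $f(\rho)=1/\rho$.

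For the combinatorial identity I would proceed by induction on $|m|$. The base case $|m|=1$ is just the chain rule $\partial_k f(\rho)=f'(\rho)\,\partial_k\rho$. For the inductive step, assuming the formula holds for some multi-index $m$, I apply $\partial_k$ in a coordinate direction $e_k$ to both sides and distribute by the Leibniz rule. The derivative $\partial_k$ either lands on $f^{(i)}(\rho)$, producing $f^{(i+1)}(\rho)\,\partial_k\rho\cdot\prod\partial^{m_\ell}\rho$ (which corresponds to appending a new length-one block $\partial^{e_k}\rho$ and incrementing $i\mapsto i+1$), or it lands on one of the factors $\partial^{m_\ell}\rho$, replacing it by $\partial^{m_\ell+e_k}\rho$. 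A direct bookkeeping check against the formula indexed by $m+e_k$ shows that these two contributions assemble exactly into the expected right-hand side.

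For the quantitative bound with $f(\rho)=1/\rho$, I would first compute $f^{(i)}(\rho)=(-1)^i\,i!/\rho^{i+1}$ explicitly. Lemma \ref{Lm-rho} provides the pointwise lower bound $\rho(t,x)\geq\underline{\rho}\exp\bigl(-\int_0^t\|\dv\,{\rm u}\|_{L^\infty}\dd\tau\bigr)$, which I raise to the power $i+1$ to obtain $\|f^{(i)}(\rho)\|_{L^\infty}\leq\tfrac{i!}{\underline{\rho}^{i+1}}\exp\bigl((i+1)\int_0^t\|\dv\,{\rm u}\|_{L^\infty}\dd\tau\bigr)$; this is the source of the exponential factor that will be absorbed into $\mathcal{Q}_{\kappa_2}({\rm u})$. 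The remaining task is to estimate $\bigl\|\prod_{\ell=1}^{i}\partial^{m_\ell}\rho\bigr\|_{L^2}$ when $\sum|m_\ell|=|m|\leq s$ and $|m_\ell|\geq 1$ for each $\ell$. I would apply Hölder, placing the block with the largest $|m_\ell|$ in $L^2$ (bounded by $\|\rho\|_{\dot H^{|m_\ell|}}\leq\|\rho\|_{\dot H^s}$) and the remaining $i-1$ blocks in $L^\infty$ via the Sobolev embedding in Lemma \ref{lem2} using $s>n/2$. This yields $\bigl\|\prod\partial^{m_\ell}\rho\bigr\|_{L^2}\lesssim\|\rho\|_{\dot H^s}^{\,i}$, and summing over $i=1,\dots,|m|$ collapses into $P_{|m|}(\|\rho\|_{\dot H^s})$.

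The main subtle point is the $L^\infty$ control of blocks $\partial^{m_\ell}\rho$ when $|m_\ell|$ is too large for the naive Sobolev embedding $H^{s-|m_\ell|}\hookrightarrow L^\infty$ to apply. For such borderline partitions I would instead iterate the Moser-type tame estimate of Lemma \ref{lem1}, viewing the product as nested bilinear products and distributing derivatives so that in each pair one factor carries the full $\dot H^s$-norm while the other stays in $L^\infty$ via $H^s\hookrightarrow L^\infty$. Collecting the combinatorial factorials and the partition multiplicities into the generic constant $\kappa_1$, and the exponential time factor into $\mathcal{Q}_{\kappa_2}({\rm u})$, then produces the asserted bound.
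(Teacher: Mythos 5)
The paper does not prove this lemma itself (it is quoted from \cite{JLT}), and your argument is the standard one, so on the whole the proposal is sound; two points, however, need repair. First, the induction for the combinatorial identity does not close with unit coefficients as you claim: already for $m=e_1+e_2$ one has $\p^m f(\rho)=f''(\rho)\,\p_1\rho\,\p_2\rho+f'(\rho)\,\p^m\rho$, whereas the ordered decompositions $(e_1,e_2)$ and $(e_2,e_1)$ would produce the cross term twice (and for $m=3e_1$ in one dimension the true coefficient of $f''\rho'\rho''$ is $3$, not $1$ or $2$). The identity holds only up to bounded combinatorial coefficients $C_{m_1,\dots,m_i}$ in the sense of Fa\`a di Bruno; your "assemble exactly" should be weakened accordingly, which is harmless since the lemma is only ever invoked as an upper bound and these constants are absorbed into $\kappa_1$. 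Second, for the product estimate, observe that $\dot H^s$ in this paper collects \emph{all} derivatives of orders $1$ through $s$, so every block satisfies $\|\p^{m_\ell}\rho\|_{L^2}\le\|\rho\|_{\dot H^s}$, and $\|\p^{m_\ell}\rho\|_{L^\infty}\lesssim\|\rho\|_{\dot H^s}$ whenever $s-|m_\ell|>\tfrac{n}{2}$; for the borderline partitions you flag, the clean fix for $n=2,3$ is not a nested Moser iteration but simply H\"older with two factors in $L^4$ together with $H^1\hookrightarrow L^4$ (exactly as the paper does in its estimate of $\mathcal{J}_2$), or alternatively the Gagliardo--Nirenberg inequality $\|\p^{m_\ell}\rho\|_{L^{2|m|/|m_\ell|}}\lesssim\|\rho\|_{L^\infty}^{1-|m_\ell|/|m|}\|\rho\|_{\dot H^{|m|}}^{|m_\ell|/|m|}$, whose $L^\infty$ factors are then absorbed into the exponential via Lemma \ref{Lm-rho}. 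With either repair, H\"older over the blocks and summation over $i$ yield $P_{|m|}(\|\rho\|_{\dot H^s})$ as you describe, and the exponential prefactor comes, as you say, from bounding $\|f^{(i)}(\rho)\|_{L^\infty}=\|i!\,\rho^{-(i+1)}\|_{L^\infty}$ by the lower bound of Lemma \ref{Lm-rho}.
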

We remark that, in the rest of the paper, the notation $\mathcal{Q}_{\ka_2} ({\rm u})$ will be used frequently with different constant lower index $\ka_2$.

\subsection{A priori estimates for the system \eqref{csys}}

In this subsection, we intend to get the uniform estimates independent of $\e$ for the system \eqref{csys}. Observing that the two singular terms $\frac{1}{\e}\dv\u$ and $\frac{1}{\e}\frac{1}{\ro}p'(\ro)\na\pe$ occur in the continuity equation $\eqref{csys}_1$ and the momentum equation $\eqref{csys}_2$, respectively, and the two singular terms own some kind of skew-symmetry, our motivation is canceling the singularity to give the uniform estimates. For fixed $\e$, Jiang, Luo and Tang \cite{JLT} obtained the uniform estimates about time for the system equivalent to \eqref{csys}. Inspired by the method developed in \cite{JLT}, it is also effective for us to get the uniform estimates on both time and the Mach number, which not only helps us cancel the singularity but also enhance the regularity.

We first introduce the following energy functional
\begin{equation}\label{Es-local}
  \begin{aligned}
    \mathcal{E}_{s} (\phi^\epsilon , \u, \d) = \| \phi^\epsilon \|^2_{H^s_{p' (\rho^\epsilon)}} + \| \u \|^2_{H^s_{\rho^\epsilon}} + \| \cd \|^2_{H^s_{\rho^\epsilon}} + \ka \| \nabla \d \|^2_{H^s} \,,
  \end{aligned}
\end{equation}
and the energy dissipative rate functional
\begin{equation}\label{Ds-local}
  \begin{aligned}
    \mathcal{D}_{s} ( \u, \d) =\, & \tfrac{1}{2} \mu_4 \| \nabla \u \|^2_{H^s} + \big( \tfrac{1}{2} \mu_4 + \xi \big) \| \dv \u \|^2_{H^s} + \mu_1  \sum_{|m| \leq s} \| \d{}^\top ( \partial^m \A^\epsilon ) \d \|^2_{L^2} \\
    & - \la_1 \sum_{|m| \leq s} \left\| \partial^m \cd + ( \partial^m {\B}^\epsilon ) \d + \tfrac{\la_2}{\la_1} ( \partial^m \A^\epsilon ) \d \right\|^2_{L^2} \\
    & + \Big( \mu_5 + \mu_6 + \tfrac{\la_2^2}{\la_1} \Big) \sum_{|m| \leq s} \| ( \partial^m \A^\epsilon ) \d \|^2_{L^2} \,.
  \end{aligned}
\end{equation}

\begin{lem}\label{Lmm-Apriori-Est}
	Let the integer $s > \tfrac{n}{2} + 1$ $(n=2,3)$ and $0 < \eps \leq 1$. Assume that $(\phi^\eps, \u, \d)$ is a sufficiently smooth solution to \eqref{csys}. Then there is a constant $c > 0$, independent of $\eps$, such that
	\begin{equation}\label{Local-Apriori-Est}
	  \begin{aligned}
	    & \tfrac{1}{2} \tfrac{\dd}{\dd t} \mathcal{E}_s (\phi^\eps , \u , \d) + \mathcal{D}_s (\u, \d) \\
	    & \qquad \lesssim \mathcal{Q}_c (\u) \Big( 1 + \mathcal{E}_s^{s+1} (\phi^\eps, \u, \d) \Big) \mathcal{E}_s^\frac{1}{2} ( \phi^\eps , \u, \d ) \mathcal{A}_s (\phi^\eps, \u , \d) \,,
	  \end{aligned}
	\end{equation}
	where the energy functional $ \mathcal{A}_s (\phi^\eps, \u , \d) $ is defined as
	\begin{equation}\label{As-local}
	  \begin{aligned}
	    \mathcal{A}_s (\phi^\eps, \u , \d) = \,& \big( \| \phi^\eps \|_{\dot{H}^s} + \| \u \|_{\dot{H}^s} + \| \na \d \|_{\dot{H}^s} + \| \cd \|_{H^s} \big) \\
	    & \times \big( \| \na \u \|_{H^s} + \| \phi^\eps \|_{\dot{H}^s} + \| \na \d \|_{\dot{H}^s} + \| \cd \|_{H^s} \big)
	  \end{aligned}
	\end{equation}
	and $\mathcal{Q}_c (\u)$ is given in \eqref{Q(u)}.
\end{lem}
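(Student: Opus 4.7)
The plan is to run a weighted $H^s$-energy estimate on the system \eqref{csys}, applying $\partial^m$ for each multi-index $|m|\le s$ and testing against carefully chosen $\rho^\eps$-dependent test functions, so that the singular contributions of order $\tfrac{1}{\eps}$ cancel out via the skew-symmetric structure between the continuity and momentum equations. Concretely, I would test $\partial^m$ of the $\phi^\eps$-equation against $p'(\rho^\eps)\partial^m\phi^\eps$, test $\partial^m$ of the $\u$-equation against $\rho^\eps\partial^m\u$, and test $\partial^m$ of the $\cd$-equation against $\rho^\eps\partial^m\cd$. After summing over $|m|\le s$, the left-hand side will assemble into $\tfrac{1}{2}\tfrac{d}{dt}\mathcal E_s(\phi^\eps,\u,\d)+\mathcal D_s(\u,\d)$ up to controlled remainders, provided the $\ka\|\nabla\d\|_{H^s}^2$ part of $\mathcal E_s$ is produced by integration by parts on the $\ka\Delta\d$ term in the $\cd$-equation and rewriting via $\partial_t\d=\cd-\u\cdot\nabla\d$.

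The central identity is the singular cancellation announced in the introduction: for each $|m|\le s$,
\begin{equation*}
\tfrac{1}{\eps}\lt\pa\dv\u,\,p'(\ro)\pa\pe\rt+\tfrac{1}{\eps}\lt\pa\bigl(\tfrac{1}{\ro}p'(\ro)\na\pe\bigr),\,\ro\pa\u\rt
=-\lt p''(\ro)\na\pe\,\pa\pe,\,\pa\u\rt+\tfrac{1}{\eps}\lt[\pa,\tfrac{1}{\ro}p'(\ro)\na]\pe,\,\ro\pa\u\rt,
\end{equation*}
where the commutator term is non-singular because $\rho^\eps=1+\eps\phi^\eps$ makes $[\pa,\tfrac{1}{\ro}p'(\ro)\na]$ gain an $\eps$-factor (vanishing when $m=0$, expandable by Lemma \ref{lem3}). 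I would derive this by integrating by parts in the first integral, commuting $\pa$ past $\tfrac{1}{\ro}p'(\ro)\na$ in the second, and using the continuity equation $\partial_t\phi^\eps+\u\cdot\nabla\phi^\eps+\phi^\eps\dv\u+\tfrac{1}{\eps}\dv\u=0$ to control the remaining $\partial_t$-derivatives.

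Next I would handle the director-field contributions. The $\cd$-equation produces $\ka\lt\pa\Delta\d,\pa\cd\rt$, which after integration by parts and using $\pa\p_t\d=\pa\cd-\pa(\u\cdot\nabla\d)$ yields $-\tfrac{\ka}{2}\tfrac{d}{dt}\|\pa\nabla\d\|_{L^2}^2$ plus commutator remainders involving $\u$ and $\nabla\d$. The Leslie-coefficient block built from $\Gamma^\eps\d$, $\la_1(\cd+\B^\eps\d)$, $\la_2\A^\eps\d$, together with the $\dv\Sigma_3^\eps$ tested against $\pa\u$, combines (after integration by parts to move the $\A^\eps,\B^\eps$ onto $\d$) into the dissipative quadratic form defining $\mathcal D_s$; the sign conditions \eqref{Coefficients} guarantee positivity. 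The other symmetric stress $\dv\Sigma_1^\eps$ gives the viscous Korn-type terms $\tfrac{1}{2}\mu_4\|\nabla\u\|_{H^s}^2+(\tfrac{1}{2}\mu_4+\xi)\|\dv\u\|_{H^s}^2$, while $\dv\Sigma_2^\eps=\tfrac{1}{2}\ka\nabla|\nabla\d|^2-\ka\dv(\nabla\d\odot\nabla\d)$ combines with the $\ka\Delta\d\cdot\nabla\d$ arising from the $\cd$-equation structure to cancel the top-order contribution that would otherwise be uncontrolled.

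The remaining task is to estimate all commutator remainders by $\mathcal Q_c(\u)(1+\mathcal E_s^{s+1})\mathcal E_s^{1/2}\mathcal A_s$. Here I would use Lemma \ref{lem1} (Moser), Lemma \ref{lem2} (Sobolev), Lemma \ref{Lmm-Cummutators} for transport commutators $[\pa,\u\cdot\nabla]$ acting on $\phi^\eps,\u,\d,\cd$, and Lemma \ref{lem3} to bound Sobolev norms of $f(\rho^\eps)$ such as $\tfrac{1}{\ro},p'(\ro),p''(\ro)$ — this produces the polynomial factor $\mathcal E_s^{s+1}$ and the $\mathcal Q_c(\u)$ prefactor through $\int_0^t\|\dv\u\|_{L^\infty}d\tau$. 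Each remainder is factored as (something controlled by $\mathcal E_s^{1/2}$) $\times$ (something controlled by $\mathcal A_s$), the latter being the factor that carries either $\|\nabla\u\|_{H^s}$ (to be absorbed by the viscous dissipation in the small-data regime) or $\|\phi^\eps\|_{\dot H^s}+\|\nabla\d\|_{\dot H^s}+\|\cd\|_{H^s}$. The main obstacle I anticipate is bookkeeping the large number of Leslie-type cross-terms coming from $\pa\Sigma_3^\eps$ and $\pa(\Gamma^\eps\d)$, and ensuring that each top-order occurrence either lands inside $\mathcal D_s$ via the sign conditions \eqref{Coefficients} or is written as an $L^2\times L^\infty$ pairing where the $L^\infty$ factor is $\|\d\|_{L^\infty}+\|\nabla\d\|_{L^\infty}$, estimable by $\|\na\d\|_{H^s}$ via Sobolev embedding for $s>n/2+1$.
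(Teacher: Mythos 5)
Your proposal follows essentially the same route as the paper: the same weighted test functions $p'(\rho^\eps)\p^m\phi^\eps$, $\rho^\eps\p^m\u$, $\rho^\eps\p^m\cd$, the identical singular cancellation identity with the non-singular commutator $[\p^m,\tfrac{1}{\rho^\eps}p'(\rho^\eps)\na]$, the cancellation of $\lt\dv\p^m\Sigma_2^\eps,\p^m\u\rt$ against $\ka\lt\Delta\p^m\d,\p^m(\u\cdot\na\d)\rt$, the assembly of the Leslie block into $\mathcal{D}_s$ under \eqref{Coefficients}, and Moser/Sobolev/Lemma~\ref{lem3} bounds for the remainders. The only cosmetic difference is that the paper separates the $|m|=0$ case from $1\le|m|\le s$, which your uniform treatment subsumes since the relevant commutators vanish at $m=0$.
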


\begin{proof}
We divide it into two steps.

\vspace{0.2cm}
\emph{Step 1. Basic energy estimates.} We first multiply the $\phi^\epsilon$-equation of \eqref{csys} by $p' (\rho^\epsilon) \phi^\epsilon$ and integrate by parts over $x \in \R^n$. We then have
\begin{equation}\label{L2-rho}
  \begin{aligned}
    & \tfrac{1}{2} \tfrac{\dd}{\dd t} \| \phi^\epsilon \|^2_{L^2_{p' (\rho^\epsilon)}} + \tfrac{1}{\epsilon} \langle \dv \u ,  p' ( \ro ) \phi^\epsilon \rangle \\
    &\qquad = \tfrac{1}{2} \langle \partial_{t} p' (\rho^\epsilon) ,  |\phi^\epsilon|^2 \rangle - \langle \u \cdot \nabla \phi^\epsilon , p' (\ro) \phi^\epsilon \rangle - \langle p' (\rho^\epsilon) |\phi^\epsilon|^2 , \dv \u \rangle \,.
  \end{aligned}
\end{equation}
Following the same calculations in Section 2 of \cite{JLT}, taking the $L^2$-inner products of $\u$-equation and $\d$-equation in \eqref{csys} with $\rho^\epsilon \u$ and $\rho^\epsilon \cd$, respectively, it says that
\begin{equation}\label{L2-u}
  \begin{aligned}
    & \tfrac{1}{2} \tfrac{\dd}{\dd t} \| \u \|^2_{L^2_{\rho^\epsilon}} + \tfrac{1}{2} \mu_4 \| \nabla \u \|^2_{L^2} + \left( \tfrac{1}{2} \mu_4 + \xi \right) \| \dv \u \|^2_{L^2} \\
    & \qquad + \tfrac{1}{\epsilon} \lt \tfrac{1}{\rho^\epsilon} p' (\rho^\epsilon) \nabla \phi^\epsilon , \rho^\epsilon \u \rt = \lt \dv (\Sigma_2^\epsilon + \Sigma_3^\epsilon) , \u \rt \,,
  \end{aligned}
\end{equation}
and
\begin{equation}\label{L2-d}
  \begin{aligned}
    & \tfrac{1}{2} \tfrac{\dd}{\dd t} \left( \| \cd \|^2_{L^2_{\rho^\epsilon}} + \kappa \| \nabla \d \|^2_{L^2} \right) - \kappa \lt \Delta \d , \u \cdot \nabla \d \rt \\
    & \qquad = \la_1 \| \cd \|^2_{L^2} + \la_1 \lt \B^\epsilon \d , \cd \rt + \la_2 \lt \A^\epsilon \d , \cd \rt \,.
  \end{aligned}
\end{equation}
For the singular terms (with coefficient $\frac{1}{\epsilon}$) in \eqref{L2-rho} and \eqref{L2-u}, we employ the following cancellation to eliminate the singularity:
\begin{equation}\label{cs0}
  \begin{aligned}
    & \tfrac{1}{\e} \lt \dv \u,  p'(\ro) \pe \rt + \tfrac{1}{\e} \lt \tfrac{1}{\ro} p'(\ro) \na \pe , \ro \u \rt \\
    =\, & \tfrac{1}{\e} \lt \dv ( p' (\ro) \pe \u ) , 1 \rt - \tfrac{1}{\e} \lt \na ( p'(\ro) \pe ) , \u \rt + \tfrac{1}{\e} \lt \tfrac{1}{\ro} p'(\ro) \na \pe , \ro \u \rt \\
    =\, & - \tfrac{1}{\e} \lt \na p'(\ro) \pe , \u \rt = - \lt p''(\ro) \pe \na \pe , \u \rt \,.
  \end{aligned}
\end{equation}
Furthermore, from the analogous calculations in Section 2 of \cite{JLT}, one easily derives the following cancellation relations:
\begin{equation}\label{cs1}
  \begin{aligned}
    \ka \lt \Delta \d , \u \cdot \nabla \d \rt + \lt \dv \Sigma_2^\epsilon , \u \rt = 0 \,,
  \end{aligned}
\end{equation}
and
\begin{equation}\label{cs2}
  \begin{aligned}
    & \lt \dv \Sigma_3^\epsilon , \u \rt + \la_1 \| \cd \|^2_{L^2} + \la_1 \lt \cd , \B^\epsilon \d \rt + \la_2 \lt \cd , \A^\epsilon \d \rt \\
    = & - \mu_1 \| \d{}^\top \A^\epsilon \d \|^2_{L^2} + \la_{1} \| \cd \|^2_{L^2} + \la_1 \| \B^\epsilon \d \|^2_{L^2} - (\mu_5 + \mu_6 ) \| \A^\epsilon \d \|^2_{L^2} \\
    & + 2 \la_1 \lt \cd , \B^\epsilon \d \rt + 2 \la_2 \lt \cd + \B^\epsilon \d , \A^\epsilon \d \rt \\
    =  & - \mu_1 \| \d{}^\top \A^\epsilon \d \|^2_{L^2} + \la_{1} \left\| \cd + \B^\epsilon \d + \tfrac{\la_2}{\la_1} \A^\epsilon \d \right\|^2_{L^2} - \left( \mu_5 + \mu_6 + \tfrac{\la_2^2}{\la_1} \right) \| \A^\epsilon \d \|^2_{L^2} \,.
  \end{aligned}
\end{equation}
It is thereby derived from adding the equalities \eqref{L2-rho}-\eqref{L2-u} to \eqref{L2-d} and combining the cancellations \eqref{cs0}-\eqref{cs1}-\eqref{cs2} that
\begin{equation}\label{be0}
  \begin{aligned}
    & \tfrac{1}{2} \tfrac{\dd}{\dd t} \big( \| \phi^\epsilon \|^2_{L^2_{p' (\rho^\epsilon)}} + \| \u \|^2_{L^2_{\rho^\epsilon}} + \| \cd \|^2_{L^2_{\rho^\epsilon}} + \ka \| \nabla \d \|^2_{L^2} \big) + \tfrac{1}{2} \mu_4 \| \nabla \u \|^2_{L^2} + \mu_1 \| \d{}^\top \A^\epsilon \d \|^2_{L^2} \\
    & + ( \tfrac{1}{2} \mu_4 + \xi ) \| \dv \u \|^2_{L^2} - \la_1 \| \cd + \B^\epsilon \d + \tfrac{\la_2}{\la_1} \A^\epsilon \d \|^2_{L^2} + \big( \mu_5 + \mu_6 + \tfrac{\la_2^2}{\la_1} \big) \| \A^\epsilon \d \|^2_{L^2} \\
    = \,& \underbrace{ \tfrac{1}{2} \lt \partial_t p' (\rho^\epsilon) , |\phi^\epsilon|^2 \rt }_{A_1} \ \underbrace{ - \lt \u \cdot \nabla \phi^\epsilon , p' (\rho^\epsilon) \phi^\epsilon \rt - \lt p' (\rho^\epsilon) |\phi^\epsilon|^2 , \dv \u \rt + \lt p'' (\rho^\epsilon) \phi^\epsilon \nabla \phi^\epsilon , \u \rt }_{A_2} \,.
  \end{aligned}
\end{equation}

Recalling that $\partial_t \rho^\epsilon + \dv (\rho^\epsilon \u) = 0$, we have
\begin{equation}\label{p-rho-evl}
  \begin{aligned}
    \p_t p' (\rho^\epsilon) + \epsilon p'' (\rho^\epsilon) \u \cdot \nabla \phi^\epsilon + p'' (\rho^\epsilon) \rho^\epsilon \dv \u = 0 \,.
  \end{aligned}
\end{equation}
Then the quantity $A_1$ can be calculated as
\begin{equation}\label{A1}
  \begin{aligned}
    A_1 =\, & - \tfrac{1}{2} \epsilon \lt p'' (\rho^\epsilon) \u \cdot \nabla \phi^\epsilon , |\phi^\epsilon|^2 \rt - \tfrac{1}{2} \lt p'' (\rho^\epsilon) \rho^\epsilon \dv \u , |\phi^\epsilon|^2 \rt \\
    \lesssim \,& \epsilon \| p'' (\rho^\epsilon) \|_{L^\infty} \| \nabla \phi^\epsilon \|_{L^2} \| \u \|_{L^6} \| \phi^\epsilon \|_{L^6} + \| p'' (\rho^\epsilon) \|_{L^\infty} \| \rho^\epsilon \|_{L^\infty} \| \dv \u \|_{L^2} \| \phi^\epsilon \|^2_{L^4} \\
    \lesssim \, & \exp \Big( c_0' \int_0^t \| \dv \u \|_{L^\infty} (\tau) {\dd} \tau \Big) \| \nabla \phi^\epsilon \|_{L^2} \big( \| \nabla \u \|_{L^2} + \| \nabla \phi^\epsilon \|_{L^2} \big) \big( \| \u \|_{H^1} + \| \phi^\epsilon \|_{H^1} \big)
  \end{aligned}
\end{equation}
for some constant $c_0' > 0$, independent of the Mach number $0 < \epsilon \leq 1$, where the last two inequalities are implied by the H\"older's inequality, Lemma \ref{lem2}, the expression $p'' (\rho^\epsilon) = \tilde{a} \gamma (\gamma - 1) \big( \rho^\epsilon \big)^{\gamma - 2}$ and Lemma \ref{Lm-rho}. Moreover, similar arguments in estimating $A_1$ reduce to
\begin{equation}\label{A2}
  \begin{aligned}
    A_2 \lesssim\, & \| p' (\rho^\epsilon) \|_{L^\infty} \| \phi^\epsilon \|_{L^4} \big( \| \nabla \phi^\epsilon \|_{L^2} \| \u \|_{L^4} + \| \dv \u \|_{L^2} \| \phi^\epsilon \|_{L^4} \big) \\
    & + \| p'' (\rho^\epsilon) \|_{L^\infty} \| \nabla \phi^\epsilon \|_{L^2} \| \u \|_{L^4} \| \phi^\epsilon \|_{L^4} \\
    \lesssim \,& \exp \Big( c_0'' \int_0^t \| \dv \u \|_{L^\infty} (\tau) {\dd} \tau \Big) \| \nabla \phi^\epsilon \|_{L^2} \big( \| \nabla \u \|_{L^2} + \| \nabla \phi^\epsilon \|_{L^2} \big) \big( \| \u \|_{H^1} + \| \phi^\epsilon \|_{H^1} \big)
  \end{aligned}
\end{equation}
for some constant $c_0'' > 0$, independent of $\epsilon > 0$. Let $c_0 = \max \{ c_0' , c_0'' \} > 0$. From substituting the bounds \eqref{A1} and \eqref{A2} into the equality \eqref{be0}, we thereby derive that
\begin{equation}\label{be}
  \begin{aligned}
    & \tfrac{1}{2} \tfrac{\dd}{\dd t} \big( \| \phi^\epsilon \|^2_{L^2_{p' (\rho^\epsilon)}} + \| \u \|^2_{L^2_{\rho^\epsilon}} + \| \cd \|^2_{L^2_{\rho^\epsilon}} + \ka \| \nabla \d \|^2_{L^2} \big) + \tfrac{1}{2} \mu_4 \| \nabla \u \|^2_{L^2} + \mu_1 \| \d{}^\top \A^\epsilon \d \|^2_{L^2} \\
    & + ( \tfrac{1}{2} \mu_4 + \xi ) \| \dv \u \|^2_{L^2} - \la_1 \| \cd + {\B}^\epsilon \d + \tfrac{\la_2}{\la_1} {\A}^\epsilon \d \|^2_{L^2} + \big( \mu_5 + \mu_6 + \tfrac{\la_2^2}{\la_1} \big) \| \A^\epsilon \d \|^2_{L^2} \\
    \lesssim\, & \mathcal{Q}_{c_0} (\u) \| \nabla \phi^\epsilon \|_{L^2} \big( \| \nabla \u \|_{L^2} + \| \nabla \phi^\epsilon \|_{L^2} \big) \big( \| \u \|_{H^1} + \| \phi^\epsilon \|_{H^1} \big)
  \end{aligned}
\end{equation}
for all $0 < \epsilon \leq 1$.

\vspace{0.2cm}
\emph{Step 2. Higher order energy estimates for $\pe$, $\u$ and $\cd$.} For all $1\leq |m| \leq s$, applying the differential operator $\pa$ to the first $\phi^\epsilon$-equation of \eqref{csys} and taking $L^2$-inner product via multiplying by $p' (\rho^\epsilon) \p^m \phi^\epsilon$, we have
\begin{equation}\label{HD-phi}
  \begin{aligned}
    & \tfrac{1}{2} \tfrac{\dd}{\dd t} \| \partial^m \phi^\epsilon \|^2_{L^2_{p' (\rho^\epsilon)}} + \tfrac{1}{\epsilon} \lt \p^m \dv \u , p' (\rho^\epsilon) \p^m \phi^\epsilon \rt \\
   &\quad = \tfrac{1}{2} \lt \p_t p' (\rho^\epsilon) , |\p^m \phi^\epsilon|^2 \rt - \lt \p^m (\u \cdot \nabla \phi^\epsilon) , p' (\rho^\epsilon) \p^m \phi^\epsilon \rt - \lt \p^m (\phi^\epsilon \dv \u) , p' (\rho^\epsilon) \p^m \phi^\epsilon \rt \,.
  \end{aligned}
\end{equation}
We then employ the operator $\p^m$ to the $\u$-equation of \eqref{csys} and multiply by $\rho^\epsilon \p^m \u$, and integrate the resulting equation over $\R^n$ with respect to $x$. We thereby derive
\begin{equation}\label{HD-u}
  \begin{aligned}
    & \tfrac{1}{2} \tfrac{\dd}{\dd t} \| \p^m \u \|^2_{L^2_{\rho^\epsilon}} + \tfrac{1}{2} \mu_4 \| \nabla \p^m \u \|^2_{L^2} + \big( \tfrac{1}{2} \mu_4 + \xi \big) \| \dv \p^m \u \|^2_{L^2} \\
    & + \tfrac{1}{\epsilon} \lt \p^m \big( \tfrac{1}{\rho^\epsilon} p' (\rho^\epsilon) \na \phi^\epsilon \big) , \rho^\epsilon \p^m \u \rt + \lt [\p^m , \u \cdot \na ] \u , \rho^\epsilon \p^m \u \rt \\
    = \,& \lt \dv \p^m (\Sigma_{2}^\epsilon + \Sigma_{3}^\epsilon) , \p^m \u \rt + \big\lt \big[ \p^m , \tfrac{1}{\rho^\epsilon} \dv \big] ( \Sigma_{1}^\epsilon + \Sigma_{2}^\epsilon + \Sigma_{3}^\epsilon ) , \rho^\epsilon \p^m \u \big\rt \,.
  \end{aligned}
\end{equation}
We finally derive from applying the operator $\p^m$ to the $\d$-equation of \eqref{csys} and taking $L^2$-inner product with $ \rho^\epsilon \p^m \cd$ that
\begin{equation}\label{HD-d}
  \begin{aligned}
    & \tfrac{1}{2} \tfrac{\dd}{\dd t} \big( \| \p^m \cd \|^2_{L^2_{\rho^\epsilon}} + \ka \| \nabla \p^m \d \|^2_{L^2} \big) - \ka \lt \Delta \p^m \d , \p^m (\u \cdot \na \d ) \rt \\
    =\, & \la_1 \lt \p^m (\cd + {\B}^\epsilon \d) , \p^m \cd \rt + \la_2 \lt \p^m ({\A}^\epsilon \d) , \p^m \cd \rt \\
    & + \ka \big\lt [ \p^m , \tfrac{1}{\rho^\epsilon} \Delta ] \d , \rho^\epsilon \p^m \cd \rt + \lt \p^m ( \Gamma^\epsilon \d ) , \p^m \cd \rt \\
    & - \lt [ \p^m , \u \cdot \na ] \cd , \rho^\epsilon \p^m \cd \rt + \lt [\p^m , \tfrac{1}{\rho^\epsilon}] ( \Gamma^\epsilon \d ) , \rho^\epsilon \p^m \cd \rt \\
    & + \la_1 \lt [\p^m , \tfrac{1}{\rho^\epsilon}] ( \cd + {\B}^\epsilon \d ) , \rho^\epsilon \p^m \cd \rt + \la_2 \lt [\p^m , \tfrac{1}{\rho^\epsilon}] ( {\A}^\epsilon \d ) , \rho^\epsilon \p^m \cd \rt \,.
  \end{aligned}
\end{equation}
In order to deal with the singular terms (with coefficient $\tfrac{1}{\epsilon}$) in \eqref{HD-phi} and \eqref{HD-u}, we calculate the following cancellation
\begin{equation}\label{csm}
  \begin{aligned}
    & \tfrac{1}{\epsilon} \lt \p^m \dv \u , p' (\rho^\epsilon) \p^m \phi^\epsilon \rt + \tfrac{1}{\epsilon} \lt \p^m \big( \tfrac{1}{\rho^\epsilon} p' (\rho^\epsilon) \na \phi^\epsilon \big) , \rho^\epsilon \p^m \u \rt \\
    =\, & \tfrac{1}{\epsilon} \lt \dv \big( p' (\rho^\epsilon) \p^m \phi^\epsilon \p^m \u \big) , 1 \rt - \tfrac{1}{\epsilon} \lt \na \big( p' (\rho^\epsilon) \p^m \phi^\epsilon \big) , \p^m \u \rt \\
    & + \tfrac{1}{\epsilon} \lt p' (\rho^\epsilon) \p^m \na \phi^\epsilon , \p^m \u \rt + \tfrac{1}{\epsilon} \lt \big[ \p^m , \tfrac{1}{\rho^\epsilon} p' (\rho^\epsilon) \na \big] \phi^\epsilon , \rho^\epsilon \p^m \u \rt \\
    =\, & - \tfrac{1}{\epsilon} \lt \na p' (\rho^\epsilon) \p^m \phi^\epsilon , \p^m \u \rt + \tfrac{1}{\epsilon} \lt \big[ \p^m , \tfrac{1}{\rho^\epsilon} p' (\rho^\epsilon) \na \big] \phi^\epsilon , \rho^\epsilon \p^m \u \rt \\
    =\, & - \lt p'' (\rho^\epsilon) \na \phi^\epsilon \p^m \phi^\epsilon , \p^m \u \rt + \tfrac{1}{\epsilon} \lt \big[ \p^m , \tfrac{1}{\rho^\epsilon} p' (\rho^\epsilon) \na \big] \phi^\epsilon , \rho^\epsilon \p^m \u \rt \,,
  \end{aligned}
\end{equation}
where the last formally singular term in \eqref{csm} is not a real singularity since the commutator operator $\big[ \p^m , \tfrac{1}{\rho^\epsilon} p' (\rho^\epsilon) \na \big]$ will generate a small coefficient $\epsilon$.

By adding the equalities \eqref{HD-phi}-\eqref{HD-u} to the relation \eqref{HD-d} and combining the cancellation \eqref{csm}, one has
\begin{equation}\label{he}
  \begin{aligned}
    & \tfrac{1}{2} \tfrac{\dd}{\dd t} \big( \| \p^m \phi^\epsilon \|^2_{L^2_{p' (\rho^\epsilon)}} + \| \p^m \u \|^2_{L^2_{\rho^\epsilon}} + \| \p^m \cd \|^2_{L^2_{\rho^\epsilon}} + \ka \| \nabla \p^m \d \|^2_{L^2} \big) \\
    &\qquad  + \tfrac{1}{2} \mu_4 \| \na \p^m \u \|^2_{L^2} + \big( \tfrac{1}{2} \mu_4 + \xi \big) \| \dv \p^m \u \|^2_{L^2} = \mathcal{I} + \mathcal{J} \,,
  \end{aligned}
\end{equation}
where
\begin{equation}\label{I-mc}
  \begin{aligned}
    \mathcal{I} =\, & \underbrace{ \tfrac{1}{2} \lt \p_t p' (\rho^\epsilon) , |\p^m \phi^\epsilon|^2 \rt }_{\mathcal{I}_1} + \underbrace{ \lt p'' (\rho^\epsilon) \na \phi^\epsilon \p^m \phi^\epsilon , \p^m \u \rt }_{\mathcal{I}_2} \\
    & + \underbrace{ \lt \dv \p^m \Sigma_{2}^\epsilon , \p^m \u \rt + \ka \lt \Delta \p^m \d , \p^m (\u \cdot \nabla \d) \rt }_{\mathcal{I}_3} + \underbrace{ \lt \p^m ( \Gamma^\epsilon \d ) , \p^m \cd \rt }_{\mathcal{I}_4} \\
    & + \underbrace{ \la_1 \lt \p^m ( \cd + {\B}^\epsilon \d ) , \p^m \cd \rt + \la_2 \lt \p^m ({\A}^\epsilon \d) , \p^m \cd \rt + \lt \dv \p^m \Sigma_3^\epsilon , \p^m \u \rt }_{\mathcal{I}_5} \\
    & \underbrace{ - \lt \p^m (\u \cdot \na \phi^\epsilon) , p' (\rho^\epsilon) \p^m \phi^\epsilon \rt - \lt \p^m ( \phi^\epsilon \dv \u ) , p' (\rho^\epsilon) \p^m \phi^\epsilon \rt }_{\mathcal{I}_6} \,,
  \end{aligned}
\end{equation}
and
  \begin{align}\label{J-mc}
    \nonumber \mathcal{J} =\, & \underbrace{ - \big\lt [\p^m , \u \cdot \na] \u , \rho^\epsilon \p^m \u \big\rt }_{\mathcal{J}_1} \ \underbrace{ - \tfrac{1}{\epsilon} \big\lt \big[ \p^m , \tfrac{1}{\rho^\epsilon} p' (\rho^\epsilon) \nabla \big] \phi^\epsilon , \rho^\epsilon \p^m \u \big\rt }_{\mathcal{J}_2} \\
    \nonumber & \underbrace{ - \big\lt [\p^m , \u \cdot \na] \cd , \rho^\epsilon \p^m \cd \big\rt }_{\mathcal{J}_3} + \underbrace{ \big\lt [ \p^m , \tfrac{1}{\rho^\epsilon} \dv ] \Sigma_{1}^\epsilon , \rho^\epsilon \p^m \u \big\rt }_{\mathcal{J}_4} \\
    \nonumber & + \underbrace{ \big\lt [ \p^m , \tfrac{1}{\rho^\epsilon} \dv ] \Sigma_2^\epsilon , \rho^\epsilon \p^m \u \big\rt }_{\mathcal{J}_5} + \underbrace{ \big\lt [ \p^m , \tfrac{1}{\rho^\epsilon} \dv ] \Sigma_3^\epsilon , \rho^\epsilon \p^m \u \big\rt }_{\mathcal{J}_6} \\
    \nonumber & + \underbrace{ \big\lt [ \p^m , \tfrac{1}{\rho^\epsilon} ] ( \Gamma^\epsilon \d ) , \rho^\epsilon \p^m \cd \big\rt }_{\mathcal{J}_7} + \underbrace{ \la_2 \big\lt [ \p^m , \tfrac{1}{\rho^\epsilon} ] ( \A^\epsilon \d ) , \rho^\epsilon \p^m \cd \big\rt }_{\mathcal{J}_8} \\
    & + \underbrace{ \la_1 \big\lt [ \p^m , \tfrac{1}{\rho^\epsilon} ] ( \cd + {\B}^\epsilon \d ) , \rho^\epsilon \p^m \cd \big\rt }_{\mathcal{J}_9} + \underbrace{ \ka \big\lt [ \p^m , \tfrac{1}{\rho^\epsilon} \Delta ] \d , \rho^\epsilon \p^m \cd \rt }_{\mathcal{J}_{10}} \,.
  \end{align}

It remains to estimate the terms $\mathcal{I}$ and $\mathcal{J}$, respectively. For the quantity $\mathcal{I}_1$, by employing the evolution \eqref{p-rho-evl}, we obtain
\begin{equation}\label{I1-mc}
  \begin{aligned}
    \mathcal{I}_1 =\, & - \tfrac{1}{2} \epsilon \lt p'' (\rho^\epsilon) \u \cdot \na \phi^\epsilon , |\p^m \phi^\epsilon|^2 \rt - \tfrac{1}{2} \lt p'' (\rho^\epsilon) \rho^\epsilon \dv \u , |\p^m \phi^\epsilon|^2 \rt \\
    \lesssim \,& \| p'' (\rho^\epsilon) \|_{L^\infty} ( 1 + \| \rho^\epsilon \|_{L^\infty} ) \big( \| \u \|_{L^\infty} \| \nabla \phi^\epsilon \|_{L^\infty} + \| \dv \u \|_{L^\infty} \big) \| \p^m \phi^\epsilon \|^2_{L^2} \\
    \lesssim \,& \mathcal{Q}_{c_1} (\u) \big( \| \u \|_{H^s} \| \phi^\epsilon \|_{\dot{H}^s} + \| \u \|_{\dot{H}^s}
     \big) \| \phi^\epsilon \|^2_{\dot{H}^s}
  \end{aligned}
\end{equation}
for all $0 < \epsilon \leq 1$, where the constant $c_1 > 0$ is independent of $\epsilon$ and Lemmas \ref{lem2} and \ref{Lm-rho} are also utilized. Here $s > \tfrac{n}{2} + 1$ is required. Similarly in \eqref{I1-mc}, one has
\begin{equation}\label{I2-mc}
  \begin{aligned}
    \mathcal{I}_2 \lesssim \mathcal{Q}_{c_2} (\u) \| \u \|_{\dot{H}^s} \| \phi^\epsilon \|^2_{\dot{H}}
  \end{aligned}
\end{equation}
for all $\epsilon \in (0,1]$ and some $c_2 > 0$ independent of $\epsilon$.

Moreover, it follows from the estimates $I_3$ (see (3.16) in Page 138 of \cite{JLT}) and $I_4$ (see (3.17) in Page 140 of \cite{JLT}) that
\begin{equation}\label{I3-mc}
  \begin{aligned}
    \mathcal{I}_3 \lesssim \| \na \d \|^2_{\dot{H}^s} \| \nabla \u \|_{H^s} \,,
  \end{aligned}
\end{equation}
and
\begin{equation}\label{I4-mc}
  \begin{aligned}
    \mathcal{I}_4 \lesssim\, & \big( \| \rho^\eps \|_{L^\infty} + \| \rho^\eps \|_{\dot{H}^s} \big) \| \na \d \|_{H^s} \| \cd \|^3_{H^s} + \| \na \d \|^2_{H^s} \| \na \d \|_{\dot{H}^s} \| \cd \|_{H^s} \\
    & + |\la_2| \sum_{1 \leq j \leq 3} \| \na \d \|^j_{H^s} \| \cd \|_{H^s} \| \na \u \|_{H^s} \\
    \lesssim \,& \big( 1 + \mathcal{Q}_{c_4} (\u) \big) \big( \| \cd \|^3_{H^s} + \| \phi^\eps \|^3_{\dot{H}^s} + \| \na \d \|^3_{H^s} + \| \na \d \|_{H^s} \big) \\
    & \times \big( \| \na \u \|_{H^s} + \| \na \d \|_{\dot{H}^s} + \| \cd \|_{H^s} \big) \| \cd \|_{H^s} \,,
  \end{aligned}
\end{equation}
respectively, for all $0 < \epsilon \leq 1$, where the constant $c_4 > 0$ is independent of $\eps$,  and Lemma \ref{Lm-rho} and the expansion $\rho^\eps = 1 + \eps \phi^\eps$ are also used. Furthermore, from the same arguments in Page 140-142 of \cite{JLT}, we deduce that
\begin{equation}\label{I5-mc}
  \begin{aligned}
    &\mathcal{I}_5 - \la_1 \big\| \p^m \cd + (\p^m {\B}^\eps) \d + \tfrac{\la_2}{\la_1} (\p^m {\A}^\eps) \d \big\|^2_{L^2} \\
    & + \mu_1 \big\| \d{}^\top (\p^m \A^\eps) \d \big\|^2_{L^2} + \big( \mu_5 + \mu_6 + \tfrac{\la_2^2}{\la_1} \big) \| (\p^m \A^\eps) \d \|^2_{L^2} \\
    \lesssim\,&\sum_{1 \leq j \leq 4} \| \na \d \|^j_{H^s} \big( \| \u \|_{\dot{H}^s} + \| \cd \|_{H^s} \big) \| \na \u \|_{H^s} + \| \u \|_{\dot{H}^s} \| \na \d \|_{H^s} \| \cd \|_{H^s} \\
 \lesssim\,    & \big( \| \na \d \|_{H^s} + \| \na \d \|^4_{H^s} \big) \big( \| \u \|_{\dot{H}^s} + \| \cd \|_{H^s} \big) \| \na \u \|_{H^s} \,.
  \end{aligned}
\end{equation}
For the term $\mathcal{I}_6$, we estimate it as
  \begin{align}\label{I6-mc}
    \nonumber \mathcal{I}_6 = \,& \tfrac{1}{2} \lt p' (\rho^\eps) \dv \u + \eps p '' (\rho^\eps) \u \cdot \na \phi^\eps , |\p^m \phi^\eps|^2 \rt - \lt [ \p^m , \u \cdot \na ] \phi^\eps , p' (\rho^\eps) \p^m \phi^\eps \rt \\
    \nonumber \,& - \lt \p^m (\phi^\eps \dv \u) , p' (\rho^\eps) \p^m \phi^\eps \rt \\
    \nonumber \lesssim\, & \big( \| p' (\rho^\eps) \|_{L^\infty} \| \dv \u \|_{L^\infty} + \eps \| p'' (\rho^\eps) \|_{L^\infty} \| \u \|_{L^\infty} \| \na \phi^\eps \|_{L^\infty} \big) \| \p^m \phi^\eps \|^2_{L^2} \\
    \nonumber & + \| p' (\rho^\eps) \|_{L^\infty} \| \p^m \phi^\eps \|_{L^2} \big( \| [\p^m , \u \cdot \na] \phi^\eps \|_{L^2} + \| \p^m (\phi^\eps \dv \u) \|_{L^2} \big) \\
    \nonumber \lesssim\, & \mathcal{Q}_{c_6} (\u) \Big\{ \big( \| \u \|_{\dot{H}^s} + \| \u \|_{H^s} \| \phi^\eps \|_{\dot{H}^s} \big) \| \phi^\eps \|^2_{\dot{H}^s} \\
    \nonumber & \qquad\qquad + \| \phi^\eps \|_{\dot{H}^s} \big( \| \u \|_{\dot{H}^s} \| \phi^\eps \|_{\dot{H}^s} + \| \na \u \|_{H^s} \| \phi^\eps \|_{H^s} \big) \Big\} \\
    \lesssim\, & \mathcal{Q}_{c_6} (\u) \big( \| \phi^\eps \|_{H^s} + \| \u \|_{H^s} + \| \u \|_{H^s} \| \phi^\eps \|_{H^s} \big) \big( \| \phi^\eps \|^2_{\dot{H}^s} + \| \na \u \|_{H^s} \| \phi^\eps \|_{\dot{H}^s} \big)
  \end{align}
for all $0 < \eps \leq 1$ and some positive constant $c_6$, independent of $\eps$, where we have made use of Lemmas \ref{lem1}, \ref{Lm-rho} and \ref{Lmm-Cummutators}.

By plugging the bounds \eqref{I1-mc}, \eqref{I2-mc}, \eqref{I3-mc}, \eqref{I4-mc}, \eqref{I5-mc} and \eqref{I6-mc} into \eqref{I-mc}, one therefore deduces that
\begin{equation}\label{I-mc-bnd}
  \begin{aligned}
    & \mathcal{I} - \la_1 \big\| \p^m \cd + (\p^m {\B}^\eps) \d + \tfrac{\la_2}{\la_1} (\p^m {\A}^\eps) \d \big\|^2_{L^2} \\
    & + \mu_1 \big\| \d{}^\top (\p^m \A^\eps) \d \big\|^2_{L^2} + \big( \mu_5 + \mu_6 + \tfrac{\la_2^2}{\la_1} \big) \| (\p^m \A^\eps) \d \|^2_{L^2} \\
    \lesssim\, &\mathcal{Q}_{c'_{\mathcal{I}}} (\u) \big( 1 + \| \phi^\eps \|^2_{H^s} + \| \u \|^2_{H^s} + \| \na \d \|^2_{H^s} + \| \cd \|^2_{H^s} \big) \\
    &   \times \big( \| \phi^\eps \|_{H^s} + \| \u \|_{H^s} + \| \na \d \|_{H^s} + \| \cd \|_{H^s} \big) \\
    &   \times \big( \| \phi^\eps \|_{\dot{H}^s} + \| \u \|_{\dot{H}^s} + \| \na \d \|_{\dot{H}^s} + \| \cd \|_{H^s} \big) \\
    & \qquad \times \big( \| \na \u \|_{H^s} + \| \phi^\eps \|_{\dot{H}^s} + \| \na \d \|_{\dot{H}^s} + \| \cd \|_{H^s} \big)
  \end{aligned}
\end{equation}
for all $m \in \mathbb{N}^n$ with $1 \leq |m| \leq s$ and for any $0 < \eps \leq 1$, where $c'_{\mathcal{I}} = \max \{ c_1, c_2, c_4, c_6 \} > 0$.

Next, we estimate the term $\mathcal{J}$ in \eqref{J-mc}. One notices that the term $\mathcal{J}_2$ has a singularity, namely, with the coefficient $\frac{1}{\eps}$ in the front. Fortunately, the derivative of $f (\rho^\eps) = \frac{p'(\ro)}{\ro}$ with $\ro=1+\eps \pe$ can product an $\eps$ to balance the singularity. Specifically, the term $\mathcal{J}_2$ can be controlled as
\begin{equation}\label{J2-mc}
  \begin{aligned}
    \mathcal{J}_2 = \,& - \tfrac{1}{\eps} \sum_{0 \neq m' \leq m} C_m^{m'} \big\lt \p^{m'} f (\rho^\eps) \na \p^{m-m'} \phi^\eps , \rho^\eps \p^m \u \big\rt \\
    \lesssim\, & \tfrac{1}{\eps} \| \p^m f (\rho^\eps) \|_{L^2} \| \na \phi^\eps \|_{L^\infty} \| \rho^\eps \|_{L^\infty} \| \p^m \u \|_{L^2} \\
    & + \tfrac{1}{\eps} \sum_{m' \leq m, |m'| = 1} \| f' (\rho^\eps) \|_{L^\infty} \| \p^{m'} \rho^\eps \|_{L^\infty} \| \na \p^{m-m'} \phi^\eps \|_{L^2} \| \rho^\eps \|_{L^\infty} \| \p^m \u \|_{L^2} \\
    & + \tfrac{1}{\eps} \sum_{m' < m, |m'| \geq 2} \| \p^{m'} f (\rho^\eps) \|_{L^4} \| \na \p^{m-m'} \phi^\eps \|_{L^4} \| \rho^\eps \|_{L^\infty} \| \p^m \u \|_{L^2} \\
    \lesssim \,& \tfrac{1}{\eps} \sum_{i=1}^{|m|} \| f^{(i)} (\rho^\eps) \|_{L^\infty} \| \rho^\eps \|_{L^\infty} P_{|m|} ( \| \rho^\eps \|_{\dot{H}^s} ) \| \phi^\eps \|_{\dot{H}^s} \| \u \|_{\dot{H}^s} \\
    & + \tfrac{1}{\eps} \sum_{m' \leq m , |m'| = 1} \| f' (\rho^\eps) \|_{L^\infty} \| \rho^\eps \|_{L^\infty} \| \p^{m'} \rho^\eps \|_{H^{s-1}} \| \na \p^{m-m'} \phi^\eps \|_{L^2} \| \u \|_{\dot{H}^s} \\
    & + \tfrac{1}{\eps} \sum_{m' < m, |m'| \geq 2} \| \p^{m'} f (\rho^\eps) \|_{H^1} \| \na \p^{m-m'} \phi^\eps \|_{H^1} \| \rho^\eps \|_{L^\infty} \| \u \|_{\dot{H}^s} \\
    \lesssim\, & \tfrac{1}{\eps} \mathcal{Q}_{c_2^\star} (\u) P_s ( \| \rho^\eps \|_{\dot{H}^s} ) \| \phi^\eps \|_{\dot{H}^s} \| \u \|_{\dot{H}^s} \\
    \lesssim \,& \mathcal{Q}_{c_2^\star} (\u) \big( \| \phi^\eps \|_{\dot{H}^s} + \| \phi^\eps \|^s_{\dot{H}^s} \big) \| \phi^\eps \|_{\dot{H}^s} \| \u \|_{\dot{H}^s}
  \end{aligned}
\end{equation}
for all $0 < \eps \leq 1$ and some constant $c_2^\star > 0$, independent of $\eps$, where we have made use of Lemma \ref{lem3}, Lemma \ref{lem1} and the expansion $\rho^\eps = 1 + \eps \phi^\eps$.

As for the other remained terms, $ ( \mathcal{J}_1, \mathcal{J}_3, \mathcal{J}_4, \mathcal{J}_5, \mathcal{J}_6, \mathcal{J}_7, \mathcal{J}_8 + \mathcal{J}_9, \mathcal{J}_{10} )$ are the same as the terms $(J_1, J_3, J_8, J_9, J_{10}, J_{11}, J_{12} + J_{13}, J_4)$, respectively, defined in Page 137 of \cite{JLT}. It easily follows from the same technical arguments in Page 143-146 of \cite{JLT} that
  \begin{align}\label{J-J2-mc}
    \nonumber \mathcal{J} - \mathcal{J}_2 \lesssim\, & \mathcal{Q}_{c_3^\star} (\u) \big( 1 + \| \rho^\eps \|^{2s+2}_{\dot{H}^s} + \| \u \|^{2s+2}_{\dot{H}^s} + \| \na \d \|^{2s + 2}_{H^s} + \| \cd \|^{2s+2}_{H^s} \big) \\
    \nonumber & \times \big( \| \rho^\eps \|_{\dot{H}^s} + \| \u \|_{\dot{H}^s} + \| \na \d \|_{H^s} + \| \cd \|_{H^s} \big) \\
    \nonumber & \times \big( \| \na \u \|_{H^s} \| \rho^\eps \|_{\dot{H}^s} + \| \cd \|^2_{H^s} + \| \na \d \|^2_{\dot{H}^s} \big) \\
    \nonumber \lesssim\, & \mathcal{Q}_{c_3^\star} (\u) \big( 1 + \| \phi^\eps \|^{2s+2}_{H^s} + \| \u \|^{2s+2}_{H^s} + \| \na \d \|^{2s + 2}_{H^s} + \| \cd \|^{2s+2}_{H^s} \big) \\
    \nonumber & \times \big( \| \phi^\eps \|_{H^s} + \| \u \|_{H^s} + \| \na \d \|_{H^s} + \| \cd \|_{H^s} \big) \\
    & \times \big( \| \na \u \|_{H^s} \| \phi^\eps \|_{\dot{H}^s} + \| \cd \|^2_{H^s} + \| \na \d \|^2_{\dot{H}^s} \big)
  \end{align}
for all $0 < \eps \leq 1$ and some positive constant $c_3^\star > 0$, which is independent of $\eps$. Here we have made use of the bounds $\| \rho^\eps \|_{\dot{H}^s} = \eps \| \phi^\eps \|_{\dot{H}^s} \leq \| \phi^\eps \|_{\dot{H}^s} \leq \| \phi^\eps \|_{H^s}$ for $\eps \in (0,1]$, which are derived from the expansion $\rho^\eps = 1 + \eps \phi^\eps$. Consequently, by substituting the inequalities \eqref{J2-mc} and \eqref{J-J2-mc} into \eqref{J-mc}, we deduce that
\begin{equation}\label{J-mc-bnd}
  \begin{aligned}
    \mathcal{J} \lesssim\, & \mathcal{Q}_{c_{\mathcal{J}}'} (\u) \big( 1 + \| \phi^\eps \|^{2s+2}_{H^s} + \| \u \|^{2s+2}_{H^s} + \| \na \d \|^{2s + 2}_{H^s} + \| \cd \|^{2s+2}_{H^s} \big) \\
    & \times \big( \| \phi^\eps \|_{H^s} + \| \u \|_{H^s} + \| \na \d \|_{H^s} + \| \cd \|_{H^s} \big) \\
    & \times \big( \| \na \u \|_{H^s} \| \phi^\eps \|_{\dot{H}^s} + \| \cd \|^2_{H^s} + \| \na \d \|^2_{\dot{H}^s} \big)
  \end{aligned}
\end{equation}
for all $\eps \in (0,1]$ and any multi-index $m \in \mathbb{N}^n$ with $1 \leq |m| \leq s$, where $c_{\mathcal{J}}' = \max \{ c_2^\star , c_3^\star \} > 0$.

From plugging the bounds \eqref{I-mc-bnd} and \eqref{J-mc-bnd} into \eqref{he}, summing up for all $1 \leq |m| \leq s$ and adding them into the inequality \eqref{be}, one immediately derives that
\begin{equation}\label{H-bnd}
  \begin{aligned}
    & \tfrac{1}{2} \tfrac{\dd}{\dd t} \big( \| \phi^\eps \|^2_{H^s_{p' (\rho^\eps)}} + \| \u \|^2_{H^s_{\rho^\eps}} + \| \cd \|^2_{H^s_{\rho^\eps}} + \ka \| \na \d \|^2_{H^s} \big) + \mu_1 \sum_{|m| \leq s} \| \d{}^\top (\p^m \A^\eps) \d \|^2_{L^2} \\
    & + \tfrac{1}{2} \mu_4 \| \na \u \|^2_{H^s} - \la_1 \sum_{|m| \leq s} \| \p^m \cd + (\p^m {\B}^\eps) \d + \tfrac{\la_2}{\la_1} (\p^m {\A}^\eps) \d \|^2_{L^2} \\
    & + (\tfrac{1}{2} \mu_4 + \xi ) \| \dv \u \|^2_{H^s} + ( \mu_5 + \mu_6 + \tfrac{\la_2^2}{\la_1} ) \sum_{|m| \leq s} \| (\p^m \A^\eps) \d \|^2_{L^2} \\
    \lesssim\, & \mathcal{Q}_{c'} (\u) \big( 1 + \| \phi^\eps \|^{2s+2}_{H^s} + \| \u \|^{2s+2}_{H^s} + \| \na \d \|^{2s + 2}_{H^s} + \| \cd \|^{2s+2}_{H^s} \big) \\
    & \times \big( \| \phi^\eps \|_{H^s} + \| \u \|_{H^s} + \| \na \d \|_{H^s} + \| \cd \|_{H^s} \big) \\
    & \times \big( \| \phi^\eps \|_{\dot{H}^s} + \| \u \|_{\dot{H}^s} + \| \na \d \|_{\dot{H}^s} + \| \cd \|_{H^s} \big) \\
    & \times \big( \| \na \u \|_{H^s} + \| \phi^\eps \|_{\dot{H}^s} + \| \na \d \|_{\dot{H}^s} + \| \cd \|_{H^s} \big)
  \end{aligned}
\end{equation}
for all $\eps \in (0,1]$, where $c' = \max \{ c_0, c'_{\mathcal{I}} , c'_{\mathcal{J}} \} > 0$. By employing Lemma \ref{lem3}, we easily obtain that
\begin{equation}\label{H-norm-relt}
  \begin{aligned}
    & \| \phi^\eps \|_{H^s} + \| \u \|_{H^s} + \| \cd \|_{H^s} \\
    \lesssim\, & \big( \| \tfrac{1}{p' (\rho^\eps)} \|^\frac{1}{2}_{L^\infty} + \| \tfrac{1}{ \rho^\eps } \|^\frac{1}{2}_{L^\infty} \big) \big( \| \phi^\eps \|_{H^s_{p'(\rho^\eps)}} + \| \u \|_{H^s_{\rho^\eps}} + \| \cd \|_{H^s_{\rho^\eps}} \big) \\
    \lesssim\, & \mathcal{Q}_{c^\star} (\u) \big( \| \phi^\eps \|_{H^s_{p'(\rho^\eps)}} + \| \u \|_{H^s_{\rho^\eps}} + \| \cd \|_{H^s_{\rho^\eps}} \big)
  \end{aligned}
\end{equation}
for some positive constant $c^\star$, independent of $\eps$. Let $c = c' + (2s+3) c^\star > 0$. Then the bounds \eqref{H-bnd} and \eqref{H-norm-relt} finish the proof.
\end{proof}

\subsection{Local well-posedness for the system \eqref{csys}} In this subsection, we will construct the local existence of the system \eqref{csys} with uniformly in $\eps$ small initial data by employing the nonlinear iteration method. More precisely, we will give the following result of local existence.

\begin{prop}[Local well-posedness]\label{Prop-Local}
	Let the integer $s > \tfrac{n}{2} + 1$ $(n=2,3)$, $0 < \eps \leq 1$, $0 < r_1 < \tfrac{1}{2}$, $r_2 > \tfrac{3}{2}$ and $\rho^\eps_0 = 1 + \eps \phi^\eps_0$. Then there exist constants $\delta_0$ and $ T \in (0,1)$, independent of $\eps$, such if
	\begin{equation}\label{IC-small-Engy-Loc}
	  \begin{aligned}
	   & |\d_0| = 1 \,, \quad \widetilde{\rm d}^\eps_0 \cdot \d_0 = 0 \,,\quad \| \phi^\eps_0 \|_{L^\infty} \leq \tfrac{1}{2} \,, \\
& \| \phi^\eps_0 \|^2_{H^s_{p' (\rho^\eps_0)}} + \| {\rm u}^\eps_0 \|^2_{H^s_{\rho^\eps_0}} + \| \widetilde{\rm d}^\eps_0 \|^2_{H^s_{\rho^\eps_0}} + \ka \| \na {\rm d}^\eps_0 \|^2_{H^s} \leq \tfrac{\delta_0}{2} \,,
	  \end{aligned}
	\end{equation}
	then the Cauchy problem \eqref{csys}-\eqref{civ} admits a unique classical solution $(\phi^\eps, \u , \d)$ satisfying
	\begin{equation*}
	  \begin{aligned}
	    & \phi^\eps \in L^\infty (0,T; H^s_{p'(\rho^\eps)}) \,, \ \u \in L^\infty (0,T; H^s_{\rho^\eps}) \cap L^2 (0,T; H^{s+1}) \,, \\
	    & \cd \in L^\infty (0,T; H^s_{\rho^\eps}) \,, \ \na \d \in L^\infty (0,T; H^s) \,, \ \rho^\eps \in L^\infty (0,T; L^\infty)
	  \end{aligned}
	\end{equation*}
	with uniform \emph{(}in $\eps \in (0,1]$\emph{)} energy bounds
	$$0 < r_1 \leq \rho^\eps (t,x) = 1 + \eps \phi^\eps (t,x) \leq r_2$$
	for all $ t \in [0,T] $ and
	\begin{equation*}
	  \begin{aligned}
	 &   \| \phi^\eps \|^2_{ L^\infty (0,T; H^s_{p'(\rho^\eps)}) } + \| \u \|^2_{ L^\infty (0,T; H^s_{\rho^\eps}) } + \| \cd \|^2_{ L^\infty (0,T; H^s_{\rho^\eps}) } \\
	 & \qquad \qquad \quad   + \ka \| \na \d \|^2_{ L^\infty (0,T; H^s) } + \tfrac{1}{2} \mu_4 \int_0^T \| \na \u \|^2_{H^s} \dd t \leq \delta_0 \,.
	  \end{aligned}
	\end{equation*}
\end{prop}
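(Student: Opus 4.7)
The plan is to construct the local solution by a linearized Picard-type iteration, using the a priori estimate in Lemma \ref{Lmm-Apriori-Est} as the driving engine for uniform-in-$\eps$ boundedness of the approximating sequence. Set the zeroth iterate $(\phi^\eps_{(0)}, {\rm u}_{(0)}, {\rm d}_{(0)}, \dot{\rm d}_{(0)})$ to be the initial data (extended by constants in time). Given the $k$-th iterate with $\rho^\eps_{(k)} = 1 + \eps \phi^\eps_{(k)}$, produce the $(k+1)$-th iterate by solving the linearized system in which the advective velocity and all variable coefficients are frozen at step $k$, while the two singular couplings $\tfrac{1}{\eps}\dv\,{\rm u}_{(k+1)}$ in the $\phi^\eps_{(k+1)}$-equation and $\tfrac{1}{\eps}\tfrac{p'(\rho^\eps_{(k)})}{\rho^\eps_{(k)}}\na\phi^\eps_{(k+1)}$ in the ${\rm u}_{(k+1)}$-equation are kept implicit at step $k+1$. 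This is the crucial design choice: it is precisely what preserves the skew-symmetric pairing \eqref{csm} at the linearized level. The resulting decoupled linearized system (linear transport for $\phi^\eps_{(k+1)}$, linear parabolic for ${\rm u}_{(k+1)}$, and linear second-order hyperbolic for ${\rm d}_{(k+1)}$ with $\Gamma^\eps_{(k)}$ playing the role of a given Lagrange multiplier) is solvable on a short time interval by standard linear theory.

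Next, I would run the energy calculation of Lemma \ref{Lmm-Apriori-Est} on the linearized system. Because the singular terms cancel exactly as in \eqref{cs0}--\eqref{csm}, an inequality of the shape
\begin{equation*}
\tfrac{d}{dt}\mathcal{E}_s(\phi^\eps_{(k+1)}, {\rm u}_{(k+1)}, {\rm d}_{(k+1)}) + \mathcal{D}_s({\rm u}_{(k+1)}, {\rm d}_{(k+1)}) \lesssim \mathcal{Q}_c({\rm u}_{(k)}) \bigl(1 + E_{(k)}^{s+1}\bigr) E_{(k)}^{1/2}\, \mathcal{A}_s^{(k,k+1)}
\end{equation*}
follows, where $E_{(k)} = \mathcal{E}_s(\phi^\eps_{(k)}, {\rm u}_{(k)}, {\rm d}_{(k)})$. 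Combined with Lemma \ref{Lm-rho} applied to the $\rho^\eps_{(k)}$-transport equation (which controls $\rho^\eps_{(k)}$ pointwise in $[r_1, r_2]$ as long as $\int_0^t \|\dv\,{\rm u}_{(k)}\|_{L^\infty}\,d\tau$ is small, which is secured through $s > n/2 + 1$ and the energy bound), a standard continuity/bootstrap argument produces $\delta_0 > 0$ and $T \in (0,1)$, both independent of $k$ and $\eps \in (0,1]$, with
\begin{equation*}
\sup_{t \in [0,T]} E_{(k)}(t) + \tfrac{\mu_4}{2}\int_0^T \|\na {\rm u}_{(k)}\|_{H^s}^2\, d\tau \leq \delta_0, \qquad r_1 \leq \rho^\eps_{(k)}(t,x) \leq r_2,
\end{equation*}
uniformly in $k$ and $\eps$. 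Subtracting two consecutive iterates and running the same energy method on the difference system at a lower regularity level (e.g.\ $L^2$ for $\phi^\eps_{(k+1)} - \phi^\eps_{(k)}$, ${\rm u}_{(k+1)} - {\rm u}_{(k)}$, $\dot{\rm d}_{(k+1)} - \dot{\rm d}_{(k)}$, and $H^1$ for ${\rm d}_{(k+1)} - {\rm d}_{(k)}$), the singular terms again cancel; after possibly shrinking $T$, the Step-2 bounds turn the remaining nonlinearities into a contraction in a suitable Banach space $X(T)$. Completeness yields a limit; the uniform bounds allow passage to the limit in every nonlinear term by Aubin--Lions-type compactness, producing the classical solution with the claimed regularity. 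The geometric constraint $|{\rm d}| = 1$ propagates because, once the Lagrange multiplier $\Gamma^\eps$ in \eqref{Ga-eps} is substituted into the $\d$-equation, the scalar quantity $|{\rm d}_{(k+1)}|^2 - 1$ satisfies a homogeneous second-order material-derivative equation with initial data vanishing by the compatibility $\widetilde{\rm d}^\eps_0 \cdot {\rm d}^\eps_0 = 0$ and $|{\rm d}^\eps_0| = 1$. Uniqueness follows from the same $L^2$-type contraction applied to two candidate solutions.

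The main obstacle is the uniform bound in the second step: the two $\tfrac{1}{\eps}$-terms cannot be estimated by any inequality that costs a positive power of $\eps$, so they must be eliminated by the exact algebraic cancellation derived in \eqref{cs0}--\eqref{csm}, and this structure has to survive after linearization---hence the implicit-in-$(k+1)$ design of the iteration above. Simultaneously, the hyperbolic ${\rm d}$-equation provides no Laplacian-type dissipation on ${\rm d}$, and must be estimated through the $\dot{\rm d}$-energy and the damping provided by $\la_1 < 0$ built into $\mathcal{D}_s$, which is why the sign assumption \eqref{Coefficients} is used from the start rather than only at the global-extension stage.
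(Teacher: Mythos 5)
Your proposal is correct and follows essentially the same route as the paper: the same iteration scheme with the two singular terms kept implicit at level $k+1$ (so the skew-symmetric cancellation survives linearization), uniform-in-$k$ and in-$\eps$ energy bounds via the a priori machinery plus pointwise control of $\rho^{\eps,k}$ through the transport equation, and convergence by a lower-order contraction/compactness argument. The only quibble is that the linearized $\phi^\eps_{(k+1)}$- and ${\rm u}_{(k+1)}$-equations are not decoupled (they remain coupled precisely through the implicit $\tfrac{1}{\eps}$-terms), but this does not affect solvability or the rest of the argument.
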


\begin{proof}
	First, we construct the approximate system of \eqref{csys} by iteration. More precisely, the iterative approximate equations are constructed as follows:
	\begin{equation}\label{Iter-Appro-Sys}
	  \left\{
	    \begin{array}{l}
	      \p_t \phi^{\eps, k+1} + {\rm u}^{\eps, k} \cdot \na \phi^{\eps, k+1} + \phi^{\eps, k+1} \dv {\rm u}^{\eps, k} + \tfrac{1}{\eps} \dv {\rm u}^{\eps, k+1} = 0 \,, \\[2mm]
	      \p_t {\rm u}^{\eps, k+1} + {\rm u}^{\eps, k} \cdot \na {\rm u}^{\eps, k+1} + \tfrac{1}{\eps} \tfrac{p' (\rho^{\eps, k})}{\rho^{\eps, k}} \na \phi^{\eps, k+1} = \tfrac{1}{\rho^{\eps, k}} \dv \big( \Sigma_1^{\eps, k+1} + \Sigma_2^{\eps, k} + \Sigma_3^{\eps, k+1} \big) \,, \\[2mm]
	      \p_t \dot{{\rm d}}^{\eps, k+1} + {\rm u}^{\eps, k} \cdot \na \dot{{\rm d}}^{\eps, k+1} = \tfrac{\ka}{\rho^{\eps, k}} \Delta {\rm d}^{\eps, k+1} + \tfrac{1}{\rho^{\eps, k}} \Gamma^{\eps, k+1} {\rm d}^{\eps, k+1} \\[2mm]
	      \qquad \qquad \,\;\quad \qquad \qquad \qquad + \tfrac{\la_1}{\rho^{\eps, k}} \big( \dot{{\rm d}}^{\eps, k+1} + {\B}^{\eps, k} {\rm d}^{\eps, k+1} \big) + \tfrac{\la_2}{\rho^{\eps, k}} {\A}^{\eps, k} {\rm d}^{\eps, k+1} \,, \\[2mm]
	      \big( \phi^{\eps, k+1} , {\rm u}^{\eps, k+1} , {\rm d}^{\eps, k+1} , \dot{{\rm d}}^{\eps, k+1} \big) |_{t=0} = \big( \phi^\eps_0 , \u_0 , \d_0 , \widetilde{\rm d}^\eps_0 \big) (x),
	    \end{array}
	  \right.
	\end{equation}
where
	\begin{equation*}
	  \begin{aligned}
	    \Sigma_1^{\eps, k+1} & : = \tfrac{1}{2} \mu_4 \big( \na {\rm u}^{\eps, k+1} + \na^\top {\rm u}^{\eps, k+1} \big) + \xi \dv {\rm u}^{\eps, k+1} \I \,, \\
	    \Sigma_2^{\eps, k} & : = \tfrac{1}{2} \ka |\na {\rm d}^{\eps, k}|^2 \I - \ka \na {\rm d}^{\eps, k} \odot \na {\rm d}^{\eps, k} \,, \\
	    \Sigma_3^{\eps, k+1} & : = \tilde{\sigma}_{\bm{\mu}} ({\rm u}^{\eps, k+1} , {\rm d}^{\eps, k}, \dot{{\rm d}}^{\eps, k}) \,,
	  \end{aligned}
	\end{equation*}
	and
	\begin{equation*}
	  \begin{aligned}
	   \, & \A^{\eps, k} = \tfrac{1}{2} \big( \na {\rm u}^{\eps, k} + \na^\top {\rm u}^{\eps, k} \big) \,,\;\; \;\; {\B}^{\eps, k} = \tfrac{1}{2} \big( \na {\rm u}^{\eps, k} - \na^\top {\rm u}^{\eps, k} \big) \,, \\
	    \, &\rho^{\eps, k} = 1 + \eps \phi^{\eps, k} \,,\;\; \;\; \dot{{\rm d}}^{\eps, k+1} = \p_t {\rm d}^{\eps, k+1} + {\rm u}^{\eps, k} \cdot \na {\rm d}^{\eps, k+1} \,, \\
 \,&\Gamma^{\eps, k+1} = \Gamma (\rho^{\eps, k}, {\rm u}^{\eps, k} , {\rm d}^{\eps, k+1}, \dot{{\rm d}}^{\eps, k+1}) = - \rho^{\eps, k} |\dot{{\rm d}}^{\eps, k+1}|^2 + \ka |\na {\rm d}^{\eps, k+1}|^2 - \la_2 { {\rm d}^{\eps,k+1} }^\top \A^{\eps,k} {\rm d}^{\eps,k+1} \,.
	  \end{aligned}
	\end{equation*}
 We start the iteration from $k = 0$ with
	\begin{equation*}
	  \big( \phi^{\eps, 0} , {\rm u}^{\eps, 0} , {\rm d}^{\eps, 0} , \dot{{\rm d}}^{\eps, 0} \big) (t,x) = \big( \phi^\eps_0 , \u_0 , \d_0 , \widetilde{\rm d}^\eps_0 \big) (x) \in \R \times \R^n \times \mathbb{S}^{n-1} \times \R^n
	\end{equation*}
	for all $(t,x) \in \R^+ \times \R^n$.
	
	It easily follows from Lemma 5.1 of \cite{JLT} that the following conclusions hold: {\em Suppose that the integer $s > \tfrac{n}{2} + 1$ and the initial data $\big( \phi^\eps_0 , \u_0 , \d_0 , \widetilde{\rm d}^\eps_0 \big) \in \R \times \R^n \times \mathbb{S}^{n-1} \times \R^n $  satisfy $ \widetilde{\rm d}^\eps_0 \cdot {\rm d}^\eps_0 = 0 $ and $\phi^\eps_0 , \u_0 , \na \d_0 , \widetilde{\rm d}^\eps_0 \in H^s$. Then, for all $k \geq 0$, there is a maximal time $T^\star_{\eps, k+1} > 0$ such that the iterative approximate system \eqref{Iter-Appro-Sys} admits a unique solution $( \phi^{\eps, k+1} , {\rm u}^{\eps, k+1} , {\rm d}^{\eps, k+1} , \dot{{\rm d}}^{\eps, k+1})$ satisfying $ | {\rm d}^{\eps, k+1} | = 1 $ and
		 $${\rm u}^{\eps, k+1} \in C(0, T^\star_{\eps, k+1}; H^s) \cap L^2 (0, T^\star_{\eps, k+1}; H^{s+1}) \textrm{ and } \phi^{\eps, k+1} , \na {\rm d}^{\eps, k+1} , \dot{{\rm d}}^{\eps, k+1} \in C(0, T^\star_{\eps, k+1} ; H^s) \,.$$}
	 We remark that $T^\star_{\eps, k+1} \leq T^\star_{\eps, k}$.
	
	 Second, we shall obtain the uniform (in $k \geq 0$) energy bound of the iterative approximate system \eqref{Iter-Appro-Sys} and the uniform (in $k \geq 0$ and $\eps \in (0,1]$) lower bound $T > 0$ of the time sequence $\{ T^\star_{\eps, k+1} \}$ given in the previous. We now introduce the following iterative approximate energy $\mathcal{E}_{s,k+1} (t)$ and energy dissipative rate $\mathcal{D}_{s,k+1} (t)$:
	 \begin{equation*}
	   \begin{aligned}
	     \mathcal{E}_{s,k+1} (t) = \| \phi^{\eps, k+1} \|^2_{H^s_{p' (\rho^{\eps, k})}} + \| {\rm u}^{\eps, k+1} \|^2_{H^s_{\rho^{\eps, k}}} + \| \dot{{\rm d}}^{\eps, k+1} \|^2_{H^s_{\rho^{\eps, k}}} + \ka \| \na {\rm d}^{\eps, k+1} \|^2_{H^s}
	   \end{aligned}
	 \end{equation*}
	 and
	   \begin{align*}
	     \mathcal{D}_{s, k+1} (t) =\, & \tfrac{1}{2} \mu_4 \| \na {\rm u}^{\eps, k+1} \|^2_{H^s} + ( \tfrac{1}{2} \mu_4 + \xi ) \| \dv {\rm u}^{\eps, k+1} \|^2_{H^s} \\
	     & - \la_1 \| \dot{{\rm d}}^{\eps, k+1} \|^2_{H^s} + \mu_1 \sum_{|m| \leq s} \| {\rm d}^{\eps, k}{}^\top ( \p^m \A^{\eps, k+1} ) {\rm d}^{\eps, k} \|^2_{L^2} \\
	     & - \la_1 \sum_{|m| \leq s} \big\| (\p^m {\B}^{\eps, k+1}) {\rm d}^{\eps, k} + \tfrac{\la_2}{\la_1} (\p^m {\A}^{\eps, k+1}) {\rm d}^{\eps, k} \big\|^2_{L^2} \\
	     & + ( \mu_5 + \mu_6 + \tfrac{\la_2^2}{\la_1} ) \sum_{|m| \leq s} \| (\p^m \A^{\eps, k+1}) {\rm d}^{\eps, k} \|^2_{L^2}.
	   \end{align*}
	 Then, following the similar arguments to that in Lemma \ref{Lmm-Apriori-Est}, we deduce that
\begin{align}\label{indc-energy-1}
    & \tfrac{1}{2} \tfrac{d}{dt} \mathcal{E}_{s, k+1} + \mathcal{D}_{s, k+1} \nonumber\\
    \lesssim\, & F( |\rho^{\eps, k}|, |\rho^{\eps, k}|^{-1}, |\rho^{\eps, k-1}|^{-1}, |\rho^{\eps, k-2}|^{-1} ) \big ( 1 + \mathcal{E}_{s,k-1}^{\frac12} \big ) \big ( 1 + \mathcal{D}_{s,k}^{\frac12} \big ) \big ( 1 + \mathcal{E}_{s,k}^{\frac{s+5}{2}} \big ) \big ( 1 + \mathcal{E}_{s,k+1}^{\frac{5}{2}} \big ) \nonumber\\
    & + F ( |\rho^{\eps, k}|, |\rho^{\eps, k}|^{-1}, |\rho^{\eps, k-1}|^{-1}, |\rho^{\eps, k-2}|^{-1} ) \big ( \mathcal{E}_{s,k}^{\frac12} + \mathcal{E}_{s,k}^{\frac{s+4}{2}} \big ) \big ( 1 + \mathcal{E}_{s,k+1}^{\frac12} \big ) \mathcal{D}_{s,k+1}^{\frac12}  
\end{align}
for all $\eps \in (0,1]$ and $t \in [0, T^\star_{\eps, k+1}]$. Here, $ F (\cdot,\cdot,\cdot,\cdot) $ is the polynomial function of $ |\rho^{\eps, k}| $, $ |\rho^{\eps, k}|^{-1} $, $ |\rho^{\eps, k-1}|^{-1} $ and $ |\rho^{\eps, k-2}|^{-1} $, which takes the one form as
\begin{equation}\label{F(t)}
\begin{aligned}
    & F( |\rho^{\eps, k}|, |\rho^{\eps, k}|^{-1}, |\rho^{\eps, k-1}|^{-1}, |\rho^{\eps, k-2}|^{-1} ) \\
     &\qquad = 1 + |\rho^{\eps, k}|^{n_1} + |\rho^{\eps, k}|^{-n_2} + |\rho^{\eps, k-1}|^{-n_3} + |\rho^{\eps, k-2}|^{-n_4} = : F (t) \, 
\end{aligned}
\end{equation}
for some positive integer $n_1$, $n_2$, $n_3$ and $n_4$.

In view of Young's inequality, the term $ \mathcal{D}_{s, k+1}^{\frac12} $ on the right-hand side of \eqref{indc-energy-1} can be controlled by its left dissipation term $\mathcal{D}_{s, k+1}$. Thus, we have
\begin{equation}\label{indc-energy-2}
	   \begin{aligned}
    \tfrac{d}{dt} \mathcal{E}_{s, k+1} (t) + \mathcal{D}_{s, k+1} (t) \leq C_\star \mathcal{K}_{s,k} (t) \big ( 1 + \mathcal{E}_{s, k+1} (t) \big )^{3}
\end{aligned}
\end{equation}
for some positive constant $C_\star$, where
\begin{equation}\label{K(t)}
\begin{aligned}
    \mathcal{K}_{s,k} (t) = ( F (t) + | F(t) |^2 ) \big ( 1 + \mathcal{E}_{s, k-1}^{\frac{1}{2}} (t) \big ) \big ( 1 + \mathcal{E}_{s, k}^{s+4} (t) \big ) \big ( 1 + \mathcal{D}_{s, k}^{\frac{1}{2}} (t) \big ) \,.
\end{aligned}
\end{equation}
Solving the ordinary differential inequality \eqref{indc-energy-2} gives
\begin{align*}
    \mathcal{E}_{s, k+1} (t) \leq \Big [ ( \mathcal{E}_{s, k+1} (0) + 1 )^{-2} - 2 C_\star \int_0^t \mathcal{K}_{s,k} (\tau) \dd \tau \Big ]^{-\frac12} - 1.
\end{align*}
Putting the above inequality into \eqref{indc-energy-2}, and integrating on $[0,t]$, we thereby derive that
\begin{equation}\label{Indc-energy}
	   \begin{aligned}
     & \mathcal{E}_{s, k+1} (t) + \int_0^t \mathcal{D}_{s, k+1} (\tau) \dd \tau \\
     \leq \,& \mathcal{E}_{s, k+1} (0) + C_\star \big ( 1 + \sup_{0 \leq \tau \leq t} \mathcal{E}_{s, k+1} (\tau) \big )^{3} \int_0^t \mathcal{K}_{s,k} (\tau) \dd \tau \\
     \leq\, & \mathcal{E}_{s, k+1} (0) + C_\star \Big [ ( \mathcal{E}_{s, k+1} (0) + 1 )^{-2} - 2 C_\star \int_0^t \mathcal{K}_{s,k} (\tau) \dd \tau \Big ]^{-\frac{3}{2}} \int_0^t \mathcal{K}_{s,k} (\tau) \dd \tau \,.
\end{aligned}
\end{equation}

Noticing $ \rho^{\eps, k+1} = 1 + \eps \phi^{\eps,k+1} $, the continuity equation in the approximate system \eqref{Iter-Appro-Sys} reads as
\begin{align*}
    \p_t \rho^{\eps, k+1} + {\rm u}^{\eps, k} \cdot \nabla \rho^{\eps, k+1} + \rho^{\eps, k+1} \dv {\rm u}^{\eps, k} = \dv ( {\rm u}^{\eps, k} - {\rm u}^{\eps, k+1} ) \,.
\end{align*}
Using the characteristic method to solve the above equation, we have
\begin{align*}
    \rho^{\eps, k+1} = \exp \Big ( - \int_0^t \dv {\rm u}^{\eps, k} \dd \tau \Big ) \bigg [ \rho^{\eps, k+1}_0 + \int_0^t \exp \Big ( - \int_0^\tau \dv {\rm u}^{\eps, k} \dd \tau \Big ) \dv ( {\rm u}^{\eps, k} - {\rm u}^{\eps, k+1} ) \dd \tau \bigg ] \,.
\end{align*}
Recalling the notation $ \mathcal{Q}_c( {\rm u} )$ defined by \eqref{Q(u)}, it infers that
\begin{align*}
    & \mathcal{Q}_{-1}( {\rm u}^{\eps, k} ) \Big [ \tfrac{1}{2} - \mathcal{Q}_{1}( {\rm u}^{\eps, k} ) \Big ( \int_0^t \| \dv ( {\rm u}^{\eps, k} - {\rm u}^{\eps, k+1} ) \|_{L^\infty} \Big) \Big ] \\
    & \qquad \leq \rho^{\eps, k+1} \leq \mathcal{Q}_{1}( {\rm u}^{\eps, k} ) \Big [ \tfrac{3}{2} + \mathcal{Q}_{1}( {\rm u}^{\eps, k} ) \Big ( \int_0^t \| \dv ( {\rm u}^{\eps, k} - {\rm u}^{\eps, k+1} ) \|_{L^\infty} \Big) \Big ] \,,
\end{align*}
where we have utilized $\rho^\eps_0(x) = 1 + \eps \phi^\eps_0 (x) \in [ \tfrac{1}{2} , \tfrac{3}{2} ]$ for all $\eps \in (0,1]$ and $x \in \R^n$, which is derived from the initial condition $\| \phi^\eps_0 \|_{L^\infty} \leq \tfrac{1}{2}$ given in \eqref{IC-small-Engy-Loc}. Then, by the Sobolev embedding $ H^{s-1} \hookrightarrow L^\infty $ $ ( s > \frac{n}{2} + 1 ) $ with the generic constant $ \widehat{C} $ and H\"{o}lder's inequality, we get
	\begin{equation}\label{Indc-rho}
	  \begin{aligned}
	    & \bigg \{ \tfrac{1}{2} - \widehat{C} t^{\frac{1}{2}} \Big [ ( \int_0^t \mathcal{D}_{s,k} \dd \tau )^{\frac{1}{2}} + ( \int_0^t \mathcal{D}_{s,k+1} \dd \tau )^{\frac{1}{2}} \Big ] \\
	    &  \times \exp \Big ( \widehat{C} t^{\frac{1}{2}} ( \int_0^t \mathcal{D}_{s,k} \dd \tau )^{\frac{1}{2}} \Big ) \bigg \} \exp \big( - \widehat{C} t^{\frac{1}{2}} ( \int_0^t \mathcal{D}_{s,k} \dd \tau )^\frac{1}{2} \big) \\
	 \leq  \, & \rho^{\eps, k+1} (t,x) \leq \bigg \{ \tfrac{3}{2} + \widehat{C} t^{\frac{1}{2}} \Big [ ( \int_0^t \mathcal{D}_{s,k} \dd \tau )^{\frac{1}{2}} + ( \int_0^t \mathcal{D}_{s,k+1} \dd \tau )^{\frac{1}{2}} \Big ] \\
	    &  \times \exp \Big ( \widehat{C} t^{\frac{1}{2}} ( \int_0^t \mathcal{D}_{s,k} \dd \tau )^{\frac{1}{2}} \Big ) \bigg \} \exp \Big( \widehat{C} t^{\frac{1}{2}} ( \int_0^t \mathcal{D}_{s,k} \dd \tau )^\frac{1}{2} \Big)
	  \end{aligned}
	\end{equation}
for all $(t,x) \in [0,T^\star_{\eps, k+1}] \times \R^n$ and $\eps \in (0,1]$.

With a similar process to \eqref{Indc-rho}, we have
\begin{equation}\label{rho-k}
\begin{aligned}
    | \rho^{\eps, k} | \leq & \exp \Big( \widehat{C} t^{\frac{1}{2}} \big ( \int_0^t \mathcal{D}_{s,k} \dd \tau \big)^{\frac{1}{2}} \Big)
    \Big[ \tfrac32 + \exp \Big( \widehat{C} t^{\frac{1}{2}} \big ( \int_0^t \mathcal{D}_{s,k} \dd \tau \big)^{\frac{1}{2}} \Big) \\
     & \quad\quad\quad\quad \times \widehat{C} t^{\frac12} \Big( ( \int_0^t \mathcal{D}_{k-1} \dd \tau )^{\frac12} + ( \int_0^t \mathcal{D}_{k} \dd \tau )^{\frac12} \Big ) \Big ]
\end{aligned}
\end{equation}
and
\begin{equation}\label{rho-i}
\begin{aligned}
    | \rho^{\eps, i} |^{-1} \leq & \exp \Big( \widehat{C} t^{\frac{1}{2}} \big ( \int_0^t \mathcal{D}_{s,k} \dd \tau \big)^{\frac{1}{2}} \Big)
    \Big[ \tfrac12 - \exp \Big( \widehat{C} t^{\frac{1}{2}} \big ( \int_0^t \mathcal{D}_{s,k} \dd \tau \big)^{\frac{1}{2}} \Big) \\
     & \quad\quad\quad\quad \times \widehat{C} t^{\frac12} \Big( ( \int_0^t \mathcal{D}_{s, k-1} \dd \tau )^{\frac12} + ( \int_0^t \mathcal{D}_{s, k} \dd \tau )^{\frac12} \Big ) \Big ]^{-1}  \,,
\end{aligned}
\end{equation}
for $ i = k, k-1, k-2 $.
	
	Now we claim that {\em for any fixed $r_1 \in (0, \tfrac{1}{2})$ and $r_2 \in (\tfrac{3}{2} , + \infty)$, there exist $\delta_0, T \in (0,1)$, independent of $\eps \in (0,1]$ and integer $k \geq 0$, such that if the initial data \eqref{civ} satisfies \eqref{IC-small-Engy-Loc} and
	\begin{equation}\label{Induction-Assumption}
	  \begin{aligned}
	    \mathcal{E}_{s,i} (t) + \int_0^t \mathcal{D}_{s,i} (\tau) \dd \tau \leq \delta_0
	  \end{aligned}
	\end{equation}
	for all integer $i \leq k$, $t \in [0,T]$ and $x \in \R^n$, then
	\begin{equation}\label{Claim-induction}
	  \begin{aligned}
	     \mathcal{E}_{s,k+1} (t) + \int_0^t \mathcal{D}_{s,k+1} (\tau) \dd \tau \leq \delta_0
	  \end{aligned}
	\end{equation}
	and
	\begin{equation}\label{Claim-induction-rho}
	  \begin{aligned}
	    r_1 \leq \rho^{\eps, k+1} (t,x) \leq r_2
	  \end{aligned}
	\end{equation}
    for all $(t,x) \in [0,T] \times \R^n$.
}

    Once the claim \eqref{Claim-induction} holds, the induction principle tells us that we obtain a uniform (in $k \geq 0$) energy bounds
    \begin{equation*}
      \begin{aligned}
       \mathcal{E}_{s,k} (t) + \int_0^t \mathcal{D}_{s,k} (t) \dd t \leq \delta_0 \,, \ \ \ r_1 \leq \rho^{\eps, k} (t,x) \leq r_2 \,
      \end{aligned}
    \end{equation*}
    for all $ t \in [0,T] $ and $ x \in \mathbb{R}^n $. Then, the standard compactness arguments can finish the proof of Proposition \ref{Prop-Local}.

    We next prove the claims \eqref{Claim-induction} and \eqref{Claim-induction-rho}. Indeed, we easily know from the initial condition \eqref{IC-small-Engy-Loc} that for all $k \geq 0$,
    \begin{equation}\label{IC-Engy-2}
      \begin{aligned}
        \mathcal{E}_{s,k} (0) = \| \phi^\eps_0 \|^2_{H^s_{p' (\rho^\eps_0)}} + \| {\rm u}^\eps_0 \|^2_{H^s_{\rho^\eps_0}} + \| \widetilde{\rm d}^\eps_0 \|^2_{H^s_{\rho^\eps_0}} + \ka \| \na {\rm d}^\eps_0 \|^2_{H^s} \leq \tfrac{\delta_0}{2} \,.
      \end{aligned}
    \end{equation}

Under the induction assumptions \eqref{Induction-Assumption}, combining the definitions of $ F(t) $ and $ \mathcal{K}_{s,k} (t) $ in \eqref{F(t)} and \eqref{K(t)}, it deduces from \eqref{rho-k} and \eqref{rho-i} that
\begin{align*}
    & F( |\rho^{\eps, k}|, |\rho^{\eps, k}|^{-1}, |\rho^{\eps, k-1}|^{-1}, |\rho^{\eps, k-2}|^{-1} ) \\
    \leq\, & C \Big \{ 1 + \exp \big( n_1 \widehat{C} \delta_0^{\frac{1}{2}} t^{\frac{1}{2}} \big) \Big( \tfrac32 + 2 \widehat{C} \delta_0^{\frac{1}{2}} \exp \big( \widehat{C} \delta_0^{\frac{1}{2}} t^{\frac{1}{2}} \big) t^{\frac12} \Big )^{n_1} \\
    & + \Big [ \exp \big( \widehat{C} \delta_0^{\frac{1}{2}} t^{\frac{1}{2}} \big) \Big ( \tfrac12 - 2 \widehat{C} \delta_0^{\frac{1}{2}} \exp \big( \widehat{C} \delta_0^{\frac{1}{2}} t^{\frac{1}{2}} \big) t^{\frac12} \Big )^{-1} \Big ]^{ \max\{n_2, n_3, n_4\} } \Big \} =: \mathcal{F} (t)
\end{align*}
for some constant $C$, and
\begin{align*}
    \int_0^t \mathcal{K}_{s,k} (\tau) \dd \tau \leq ( \mathcal{F} (t) + | \mathcal{F} (t) |^2 ) ( 1 + \delta_0^{\frac12} t ) ( 1 + \delta_0^{s+4} t ) ( t + \delta_0^{\frac12} t^{\frac12} ) = : \mathcal{G} (t)\,,
\end{align*}
where $ \mathcal{F} (t)$ and $ \mathcal{G} (t) $ are continuous and strictly increasing in $ [0, t_0] $ for some finite and positive constant $ t_0 $ independent of $ \eps $, and $ \mathcal{G} (0) = 0 $.

By the initial bound \eqref{IC-Engy-2} and the induction energy inequality \eqref{Indc-energy}, we arrive at
\begin{equation*}
	   \begin{aligned}
     & \mathcal{E}_{s, k+1} (t) + \int_0^t \mathcal{D}_{s, k+1} (\tau) d \tau \\
    &\qquad  \leq \tfrac{\delta_0}{2} + C_\star \Big [ \big ( \tfrac{\delta_0}{2} + 1 \big )^{-2} - 2 C_\star \mathcal{G} (t) \Big ]^{-\frac{3}{2}} \mathcal{G} (t) = : \mathcal{H} (t) \,.
\end{aligned}
\end{equation*}
Here, $ \mathcal{H} (t) $ is continuous and increasing in $ [0, t_1] $ for some positive constant $ t_1 ( < t_0 ) $ independent of $ \eps $, and $ \mathcal{H} (0) = \frac{\delta_0}{2} $. Thus, there exists a $ t_2 $ with $ 0 < t_2 \leq t_1 $, independent of $ \eps $, such that
    \begin{equation}\label{energy-k+1}
      \begin{aligned}
       \mathcal{E}_{s,k+1} (t) + \int_0^t \mathcal{D}_{s,k+1} (\tau) \dd \tau \leq \delta_0 \,
      \end{aligned}
    \end{equation}
for all $ t \in [0, t_2] $. Next, by the bounds \eqref{Induction-Assumption} and \eqref{energy-k+1}, we deduce from \eqref{Indc-rho} that
    \begin{equation*}
      \begin{aligned}
        & \underbrace{ \Big \{ \tfrac{1}{2} - 2 \widehat{C} \delta_0^{\frac{1}{2}} t^{\frac{1}{2}} \exp \big ( \widehat{C} \delta_0^{\frac{1}{2}} t^{\frac{1}{2}} \big ) \Big \} \exp \big( - \widehat{C} \delta_0^{\frac{1}{2}} t^{\frac{1}{2}} \big) }_{g(t)} \\
	    & \qquad  \leq \rho^{\eps, k+1} (t,x) \leq \underbrace{ \Big \{ \tfrac{3}{2} + 2 \widehat{C} \delta_0^{\frac{1}{2}} t^{\frac{1}{2}} \exp \big ( \widehat{C} \delta_0^{\frac{1}{2}} t^{\frac{1}{2}} \big ) \Big \} \exp \big( \widehat{C} \delta_0^{\frac{1}{2}} t^{\frac{1}{2}} \big) }_{h(t)} \,,
      \end{aligned}
    \end{equation*}
    where $g(t)$ is continuous and strictly decreasing in $\R^+$ with $g(0) = \tfrac{1}{2}$ and $h(t)$ is continuous and strictly increasing in $\R^+$ with $h(0) = \tfrac{3}{2}$. Then, it is easy to know that there exists a $ t_3 $ with $ 0 < t_3 \leq t_2 $, independent of $ \eps $, such that
    $$ r_1 \leq g(t) \leq \rho^{\eps, k+1} (t,x) \leq h(t) \leq r_2 $$
    for all $t \in [0,t_3]$ and $ x \in \mathbb{R}^n $. We thereby take $T$ such that $ 0 < T < \min \{ 1, t_3 \} $. As a result, the claims \eqref{Claim-induction} and \eqref{Claim-induction-rho} hold. Thus, the proof 
     is completed.
\end{proof}

\subsection{Global uniform energy bound \eqref{Unif-Bnd-1}: global well-posedness}

In this subsection, we will globally extend the solution constructed in Proposition \ref{Prop-Local} by seeking some additional dissipative structures on $\phi^\eps$ and $\d$. Consequently, we can derive a uniform global energy bound. We emphasize that, based on the solution to \eqref{csys}-\eqref{civ} constructed in Proposition \ref{Prop-Local}, the values of density function $\rho^\eps (t,x) = 1 + \eps \phi^\eps (t,x)$ on $(t,x) \in [0,T] \times \R^n$ are ranged in $[r_1, r_2]$ for some positive constants $0 < r_1 < \tfrac{1}{2} < \tfrac{3}{2} < r_2$, where $T > 0$ is given in Proposition \ref{Prop-Local}. Consequently, for any $\alpha \in \R^+$, there are two constants $r_1$ and $r_2$ such that
\begin{equation}\label{rho-bnd}
  \begin{aligned}
    r_1^\alpha \leq \big( \rho^\eps (t,x) \big)^\alpha \leq r_2^\alpha
  \end{aligned}
\end{equation}
for all $(t,x) \in [0,T] \times \R^n$ and $0 < \eps \leq 1$.

For any $\eta \in (0,1)$, we define the following so-called {\em instant energy functional} $\mathscr{E}_{s, \eta} (\phi^\eps, \u, \d)$:
\begin{equation}\label{Es-Inst}
\begin{aligned}
\mathscr{E}_{s, \eta} (\phi^\eps, \u, \d) =\, & \mathcal{E}_s (\phi^\eps, \u, \d) + \eps \eta \| \u + \na \phi^\eps \|^2_{H^{s-1}} + \eta \| \cd + \d \|^2_{\dot{H}^s} \\
& - \eps \eta \| \u \|^2_{H^{s-1}} - \eps \eta \| \na \phi^\eps \|^2_{H^{s-1}} - \eta \| \cd \|^2_{\dot{H}^s} - \eta \| \d \|^2_{\dot{H}^s} \,,
\end{aligned}
\end{equation}
and the so-called {\em instant energy dissipative rate functional} $\mathscr{D}_{s, \eta} (\phi^\eps, \u, \d)$:
\begin{equation}\label{Ds-Inst}
\begin{aligned}
\mathscr{D}_{s, \eta} (\phi^\eps, \u, \d) =\, &\mathcal{D}_s (\u, \d) + \tfrac{1}{2} \eta \| \na \phi^\eps \|^2_{H^{s-1}_{w(\rho^\eps)}} + \tfrac{3}{4} \ka \eta \| \na \d \|^2_{\dot{H}^s_{1/\rho^\eps}} \\
& - \eta ( C + C_0 ) \Big( \| \na \u \|^2_{H^s} + \sum_{|m| \leq s} \big\| \p^m \cd + (\p^m \B^\eps) \d + \tfrac{\la_2}{\la_1} ( \p^m \A^\eps ) \d \big\|^2_{L^2} \Big) \,,
\end{aligned}
\end{equation}
where $w(\rho^\eps) = \tfrac{1}{\rho^\eps} p' (\rho^\eps)$ with $\rho^\eps = 1 + \eps \phi^\eps$ and $C, C_0 > 0$ are some fixed constants, independent of $\eps \in (0,1]$. Notice that the above instant functionals may be not positive for all $ \eta \in (0,1] $. However, one can derive the following lemma.

\begin{lem}\label{Lmm-Inst-Eneg}
	There is a small constant $\eta_0 \in (0,1)$, independent of $\eps \in (0,1]$, such that $\mathscr{E}_{s, \eta_0} (\phi^\eps, \u, \d) \geq 0$ and $\mathscr{D}_{s, \eta_0} (\phi^\eps, \u, \d) \geq 0$. Moreover,
	\begin{equation*}
	\begin{aligned}
	\mathscr{E}_{s, \eta_0} (\phi^\eps, \u, \d) \approx \mathcal{E}_{s} (\phi^\eps, \u, \d) \,, \quad \mathscr{D}_{s, \eta_0} (\phi^\eps, \u, \d) \approx \mathbb{D}_{s} (\phi^\eps, \u, \d) \,,
	\end{aligned}
	\end{equation*}
	where the global energy dissipative rate functional $\mathbb{D}_{s} (\phi^\eps, \u, \d)$ is defined as follows:
	\begin{equation*}
	  \begin{aligned}
	    \mathbb{D}_s (\phi^\eps, \u , \d ) = \| \na \phi^\eps \|^2_{H^{s-1}_{w(\rho^\eps)}} + \| \na \d \|^2_{\dot{H}^s_{1/\rho^\eps}} + \mathcal{D}_s (\u, \d) \,.
	  \end{aligned}
	\end{equation*}
	Here the energy dissipative rate $\mathcal{D}_s (\u, \d)$ is given in \eqref{Ds-local}.
\end{lem}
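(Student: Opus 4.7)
The plan is to expand the quadratic cross-terms hidden in the definitions of $\mathscr{E}_{s,\eta}$ and $\mathscr{D}_{s,\eta}$, bound them by Young's inequality, and then choose $\eta_0 \in (0,1)$ so small that these perturbations are absorbed by the positive functionals $\mathcal{E}_s$ and $\mathcal{D}_s$. The density bounds \eqref{rho-bnd} established via Proposition \ref{Prop-Local} are essential: they ensure that weighted Sobolev norms such as $H^s_{p'(\rho^\eps)}$, $H^s_{\rho^\eps}$, $H^{s-1}_{w(\rho^\eps)}$ and $\dot{H}^s_{1/\rho^\eps}$ are all equivalent to their unweighted counterparts, with constants independent of $\eps \in (0,1]$.

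For the instant energy, expanding $\|a+b\|^2 - \|a\|^2 - \|b\|^2 = 2\langle a,b\rangle$ twice yields
\begin{equation*}
  \mathscr{E}_{s,\eta}(\phi^\eps,\u,\d) = \mathcal{E}_s(\phi^\eps,\u,\d) + 2\eps\eta \langle \u, \na \phi^\eps \rangle_{H^{s-1}} + 2\eta \langle \cd, \d \rangle_{\dot{H}^s}.
\end{equation*}
Cauchy--Schwarz, Young's inequality, the bound $\eps \leq 1$, the trivial inclusions $\|\cd\|_{\dot{H}^s} \leq \|\cd\|_{H^s}$ and $\|\d\|_{\dot{H}^s} \leq \|\na \d\|_{H^{s-1}} \leq \|\na\d\|_{H^s}$, together with the weight equivalences from \eqref{rho-bnd}, imply
\begin{equation*}
  \big| 2 \eps \eta \langle \u , \na \phi^\eps \rangle_{H^{s-1}} \big| + \big| 2\eta \langle \cd, \d \rangle_{\dot{H}^s} \big| \leq C' \eta \, \mathcal{E}_s(\phi^\eps,\u,\d)
\end{equation*}
for a constant $C'$ independent of $\eps$. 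Choosing $\eta_0 \in (0,1)$ so that $C'\eta_0 \leq \tfrac12$ gives $\tfrac12 \mathcal{E}_s \leq \mathscr{E}_{s,\eta_0} \leq \tfrac32 \mathcal{E}_s$, which is simultaneously the non-negativity and the asserted equivalence.

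For the instant dissipation, the key observation is that the negative contribution $-\eta(C+C_0)\bigl(\|\na\u\|^2_{H^s} + \sum_{|m|\leq s}\|\p^m \cd + (\p^m \B^\eps)\d + \tfrac{\la_2}{\la_1}(\p^m \A^\eps)\d\|^2_{L^2}\bigr)$ has exactly the same quadratic structure as two of the positive summands inside $\mathcal{D}_s(\u,\d)$, namely $\tfrac{\mu_4}{2}\|\na \u\|^2_{H^s}$ and $-\la_1\sum_{|m|\leq s}\|\p^m\cd + (\p^m\B^\eps)\d + \tfrac{\la_2}{\la_1}(\p^m\A^\eps)\d\|^2_{L^2}$, both with strictly positive coefficients by \eqref{Coefficients}. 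Shrinking $\eta_0$ further so that $\eta_0(C+C_0) \leq \min\{\mu_4/4 , -\la_1/2\}$, the subtraction is absorbed into at most one half of $\mathcal{D}_s$, leaving
\begin{equation*}
  \mathscr{D}_{s,\eta_0}(\phi^\eps,\u,\d) \geq \tfrac12 \mathcal{D}_s(\u,\d) + \tfrac{\eta_0}{2}\|\na\phi^\eps\|^2_{H^{s-1}_{w(\rho^\eps)}} + \tfrac{3\ka\eta_0}{4}\|\na\d\|^2_{\dot{H}^s_{1/\rho^\eps}},
\end{equation*}
which is manifestly non-negative and, by the weight equivalence, bounded below by a positive multiple of $\mathbb{D}_s$. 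The upper bound $\mathscr{D}_{s,\eta_0} \lesssim \mathbb{D}_s$ is immediate, since discarding the negative terms only increases the functional and each remaining summand is a constant multiple of a summand of $\mathbb{D}_s$.

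The main difficulty is purely one of bookkeeping: selecting a single $\eta_0 \in (0,1)$ that satisfies all three smallness constraints simultaneously, while keeping every constant independent of the Mach number $\eps \in (0,1]$. This is guaranteed by \eqref{rho-bnd} together with the fact that $\ka, \mu_4$ and $-\la_1$ are fixed positive quantities under the hypotheses \eqref{Coefficients} of Theorem \ref{Thm-global}.
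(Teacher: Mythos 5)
Your proposal is correct and follows essentially the same route as the paper: bound the $\eta$-perturbation of the energy by $C\eta\,\mathcal{E}_s$ using \eqref{rho-bnd} and the weight equivalences, absorb the negative $\eta(C+C_0)(\cdots)$ term of the dissipation into the positive summands $\tfrac{1}{2}\mu_4\|\na\u\|^2_{H^s}$ and $-\la_1\sum_{|m|\le s}\|\cdots\|^2_{L^2}$ of $\mathcal{D}_s$, and then pick one $\eta_0$ satisfying all smallness constraints. The only (cosmetic) difference is that you use the polarization identity $\|a+b\|^2-\|a\|^2-\|b\|^2=2\langle a,b\rangle$ where the paper bounds the same expression by the cruder $3\eta(\|a\|^2+\|b\|^2)$; both yield the same conclusion with $\eps$-independent constants.
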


\begin{proof}
	We first notice that
	\begin{equation*}
	  \begin{aligned}
	    & \eta \Big|\eps \| \u \! + \na \phi^\eps \|^2_{H^{s-1}} \! + \| \cd \! + \d \|^2_{\dot{H}^s} \! - \eps \| \u \|^2_{H^{s-1}} \! - \eps \| \na \phi^\eps \|^2_{H^{s-1}} \! - \eta \| \cd \|^2_{\dot{H}^s} \! - \eta \| \d \|^2_{\dot{H}^s} \Big| \\
	    \leq\, & 3 \eta \big( \| \u \|^2_{H^{s-1}} + \| \na \phi^\eps \|^2_{H^{s-1}} + \| \cd \|^2_{\dot{H}^s} + \| \d \|^2_{\dot{H}^s} \big) \\
	    \leq\, & C_3 \eta \big( \| \u \|^2_{H^s_{\rho^\eps}} + \| \phi^\eps \|^2_{H^s_{p'(\rho^\eps)}} + \| \cd \|^2_{H^s_{\rho^\eps}} + \ka \| \na \d \|^2_{H^s} \big) \\
	    =\, & C_3 \eta \mathcal{E}_s (\phi^\eps, \u , \d)
	  \end{aligned}
	\end{equation*}
	for some constant $C_3 > 0$, independent of $\eps$, and for all $\eta \in (0,1)$, $0 < \eps \leq 1$, where the last inequality is implied by the bound \eqref{rho-bnd}. We thereby have
	\begin{equation*}
	  \begin{aligned}
	    ( 1 - C_3 \eta ) \mathcal{E}_s (\phi^\eps, \u , \d) \leq \mathscr{E}_{s, \eta} (\phi^\eps, \u, \d) \leq ( 1 + C_3 \eta ) \mathcal{E}_s (\phi^\eps, \u , \d) \,.
	  \end{aligned}
	\end{equation*}
	If $1 - C_3 \eta > 0$, i.e. $0 < \eta < \min\{ 1 , \tfrac{1}{C_3} \}$, then $ \mathscr{E}_{s, \eta} (\phi^\eps, \u, \d) \geq  ( 1 - C_3 \eta ) \mathcal{E}_s (\phi^\eps, \u , \d) \geq 0 $ and $ \mathscr{E}_{s, \eta} (\phi^\eps, \u, \d) \approx \mathcal{E}_s (\phi^\eps, \u , \d) $.
	
	We now consider the instant energy dissipative rate functional $\mathscr{D}_{s,\eta} (\phi^\eps, \u, \d)$. Observing that
(recalling that  $\mu_4>0$ and $\lambda_1<0$)
	  \begin{align*}
	    & \| \na \u \|^2_{H^s} + \sum_{|m| \leq s} \big\| \p^m \cd + (\p^m \B^\eps) \d + \tfrac{\la_2}{\la_1} ( \p^m \A^\eps ) \d \big\|^2_{L^2} \\
	    \leq\, & ( \tfrac{2}{\mu_4} - \tfrac{1}{\la_1} ) \Big( \tfrac{1}{2} \mu_4 \| \na \u \|^2_{H^s} - \la_1 \sum_{|m| \leq s} \big\| \p^m \cd + (\p^m \B^\eps) \d + \tfrac{\la_2}{\la_1} ( \p^m \A^\eps ) \d \big\|^2_{L^2} \Big) \\
	    \leq\, & ( \tfrac{2}{\mu_4} - \tfrac{1}{\la_1} ) \mathcal{D}_s (\u, \d) \,,
	  \end{align*}
	one has
	\begin{equation*}
	  \begin{aligned}
	   & \Big( 1 - (C+C_0) ( \tfrac{2}{\mu_4} - \tfrac{1}{\la_1} ) \eta \Big) \mathcal{D}_s (\u, \d) + \tfrac{1}{2} \eta \| \na \phi^\eps \|^2_{H^{s-1}_{w(\rho^\eps)}} + \tfrac{3}{4} \ka \eta \| \na \d \|^2_{\dot{H}^s_{1/\rho^\eps}} \\
	   &\qquad \qquad  \leq \mathscr{D}_{s,\eta} (\phi^\eps, \u, \d) \leq \mathcal{D}_s (\u, \d) + \tfrac{1}{2} \eta \| \na \phi^\eps \|^2_{H^{s-1}_{w(\rho^\eps)}} + \tfrac{3}{4} \ka \eta \| \na \d \|^2_{\dot{H}^s_{1/\rho^\eps}} \,.
	  \end{aligned}
	\end{equation*}
	Once $ 1 - (C+C_0) ( \tfrac{2}{\mu_4} - \tfrac{1}{\la_1} ) \eta > 0 $, i.e. $0 < \eta < \min \bigg\{ 1, \tfrac{1}{ (C+C_0) ( \tfrac{2}{\mu_4} - \tfrac{1}{\la_1} ) } \bigg\}$, we have $ \mathscr{D}_{s,\eta} (\phi^\eps, \u, \d) \geq 0 $. Moreover, it is easy to see that $ \mathscr{D}_{s, \eta} (\phi^\eps, \u, \d) \approx \mathbb{D}_{s} (\phi^\eps, \u, \d) $. Consequently, we can take
	\begin{equation*}
	  \eta_0 = \min \bigg\{ 1, \tfrac{1}{C_3}, \tfrac{1}{ (C+C_0) ( \tfrac{2}{\mu_4} - \tfrac{1}{\la_1} ) } \bigg\} \in (0, 1) \,,
	\end{equation*}
	and the proof of Lemma \ref{Lmm-Inst-Eneg} is completed.
\end{proof}

Now, we derive the following proposition.

\begin{prop}\label{Prop-Global}
	Let $0 < \eps \leq 1$. Assume that $(\phi^\eps, \u, \d)$ is the solution on $[0,T]$ to the Cauchy problem \eqref{csys}-\eqref{civ} constructed in Proposition \ref{Prop-Local}. Then there is a constant $C_4 > 0$, independent of $\eps$, such that
	\begin{equation*}
	  \begin{aligned}
	    \tfrac{1}{2} \tfrac{\dd}{\dd t} \mathscr{E}_{s, \eta_0} (\phi^\eps, \u, \d) + \mathscr{D}_{s, \eta_0} (\phi^\eps, \u, \d) \leq C_4 \mathscr{E}_{s, \eta_0}^\frac{1}{2} (\phi^\eps, \u , \d) \mathscr{D}_{s, \eta_0} (\phi^\eps, \u, \d)
	  \end{aligned}
	\end{equation*}
	for all $t \in [0,T]$ and $\eps \in (0,1]$, where $\eta_0 > 0$ is given in Lemma \ref{Lmm-Inst-Eneg}.
\end{prop}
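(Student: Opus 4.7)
My plan is to decompose
$$\mathscr{E}_{s,\eta_0}(\phi^\eps,\u,\d) = \mathcal{E}_s(\phi^\eps,\u,\d) + 2\eps\eta_0 \langle \u, \na\phi^\eps\rangle_{H^{s-1}} + 2\eta_0 \langle \cd, \d\rangle_{\dot H^s},$$
differentiate each piece in turn, and arrange the new dissipative contributions $\tfrac{1}{2}\eta_0\|\na\phi^\eps\|^2_{H^{s-1}_{w(\rho^\eps)}}$ and $\tfrac{3}{4}\ka\eta_0\|\na\d\|^2_{\dot H^s_{1/\rho^\eps}}$ in $\mathscr{D}_{s,\eta_0}$ from favourable cancellations in the momentum equation and the director equation respectively. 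For the baseline piece, I would just quote Lemma \ref{Lmm-Apriori-Est}: after using the smallness of $\mathcal{E}_s$ (guaranteed by Proposition \ref{Prop-Local}), the factor $\mathcal{Q}_c(\u)(1+\mathcal{E}_s^{s+1})$ is absorbed into an absolute constant, and the energy factor $\mathcal{A}_s(\phi^\eps,\u,\d)$ is controlled by $\mathcal{E}_s^{1/2}\mathbb{D}_s$ once we have gained $\|\na\phi^\eps\|^2_{H^{s-1}_{w(\rho^\eps)}}$ and $\|\na\d\|^2_{\dot H^s_{1/\rho^\eps}}$ in the dissipation — this is exactly the point of the cross-terms.

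For the first cross-term, I would differentiate and substitute the momentum equation to isolate the singular piece
$$2\eps\eta_0\langle \p_t\u, \na\phi^\eps\rangle_{H^{s-1}} \supset -2\eta_0 \Big\langle \tfrac{p'(\rho^\eps)}{\rho^\eps}\na\phi^\eps,\, \na\phi^\eps\Big\rangle_{H^{s-1}} = -2\eta_0\|\na\phi^\eps\|^2_{H^{s-1}_{w(\rho^\eps)}} + (\text{commutators}),$$
where the factor $\eps$ in the cross-term exactly cancels the $1/\eps$ in front of $\tfrac{p'(\rho^\eps)}{\rho^\eps}\na\phi^\eps$. Moser-type commutators from Lemma \ref{lem1}, Lemma \ref{lem3} and Lemma \ref{Lmm-Cummutators}, together with the estimates for $\dv(\Sigma_1^\eps+\Sigma_2^\eps+\Sigma_3^\eps)$ already performed in the proof of Lemma \ref{Lmm-Apriori-Est}, bound the remainder by $\mathcal{E}_s^{1/2}\mathbb{D}_s$ plus a constant multiple of $\eta_0\|\na\u\|^2_{H^s}$. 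The companion term $2\eps\eta_0\langle \u, \na\p_t\phi^\eps\rangle_{H^{s-1}}$ is handled by integration by parts and the $\phi^\eps$-equation: the singular contribution gives $2\eta_0\|\dv\u\|^2_{H^{s-1}} \leq C\eta_0\|\na\u\|^2_{H^s}$, which is absorbed by the negative $-\eta_0(C+C_0)\|\na\u\|^2_{H^s}$ already sitting in $\mathscr{D}_{s,\eta_0}$, provided the constant $C_0$ in \eqref{Ds-Inst} is chosen larger than all such contributions. A small portion $\tfrac{3}{2}\eta_0\|\na\phi^\eps\|^2_{H^{s-1}_{w(\rho^\eps)}}$ is sacrificed to a Cauchy–Schwarz used on the commutators, leaving the advertised $\tfrac{1}{2}\eta_0$.

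For the second cross-term, I would write $\tfrac{d}{dt}\langle \cd, \d\rangle_{\dot H^s} = \langle \p_t\cd, \d\rangle_{\dot H^s} + \langle \cd, \p_t\d\rangle_{\dot H^s}$ and replace $\p_t\cd = \ddot{\rm d}^\eps - \u\cdot\na\cd$ with the director equation $\ddot{\rm d}^\eps = \tfrac{\ka}{\rho^\eps}\Delta\d + \tfrac{1}{\rho^\eps}\Gamma^\eps\d + \tfrac{\la_1}{\rho^\eps}(\cd + \B^\eps\d) + \tfrac{\la_2}{\rho^\eps}\A^\eps\d$. Integration by parts on the $\Delta\d$ term produces
$$2\ka\eta_0 \Big\langle \tfrac{1}{\rho^\eps}\Delta\d,\, \d\Big\rangle_{\dot H^s} \;\supset\; -2\ka\eta_0\|\na\d\|^2_{\dot H^s_{1/\rho^\eps}} + (\text{commutators on } 1/\rho^\eps),$$
the remaining Leslie/kinematic contributions are bounded by $\eta_0$ times $\sum_{|m|\le s}\|\p^m\cd + (\p^m\B^\eps)\d + \tfrac{\la_2}{\la_1}(\p^m\A^\eps)\d\|^2_{L^2}$ via Young's inequality (again absorbed by the $-\eta_0(C+C_0)(\ldots)$ negative contribution in $\mathscr{D}_{s,\eta_0}$), and the quadratic $\Gamma^\eps\d$ piece produces an $\mathcal{E}_s^{1/2}\mathbb{D}_s$ remainder by Sobolev's inequalities. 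The companion $2\eta_0\langle \cd, \p_t\d\rangle_{\dot H^s}$ is handled analogously via $\p_t\d = \cd - \u\cdot\na\d$. Once more one sacrifices $\tfrac{1}{4}\ka\eta_0\|\na\d\|^2_{\dot H^s_{1/\rho^\eps}}$ to close Cauchy–Schwarz against the commutators, so $\tfrac{3}{4}\ka\eta_0$ remains.

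The main obstacle is the bookkeeping of the commutators at $H^{s-1}$ and $\dot H^s$ levels: each pairing produces singular $1/\eps$ factors that must be matched exactly by the $\eps$ in the first cross-term (and by the derivative-gain structure in the second) before the cancellation yields a bounded term. Collecting everything, choosing $C_0$ in the definition \eqref{Ds-Inst} large enough (which is consistent with the choice of $\eta_0$ in Lemma \ref{Lmm-Inst-Eneg}), applying Lemma \ref{Lmm-Inst-Eneg} to replace $\mathcal{E}_s$ and $\mathbb{D}_s$ by $\mathscr{E}_{s,\eta_0}$ and $\mathscr{D}_{s,\eta_0}$, and using the smallness of $\mathcal{E}_s$ from Proposition \ref{Prop-Local} to collapse polynomial powers into constants, yields the stated bound with a single constant $C_4>0$ independent of $\eps$.
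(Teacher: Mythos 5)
Your proposal is correct and follows essentially the same route as the paper: polarize the cross-terms, pair the momentum equation with $\eps\na\p^m\phi^\eps$ and the director equation with $\p^m\d$ to extract the weighted dissipations $\|\na\phi^\eps\|^2_{H^{s-1}_{w(\rho^\eps)}}$ and $\|\na\d\|^2_{\dot H^s_{1/\rho^\eps}}$, absorb the $\|\na\u\|^2_{H^s}$ and $\|\cd\|^2_{H^s}$ losses into the $-\eta_0(C+C_0)(\cdots)$ term of $\mathscr{D}_{s,\eta_0}$, and close with Lemma \ref{Lmm-Inst-Eneg} and the smallness from Proposition \ref{Prop-Local}. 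The only blemish is some factor-of-two bookkeeping (you drop the $\tfrac{1}{2}$ in front of $\tfrac{\dd}{\dd t}$, so your intermediate coefficients $2\eta_0$ and $2\ka\eta_0$ should be $\eta_0$ and $\ka\eta_0$), which does not affect the argument.
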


\begin{proof}

We now prove this proposition by three steps.

\vspace*{1mm}

 \emph{Step 1. Estimate the dissipation of $\pe$.} For all $|m| \leq s-1$, making use of the derivative operator $\p^m$ to the second equation of \eqref{csys}, and taking the inner product with $ \eps \pa\na\pe$, we have
\begin{equation*}
  \begin{aligned}
    & \eps \lt \p_t \p^m \u , \na \p^m \phi^\eps \rt + \| \na \p^m \phi^\eps \|^2_{L^2_{w (\rho^\eps)}} \\
    = & - \eps \lt \p^m (\u \cdot \na \u) , \na \p^m \phi^\eps \rt - \big\lt \big[ \p^m , w(\rho^\eps) \na \big] \phi^\eps , \na \p^m \phi^\eps \big\rt \\
    & + \eps \big\lt \p^m \big[ \tfrac{1}{\rho^\eps} \dv ( \Sigma_1^\eps + \Sigma_2^\eps + \Sigma_3^\eps ) \big] , \na \p^m \phi^\eps \big\rt \,,
  \end{aligned}
\end{equation*}
where $w(\rho^\eps) = \tfrac{1}{\rho^\eps} p' (\rho^\eps)$. From the first $\phi^\eps$-equation of \eqref{csys}, we derive that
\begin{equation*}
  \begin{aligned}
    \eps \lt \p_t \p^m \u , \na \p^m \phi^\eps \rt =\, & \tfrac{\dd}{\dd t} \lt \p^m \u , \eps \na \p^m \phi^\eps \rt - \lt \p^m \u , \eps \na \p^m \p_t \phi^\eps \rt \\
    =\, & \tfrac{\dd}{\dd t} \lt \p^m \u , \eps \na \p^m \phi^\eps \rt - \lt \dv \p^m \u , \eps \p^m (\u \cdot \na \phi^\eps) \rt \\
    & - \lt \dv \p^m \u , \eps \p^m ( \phi^\eps \dv \u ) \rt - \| \dv \p^m \u \|^2_{L^2} \,.
  \end{aligned}
\end{equation*}
Consequently, we have that for $0 < \eps \leq 1$ and $|m| \leq s - 1$,
\begin{equation}\label{Dissp-phi-1}
  \begin{aligned}
    & \eps \tfrac{\dd}{\dd t} \lt \p^m \u , \na \p^m \phi^\eps \rt + \| \na \p^m \phi^\eps \|^2_{L^2_{w(\rho^\eps)}} - \| \dv \p^m \u \|^2_{L^2} \\
    =\, & \underbrace{ \lt \dv \p^m \u , \eps \p^m (\u \cdot \na \phi^\eps) \rt }_{O_1} +  \underbrace{ \lt \dv \p^m \u , \eps \p^m ( \phi^\eps \dv \u ) \rt }_{O_2} \\
    &  \underbrace{ - \eps \lt \p^m (\u \cdot \na \u) , \na \p^m \phi^\eps \rt }_{O_3} \  \underbrace{ - \big\lt \big[ \p^m , w(\rho^\eps) \na \big] \phi^\eps , \na \p^m \phi^\eps \big\rt }_{O_4} \\
    & +  \underbrace{ \eps \big\lt \p^m \big( \tfrac{1}{\rho^\eps} \dv \Sigma_1^\eps \big) , \na \p^m \phi^\eps \big\rt }_{O_5} +  \underbrace{ \eps \big\lt \p^m \big( \tfrac{1}{\rho^\eps} \dv \Sigma_2^\eps \big) , \na \p^m \phi^\eps \big\rt }_{O_6} \\
    & +  \underbrace{ \eps \big\lt \p^m \big( \tfrac{1}{\rho^\eps} \dv \Sigma_3^\eps \big) , \na \p^m \phi^\eps \big\rt }_{O_7} \,.
  \end{aligned}
\end{equation}
Now we estimate terms $O_i$ for $1 \leq i \leq 7$ in \eqref{Dissp-phi-1}. First, from the Moser-type calculus inequalities in Lemma \ref{lem1} and the bound \eqref{rho-bnd}, one can easily derive that for all $0 < \eps \leq 1$,
\begin{equation}\label{O1O2O3}
  \begin{aligned}
    O_1 \lesssim\, & \| \na \u \|_{H^s} \| \na \phi^\eps \|_{H^{s-1}} \| \u \|_{H^s} \lesssim \| \na \u \|_{H^s} \| \na \phi^\eps \|_{H^{s-1}_{w(\rho^\eps)}} \| \u \|_{H^s_{\rho^\eps}} \,, \\
    O_2 \lesssim\, & \| \na \u \|_{H^s}^2 \| \phi^\eps \|_{H^s} \lesssim \| \na \u \|^2_{H^s} \| \phi^\eps \|_{H^s_{p' (\rho^\eps)}} \,, \\
    O_3 \lesssim\, & \| \na \u \|_{H^s} \| \na \phi^\eps \|_{H^{s-1}} \| \u \|_{H^s} \lesssim \| \na \u \|_{H^s} \| \na \phi^\eps \|_{H^{s-1}_{w(\rho^\eps)}} \| \u \|_{H^s_{\rho^\eps}} \,.
  \end{aligned}
\end{equation}
The term $O_4$ is equal to zero for $m=0$. We estimate the term $O_4$ for the case $1\leq |m| \leq s-1$. The direct calculations imply that
\begin{equation*}
  \begin{aligned}
    O_4 =\, & - \sum_{0 \neq m' \leq m} C_m^{m'} \big\lt \p^{m'} w(\rho^\eps) \na \p^{m-m'} \phi^\eps , \na \p^m \phi^\eps \big\rt \\
    \lesssim \,& \sum_{0 \neq m' \leq m} \| \p^{m'} w(\rho^\eps) \|_{L^4} \| \na \p^{m-m'} \phi^\eps \|_{L^4} \| \na \p^m \phi^\eps \|_{L^2} \\
    \lesssim \,& \sum_{0 \neq m' \leq m} \| \p^{m'} w(\rho^\eps) \|_{H^1} \| \na \p^{m-m'} \phi^\eps \|_{H^1} \| \na \p^m \phi^\eps \|_{L^2} \\
    \lesssim\, & \sum_{1 \leq |m| \leq s} \| \p^m w (\rho^\eps) \|_{L^2} \| \na \phi^\eps \|^2_{H^{s-1}_{w(\rho^\eps)}} \,,
  \end{aligned}
\end{equation*}
where we have made use of the fact that  $\tfrac{1}{w (\rho^\eps)} \lesssim 1$, derived from the bound \eqref{rho-bnd}. From Lemma \ref{lem3}, Lemma \ref{lem1} and the bound \eqref{rho-bnd}, we deduce that for all $1 \leq |m| \leq s$,
\begin{equation*}
  \begin{aligned}
    \| \p^m w (\rho^\eps) \|_{L^2} =\, & \Bigg\| \sum_{i=1}^{|m|} w^{(i)} (\rho^\eps) \sum_{\substack{ m_1 + \cdots + m_i = m \\ |m_1|, \cdots , |m_i| \geq 1 }} \prod_{1 \leq \ell \leq i} \p^{m_\ell} \rho^\eps \Bigg\|_{L^2} \\
    \lesssim\, & \sum_{i=1}^{|m|} \| w^{(i)} (\rho^\eps) \|_{L^\infty} \sum_{1 \leq k \leq s} \| \rho^\eps \|^k_{\dot{H}^s} \\
    \lesssim\, & \| \eps \phi^\eps \|_{\dot{H}^s} \big( 1 + \| \eps \phi^\eps \|^{s-1}_{\dot{H}^s} \big) \lesssim \| \phi^\eps \|_{H^s_{p'(\rho^\eps)}} \big( 1 + \| \phi^\eps \|^{s-1}_{H^s_{p'(\rho^\eps)}} \big) \,.
  \end{aligned}
\end{equation*}
Here $\eps \in (0,1]$ is required. Consequently, we have
\begin{equation}\label{O4}
  \begin{aligned}
    O_4 \lesssim \big( 1 + \| \phi^\eps \|^{s-1}_{H^s_{p'(\rho^\eps)}} \big) \| \phi^\eps \|_{H^s_{p'(\rho^\eps)}} \| \na \phi^\eps \|^2_{H^{s-1}_{w(\rho^\eps)}}
  \end{aligned}
\end{equation}
for all $0 < \eps \leq 1$.

For the terms $O_5$, $O_6$ and $O_7$, it follows from the similar estimates of Section 6 in \cite{JLT} (see (6.11), (6.12) and (6.14), respectively) that for $0 < \eps \leq 1$,
\begin{align}
&\left. 
  \begin{array}{rl}\label{O5}
   O_5 \lesssim\!\!\! & \big( 1 + P_{s-1} ( \| \rho^\eps \|_{\dot{H}^s} ) \big) \| \na \u \|_{H^s} \| \rho^\eps \|_{\dot{H}^s} \\
    \lesssim\!\! \!& \big( 1 + \| \phi^\eps \|^s_{H^s_{p' (\rho^\eps)}} \big) \| \na \phi^\eps \|_{H^{s-1}_{w(\rho^\eps)}} \| \na \u \|_{H^s} \,,
    \end{array}
\right. \\[-0.2ex]
&  \left. 
  \begin{array}{rl}\label{O6}
    O_6 \lesssim \!\!\!& \big( 1 + P_{s-1} ( \| \rho^\eps \|_{\dot{H}^s} ) \big) \| \na \d \|_{H^s} \| \rho^\eps \|_{\dot{H}^s} \| \na \d \|_{\dot{H}^s} \\
    \lesssim\!\!\! & \big( 1 + \| \phi^\eps \|^s_{H^s_{p' (\rho^\eps)}} \big) \| \na \d \|_{H^s} \| \na \d \|_{\dot{H}^s} \| \na \phi^\eps \|_{H^{s-1}_{w (\rho^\eps)}} \,,
    \end{array}
\right.
\end{align}
and
\begin{equation}\label{O7}
  \begin{aligned}
    O_7 \lesssim \,& \big( 1 + P_{s-1} ( \| \rho^\eps \|_{\dot{H}^s} ) \big) \| \rho^\eps \|_{\dot{H}^s} \Big( \| \na \u \|_{H^s} \sum_{j=0}^4 \| \na \d \|_{H^s}^j + \| \cd \|_{H^s} (1 + \| \na \d \|_{H^s}) \Big) \\
    \lesssim\, & \big( 1 + \| \phi^\eps \|^{s+4}_{H^s_{p' (\rho^\eps)}} + \| \na \d \|^{s+4}_{H^s} \big) \| \na \phi^\eps \|_{H^{s-1}_{w (\rho^\eps)}} \big( \| \na \u \|_{H^s} + \| \cd \|_{H^s} \big) \,.
  \end{aligned}
\end{equation}
By plugging the inequalities \eqref{O1O2O3}, \eqref{O4}, \eqref{O5}, \eqref{O6} and \eqref{O7} into \eqref{Dissp-phi-1} and summing up for all $|m| \leq s - 1$, one directly deduces that
\begin{equation}\label{Dissp-phi}
  \begin{aligned}
    & \tfrac{1}{2} \tfrac{\dd}{\dd t} \big( \eps \| \u + \na \phi^\eps \|^2_{H^{s-1}} - \eps \| \u \|^2_{H^{s-1}} - \eps \| \na \phi^\eps \|^2_{H^{s-1}} \big) + \tfrac{1}{2} \| \na \phi^\eps \|^2_{H^{s-1}_{w(\rho^\eps)}}  \\
    & - C \| \na \u \|^2_{H^s} - C \sum_{|m| \leq s} \big\| \p^m \cd + (\p^m \B^\eps) \d + \tfrac{\la_2}{\la_1} ( \p^m \A^\eps ) \d \big\|^2_{L^2} \\
    \lesssim\, & \big( 1 + \| \phi^\eps \|^{s+3}_{H^s_{p'(\rho^\eps)}} + \| \na \d \|^{s+3}_{H^s} \big) \big( \| \u \|_{H^s_{\rho^\eps}} + \| \phi^\eps \|_{H^s_{p' (\rho^\eps)}} + \| \na \d \|_{H^s} \big) \\
    & \times \Big( \| \na \u \|^2_{H^s} + \| \na \d \|^2_{\dot{H}^s_{1/\rho^\eps}} + \| \na \phi^\eps \|^2_{H^{s-1}_{w(\rho^\eps)}} \\
    & \qquad + \sum_{|m| \leq s} \| \p^m \cd + (\p^m \B^\eps) \d + \tfrac{\la_2}{\la_1} (\p^m \A^\eps) \d \|^2_{L^2} \Big) \\
    \lesssim\, & \Big( 1 + \mathcal{E}_s^\frac{s+3}{2} (\phi^\eps, \u, \d) \Big) \mathcal{E}_s^\frac{1}{2} (\phi^\eps, \u, \d) \mathbb{D}_s (\phi^\eps, \u, \d)
  \end{aligned}
\end{equation}
for all $t \in [0,T]$, $0 < \eps \leq 1$ and for some $\eps$-independent constant $C>0$. Here we also have utilized the Young's inequality, the bound \eqref{rho-bnd}, the relation
\begin{equation*}
  \begin{aligned}
    \big\lt \p^m \u , \na \p^m \phi^\eps \big\rt = \tfrac{1}{2} \| \p^m ( \u + \na \phi^\eps ) \|^2_{L^2} - \tfrac{1}{2} \| \p^m \u \|^2_{L^2} - \tfrac{1}{2} \| \na \p^m \phi^\eps \|^2_{L^2} \,,
  \end{aligned}
\end{equation*}
the bounds $\| \dv \u \|^2_{H^{s-1}} \lesssim \| \na \u \|^2_{H^s}$ and
\begin{equation}\label{dot-d-bnd}
  \begin{aligned}
    \| \cd \|^2_{H^s} \lesssim \| \na \u \|^2_{H^s} + \sum_{|m| \leq s} \big\| \p^m \cd + (\p^m \B^\eps) \d + \tfrac{\la_2}{\la_1} ( \p^m \A^\eps ) \d \big\|^2_{L^2} \,.
  \end{aligned}
\end{equation}

\vspace{0.2cm}

 \emph{Step 2. Estimate the dissipation of $\d$.} 
Noticing that the $\d$-equation of \eqref{csys} does not involve any singular term, then we know that the estimates of dissipation of $\d$ are almost the same as the arguments (6.31) in Step 4 of Section 6 in \cite{JLT} (from Page 172 to Page 178). We therefore only sketch the process of proof and omit the details of calculations here. More precisely, for all $1 \leq |m| \leq s$, acting the derivative operator $\p^m$ on the third $\d$-equation of \eqref{csys}, taking the inner product with $\p^m\d$, and making use of integration by parts, we have
\begin{equation}\label{pmd}
  \begin{aligned}
    & \tfrac{1}{2} \tfrac{\dd}{\dd t} \big( \| \p^m \cd + \p^m \d \|^2_{L^2} - \| \p^m \cd \|^2_{L^2} - \| \p^m \d \|^2_{L^2} \big) + \ka \| \na \p^m \d \|^2_{L^2_{1/\rho^\eps}} - \| \p^m \cd \|_{L^2}^2 \\
    =\, & - \lt \p^m ( \u \cdot \na \d ) , \p^m \cd \rt - \lt \p^m ( \u \cdot \na \cd ) , \p^m \d \rt - \ka \big \lt \na \tfrac{1}{\rho^\eps} \na \p^m \d , \p^m \d \big\rt \\
    & + \ka \big\lt \big[ \p^m , \tfrac{1}{\ro} \De \big] \d , \p^m \d \big\rt + \la_1 \big\lt \tfrac{1}{\ro} \p^m \cd , \p^m \d \big\rt
    + \la_1 \big\lt \big[ \p^m , \tfrac{1}{\ro} \big] \cd , \p^m \d \big\rt \\
    & + \big\lt \p^m \big[ \tfrac{1}{\ro} ( \la_1 \B^\e + \la_2 \A^\e ) \d \big] , \p^m \d \big\rt + \big\lt \p^m \big( \tfrac{1}{\ro} \Gamma^\e \d \big) , \p^m \d \big\rt \,.
  \end{aligned}
\end{equation}
Following the same estimates in Step 4 of Section 6 in \cite{JLT} and summing up for all $1 \leq |m| \leq s$, we thereby have
\begin{equation}\label{d-Dissp-bnd-1}
  \begin{aligned}
    & \tfrac{1}{2} \tfrac{\dd}{\dd t} \big( \| \cd + \d \|^2_{\dot{H}^s} - \| \cd \|^2_{\dot{H}^s} - \| \d \|^2_{\dot{H}^s} \big) + \tfrac{3}{4} \ka \| \na \d \|^2_{\dot{H}^s_{1/\rho^\eps}} \\
    & - \big( 1 + 2 |\la_1|^2 \| \tfrac{1}{\rho^\eps} \|_{L^\infty} \big) \| \cd \|^2_{H^s} - 2 C^2 ( |\la_1| + |\la_2| )^2 \| \rho^\eps \|_{L^\infty} \| \na \u \|^2_{H^s} \\
    \lesssim\, & \big( \| \u \|_{H^s} \| \na \d \|_{\dot{H}^s} + \| \na \d \|_{H^s} \| \na \u \|_{H^s} \big) \| \cd \|_{H^s} \\
    & + \| \na \d \|_{H^s} P_s (\| \rho^\eps \|_{\dot{H}^s}) \big( \| \na \d \|_{\dot{H}^s} + \| \cd \|_{H^s} \big) \\
    & + \big( 1 + \| \na \d \|_{H^s} \big) \| \na \d \|_{H^s} \| \na \d \|_{\dot{H}^s} \big( P_s (\| \rho^\eps \|_{\dot{H}^s}) + \| \na \d \|_{\dot{H}^s} \big) \\
    & + \big( 1 + \| \na \d \|_{H^s} \big) \| \na \d \|_{H^s} \| \cd \|^2_{H^s} \\
    & + \| \na \u \|_{H^s} \big( \| \na \d \|_{\dot{H}^s} + P_s (\| \rho^\eps \|_{\dot{H}^s}) \big) \sum_{j=1}^4 \| \na \d \|^j_{H^s} \,,
  \end{aligned}
\end{equation}
where the polynomial $P_s (\cdot)$ is given in Lemma \ref{lem3}. From the bounds \eqref{rho-bnd} and \eqref{dot-d-bnd}, one easily derives that for all $0 < \eps \leq 1$ and for some $\eps$-independent constant $C_0 > 0$,
\begin{equation}\label{d-Dissp-bnd-2}
  \begin{aligned}
   \big( 1 + 2 |\la_1|^2 \| \tfrac{1}{\rho^\eps} \|_{L^\infty} \big)  &\| \cd \|^2_{H^s} + 2 C^2 ( |\la_1| + |\la_2| )^2 \| \rho^\eps \|_{L^\infty} \| \na \u \|^2_{H^s} \\
    \leq\,& C_0 \Big( \| \na \u \|^2_{H^s} + \sum_{ |m| \leq s} \| \p^m \cd + (\p^m \B^\eps) \d + \tfrac{\la_2}{\la_1} (\p^m \A^\eps) \d \|^2_{L^2} \Big) \,, \\
    \| \u \|_{H^s} \lesssim \,&\| \u \|_{H^s_{\rho^\eps}} \,, \quad \ \ \ \| \na \d \|_{\dot{H}^s} \lesssim \| \na \d \|_{\dot{H}^s_{1/\rho^\eps}} \,, \\
    P_s (\| \rho^\eps \|_{\dot{H}^s}) \lesssim\, &\big( 1 + \| \phi^\eps \|^{s-1}_{H^s_{p'(\rho^\eps)}} \big) \| \na \phi^\eps \|_{H^{s-1}_{w(\rho^\eps)}} \,.
  \end{aligned}
\end{equation}
Then, following from the inequalities \eqref{d-Dissp-bnd-1} and \eqref{d-Dissp-bnd-2}, we have
\begin{equation}\label{Dissp-d}
  \begin{aligned}
    & \tfrac{1}{2} \tfrac{\dd}{\dd t} \big( \| \cd + \d \|^2_{\dot{H}^s} - \| \cd \|^2_{\dot{H}^s} - \| \d \|^2_{\dot{H}^s} \big) + \tfrac{3}{4} \ka \| \na \d \|^2_{\dot{H}^s_{1/\rho^\eps}} \\
    & - C_0 \| \na \u \|^2_{H^s} - C_0 \sum_{ |m| \leq s} \| \p^m \cd + (\p^m \B^\eps) \d + \tfrac{\la_2}{\la_1} (\p^m \A^\eps) \d \|^2_{L^2} \\
    \lesssim\, & \big( 1 + \| \na \d \|^{s+2}_{H^s} + \| \phi^\eps \|^{s+2}_{H^s_{p'(\rho^\eps)}} \big) \big( \| \u \|_{H^s_{\rho^\eps}} + \| \na \d \|_{H^s} \big) \\
    & \times \Big( \| \na \u \|^2_{H^s} + \| \na \d \|^2_{\dot{H}^s_{1/\rho^\eps}} + \| \na \phi^\eps \|^2_{H^{s-1}_{w(\rho^\eps)}} \\
    & \qquad + \sum_{|m| \leq s} \| \p^m \cd + (\p^m \B^\eps) \d + \tfrac{\la_2}{\la_1} (\p^m \A^\eps) \d \|^2_{L^2} \Big) \\
    \lesssim\, & \Big( 1 + \mathcal{E}_s^\frac{s+2}{2} (\phi^\eps, \u, \d) \Big) \mathcal{E}_s^\frac{1}{2} (\phi^\eps, \u, \d) \mathbb{D}_s (\phi^\eps, \u, \d)
  \end{aligned}
\end{equation}
for all $t \in [0,T]$ and $0 < \eps \leq 1$.

\vspace*{2mm}

 {\em Step 3. Close the global energy estimates.}
 First, from the inequalities \eqref{d-Dissp-bnd-2}, \eqref{rho-bnd} and \eqref{dot-d-bnd}, the functional $\mathcal{A}_s (\phi^\eps, \u , \d)$ defined in \eqref{As-local} can be bounded by
\begin{equation*}
  \begin{aligned}
    \mathcal{A}_s (\phi^\eps, \u , \d) \lesssim\, & \| \na \phi^\eps \|^2_{H^{s-1}_{w(\rho^\eps)}} + \| \na \d \|^2_{\dot{H}^s_{1/\rho^\eps}} + \| \na \u \|^2_{H^s} \\
    & + \sum_{|m| \leq s} \| \p^m \cd + (\p^m \B^\eps) \d + \tfrac{\la_2}{\la_1} (\p^m \A^\eps) \d \|^2_{L^2} \\
    \lesssim\, & \mathbb{D}_s (\phi^\eps, \u, \d) \,.
  \end{aligned}
\end{equation*}
Furthermore, Proposition \ref{Prop-Local} tells us that $ \mathcal{Q}_c (\u) \lesssim 1 $. Consequently, the inequality \eqref{Local-Apriori-Est} in Lemma \ref{Lmm-Apriori-Est} implies that
\begin{equation}\label{Dissp-local}
  \begin{aligned}
    \tfrac{1}{2} \tfrac{\dd}{\dd t} \mathcal{E}_s (\phi^\eps, \u, \d) + \mathcal{D}_s (\u, \d) \lesssim \big( 1 + \mathcal{E}_s^{s+1} (\phi^\eps, \u , \d) \big) \mathcal{E}_s^\frac{1}{2} (\phi^\eps, \u , \d) \mathbb{D}_s (\phi^\eps, \u, \d) \,.
  \end{aligned}
\end{equation}
By adding the $\eta_0$ times of the inequalities \eqref{Dissp-phi} and \eqref{Dissp-d} into the inequality \eqref{Dissp-local}, recalling the definitions of the instant energy functional $\mathscr{E}_{s, \eta_0} (\phi^\eps, \u, \d )$ and the instant energy dissipative rate functional $\mathscr{D}_{s, \eta_0} (\phi^\eps, \u, \d )$ in \eqref{Es-Inst} and \eqref{Ds-Inst}, respectively, we deduce that
\begin{equation*}
  \begin{aligned}
    & \tfrac{1}{2} \tfrac{\dd}{\dd t} \mathscr{E}_{s, \eta_0} (\phi^\eps, \u, \d) + \mathscr{D}_{s, \eta_0} (\phi^\eps, \u, \d) \\
    \lesssim \,& \big( 1 + \mathcal{E}_s^{s+3} (\phi^\eps, \u , \d) \big) \mathcal{E}_s^\frac{1}{2} (\phi^\eps, \u , \d) \mathbb{D}_s (\phi^\eps, \u, \d) \\
    \lesssim\, & \mathscr{E}_{s, \eta_0}^\frac{1}{2} (\phi^\eps, \u , \d) \mathscr{D}_{s, \eta_0} (\phi^\eps, \u, \d)
  \end{aligned}
\end{equation*}
for all $t \in [0,T]$ and $0 < \eps \leq 1$. Here $\eta_0 > 0$ is given in Lemma \ref{Lmm-Inst-Eneg} and the last inequality is derived from Lemma \ref{Lmm-Inst-Eneg} and Proposition \ref{Prop-Local}, namely $ 1 + \mathcal{E}_s^{s+3} (\phi^\eps, \u , \d) \leq 1 + \delta_0^{s+3} \leq 2 $. Then the proof 
is finished.
\end{proof}

Now, based on Proposition \ref{Prop-Global}, we justify the global well-posedness and the uniform bound \eqref{Unif-Bnd-1} in Theorem \ref{Thm-global}.

\begin{proof}[Proof of Theorem \ref{Thm-global}: global well-posedness and uniform bounds \eqref{Unif-Bnd-1}
and \eqref{Unif-Bnd-rho}]
	From \linebreak Proposition \ref{Prop-Global}, we have that for all $t \in [0,T]$ and $0 < \eps \leq 1$
	\begin{equation}\label{Cont-BInq}
	  \begin{aligned}
	    \tfrac{1}{2} \tfrac{\dd}{\dd t} \mathscr{E}_{s, \eta_0} (\phi^\eps, \u, \d) + \mathscr{D}_{s, \eta_0} (\phi^\eps, \u, \d) \leq C_4 \mathscr{E}_{s, \eta_0}^\frac{1}{2} (\phi^\eps, \u , \d) \mathscr{D}_{s, \eta_0} (\phi^\eps, \u, \d) \,,
	  \end{aligned}
	\end{equation}
	which, combining Lemma \ref{Lmm-Inst-Eneg} and Proposition \ref{Prop-Local}, easily implies that $\mathscr{E}_{s, \eta_0} (\phi^\eps, \u, \d)$ is continuous on $t \in [0,T]$, where $T>0$ is given in Proposition \ref{Prop-Local}. By the initial conditions in Theorem \ref{Thm-global}, we know that $\mathcal{E}_s (\phi^\eps (0), \u (0), \d (0)) \leq \delta$, where $\delta > 0$ is to be determined. We first take $\delta \leq \tfrac{\delta_0}{2}$ such that Proposition \ref{Prop-Local} holds. From Lemma \ref{Lmm-Inst-Eneg}, one deduces that there are constants $C_5, C_6 > 0$, independent of $\eps \in (0,1]$, such that
	\begin{equation}\label{Cont-Eneg-Equiv}
	  \begin{aligned}
	    C_5 \mathcal{E}_s (\phi^\eps, \u, \d) \leq \mathscr{E}_{s, \eta_0} (\phi^\eps, \u, \d) \leq C_6 \mathcal{E}_s (\phi^\eps, \u, \d) \,.
	  \end{aligned}
	\end{equation}
	We now take $\delta$ such that $C_4 \sqrt{C_6 \delta} \leq \tfrac{1}{4}$, hence $0 < \delta \leq \min\big\{ \tfrac{\delta_0}{2} , \tfrac{1}{16 C_4^2 C_6} \big\}$. We therefore have
	\begin{equation}\label{Cont-0}
	  \begin{aligned}
	    C_4 \mathscr{E}_{s, \eta_0}^\frac{1}{2} (\phi^\eps (0), \u (0), \d (0)) \leq C_4 C_6^\frac{1}{2} \mathcal{E}_s^\frac{1}{2} (\phi^\eps (0), \u (0), \d (0)) \leq C_4 \sqrt{C_6 \delta} \leq \tfrac{1}{4} \,.
	  \end{aligned}
	\end{equation}
	We define
	\begin{equation*}
	  \begin{aligned}
	    T^\star := \sup \Big\{ \tau \in (0,T] ; \sup_{t \in [0,\tau]} C_4 \mathscr{E}_{s, \eta_0}^\frac{1}{2} (\phi^\eps, \u, \d) \leq \tfrac{1}{2} \Big\} \in [0,T] \,.
	  \end{aligned}
	\end{equation*}
	Then, the continuity of $\mathscr{E}_{s, \eta_0} (\phi^\eps, \u, \d)$ and the initial bound \eqref{Cont-0} imply that $T^\star > 0$.
	
	We then claim that $T^\star = T$. Indeed, if $0 < T^\star < T$, we derive from \eqref{Cont-BInq} that
	\begin{equation*}
	  \begin{aligned}
	    \tfrac{1}{2} \tfrac{\dd}{\dd t} \mathscr{E}_{s, \eta_0} (\phi^\eps, \u, \d) + \mathscr{D}_{s, \eta_0} (\phi^\eps, \u, \d) \leq \tfrac{1}{2} \mathscr{D}_{s, \eta_0} (\phi^\eps, \u, \d) \,,
	  \end{aligned}
	\end{equation*}
	which means that
	\begin{equation*}
	  \begin{aligned}
	    \tfrac{\dd}{\dd t} \mathscr{E}_{s, \eta_0} (\phi^\eps, \u, \d) + \mathscr{D}_{s, \eta_0} (\phi^\eps, \u, \d) \leq 0
	  \end{aligned}
	\end{equation*}
	for all $t \in [0,T^\star]$ and $0 < \eps \leq 1$. Integrating the previous inequality over $[0,t] \subseteq [0, T^\star]$, we have
	\begin{equation}\label{Cont-Eneg-Bnd}
	  \begin{aligned}
	    \mathscr{E}_{s, \eta_0} (\phi^\eps, \u, \d) (t) + \int_0^t \mathscr{D}_{s,\eta_0} (\phi^\eps, \u, \d) (\tau) \dd \tau \leq  \mathscr{E}_{s, \eta_0} (\phi^\eps (0), \u (0), \d (0)) \leq C_6 \delta
	  \end{aligned}
	\end{equation}
	for all $t \in [0, T^\star]$ and $0 < \eps \leq 1$, which reduces to
	\begin{equation*}
	  \begin{aligned}
	    \sup_{t \in [0,T^\star]} C_4 \mathscr{E}_{s, \eta_0}^\frac{1}{2} (\phi^\eps, \u, \d) \leq \tfrac{1}{4} < \tfrac{1}{2} \,.
	  \end{aligned}
	\end{equation*}
	The continuity of $ \mathscr{E}_{s, \eta_0} (\phi^\eps, \u, \d) $ tells us that there is a small $t^\star > 0$ such that
	$$ \sup_{t \in [0,T^\star + t^\star]} C_4 \mathscr{E}_{s, \eta_0}^\frac{1}{2} (\phi^\eps, \u, \d) \leq \tfrac{1}{2} \,, $$
	which contradicts to the definition of $T^\star$. Thus, we have $T^\star = T$.
	
	We thereby know that the energy bound \eqref{Cont-Eneg-Bnd} holds for all $t \in [0, T]$ and $0 < \eps \leq 1$. We now further take $ \delta $ with $0 < \delta \leq \min\Big\{ \tfrac{\delta_0}{2} , \tfrac{1}{16 C_4^2 C_6} , \tfrac{C_5}{2 C_6} \delta_0, \tfrac{C_5}{16 C_4^2 C_6^2} \Big\}$. Then the inequality \eqref{Cont-Eneg-Equiv} yields
	\begin{equation*}
	  \begin{aligned}
	    \mathcal{E}_s (\phi^\eps, \u, \d) \leq \tfrac{1}{C_5} \mathscr{E}_{s, \eta_0} (\phi^\eps, \u, \d) \leq \tfrac{C_6}{C_5} \delta \leq \min\Big\{ \tfrac{\delta_0}{2}, \tfrac{1}{16 C_4^2 C_6} \Big\}
	  \end{aligned}
	\end{equation*}
	for all $t \in [0,T]$ and $0 < \eps \leq 1$. Thus, the solution constructed in Proposition \ref{Prop-Local} can be globally extended and the bound \eqref{Cont-Eneg-Bnd}, held then in $t \in \R^+$, implies the uniform bound \eqref{Unif-Bnd-1} in Theorem \ref{Thm-global}. Furthermore, the bound \eqref{Unif-Bnd-1} and Lemma \ref{Lm-rho} yield that
\begin{align*}
    & C_{\rho}'(T_1, \delta) \leq \tfrac{1}{2} \exp \Big ( - \widehat{C} T_1^{\frac12} \big ( \int_0^\infty \| \na {\rm u}^\eps \|_{H^s}^2 \dd \tau \big )^{\frac12} \Big ) \leq \tfrac{1}{2} \exp \Big( - \int_0^t \| \dv {\rm u}^\eps \|_{L^\infty} \dd \tau \Big) \\
    & \quad \leq \rho^\eps (t, x) \leq \tfrac{3}{2} \exp \Big( \int_0^t \| \dv {\rm u}^\eps \|_{L^\infty} \dd \tau \Big) \leq \tfrac{3}{2} \exp \Big ( \widehat{C} T_1^{\frac12} \big ( \int_0^\infty \| \na {\rm u}^\eps \|_{H^s}^2 \dd \tau \big )^{\frac12} \Big ) \leq C_{\rho}(T_1, \delta)
\end{align*}
for all $ t \in [0, T_1] $ with any finite $ 0 < T_1 < + \infty $ and all $ x \in \mathbb{R}^n $, where the constants $ C_{\rho}'(T_1, \delta) $ and $ C_{\rho}(T_1, \delta) $ is independent of $\eps$. That is, the bound \eqref{Unif-Bnd-rho} holds.
\end{proof}

\subsection{Uniform bounds \eqref{Unif-Bnd-2}, \eqref{Unif-Bnd-3} and \eqref{Unif-Bnd-4} in Theorem \ref{Thm-global}}

In this subsection, based on the uniform bounds \eqref{Unif-Bnd-1} and \eqref{Unif-Bnd-rho}, we will show the uniform estimate for the derivative of the system \eqref{csys} with respect to time. Own to the third equation of the system $\eqref{csys}$ without singularity, it inspires us to deduce the uniform estimates of $\p_t\cd$ and $\p_t\d$ by the structure of the equation itself. As for the first two equation of the system $\eqref{csys}$ with singularity, we also use the ideas of cancellation the singularity to give the uniform estimates of $\p_t\pe$ and $\p_t\u$. Next, we divide it into two parts to discuss.

\subsubsection{Uniform estimate of $\p_t \cd$ and $\p_t\d$}
By the equation $\eqref{csys}_3$, and noticing that $\ddot{\rm d}^\e=\p_t\cd+\u\ct\na\cd$, we have
\begin{align}\label{tcd}
    \p_t \cd = - \u \ct \na \cd + \ka \tfrac{1}{\ro} \De \d + \tfrac{1}{\ro} \Gamma^\e \d + \la_1 \tfrac{1}{\ro} \cd + \la_1 \tfrac{1}{\ro} \B^\e \d + \la_2 \tfrac{1}{\ro} \A^\e \d \,.
\end{align}
Acting the derivative operator $\pa \, (0\leq |m| \leq s-1)$ on the above equation \eqref{tcd}, we get
\begin{align}\label{mtcd}
    \p_t \p^m \cd = & - \pa ( \u \ct \na \cd ) + \ka \pa \big( \tfrac{1}{\ro} \De \d \big) + \pa \big( \tfrac{1}{\ro} \Gamma^\e \d \big) \nonumber \\
    & + \la_1 \pa \big( \tfrac{1}{\ro} \cd \big) + \la_1 \pa \big( \tfrac{1}{\ro} \B^\e \d \big) + \la_2 \pa \big( \tfrac{1}{\ro} \A^\e \d \big) \,.
\end{align}
For all $|m| \leq s-1$, we deal with the right-hand side of \eqref{mtcd} term by term. By Moser-type calculus inequalities in Lemma \ref{lem1} and the bound \eqref{Unif-Bnd-rho}, one has
\begin{align*}
    \| \pa ( \u \ct \na \cd ) \|_{L^2} \lesssim\,& ( \| \u \|_{L^\infty} \| \pa \na \cd \|_{L^2} + \| \na \cd \|_{L^\infty} \| \pa \u \|_{L^2}) \lesssim \| \u \|_{H^s} \| \cd \|_{H^s} \,, \\
    \big\| \pa \big( \tfrac{1}{\ro} \De \d \big) \big\|_{L^2} \lesssim\, & \big( \big\| \tfrac{1}{\ro} \big\|_{L^\infty} \| \pa \De \d \|_{L^2} + \| \De \d \|_{L^\infty} \big\| \pa \tfrac{1}{\ro} \big\|_{L^2} \big) \lesssim \big( 1 + \| \pe \|^s_{\dot{H}^s} \big) \| \na \d \|_{H^s} \,.
\end{align*}
Recalling the structure of the Lagrangian multiplier $\Gamma^\e$, the third term on right-hand side of the equality \eqref{mtcd} can be divided into three parts as follows:
\begin{align*}
    \pa ( \tfrac{1}{\ro} \Gamma^\e \d ) = - \pa ( | \cd |^2 \d ) + \ka \pa \big( \tfrac{1}{\ro} | \na \d |^2 \d \big) - \la_2 \pa \big( \tfrac{1}{\ro} ( {\d}^{\top} \A^\e \d ) \d \big) \,.
\end{align*}
For the first part, it holds
\begin{align*}
    \| \pa ( | \cd |^2 \d ) \|_{L^2} \leq \,& \| \pa ( | \cd |^2 ) \|_{L^2} + \sum_{\substack{ a+b=m , \\ |b| \geq 1 }} \sum_{a_1 + a_2 = a } \| \p^{a_1} \cd \|_{L^4} \| \p^{a_2} \cd \|_{L^4} \| \p^b \d \|_{L^2} \\
    \lesssim\, & \| \cd \|_{H^s}^2 (1 + \| \na \d \|_{H^s} ) \,.
\end{align*}
For the second part, we have
\begin{align*}
    \big\| \pa \big( \tfrac{1}{\ro} | \na \d |^2 \d \big) \big\|_{L^2} \leq\, & \big\| \tfrac{1}{\ro} \pa ( | \na \d |^2 \d ) \big\|_{L^2} + \sum_{\substack{ a+b+c=m , \\ |a| \geq 1 }} \sum_{b_1+b_2=b} \big\| \p^a \tfrac{1}{\ro} \p^{b_1} \na \d \p^{b_2} \na \d \p^c \d \big\|_{L^2} \\
    \lesssim \,& \big( 1 + \| \pe \|^s_{\dot{H}^s} \big) \| \na \d \|_{H^s}^2 ( 1 + \| \na \d \|_{H^s} ) \,.
\end{align*}
Finally, for the third part, it deduces that
\begin{align*}
    & \big\| \pa \big( \tfrac{1}{\ro} ( {\d}^{\top} \A^\e \d ) \d \big) \big\|_{L^2} \\
    \leq\, & \big\| \tfrac{1}{\ro} \pa \big( ( {\d}^{\top} \A^\e \d ) \d \big) \big\|_{L^2} + \sum_{\substack{ a+b+c=m , \\ |a| \geq 1 }} \sum_{c_1+c_2+c_3=c} \big\| \p^a \tfrac{1}{\ro} \p^b \na \u \p^{c_1} \d \p^{c_2} \d \p^{c_3} \d \big\|_{L^2} \\
    \lesssim\, & \big( 1 + \| \pe \|^s_{\dot{H}^s} \big) \| \u \|_{\dot{H}^s} \sum_{j=0}^3 \| \na \d \|_{H^s}^j \,.
\end{align*}
With the above three estimates at hand, it leads to
\begin{align*}
    \big\| \pa \big( \tfrac{1}{\ro} \Gamma^\e \d \big) \big\|_{L^2} \lesssim\, & \big( 1 + \| \pe \|^s_{\dot{H}^s} \big) ( \| \na \d \|_{H^s} + \| \u \|_{\dot{H}^s} ) \sum_{k=0}^3 \| \na \d \|_{H^s}^k \\
    & + \| \cd \|_{H^s}^2 ( 1 + \| \na \d\|_{H^s} ) \,.
\end{align*}
Similarly, the last three terms of the right-hand side of \eqref{mtcd} can be controlled as
\begin{align*}
    \big\| \pa \big( \tfrac{1}{\ro} \cd \big) \big\|_{L^2} \lesssim\, & \big( \big\| \tfrac{1}{\ro} \big\|_{L^\infty} \| \pa \cd \|_{L^2} + \| \cd \|_{L^\infty} \big\| \pa \tfrac{1}{\ro} \big\|_{L^2} \big) \lesssim \big( 1 + \| \pe \|^s_{H^s} \big) \| \cd \|_{H^s} \,, \\
    \big\| \pa \big( \tfrac{1}{\ro} \B^\e \d \big) \big \|_{L^2} \leq\, & \big\| \tfrac{1}{\ro} \pa ( \B^\e \d ) \big\|_{L^2} + \sum_{\substack{ a+b+c=m , \\ |a| \geq 1 }} \big\| \p^a \tfrac{1}{\ro} \p^b \na \u \p^c \d \big\|_{L^2} \\
    \lesssim\, & \big( 1 + \| \pe \|^s_{H^s} \big) \| \u \|_{\dot{H}^s} ( 1 + \| \na \d \|_{H^s} ) \,, \\
    \big\| \pa \big( \tfrac{1}{\ro} \A^\e \d \big) \big\|_{L^2} \lesssim \, & \big( 1 + \| \pe \|^s_{H^s} \big) \| \u \|_{\dot{H}^s} ( 1 + \| \na \d \|_{H^s} ) \,.
\end{align*}
As a result, we have
\begin{equation}\label{tdd}
  \begin{aligned}
    \| \p_t \cd \|_{H^{s-1}} =\, & \sum_{|m| \leq s -1} \| \pa \p_t \cd \|_{L^2} \lesssim \| \u \|_{H^s} \| \cd \|_{H^s} + C \| \cd \|_{H^s}^2 ( 1 + \| \na \d \|_{H^s} ) \\
    & + \big( 1 + \| \pe \|^{s+2}_{\dot{H}^s} + \| \na \d \|^{s+2}_{H^s} \big) ( \| \na \d \|_{H^s} + \| \u \|_{\dot{H}^s} + \| \cd \|_{H^s} ) \\
    \lesssim \,& \big( 1 + \mathcal{E}_s^\frac{s+2}{2} (\phi^\eps, \u, \d) \big) \mathcal{E}_s^\frac{1}{2} (\phi^\eps, \u, \d) \,.
  \end{aligned}
\end{equation}

We now turn to estimate $\p_t\d$. Notice $ \p_t \d = \cd - \u \ct \na \d $, using Minkowski's inequality and Moser-type calculus inequality, we derive that
\begin{equation}\label{td}
  \begin{aligned}
    \| \p_t \d \|_{H^s} \leq & \| \cd \|_{H^s} + \| ( \u \ct \na ) \d \|_{H^s} \lesssim \| \cd \|_{H^s} + \| \u \|_{H^s} \| \na \d \|_{H^s} \\
    \lesssim & \big( 1 + \mathcal{E}_s^\frac{1}{2} (\phi^\eps, \u, \d) \big) \mathcal{E}_s^\frac{1}{2} (\phi^\eps, \u, \d) \,.
  \end{aligned}
\end{equation}
By \eqref{tdd} and \eqref{td}, and combining the uniform bound \eqref{Unif-Bnd-1}, it holds
\begin{align}\label{tdue}
    \| \p_t \cd \|_{L^\infty ( \R^+; H^{s-1} ) } + \| \p_t \d \|_{L^\infty ( \R^+; H^{s} ) } \leq \widetilde{C}_1 \,,
\end{align}
where the constant $\widetilde{C}_1 > 0$ is independents of $\e$.

\subsubsection{Uniform estimates of $\p_t \pe$ and $\p_t \u$}

By differentiating both the continuity equation $\eqref{csys}_1$ and the momentum equation $\eqref{csys}_2$ with respect to $t$, we get
\begin{align}\label{tsys}
\left\{
 \begin{array}{ll}
    \p_{tt} \pe + \u \ct \na \p_t \pe + \p_t \u \ct \na \pe + \p_t \pe \dv \u + \pe \dv \p_t \u + \frac{1}{\e} \dv \p_t \u = 0 \,, \\
    \p_{tt} \u + \u \ct \na \p_t \u + \p_t \u \ct \na \u + \frac{1}{\e} \p_t ( \frac{p'(\ro)}{\ro} ) \na \pe + \frac{1}{\e} \frac{p'(\ro)}{\ro} \na \p_t \pe \\
    \qquad\quad\qquad = \p_t ( \frac{1}{\ro} ) \dv ( \Sigma_{1}^\e + \Sigma_{2}^\e + \Sigma_{3}^\e ) + \frac{1}{\ro} \dv \p_t ( \Sigma_{1}^\e + \Sigma_{2}^\e + \Sigma_{3}^\e ) \,,
 \end{array}\right.
\end{align}
where $\rho^\eps = 1 + \eps \phi^\eps$.

For all multi-index $m \in \mathbb{N}^n$ with $ |m| \leq s-2 $, applying the derivative operator $\pa$ to the above two equations, and taking the inner product with $p'(\ro)\pa\p_t\pe$ and $\ro\pa\p_t\u$, respectively, gives
\begin{equation}\label{dt-phi-bnd-1}
  \begin{aligned}
    & \tfrac{1}{2} \tfrac{\dd}{\dd t} \| \p^m \p_t \phi^\eps \|^2_{L^2_{p' (\rho^\eps)}} + \underbrace{ \tfrac{1}{\eps} \big\lt \p^m \dv \p_t \u, p' (\rho^\eps) \p^m \p_t \phi^\eps \big\rt }_{\mathscr{S}_{\phi}}\\
    =\, & \underbrace{ \tfrac{1}{2} \big\lt \p_t  p' (\rho^\eps) , |\p^m \p_t \phi^\eps|^2 \big\rt  - \big\lt \u \cdot \na\p^m \p_t \phi^\eps , p'(\rho^\eps) \p^m \p_t \phi^\eps \big\rt }_{\mathscr{X}_1} \\
    &\underbrace{ - \big\lt [\p^m, \u \cdot \na] \p_t \phi^\eps , p'(\rho^\eps) \p^m \p_t \phi^\eps \big\rt }_{\mathscr{X}_2} \ \underbrace{ - \big\lt \p^m ( \p_t \u \cdot \na \phi^\eps ) , p'(\rho^\eps) \p^m \p_t \phi^\eps \big\rt }_{\mathscr{X}_3} \\
    & \underbrace{ - \big\lt \p^m ( \p_t \phi^\eps \dv \u ) , p'(\rho^\eps) \p^m \p_t \phi^\eps \big\rt }_{\mathscr{X}_4} \ \underbrace{ - \big\lt \p^m (\phi^\eps \dv \p_t \u) , p'(\rho^\eps) \p^m \p_t \phi^\eps \big\rt }_{\mathscr{X}_5}
  \end{aligned}
\end{equation}
and
  \begin{align}\label{dt-u-bnd-1}
    \nonumber & \tfrac{1}{2} \tfrac{\dd}{\dd t} \| \p^m \p_t \u \|^2_{L^2_{\rho^\eps}} + \underbrace{ \tfrac{1}{\eps} \big\lt p' (\rho^\eps) \na \p^m \p_t \phi^\eps , \p^m \p_t \u \big\rt }_{\mathscr{S}_{\rm u}} \\
    \nonumber & + \tfrac{1}{2} \mu_4 \| \na \p^m \p_t \u \|^2_{L^2} + (\tfrac{1}{2} \mu_4 + \xi ) \| \dv \p^m \p_t \u \|^2_{L^2} \\
    \nonumber = \,& \underbrace{ -\big\lt [\p^m, \u \cdot \na] \p_t \u , \rho^\eps \p^m \p_t \u \big\rt }_{\mathscr{Y}_1} \ \underbrace{ - \big\lt \p^m (\p_t \u \cdot \na \u) , \rho^\eps \p^m \p_t \u \big\rt }_{\mathscr{Y}_2} \\
    \nonumber & \underbrace{ - \tfrac{1}{\eps} \big\lt [ \p^m, \tfrac{1}{\rho^\eps} p' (\rho^\eps) \na ] \p_t \phi^\eps, \rho^\eps \p^m \p_t \u \big\rt }_{\mathscr{Y}_3} \ \underbrace{ - \tfrac{1}{\eps} \big\lt \p^m ( \p_t(\tfrac{p' (\rho^\eps)}{\rho^\eps}) \na \phi^\eps ), \rho^\eps \p^m \p_t \u \big\rt }_{\mathscr{Y}_4} \\
    \nonumber & + \underbrace{ \big\lt \dv \p^m \p_t \Sigma_2^\eps , \p^m \p_t \u \big\rt + \big\lt \dv \p^m \p_t \Sigma_3^\eps , \p^m \p_t \u \big\rt }_{\mathscr{Y}_5} \\
    \nonumber & + \underbrace{ \big\lt  \p^m \big( \p_t ( \tfrac{1}{\rho^\eps} ) \dv ( \Sigma_1^\eps + \Sigma_2^\eps + \Sigma_3^\eps ) \big) , \rho^\eps \p^m \p_t \u \big\rt }_{\mathscr{Y}_6} \\
    & + \underbrace{ \big\lt  [ \p^m , \tfrac{1}{\rho^\eps} \dv ] \p_t ( \Sigma_1^\eps + \Sigma_2^\eps + \Sigma_3^\eps ) \big) , \rho^\eps \p^m \p_t \u \big\rt }_{\mathscr{Y}_7} \,.
  \end{align}
One notices that the {\em key point} is to deal with the singular term $\mathscr{S}_\phi$ and $\mathscr{S}_{\rm u}$ in \eqref{dt-phi-bnd-1} and \eqref{dt-u-bnd-1}, respectively. The other terms will be controlled by carefully utilizing the Moser-type calculus inequalities in Lemma \ref{lem1}, the embedding inequalities in Lemma \ref{lem2} and integration by parts, similar to the calculations of the a priori estimates in the proof of Lemma \ref{Lmm-Apriori-Est}. We emphasize that the terms $\mathscr{Y}_3$ and $\mathscr{Y}_4$ are not a real singular term, although there is a coefficient $\tfrac{1}{\eps}$ in the front of it. This is because $\rho^\eps = 1 + \eps \phi^\eps$ will separate out  an $\eps$ after operate the derivative, so that the singularity $\tfrac{1}{\eps}$ will be canceled,

Fortunately, for the singular terms $\mathscr{S}_\phi$ and $\mathscr{S}_{\rm u}$, adding the singular term $\mathscr{S}_\phi$ to $\mathscr{S}_{\rm u}$ gives
\begin{equation}\label{dt-Singular-Cancel}
  \begin{aligned}
    \mathscr{S}_\phi + \mathscr{S}_{\rm u} =\, & \tfrac{1}{\eps} \big\lt \p^m \dv \p_t \u, p' (\rho^\eps) \p^m \p_t \phi^\eps \big\rt + \tfrac{1}{\eps} \big\lt p' (\rho^\eps) \na \p^m \p_t \phi^\eps , \p^m \p_t \u \big\rt \\
    =\, & \tfrac{1}{\eps} \big\lt \p^m \dv \p_t \u, p' (\rho^\eps) \p^m \p_t \phi^\eps \big\rt - \tfrac{1}{\eps} \big\lt \na p' (\rho^\eps) \p^m \p_t \phi^\eps , \p^m \p_t \u \big\rt \\
    & - \tfrac{1}{\eps} \big\lt p' (\rho^\eps) \p^m \p_t \phi^\eps , \p^m \dv \p_t \u \big\rt \\
    = \,& - \tfrac{1}{\eps} \big\lt p'' (\rho^\eps) \na \rho^\eps \p^m \p_t \phi^\eps , \p^m \p_t \u \big\rt = - \underbrace{ \big\lt p'' (\rho^\eps) \na \phi^\eps \p^m \p_t \phi^\eps , \p^m \p_t \u \big\rt }_{\mathscr{Z}} \,,
  \end{aligned}
\end{equation}
where we have made use of the integration by parts over $x \in \R^n$ and the relation $\rho^\eps = 1 + \eps \phi^\eps$. Thus, the summation of the singular terms $\mathscr{S}_\phi$ and $\mathscr{S}_{\rm u}$ in \eqref{dt-Singular-Cancel} will successfully cancel the singularity, namely, the quantity  $\mathscr{Z}$ is not of singularity.

Combining with the singularity cancellation \eqref{dt-Singular-Cancel} and summing up for \eqref{dt-phi-bnd-1} and \eqref{dt-u-bnd-1} tell us
\begin{equation}\label{dt-phi-u-bnd-1}
  \begin{aligned}
    & \tfrac{1}{2} \tfrac{\dd}{\dd t} \big( \| \p^m \p_t \phi^\eps \|^2_{L^2_{p' (\rho^\eps)}} + \| \p^m \p_t \u \|^2_{L^2_{\rho^\eps}} \big) + \tfrac{1}{2} \mu_4 \| \na \p^m \p_t \u \|^2_{L^2} \\
    &\qquad \quad + (\tfrac{1}{2} \mu_4 + \xi ) \| \dv \p^m \p_t \u \|^2_{L^2}  = \sum_{i=1}^5 \mathscr{X}_i + \sum_{j=1}^7 \mathscr{Y}_j + \mathscr{Z}
  \end{aligned}
\end{equation}
for all $|m| \leq s - 2$ and $0 < \eps \leq 1$.

Now we estimate the terms $\mathscr{X}_i$ $(1 \leq i \leq 5)$, $\mathscr{Y}_j$ $(1 \leq j \leq 7)$, and $\mathscr{Z}$ in \eqref{dt-phi-u-bnd-1},
respectively. We point out that the terms $\mathscr{X}_2$, $\mathscr{Y}_1$ and $\mathscr{Y}_7$ are equal to zero for $ |m| = 0 $. Since the estimates of these terms are similar to that of the a priori estimates in Lemma \ref{Lmm-Apriori-Est} and the uniform bound \eqref{Unif-Bnd-1} holds, we only list the results and omit the details here for simplicity. More precisely, we have
\begin{equation}\label{X12345}
  \begin{aligned}
    & \mathscr{X}_1 \lesssim \| \u \|_{H^s_{\rho^\eps}} \| \p_t \phi^\eps \|^2_{H^{s-2}_{p' (\rho^\eps)}} \,, \quad
    \mathscr{X}_2 \lesssim \| \u \|_{H^s_{\rho^\eps}} \| \p_t \phi^\eps \|^2_{H^{s-2}_{p' (\rho^\eps)}} \,, \\
    & \mathscr{X}_3 \lesssim \| \phi^\eps \|_{H^s_{p'(\rho^\eps)}} \| \p_t \u \|_{H^{s-2}_{\rho^\eps}} \| \p_t \phi^\eps \|_{H^{s-2}_{p' (\rho^\eps)}} \,, \quad
    \mathscr{X}_4 \lesssim \| \u \|_{H^s_{\rho^\eps}} \| \p_t \phi^\eps \|^2_{H^{s-2}_{p' (\rho^\eps)}} \,, \\
    & \mathscr{X}_5 \lesssim \| \phi^\eps \|_{H^s_{p' (\rho^\eps)}} \| \p_t \phi^\eps \|_{H^{s-2}_{p' (\rho^\eps)}} \| \na \p_t \u \|_{H^{s-2}} \,,
  \end{aligned}
\end{equation}
and
  \begin{equation}\label{Y1234567}
  \begin{aligned}
    \mathscr{Y}_1 & \lesssim \| \u \|_{H^s_{\rho^\eps}} \| \p_t \u \|^2_{H^{s-2}_{\rho^\eps}} \,, \quad  \mathscr{Y}_2 \lesssim \| \u \|_{H^s_{\rho^\eps}} \| \p_t \u \|^2_{H^{s-2}_{\rho^\eps}} \,, \\
      \mathscr{Y}_3 & \lesssim \big( 1 + \|\phi^\eps \|^s_{H^s_{p' (\rho^\eps)}}\big) \| \p_t \phi^\eps \|_{H^{s-2}_{p' (\rho^\eps)}}\| \p_t \u \|_{H^{s-2}_{\rho^\eps}} \,, \\
      \mathscr{Y}_4 & \lesssim \big( 1 + \| \phi^\eps \|^s_{H^s_{p' (\rho^\eps)}}\big) \| \p_t \phi^\eps \|_{H^{s-2}_{p' (\rho^\eps)}}\| \p_t \u \|_{H^{s-2}_{\rho^\eps}} \\
      \mathscr{Y}_5 & + (\mu_5 + \mu_6 + \tfrac{\la_2^2}{\la_1}) \| (\p^m \p_t \A^\eps) \d \|^2_{L^2} + \mu_1 \| {\d}^\top (\p^m \p_t \A^\eps) \d \|^2_{L^2} \\
      & \quad - \la_1 \| (\p^m \p_t \B^\eps) \d + \tfrac{\la_2}{\la_1} (\p^m \p_t \A^\eps) \d \|^2_{L^2} \\
     & \lesssim (1+\|\na \d \|_{H^s}^4)(1+\| \u \|_{H^s_{\rho^\eps}}+\| \cd \|_{H^s_{\rho^\eps}}) \\
      & \quad \times \big( 1+ \| \p_t \cd \|_{H^{s-1}} + \| \p_t \d \|_{H^s} \big)\big(1+ \| \p_t \u \|_{H^{s-2}_{\rho^\eps}}\big) \| \na \p_t \u \|_{H^{s-2}} \,, \\
     \mathscr{Y}_6 & \lesssim \big(1+\|\na \d\|^4_{H^s}\big) \big( \| \u \|_{H^s_{\rho^\eps}} + \| \cd \|_{H^s_{\rho^\eps}} + \| \na \d \|_{H^s} \big)\big( 1+\| \phi^\eps \|_{H^s_{p' (\rho^\eps)}}^s \big) \\
     & \quad \times \| \p_t \phi^\eps \|_{H^{s-2}_{p'(\rho^\eps)}}\big(\| \na \p_t \u \|_{H^{s-2}}+ \| \p_t \u \|_{H^{s-2}_{\rho^\eps}} \big) \,, \\
     \mathscr{Y}_7 & \lesssim \big( 1+\| \phi^\eps \|^s_{H^s_{p' (\rho^\eps)}} \big)\big(1+\|\na \d\|^4_{H^s}\big)\big( 1+ \| \p_t \cd \|_{H^{s-1}} + \| \p_t \d \|_{H^s} \big)\\
    & \quad \times \big( 1+ \| \u \|_{H^s_{\rho^\eps}} + \| \cd \|_{H^s_{\rho^\eps}} \big)\big( \| \na \p_t \u \|_{H^{s-2}} + \| \p_t \u \|_{H^{s-2}_{\rho^\eps}} \big) ( 1 + \| \p_t \u \|_{H^{s-2}_{\rho^\eps}} ) \,,
  \end{aligned}
  \end{equation}
and
\begin{equation}\label{Z-mathscr}
  \begin{aligned}
    \mathscr{Z} \lesssim \| \phi^\eps \|_{H^s_{p'(\rho^\eps)}} \| \p_t \phi^\eps \|_{H^{s-2}_{p'(\rho^\eps)}} \| \p_t \u \|_{H^{s-2}_{\rho^\eps}} \,,
  \end{aligned}
\end{equation}
where the bound \eqref{rho-bnd} is also used frequently.

By plugging the bounds \eqref{X12345}, \eqref{Y1234567} and \eqref{Z-mathscr} into the equality \eqref{dt-phi-u-bnd-1}, summing up for all $|m| \leq s-2$, combining with the coefficients relations \eqref{Coefficients}, the bounds \eqref{tdd}, \eqref{td} and the definition of $\mathcal{E}_s (\phi^\eps, \u, \d)$ in \eqref{Es-local}, we deduce that
\begin{equation*}
  \begin{aligned}
    & \tfrac{1}{2} \tfrac{\dd}{\dd t} E_s (\p_t \phi^\eps , \p_t \u) + D_s (\p_t \u) \lesssim \mathcal{E}_s^\frac{1}{2} (\phi^\eps, \u, \d) E_s (\p_t \phi^\eps, \p_t \u) \\
    &\qquad  + \big( 1 + \mathcal{E}_s^{s+2} (\phi^\eps, \u, \d) \big) \big ( D_s^\frac{1}{2} (\p_t \u) + E_s^\frac{1}{2} (\p_t \phi^\eps, \p_t \u) \big ) \big( 1 + E_s^\frac{1}{2} (\p_t \phi^\eps, \p_t \u) \big)
  \end{aligned}
\end{equation*}
for all $t \in \R^+$ and $0 < \eps \leq 1$, where
\begin{equation}\label{ED-t-derivative}
  \begin{aligned}
    E_s (\p_t \phi^\eps , \p_t \u) &= \| \p_t \phi^\eps \|^2_{H^{s-2}_{p' (\rho^\eps)}} + \| \p_t \u \|^2_{H^{s-2}_{\rho^\eps}} \,, \\
    D_s (\p_t \u) &= \tfrac{1}{2} \mu_4 \| \na \p_t \u \|^2_{H^{s-2}} + (\tfrac{1}{2} \mu_4 + \xi ) \| \dv \p_t \u \|^2_{H^{s-2}} \,.
  \end{aligned}
\end{equation}
Thanks to  the Young's inequality and the uniform bound \eqref{Unif-Bnd-1}, we deduce that there is a constant $C_8 > 0$, independent of $\eps \in (0,1]$, such that
\begin{equation*}
  \begin{aligned}
    \tfrac{\dd}{\dd t} E_s (\p_t \phi^\eps, \p_t \u) + D_s (\p_t \u) \leq C_8 \big( 1 + E_s (\p_t \phi^\eps, \p_t \u) \big)
  \end{aligned}
\end{equation*}
for all $t \in \R^+$ and $0 < \eps \leq 1$. One easily solves the above differential inequality  to obtain that for any fixed $T > 0$,
\begin{equation}\label{dt-Es-1}
  \begin{aligned}
    E_s (\p_t \phi^\eps, \p_t \u) (t) \leq \big(1 + E_s (\p_t \phi^\eps (0) , \p_t \u (0))\big) \exp (C_8 T)
  \end{aligned}
\end{equation}
holds for all $t \in [0,T]$ and $0 < \eps \leq 1$.

Using the first two equations of \eqref{csys}, we deduce that
\begin{equation*}
  \begin{aligned}
    \p_t \phi^\eps (0) & = - \u_0 \cdot \na \phi^\eps_0 - \phi^\eps_0 \dv \u_0 - \tfrac{1}{\eps} \dv \u_0 \,, \\
    \p_t \u (0) & = - \u_0 \cdot \na \u_0 - \tfrac{1}{\eps} \tfrac{1}{\rho^\eps_0} p' (\rho^\eps_0) \na \phi^\eps_0 + \tfrac{1}{\rho^\eps_0} \dv \big( \Sigma_1^\eps (0) + \Sigma_2^\eps (0) + \Sigma_3^\eps (0) \big) \,,
  \end{aligned}
\end{equation*}
where
\begin{equation*}
  \begin{aligned}
    \Sigma_1^\eps (0) =\, & \tfrac{1}{2} \mu_4 ( \na \u_0 + \na \u_0{}^\top ) + \xi \dv \u_0 \I \,, \\
    \Sigma_2^\eps (0) =\, & \tfrac{1}{2} \ka |\na \d_0|^2 \I - \ka \na \d_0 \odot \na \d_0 \,, \\
    \Sigma_3^\eps (0) =\, & \mu_1 ( \d_0{}^\top \A_0^\eps \d_0 ) \d_0 \otimes \d_0 + \mu_2 \d_0 \otimes (\widetilde{\rm d}^\eps_0 + \B_0^\eps \d_0) \\
    & + \mu_3 (\widetilde{\rm d}^\eps_0 + \B_0^\eps \d_0) \otimes \d_0 + \mu_5 \d_0 \otimes (\A_0^\eps \d_0) + \mu_6 (\A_0^\eps \d_0) \otimes \d_0 \,.
  \end{aligned}
\end{equation*}
From the initial data \eqref{IC-1} and \eqref{IC-2}, one easily deduces that
\begin{equation*}
  \begin{aligned}
    \| \p_t \phi^\eps (0) \|_{H^{s-2}} \lesssim\, & \| \u_0 \|_{H^s_{\rho^\eps_0}} \| \phi^\eps_0 \|_{H^s_{p'(\rho^\eps_0)}} + \tfrac{1}{\eps} \|\dv {\rm u}^\eps_0 \|_{H^{s-2}} \lesssim \delta + C_{\rm u} < + \infty \,, \\
    \| \p_t \u (0) \|_{H^{s-2}} \lesssim\, & \tfrac{1}{\eps} \| \na \phi^\eps_0 \|_{H^{s-2}} \big( 1 + \| \phi^\eps_0 \|^s_{H^s_{p'(\rho^\eps_0)}} \big) + \big( 1 + \| \phi^\eps_0 \|^{s+5}_{H^s_{p'(\rho^\eps_0)}} + \| \u_0 \|^{s+5}_{H^s_{\rho^\eps_0}} + \| \na \d_0 \|^{s+5}_{H^s} \big) \\
    & \quad \times \big( \| \u_0 \|_{H^s_{\rho^\eps_0}} + \| \na \d_0 \|_{H^s} + \| \widetilde{\rm d}^\eps_0 \|_{H^s_{\rho^\eps}} \big) \\
    \lesssim\, & C_\phi ( 1 + \delta^\frac{s}{2} ) + ( 1 + \delta^\frac{s+5}{2} ) \delta^\frac{1}{2} < + \infty
  \end{aligned}
\end{equation*}
for all $0 < \eps \leq 1$, which means that $ E_s ( \p_t \phi^\eps (0) , \p_t \u (0) ) \leq C(\delta, C_{\rm u}, C_{\phi}) < + \infty $. Consequently, the inequality \eqref{dt-Es-1} reduces to the bound \eqref{Unif-Bnd-3}, i.e.,
\begin{equation*}
  \begin{aligned}
    \| \p_t \phi^\eps \|^2_{L^\infty(0,T; H^{s-2}_{p' (\rho^\eps)})} + \| \p_t \u \|^2_{L^\infty(0,T; H^{s-2}_{\rho^\eps})} \leq C_{\phi {\rm u}} (T) < + \infty
  \end{aligned}
\end{equation*}
for any fixed $T > 0$ and for all $0 < \eps \leq 1$, where $ C_{\phi {\rm u}} (T) $ is a constant independent of $ \eps $.

Finally, we justify the uniform bound \eqref{Unif-Bnd-4}. From the first two equations of \eqref{csys}, Moser-type calculus inequalities in Lemma \ref{lem1} and the uniform bound \eqref{Unif-Bnd-rho}, we derive that
\begin{equation*}
  \begin{aligned}
    \tfrac{1}{\eps} \| \dv \u \|_{H^{s-2}} \leq & \| \p_t \phi^\eps \|_{H^{s-2}} + \| \u \cdot \na \phi^\eps \|_{H^{s-2}} + \| \phi^\eps \dv \u \|_{H^{s-2}} \\
    \lesssim & \| \p_t \phi^\eps \|_{H^{s-2}_{p'(\rho^\eps)}} + \| \u \|_{H^s_{\rho^\eps}} \| \phi^\eps \|_{H^s_{p'(\rho^\eps)}} \,, \\
    \tfrac{1}{\eps} \| p' (\rho^\eps) \na \phi^\eps \|_{H^{s-2}} \leq & \| \rho^\eps \p_t \u \|_{H^{s-2}} + \| \rho^\eps \u \cdot \na \u \|_{H^{s-2}} + \| \dv (\Sigma_1^\eps + \Sigma_2^\eps + \Sigma_3^\eps) \|_{H^{s-2}} \\
    \lesssim & \big( 1 + \| \phi^\eps \|^{s+4}_{H^s_{p'(\rho^\eps)}} + \| \u \|^{s+4}_{H^s_{\rho^\eps}} + \| \na \d \|^{s+4}_{H^s} \big) \\
    & \times \big( \| \p_t \u \|_{H^{s-2}_{\rho^\eps}} + \| \u \|_{H^s_{\rho^\eps}}  + \| \na \d \|_{H^s} + \| \cd \|_{H^s_{\rho^\eps}} \big) \,,
  \end{aligned}
\end{equation*}
which, combining with the uniform bounds \eqref{Unif-Bnd-1}, \eqref{Unif-Bnd-rho} and \eqref{Unif-Bnd-3}, implies that
\begin{equation*}
  \begin{aligned}
    \tfrac{1}{\eps} \| \dv \u \|_{L^\infty(0;T;H^{s-2})} + \tfrac{1}{\eps} \| p' (\rho^\eps) \na \phi^\eps \|_{L^\infty(0,T; H^{s-2})} \leq C_{\phi {\rm u}}' (T) < + \infty
  \end{aligned}
\end{equation*}
for any fixed $T > 0$ and all $0 < \eps \leq 1$, where $ C_{\phi {\rm u}}' (T) $ is a constant independent of $ \eps $. Then the proof of Theorem \ref{Thm-global} is finished.
\hfill$\square$

\section{Limit to incompressible hyperbolic Ericksen-Leslie's liquid crystal system: Proof of Theorem \ref{Thm-Limit}}\label{Sec:Limit}

In this section, based on the uniform global energy bounds \eqref{Unif-Bnd-1}, \eqref{Unif-Bnd-rho}, \eqref{Unif-Bnd-2}, \eqref{Unif-Bnd-3} and \eqref{Unif-Bnd-4} in Theorem \ref{Thm-global}, we aim at deriving the incompressible hyperbolic Ericksen-Leslie's liquid crystal model \eqref{isys} from the corresponding compressible system \eqref{csys} as $\eps \rightarrow 0$.
We divide it into two parts: limits from the global energy estimates and convergence to the limit equations.

\subsection{Limits from the global energy estimates}

We first introduce the following Aubin-Lions-Simon Theorem, a fundamental result of compactness in the study of nonlinear evolution problems, which can be referred to Theorem II.5.16 of \cite{Boyer-Fabrie-2013-BOOK}, \cite{Simon-1987-AMPA} or \cite{S}, for instance.

\begin{lem}[Aubin-Lions-Simon Theorem]\label{Lmm-ALS-Thm}
	Let $B_0 \subset B_1 \subset B_2$ be three Banach spaces. We assume that the embedding of $B_1$ in $B_2$ is continuous and that the embedding of $B_0$ in $B_1$ is compact. Let 
 $1 \leq p,r \leq + \infty$. For $T > 0$, we define
	\begin{equation*}
	  \begin{aligned}
	    E_{p,r} = \big\{ u \in L^p (0,T;B_0), \p_t u \in L^r (0,T;B_2) \big\} \,.
	  \end{aligned}
	\end{equation*}
	\begin{enumerate}
		\item If $p < + \infty$, the embedding of $E_{p,r}$ in $ L^p (0,T;B_1) $ is compact.
		\item If $p = + \infty$ and $r > 1$, the embedding of $E_{p,r}$ in $ C (0,T;B_1) $ is compact.
	\end{enumerate}
\end{lem}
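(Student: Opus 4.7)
The plan is to reduce this classical compactness result to two ingredients: an Ehrling-type interpolation inequality that exploits the compactness of the embedding $B_0 \hookrightarrow B_1$, and a Kolmogorov--Riesz--Fr\'echet (resp.\ Arzel\`a--Ascoli) compactness criterion in the larger space $B_2$ (resp.\ $B_1$) whose time-equicontinuity hypothesis is verified through the uniform control on $\p_t u$.

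First, I would establish Ehrling's lemma: for every $\eta > 0$ there exists $C_\eta > 0$ such that
\begin{equation*}
\| v \|_{B_1} \leq \eta \| v \|_{B_0} + C_\eta \| v \|_{B_2} \quad \text{for all } v \in B_0.
\end{equation*}
This is a standard contradiction argument. If it failed, there would be $(v_k) \subset B_0$ with $\| v_k \|_{B_1} = 1$, $\| v_k \|_{B_0}$ bounded, and $\| v_k \|_{B_2} \to 0$; the compactness of $B_0 \hookrightarrow B_1$ extracts a $B_1$-convergent subsequence whose limit must be zero by continuity of $B_1 \hookrightarrow B_2$, contradicting $\| v_k \|_{B_1} = 1$.

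Second, take a bounded sequence $(u_n) \subset E_{p,r}$ and prove part (1). I would apply Ehrling pointwise in $t$ and integrate to obtain
\begin{equation*}
\| u_n - u_m \|_{L^p(0,T;B_1)} \leq \eta \| u_n - u_m \|_{L^p(0,T;B_0)} + C_\eta \| u_n - u_m \|_{L^p(0,T;B_2)}.
\end{equation*}
Since the first summand is bounded by $2\eta M$ with $M$ a uniform $L^p(0,T;B_0)$-bound, it suffices to show $(u_n)$ is relatively compact in $L^p(0,T;B_2)$. Apply the Riesz--Fr\'echet--Kolmogorov criterion: pointwise precompactness in $B_2$ follows from the compact embedding $B_0 \hookrightarrow B_2$ (composition of compact and continuous), and equicontinuity of time translations follows, when $r > 1$, from
\begin{equation*}
\| u_n(t+h) - u_n(t) \|_{B_2} \leq \int_t^{t+h} \| \p_t u_n(\tau) \|_{B_2} \, \dd \tau \leq h^{1-1/r} \| \p_t u_n \|_{L^r(0,T;B_2)},
\end{equation*}
giving a uniform modulus that is integrated against the time variable.

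Third, for part (2) with $p = \infty$ and $r > 1$, I would invoke the Arzel\`a--Ascoli theorem with values in $B_1$: pointwise relative compactness in $B_1$ on a countable dense set of $[0,T]$ is obtained by iterated extraction from $B_0 \hookrightarrow B_1$ compact, and equicontinuity in $B_1$ follows by splitting through Ehrling, $\| u_n(t+h) - u_n(t) \|_{B_1} \leq \eta \| u_n(t+h) - u_n(t) \|_{B_0} + C_\eta \| u_n(t+h) - u_n(t) \|_{B_2}$, where the first term is $\leq 2\eta M$ and the second is $\leq C_\eta h^{1-1/r} \| \p_t u_n \|_{L^r(0,T;B_2)}$, uniformly small as $h \to 0$ once $\eta$ is fixed. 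The main obstacle I expect is the endpoint $r = 1$ allowed in part (1): the H\"older trick produces no modulus of time equicontinuity, and one must replace the increment estimate by a mollification-in-time argument, estimating the regularization error in terms of $\| \p_t u_n \|_{L^1(0,T;B_2)}$ by a careful truncation device. This is precisely the refinement carried out by Simon \cite{Simon-1987-AMPA}, and in the write-up I would either reproduce it or cite it directly, while presenting the $r > 1$ case in full since it already suffices for every application made in this paper.
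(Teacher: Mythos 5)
Your proposal is correct and follows the standard route; the paper itself offers no proof of this lemma at all, simply citing Boyer--Fabrie (Theorem II.5.16) and Simon, and the argument you outline --- Ehrling's interpolation inequality to reduce compactness in $L^p(0,T;B_1)$ (resp.\ $C(0,T;B_1)$) to compactness in the weakest space $B_2$, followed by the Bochner-valued Kolmogorov criterion (resp.\ Arzel\`a--Ascoli) with time-equicontinuity supplied by $\|u_n(t+h)-u_n(t)\|_{B_2}\leq h^{1-1/r}\|\p_t u_n\|_{L^r(0,T;B_2)}$ --- is exactly the proof in those sources. Two small points. First, for part (1) the ``pointwise precompactness'' hypothesis of the Kolmogorov criterion should be phrased as precompactness in $B_2$ of the sets $\big\{\int_{t_1}^{t_2}u_n\,\dd\tau\big\}_n$ (these are bounded in $B_0$ by H\"older, hence precompact in $B_2$); the pointwise values $u_n(t)$ themselves need not lie in a fixed bounded set of $B_0$ when $p<\infty$. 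Second, the one genuine gap you flag --- the endpoint $r=1$ of part (1), where the H\"older increment bound degenerates and Simon's mollification/truncation refinement is needed --- is real, but you identify it precisely, attribute it correctly, and observe (accurately) that every invocation of the lemma in this paper uses $p=r=+\infty$, so the $r>1$ case you prove in full is sufficient. For part (2), note also that applying Ehrling to $u_n(t)$ at \emph{every} $t$ (not just a.e.) requires passing the inequality to the $B_1$-closure of the ball of $B_0$, a routine limit since the right-hand side is lower semicontinuous along $B_1$-convergent sequences; this does not affect the conclusion.
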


From Theorem \ref{Thm-global}, we deduce that the Cauchy problem \eqref{csys}-\eqref{civ} admits a global solution $(\phi^\eps, \u, \d) \in \R \times \R^n \times \mathbb{S}^{n-1}$ with
\begin{equation*}
  \begin{aligned}
    \phi^\eps \in L^\infty (\R^+; H^s_{p' (\rho^\eps)}) \,, \ \u, \cd \in L^\infty (\R^+; H^s_{\rho^\eps}) \,, \ \na \u \in L^2 (\R^+; H^s) \,, \ \na \d \in L^\infty (\R^+; H^s) \,,
  \end{aligned}
\end{equation*}
which subjects to the uniform global energy estimates \eqref{Unif-Bnd-1}-\eqref{Unif-Bnd-2}, \eqref{Unif-Bnd-3} and \eqref{Unif-Bnd-4}. Then there is a positive constant $C$, independent of $\eps$, such that
\begin{equation}\label{Ub-1}
  \begin{aligned}
    \| \phi^\eps \|^2_{L^\infty (\R^+; H^s)} & + \| \u \|^2_{L^\infty (\R^+; H^s)} + \| \cd \|^2_{L^\infty (\R^+; H^s)} \\
    & + \| \na \d \|^2_{L^\infty (\R^+; H^s)} + \tfrac{1}{2} \mu_4 \| \na \u \|^2_{L^2(\R^+; H^s)} \leq C \delta \,,
  \end{aligned}
\end{equation}
\begin{equation}\label{Ub-2}
  \begin{aligned}
 \!\!\!\!\!\!\!\!\!\!\!   \| \p_t \cd \|^2_{L^\infty (\R^+; H^{s-1})} + \| \p_t \d \|^2_{L^\infty (\R^+; H^s)} \leq C \,,
  \end{aligned}
\end{equation}
and
\begin{equation}\label{Ub-3}
  \begin{aligned}
    \| \p_t \phi^\eps \|^2_{L^\infty (0,T; H^{s-2})} + \| \p_t \u \|^2_{L^\infty (0,T; H^{s-2})} \leq C(T) \,,
  \end{aligned}
\end{equation}
\begin{equation}\label{Ub-4}
  \begin{aligned}
    \tfrac{1}{\eps} \| \dv \u \|_{L^\infty (0,T; H^{s-2})} + \tfrac{1}{\eps} \| p' (\rho^\eps) \na \phi^\eps \|_{L^\infty (0,T; H^{s-2})} \leq C(T)
  \end{aligned}
\end{equation}
for any fixed $T > 0$ and some $\eps$-independent $C(T) > 0$ and for all $0 < \eps \leq 1$.

From the uniform bounds \eqref{Ub-1}, there exist the functions $ ( \phi, {\rm w}, {\rm d}, {\rm u} ) $ with
\begin{align*}
    & \phi, {\rm w} \in L^\infty (\R^+; H^s) \,, \;\;  {\rm d} \in L^\infty (\R^+; L^\infty) \cap L^\infty (\R^+; \dot{H}^{s+1})\,, \\
    & {\rm u} \in L^\infty (\R^+; H^s) \cap L^2 (\R^+; \dot{H}^{s+1})  \,,
\end{align*}
such that
\begin{equation}\label{Cnvgc-1}
  \begin{aligned}
    & ( \phi^\eps, \u, \cd ) \rightarrow ( \phi, {\rm u}, {\rm w} )\  \ \textrm{ weakly-}\star \textrm{ for } t \geq 0, \textrm{ weakly in } H^s \,, \\
    & \d \rightarrow {\rm d}\  \ \textrm{ weakly-}\star \textrm{ in } L^\infty (\R^+; L^\infty), \textrm{ weakly-} \star \textrm{ for } t \geq 0, \textrm{ weakly in } \dot{H}^{s+1} \,, \\
    & \na \u \rightarrow \na {\rm u} \  \ \textrm{ weakly in } L^2 (\R^+; H^s) \,,
  \end{aligned}
\end{equation}
as $\eps \rightarrow 0$. Using the uniform estimate \eqref{Ub-4}and Poincar\'{e}'s inequality, we obtain
\begin{align*}
    \Big \| \tfrac{1}{\eps^2} p (\rho^\eps) - \tfrac{1}{\eps^2} \tfrac{1}{|\Omega|} \int_{\Omega} p (\rho^\eps) \Big \|_{L^2(\Omega)} \lesssim \| \tfrac{1}{\eps^2} \na p (\rho^\eps) \|_{L^2} \leq C(T)
\end{align*}
for any compact domain $ \Omega\subseteq \R^n $. There exists a function $ \pi_1 \in L^\infty (0,T; L^2_{\rm loc}) $, such that
\begin{align}\label{Cnvgc-p1}
    \tfrac{1}{\eps^2} p (\rho^\eps) - \tfrac{1}{\eps^2} \tfrac{1}{|\Omega|} \int_{\Omega} p (\rho^\eps) \rightarrow \pi_1 \  \ \textrm{ weakly-} \star \textrm{ for } t \in [0, T], \textrm{ weakly in } L^{2}_{\rm loc} \,.
\end{align}
Then, it derives from \eqref{Ub-4} that
\begin{equation}\label{Cnvgc-p2}
\begin{aligned}
    \tfrac{1}{\eps} p'(\rho^\eps) \na \phi^\eps = \tfrac{1}{\eps^2} \na p (\rho^\eps) \rightarrow \na \pi_1 \  \ \textrm{ weakly-} \star \textrm{ for } t \geq 0, \textrm{ weakly in } H^{s-2}_{\rm loc} \,.
\end{aligned}
\end{equation}
The limits \eqref{Cnvgc-1}, \eqref{Cnvgc-p1} and \eqref{Cnvgc-p2} may hold for some subsequences. But, for convenience, we still employ the original notations of the sequences to denote by the subsequences throughout this paper.

One notices that
\begin{equation}\label{Embeddings}
  \begin{aligned}
    H^s \hookrightarrow H^{s-1}_{\loc} \hookrightarrow H^{s-1}_{\loc} ( \textrm{ or } \hookrightarrow H^{s-2}_{\loc} ) \,,
  \end{aligned}
\end{equation}
where the embedding of $H^s$ in $H^{s-1}_{\loc}$ is compact derived from Rellich-Kondrachov Theorem (see \cite{Adams-Fournier-2003-BOOK}, for instance) and the embedding of $H^{s-1}_{\loc}$ in $H^{s-1}_{\loc}$ (or $H^{s-2}_{\loc}$) is naturally continuous. Then, from Aubin-Lions-Simon Theorem in Lemma \ref{Lmm-ALS-Thm}, the bounds \eqref{Ub-1}-\eqref{Ub-3} and the embeddings \eqref{Embeddings}, we deduce that
\begin{equation}\label{Cnvgc-2}
  \begin{aligned}
    (\phi^\eps, \u , \cd, \na \d) \rightarrow (\phi, {\rm u}, {\rm w}, \na {\rm d})
  \end{aligned}
\end{equation}
strongly in $C(\R^+; H^{s-1}_{\loc})$ as $\eps \rightarrow 0$. We immediately know that
\begin{equation*}
  \begin{aligned}
    \u \cdot \na \d \rightarrow {\rm u} \cdot \na {\rm d}
  \end{aligned}
\end{equation*}
strongly in $C(\R^+; H^{s-1}_{\loc})$ as $\eps \rightarrow 0$. Moreover, from the convergences \eqref{Cnvgc-1} and uniform bound \eqref{Ub-2}, one easily deduces that
\begin{equation*}
  \begin{aligned}
    & \p_t \cd  \rightarrow \p_t {\rm w} \  \ \textrm{ weakly-} \star \textrm{ for } t \geq 0, \ \ \textrm{ weakly in } H^{s-1} \,, \\
    & \p_t \d  \rightarrow \p_t {\rm d} \  \ \textrm{ weakly-} \star \textrm{ for } t \geq 0, \ \ \textrm{ weakly in } H^s \,,
  \end{aligned}
\end{equation*}
as $\eps \rightarrow 0$. Combining with $\cd = \p_t \d + \u \cdot \na \d$, we have
\begin{equation}\label{w=dot-d}
  \begin{aligned}
    {\rm w} = \p_t {\rm d} + {\rm u} \cdot \na {\rm d} \,\,( = \dot{\rm d} ) \,.
  \end{aligned}
\end{equation}
Noticing that $\p_t \d = \cd - \u \cdot \na \d$ and $\p_t {\rm d} = \dot{\rm d} - {\rm u} \cdot \na {\rm d}$, we have
\begin{equation*}
  \begin{aligned}
    \tfrac{1}{2} \tfrac{\dd}{\dd t} \| \d - {\rm d} \|^2_{L^2} = \lt \cd - \dot{\rm d} - ( \u \cdot \na \d - {\rm u} \cdot  \na {\rm d} ) , \d - {\rm d} \rt \,,
  \end{aligned}
\end{equation*}
which implies that $ \tfrac{\dd}{\dd t} \| \d - {\rm d} \|_{L^2} \leq \| \cd - \dot{\rm d} \|_{L^2} + \| \u \cdot \na \d - {\rm u} \cdot \na {\rm d} \|_{L^2}$, namely, for any fixed $T > 0$ and for all $t \in [0,T]$,
\begin{equation*}
  \begin{aligned}
    \| \d - {\rm d} \|_{L^2} \leq \| \d_0 - {\rm d}_0 \|_{L^2} + T \| \cd - \dot{\rm d} \|_{L^\infty (0,T;L^2)} + T \| \u \cdot \na \d - {\rm u} \cdot \na {\rm d} \|_{L^\infty (0,T; L^2)} \rightarrow 0
  \end{aligned}
\end{equation*}
as $\eps \rightarrow 0$. Here the convergence \eqref{Cnvgc-2}, \eqref{w=dot-d} and the initial conditions in Theorem \ref{Thm-Limit} are utilized. Consequently, we have
\begin{equation}\label{Cnvgc-3}
  \begin{aligned}
    \lim_{\eps \rightarrow 0} \| \d - {\rm d} \|_{L^\infty(0,T; L^2)} = 0
  \end{aligned}
\end{equation}
for any fixed $T > 0$. This does not mean that $\d \rightarrow {\rm d}$ strongly in $L^2$, because neither $\d$ nor ${\rm d}$ are in $L^2$.

\subsection{Convergence to the limit equations} In this subsection, we will derive the incompressible hyperbolic Ericksen-Leslie's liquid crystal flow \eqref{isys} from the corresponding compressible model \eqref{sys2} and the convergences obtained in the previous subsection.

\subsubsection{Equation of ${\rm u}$}

First, from the uniform bound \eqref{Ub-4}, we know that $ \dv \u \rightarrow 0 $ strongly in $L^\infty (\R^+; H^{s-2})$ as $\eps \rightarrow 0$, which, combining with the convergence \eqref{Cnvgc-1} or \eqref{Cnvgc-2}, implies that
\begin{equation}\label{Incompressibility}
  \begin{aligned}
    \dv {\rm u} = 0
  \end{aligned}
\end{equation}
for a.e. $ (t, x) \in \mathbb{R}^{+} \times \mathbb{R}^n $. Next, we derive the ${\rm u}$-equation of \eqref{isys} from the second equation of \eqref{csys}, namely
\begin{equation*}
  \begin{aligned}
    \p_t \u + \u \cdot \na \u + \eps \phi^\eps \p_t \u + \eps \phi^\eps \u \cdot \na \u + \tfrac{1}{\eps} p' (\rho^\eps) \na \phi^\eps = \dv ( \Sigma_1^\eps + \Sigma_2^\eps + \Sigma_3^\eps ) \,.
  \end{aligned}
\end{equation*}
The convergence \eqref{Cnvgc-2} implies that
\begin{equation}\label{Cnvgc-u-1}
  \begin{aligned}
    \u \cdot \na \u \rightarrow {\rm u} \cdot \na {\rm u}
  \end{aligned}
\end{equation}
strongly in $C(\R^+; H^{s-2}_{\loc})$ as $\eps \rightarrow 0$. From the uniform bounds \eqref{Ub-1}, \eqref{Ub-3} and Moser-type calculus inequalities in Lemma \ref{lem1}, we have
\begin{equation*}
  \begin{aligned}
    & \| \phi^\eps \p_t \u \|_{L^\infty (0,T; H^{s-2})} + \| \phi^\eps \u \cdot \na \u \|_{L^\infty (\R^+; H^{s-1})} \\
    & \qquad \lesssim \| \phi^\eps \|_{L^\infty (\R^+; H^s)} \| \p_t \u \|_{L^\infty (0,T; H^{s-2})} + \| \phi^\eps \|_{L^\infty (\R^+; H^s)} \| \u \|^2_{L^\infty (\R^+; H^s)} \leq C(T)
  \end{aligned}
\end{equation*}
for any fixed $T > 0$, which implies that 
\begin{equation}\label{Cnvgc-u-2}
  \begin{aligned}
    & \eps \phi^\eps \p_t \u \rightarrow 0 \textrm{ strongly in } L^\infty (0,T; H^{s-2}) \textrm{ for any fixed } T > 0 \,, \\
    & \eps \phi^\eps \u \cdot \na \u \rightarrow 0 \textrm{ strongly in } L^\infty (\R^+; H^{s-1}) \,,
  \end{aligned}
\end{equation}
as $\eps \rightarrow 0$. Recalling that $\Sigma_1^\eps = \tfrac{1}{2} \mu_4 ( \na \u + \na \u{}^\top ) + \xi \dv \u \I$, we deduce from the convergence \eqref{Cnvgc-2} that
\begin{equation}\label{Cnvgc-u-4}
  \begin{aligned}
    \dv \Sigma_1^\eps \rightarrow \dv \big( \tfrac{1}{2} \mu_4 ( \na {\rm u} + \na {\rm u}^\top ) + \xi \dv {\rm u} \,\I \big) = \tfrac{1}{2} \mu_4 \Delta {\rm u} + (\tfrac{1}{2} \mu_4 + \xi) \na \dv {\rm u} = \tfrac{1}{2} \mu_4 \Delta {\rm u}
  \end{aligned}
\end{equation}
strongly in $C(\R^+; H^{s-3}_{\loc})$ as $\eps \rightarrow 0$, where we have made use of the incompressibility \eqref{Incompressibility}. Since $\Sigma_2^\eps = \tfrac{1}{2} \ka |\na \d|^2 \I - \ka \na \d \odot \na \d$, the convergence \eqref{Cnvgc-2} yields that
\begin{equation}\label{Cnvgc-u-5}
  \begin{aligned}
    \dv \Sigma_2^\eps \rightarrow \dv \big( \tfrac{1}{2} \ka |\na {\rm d}|^2 \I - \ka \na {\rm d} \odot \na {\rm d} \big) = - \ka \dv ( \na {\rm d} \odot \na {\rm d} ) + \tfrac{1}{2} \ka \na |\na {\rm d}|^2
  \end{aligned}
\end{equation}
strongly in $C(\R^+; H^{s-2}_{\loc})$ as $\eps \rightarrow 0$. Recall that $\Sigma_3^\eps = \tilde{\sigma}_{\bm{{\mu}}} (\u, \d, \cd)$, where
\begin{equation}\label{sigma-tilde}
  \begin{aligned}
    \tilde{\sigma}_{\bm{{\mu}}} (\u, \d, \cd) =\, & \mu_1 (\d{}^\top \A^\eps \d) \d \otimes \d + \mu_2 \d \otimes ( \cd + \B^\eps \d ) \\
    & + \mu_3 ( \cd + \B^\eps \d ) \otimes \d + \mu_5 \d \otimes (\A^\eps \d) + \mu_6 (\A^\eps \d) \otimes \d \,.
  \end{aligned}
\end{equation}
For the first term in \eqref{sigma-tilde}, we have
\begin{equation*}
  \begin{aligned}
    \mu_1 (\d{}^\top \A^\eps \d) \d \otimes \d = \mathscr{V}_\eps + \mu_1 ( {\rm d}^\top \A^\eps {\rm d} ) {\rm d} \otimes {\rm d} \,,
  \end{aligned}
\end{equation*}
where
\begin{equation*}
  \begin{aligned}
    \mathscr{V}_\eps =\, & \mu_1 \big( (\d - {\rm d}){}^\top \A^\eps \d \big) \d \otimes \d + \mu_1 \big( {\rm d}^\top \A^\eps (\d - {\rm d}) \big) \d \otimes \d \\
    & + \mu_1 ( {\rm d}^\top \A^\eps {\rm d} ) (\d - {\rm d}) \otimes \d + \mu_1 ({\rm d}^\top \A^\eps {\rm d}) {\rm d} \otimes (\d - {\rm d}) \,.
  \end{aligned}
\end{equation*}
It follows from the Moser-type calculus inequalities in Lemma \ref{lem1}, the uniform bound \eqref{Ub-1}, the fact $|\d| = 1$, ${\rm d} \in L^\infty (\R^+; L^\infty)$ and the convergences    \eqref{Cnvgc-2} and  \eqref{Cnvgc-3} that
\begin{equation*}
  \begin{aligned}
    \| \mathscr{V}_\eps \|_{L^\infty (\R^+; H^{s-1}_{\loc})} \lesssim\, & \big( \| \d - {\rm d} \|_{L^\infty (\R^+; L^2)} + \| \na \d - \na {\rm d} \|_{L^\infty (\R^+; H^{s-2}_{\loc})} \big) \| \u \|_{L^\infty (\R^+; H^s)} \\
    \lesssim\, & \| \d - {\rm d} \|_{L^\infty (\R^+; L^2)} + \| \na \d - \na {\rm d} \|_{L^\infty (\R^+; H^{s-2}_{\loc})} \rightarrow 0
  \end{aligned}
\end{equation*}
as $\eps \rightarrow 0$, which means that $\mathscr{V}_\eps \rightarrow 0$ strongly in $L^\infty (\R^+; H^{s-1}_{\loc})$ as $\eps \rightarrow 0$. We denote by $\A = \tfrac{1}{2} (\na {\rm u} + \na {\rm u}^\top)$ and $\B = \tfrac{1}{2} (\na {\rm u} - \na {\rm u}^\top)$. Then, from the convergence \eqref{Cnvgc-2}, we derive that
\begin{equation*}
  \begin{aligned}
    \mu_1 ( {\rm d}^\top \A^\eps {\rm d}) {\rm d} \otimes {\rm d} \rightarrow \mu_1 ( {\rm d}^\top \A {\rm d} ) {\rm d} \otimes {\rm d}
  \end{aligned}
\end{equation*}
strongly in $C (\R^+; H^{s-2}_{\loc})$ as $\eps \rightarrow 0$. In summary, we have
\begin{equation}\label{Cnvgc-mu1}
  \begin{aligned}
    \mu_1 (\d{}^\top \A^\eps \d) \d \otimes \d \rightarrow \mu_1 ( {\rm d}^\top \A {\rm d} ) {\rm d} \otimes {\rm d}
  \end{aligned}
\end{equation}
strongly in $C (\R^+; H^{s-2}_{\loc})$ as $\eps \rightarrow 0$. By taking the similar arguments to that in obtaining  \eqref{Cnvgc-mu1}, one can easily derive from the convergences \eqref{Cnvgc-2} and \eqref{Cnvgc-3} that
\begin{equation}\label{Cnvgc-mu2356}
  \begin{aligned}
    & \mu_2 \d \otimes ( \cd + \B^\eps \d ) + \mu_3 ( \cd + \B^\eps \d ) \otimes \d \rightarrow \mu_2 {\rm d} \otimes ( \dot{{\rm d}} + \B {\rm d} ) + \mu_3 ( \dot{{\rm d}} + \B {\rm d} ) \otimes {\rm d} \,, \\
    & \mu_5 \d \otimes (\A^\eps \d) + \mu_6 (\A^\eps \d) \otimes \d \rightarrow \mu_5 {\rm d} \otimes (\A {\rm d}) + \mu_6 (\A {\rm d}) \otimes {\rm d}  \,,
  \end{aligned}
\end{equation}
strongly in $C (\R^+; H^{s-2}_{\loc})$ as $\eps \rightarrow 0$. Consequently, the limits \eqref{Cnvgc-mu1} and \eqref{Cnvgc-mu2356} give
\begin{equation*}
  \begin{aligned}
    \tilde{\sigma}_{\bm{\mu}} (\u, \d, \cd) \rightarrow\, & \mu_1 ( {\rm d}^\top \A {\rm d} ) {\rm d} \otimes {\rm d} + \mu_2 {\rm d} \otimes ( \dot{{\rm d}} + \B {\rm d} ) \\
    & + \mu_3 ( \dot{{\rm d}} + \B {\rm d} ) \otimes {\rm d} + \mu_5 {\rm d} \otimes (\A {\rm d}) + \mu_6 (\A {\rm d}) \otimes {\rm d}   = \tilde{\sigma}_{\bm{\mu}} ( {\rm u}, {\rm d}, \dot{\rm d})
  \end{aligned}
\end{equation*}
strongly in $C (\R^+; H^{s-2}_{\loc})$ as $\eps \rightarrow 0$, which means that
\begin{equation}\label{Cnvgc-u-6}
  \begin{aligned}
    \dv \Sigma_3^\eps \rightarrow \dv \tilde{\sigma}_{\bm{\mu}} ( {\rm u}, {\rm d}, \dot{\rm d} )
  \end{aligned}
\end{equation}
strongly in $C (\R^+; H^{s-3}_{\loc})$ as $\eps \rightarrow 0$. For any $T > 0$, let a vector-valued test function $\psi (t,x) \in C^1 (0,T; C_c^\infty (\R^n))$ with $\psi (0,x) = \psi_0 (x) \in C_c^\infty (\R^n)$ and $\psi (t,x) = 0$ for $t \geq T'$, where $T' < T$. Then we deduce from the initial conditions in Theorem \ref{Thm-Limit} and the convergence \eqref{Cnvgc-1} that
\begin{equation}\label{Cnvgc-u-7}
  \begin{aligned}
    \int_0^T \int_{\R^n} \p_t \u \cdot \psi (t,x) \dd x \dd t = & - \int_{\R^n} \u_0 (x) \cdot \psi_0 (x) \dd x - \int_0^T \int_{\R^n} \u \cdot \p_t \psi (t,x) \dd x \dd t \\
    \rightarrow & - \int_{\R^n} {\rm u}_0 (x) \cdot \psi_0 (x) \dd x - \int_0^T \int_{\R^n} {\rm u} \cdot \p_t \psi (t,x) \dd x \dd t
  \end{aligned}
\end{equation}
as $\eps \rightarrow 0$. As a consequence, the limits \eqref{Cnvgc-p2}, \eqref{Cnvgc-u-1}-\eqref{Cnvgc-u-5}, \eqref{Cnvgc-u-6} and \eqref{Cnvgc-u-7} imply that ${\rm u} (t,x) \in L^\infty (\R^+; H^s) \cap L^2 (\R^+; \dot{H}^{s+1})$ subjects to the evolution
\begin{equation*}
  \begin{aligned}
    \p_t {\rm u} + {\rm u} \cdot \na {\rm u} + \na \pi = \,& \tfrac{1}{2} \mu_4 \Delta {\rm u} - \ka \dv ( \na {\rm d} \odot \na {\rm d} ) + \dv \tilde{\sigma}_{\bm{\mu}} ({\rm u}, {\rm d}, \dot{\rm d}) \,, \\
    \dv {\rm u} =\, & 0 \,,
  \end{aligned}
\end{equation*}
with the initial data
\begin{equation*}
  \begin{aligned}
    {\rm u} |_{t=0} = {\rm u}_0 (x) \,,
  \end{aligned}
\end{equation*}
where $\pi = \pi_1 - \tfrac{1}{2} \ka |\na {\rm d}|^2 \in L^\infty (\R^+; H^{s-1}_{\rm loc})$.

\subsubsection{Equation of \,${\rm d}$} Based on the convergences obtained in the previous subsection, we now derive the ${\rm d}$-equation in \eqref{isys} from the last equation of \eqref{csys}, i.e.,
\begin{equation}\label{Cnvgc-d-1}
  \begin{aligned}
    \p_t \cd + \u \cdot \na \cd + \eps \phi^\eps \p_t \cd + \eps \phi^\eps \u \cdot \na \cd = \ka \Delta \d + \Gamma^\eps \d + \la_1 ( \cd + \B^\eps \d ) + \la_2 \A^\eps \d \,,
  \end{aligned}
\end{equation}
where $\Gamma^\eps = - |\cd|^2 - \eps \phi^\eps |\cd|^2 + \ka |\na \d|^2 - \la_2 \d{}^\top \A^\eps \d$.

From the convergence \eqref{Cnvgc-2} and the relation \eqref{w=dot-d}, we have
\begin{equation*}
  \begin{aligned}
    \u \cdot \na \cd \rightarrow {\rm u} \cdot \na \dot{{\rm d}}
  \end{aligned}
\end{equation*}
strongly in $C(\R^+; H^{s-2}_{\loc})$ as $\eps \rightarrow 0$. Furthermore, from the Moser-type calculus inequalities in Lemma \ref{lem1}, and the uniform bounds \eqref{Ub-1} and \eqref{Ub-2}, we deduce that
\begin{equation*}
  \begin{aligned}
    & \| \phi^\eps \p_t \cd \|_{L^\infty (\R^+; H^{s-1})} + \| \phi^\eps \u \cdot \na \u \|_{L^\infty (\R^+; H^{s-1})} \\
   &\qquad \lesssim \| \phi^\eps \|_{L^\infty (\R^+; H^s)} \big( \| \p_t \cd \|_{L^\infty (\R^+; H^{s-1})} + \| \u \|^2_{L^\infty (\R^+; H^s)} \big) \lesssim 1 \,,
  \end{aligned}
\end{equation*}
which means that
\begin{equation}\label{Cnvgc-d-2}
  \begin{aligned}
    \eps \phi^\eps \p_t \cd \,, \ \eps \phi^\eps \u \cdot \na \u \rightarrow 0
  \end{aligned}
\end{equation}
strongly in $L^\infty (\R^+; H^{s-1})$ as $\eps \rightarrow 0$. The convergence \eqref{Cnvgc-2} also tells us
\begin{equation}\label{Cnvgc-d-3}
  \begin{aligned}
    \ka \Delta \d \rightarrow \ka \Delta {\rm d}
  \end{aligned}
\end{equation}
strongly in $C(\R^+; H^{s-2}_{\loc})$ as $\eps \rightarrow 0$. It follows from the convergence \eqref{Cnvgc-2}, the similar arguments in the limit \eqref{Cnvgc-mu1} and the analogous derivations of \eqref{Cnvgc-d-2} that
\begin{equation*}
  \begin{aligned}
    - |\cd|^2 + \ka |\na \d|^2 \rightarrow \,&- |\dot{\rm d}|^2 + \ka |\na {\rm d}|^2 \ \textrm{  strongly in } C(\R^+; H^{s-1}_{\loc}) \,, \\
    - \la_2 \d{}^\top \A^\eps \d \rightarrow \,& - \la_2 {\rm d}^\top \A {\rm d} \ \textrm{   strongly in } C(\R^+; H^{s-2}_{\loc}) \,, \\
    - \eps \phi^\eps |\cd|^2 \rightarrow \,& \,0 \ \textrm{  strongly in } L^\infty (\R^+; H^s) \,
  \end{aligned}
\end{equation*}
as $\eps \rightarrow 0$, which immediately implies
\begin{equation}\label{Cnvgc-Gamma}
  \begin{aligned}
    \Gamma^\eps \rightarrow - |\dot{\rm d}|^2 + \ka |\na {\rm d}|^2 - \la_2 {\rm d}^\top \A {\rm d} = \Gamma
  \end{aligned}
\end{equation}
strongly in $C(\R^+; H^{s-2}_{\loc})$ as $\eps \rightarrow 0$. Notice that $\Gamma^\eps \d = \Gamma^\eps (\d - {\rm d}) + \Gamma^\eps {\rm d}$. Then the limit \eqref{Cnvgc-Gamma} reduces to $ \Gamma^\eps {\rm d} \rightarrow \Gamma {\rm d} $ strongly in $C(\R^+; H^{s-2}_{\loc})$ as $\eps \rightarrow 0$. Moreover, for the term $ \Gamma^\eps (\d - {\rm d}) $, we derive from the Moser-type calculus inequalities in Lemma \ref{lem1}, the fact $|\d| = 1$, the uniform bound \eqref{Ub-1}, and the convergences \eqref{Cnvgc-2} and \eqref{Cnvgc-3} that
\begin{equation*}
  \begin{aligned}
    \| \Gamma^\eps (\d - {\rm d}) \|_{L^\infty (\R^+; H^{s-1}_{\loc})} \lesssim & \big( \| \cd \|^2_{L^\infty (\R^+; H^s)} + \| \na \d \|^2_{L^\infty (\R^+; H^s)} + \| \u \|^2_{L^\infty (\R^+; H^s)} \big) \\
    & \times \big( \| \d - {\rm d} \|_{L^\infty (\R^+; L^2)} + \| \na \d - \na {\rm d} \|_{L^\infty (\R^+; H^{s-1}_{\loc})} \big) \\
    \lesssim & \| \d - {\rm d} \|_{L^\infty (\R^+; L^2)} + \| \na \d - \na {\rm d} \|_{L^\infty (\R^+; H^{s-1}_{\loc})} \rightarrow 0
  \end{aligned}
\end{equation*}
as $\eps \rightarrow 0$, which means that $ \Gamma^\eps (\d - {\rm d}) \rightarrow 0 $ strongly in $ L^\infty (\R^+; H^{s-1}_{\loc}) $ as $\eps \rightarrow 0$. Consequently, we have
\begin{equation}\label{Cnvgc-d-4}
  \begin{aligned}
    \Gamma^\eps \d \rightarrow \Gamma {\rm d}
  \end{aligned}
\end{equation}
strongly in $C(\R^+; H^{s-2}_{\loc})$ as $\eps \rightarrow 0$. Finally, from the similar arguments in \eqref{Cnvgc-mu1} or \eqref{Cnvgc-mu2356},
it holds
\begin{equation}\label{Cnvgc-d-5}
  \begin{aligned}
    \la_1 ( \cd + \B^\eps \d ) + \la_2 \A^\eps \d \rightarrow \la_1 ( \dot{{\rm d}} + \B {\rm d} ) + \la_2 \A {\rm d}
  \end{aligned}
\end{equation}
strongly in $ C(\R^+; H^{s-2}_{\loc}) $ as $\eps \rightarrow 0$. For any $T > 0$, let a vector-valued test function $\zeta (t,x) \in C^1 (0,T; C_c^\infty (\R^n))$ with $\zeta (0,x) = \zeta_0 (x) \in C_c^\infty (\R^n)$ and $\zeta (t,x) = 0$ for $t \geq T'$, where $T' < T$. Then we deduce from the initial conditions in Theorem \ref{Thm-Limit} and the convergence \eqref{Cnvgc-1} that
\begin{equation}\label{Cnvgc-d-6}
  \begin{aligned}
    \int_0^T \int_{\R^n} \p_t \cd \cdot \zeta (t,x) \dd x \dd t = & - \int_{\R^n} \widetilde{\rm d}^\eps_0 (x) \cdot \zeta_0 (x) \dd x - \int_0^T \int_{\R^n} \cd \cdot \p_t \zeta (t,x) \dd x \dd t \\
    \rightarrow & - \int_{\R^n} \widetilde{\rm d}_0 (x) \cdot \zeta_0 (x) \dd x - \int_0^T \int_{\R^n} \dot{\rm d} \cdot \p_t \zeta (t,x) \dd x \dd t
  \end{aligned}
\end{equation}
as $\eps \rightarrow 0$. We summarize the limits \eqref{Cnvgc-d-1}-\eqref{Cnvgc-d-3}, and \eqref{Cnvgc-d-4}-\eqref{Cnvgc-d-6} to obtain that $\dot{{\rm d}}$ obey the evolution
\begin{equation*}
  \begin{aligned}
    \p_t \dot{{\rm d}} + {\rm u} \cdot \nabla \dot{{\rm d}} = \ka \Delta {\rm d} + \Gamma {\rm d} + \la_1 ( \dot{{\rm d}} + \B {\rm d} ) + \la_2 \A {\rm d}
  \end{aligned}
\end{equation*}
with the initial data
\begin{equation*}
  \begin{aligned}
    \dot{{\rm d}} |_{t=0} = \widetilde{\rm d}_0 (x) \,.
  \end{aligned}
\end{equation*}
Furthermore, we have proved that $\d \rightarrow {\rm d}$ strongly in $C(\R^+ \dot{H}^s_{\loc})$ as $\eps \rightarrow 0$ in \eqref{Cnvgc-2}. Using the relations $\p_t \d = \cd - \u \cdot \nabla \d$ and $\p_t {\rm d} = \dot{\rm d} - {\rm u} \cdot \nabla {\rm d}$, combining with the initial data conditions given in Theorem \ref{Thm-Limit}, it follows from the similar process as \eqref{Cnvgc-d-6} that
\begin{equation*}
  \begin{aligned}
    {\rm d} |_{t=0} = {\rm d}_0 (x) \,.
  \end{aligned}
\end{equation*}
Since $\widetilde{\rm d}_0 (x) \cdot {\rm d}_0 (x) = 0$, $|{\rm d}_0 (x)| = 1$, ${\rm d} (t, x) \in L^\infty (\R^+; L^\infty)$ and ${\rm u} (t,x), \na {\rm d} (t,x), \dot{\rm d} (t,x) \in L^\infty (\R^+; H^s)$, it follows from Lemma 4.1 in \cite{JL} that
\begin{align*}
    |{\rm d}|=1\quad \hbox{for all}\;\; (t,x) \in \R^+ \times \mathbb{R}^n \,.
\end{align*}
Consequently, the limit functions $({\rm u}, {\rm d}) (t,x)$ satisfy the system
\begin{equation*}
  \left\{
    \begin{array}{l}
       \p_t {\rm u} + {\rm u} \cdot \na {\rm u} + \na \pi = \tfrac{1}{2} \mu_4 \Delta {\rm u} - \ka \dv ( \na {\rm d} \odot \na {\rm d} ) + \dv \tilde{\sigma}_{\bm{\mu}} ({\rm u}, {\rm d}, \dot{\rm d}) \,, \\
       \dv {\rm u} = 0 \,, \\
       \ddot{\rm d} = \ka \Delta {\rm d}  + \Gamma {\rm d} + \la_1 ( \dot{{\rm d}} + \B {\rm d} ) + \la_2 \A {\rm d} \,, \quad
       |{\rm d}| = 1
    \end{array}
  \right.
\end{equation*}
with the initial data $ ({\rm u}, {\rm d}, \dot{{\rm d}} ) |_{t=0} = ( {\rm u}_0 (x), {\rm d}_0 (x) , \widetilde{\rm d}_0 (x) ) $. Moreover, the uniform bound \eqref{Ub-1} gives
\begin{equation*}
  \begin{aligned}
    \| {\rm u} \|^2_{L^\infty (\R^+; H^s)} + \| \dot{\rm d} \|^2_{L^\infty (\R^+; H^s)} + \| \na {\rm d} \|^2_{L^\infty (\R^+; H^s)} + \tfrac{1}{2} \mu_4 \| \na {\rm u} \|^2_{L^2 (\R^+; H^s)} \lesssim \delta \,.
  \end{aligned}
\end{equation*}
Then the proof of Theorem \ref{Thm-Limit} is completed. \hfill$\square$

\section{Convergence rate: Proof of Theorem \ref{Thm-Convergence-Rate}}\label{Sec:Convergence}
 
In this section, we aim at proving the convergence rate (in $L^2$-norms) of the limit process in Theorem \ref{Thm-Limit} by employing the modulated energy method, see for example \cite{JJL}.

\begin{proof}[Proof of Theorem \ref{Thm-Convergence-Rate}]
Multiplying the two equations $\eqref{csys}_2$ and $\eqref{csys}_3$ by $ ( \ro \u, \ro \cd ) $, respectively, integrating
the results over $\mathbb{R}^n$ with respect to $x$, using the continuity equation $\eqref{csys}_1$ and  integration by parts, and noticing that $|\d|=1$, we infer the basic energy (see also Section 2 of \cite{JLT})
\begin{equation}\label{Rate-1}
  \begin{aligned}
    & \tfrac{1}{2} \tfrac{\dd}{\dd t} \big( \| \sqrt{\ro} \u \|^2_{L^2} + \| \sqrt{\ro} \cd \|^2_{L^2} + \ka \| \na \d \|^2_{L^2} + 2 \lt \Pi^\e , 1 \rt \big) \\
    & + \tfrac{1}{2} \mu_4 \| \na \u \|_{L^2}^2 + \big( \tfrac{1}{2} \mu_4 + \xi \big) \| \dv \u \|_{L^2}^2  \\
    = \,& \lt \dv \Sigma_3^\eps , \u \rt + \la_1 \lt \cd + \B^\eps \d , \cd \rt + \la_2 \lt \A^\eps \d , \cd \rt \,,
  \end{aligned}
\end{equation}
where
\begin{align*}
    \Pi^\e = \tfrac{1}{\e^2} \tfrac{ \tilde{a} }{\gamma-1} [ (\ro)^\gamma - \gamma ( \ro - 1 ) -1 ]
\end{align*}
is nonnegative. For any fixed $T>0$, integrating \eqref{Rate-1} over $(0,t) \subseteq (0,T]$ with respect to $t$, we get
\begin{equation}\label{Rate-2}
  \begin{aligned}
    & \tfrac{1}{2} \| \sqrt{\ro} \u \|^2_{L^2} + \tfrac{1}{2} \| \sqrt{\ro} \cd \|^2_{L^2} + \tfrac{1}{2} \ka \| \na \d \|^2_{L^2} + \lt \Pi^\e , 1 \rt \\
    & + \tfrac{1}{2} \mu_4 \int_{0}^{t} \| \na \u \|_{L^2}^2 \dd \tau + \big( \tfrac{1}{2} \mu_4 + \xi \big) \int_{0}^{t} \| \dv \u \|_{L^2}^2 \dd \tau \\
    =\, & \int_0^t \lt \dv \Sigma_3^\eps , \u \rt + \la_1 \lt \cd + \B^\eps \d , \cd \rt + \la_2 \lt \A^\eps \d , \cd \rt \dd \tau  \\
    & + \tfrac{1}{2} \| \sqrt{\ro_0} \u_0 \|^2_{L^2} + \tfrac{1}{2} \| \sqrt{\ro_0} \widetilde{\rm d}^\eps_0 \|^2_{L^2} + \tfrac{1}{2} \ka \| \na \d_0 \|^2_{L^2} + \lt \Pi^\e_0 , 1 \rt \,,
  \end{aligned}
\end{equation}
where $\Pi^\eps_0$ is obtained by replacing the $\rho^\eps$ with $\rho^\eps_0$ in the quantity $\Pi^\eps$.

Applying the calculations in Section 2 of \cite{JL}, we can derive the following basic energy law of the incompressible system \eqref{isys}:
\begin{equation}\label{Rate-3}
  \begin{aligned}
    & \tfrac{1}{2} \| {\rm u} \|^2_{L^2} + \tfrac{1}{2} \| \dot{\rm d} \|^2_{L^2} + \tfrac{1}{2} \ka \| \na {\rm d} \|^2_{L^2} + \tfrac{1}{2} \mu_4 \int_0^t \| \na {\rm u} \|^2_{L^2} \dd \tau \\
    =\, & \int_0^t \lt {\rm u} , \dv \tilde{\sigma}_{\bm{\mu}} ({\rm u}, {\rm d}, \dot{\rm d}) \rt + \la_1 \lt \dot{\rm d} , \dot{\rm d} + \B {\rm d} \rt + \la_2 \lt \dot{\rm d} , \A {\rm d} \rt \dd \tau \\
    & + \tfrac{1}{2} \| {\rm u}_0 \|^2_{L^2} + \tfrac{1}{2} \| \widetilde{\rm d}_0 \|^2_{L^2} + \tfrac{1}{2} \ka \| \na {\rm d}_0 \|^2_{L^2}
  \end{aligned}
\end{equation}
holds for all $t \in [0,T]$, where $T > 0$ is an arbitrary fixed number.

Taking the inner product of $\eqref{csys}_2$ with ${\rm u}$, with the help of integration by parts, and combining the equation $\eqref{isys}_2$, it can be inferred that
  \begin{align}\label{Rate-4}
    \nonumber & \lt \rho^\eps \u , {\rm u} \rt - \lt \rho^\eps_0 \u_0 , {\rm u}_0 \rt \\ \nonumber
=\,& \lt \p_t (\rho^\eps \u) , {\rm u} \rt + \lt \rho^\eps \u, \p_t {\rm u} \rt \\
    \nonumber = \,& - \int_0^t \lt \rho^\eps \u , {\rm u} \cdot \na {\rm u} + \na \pi - \tfrac{1}{2} \mu_4 \Delta {\rm u} + \ka \dv (\na {\rm d} \odot \na {\rm d}) - \dv \tilde{\sigma}_{\bm{\mu}} ({\rm u}, {\rm d}, \dot{\rm d}) \rt \dd \tau \\
    & + \int_0^t \lt \rho^\eps \u \otimes \u : \na {\rm u} \rt \dd \tau + \int_0^t \lt \dv (\Sigma_1^\eps + \Sigma_2^\eps + \Sigma_3^\eps) , {\rm u} \rt \dd \tau \,,
  \end{align}
where we have also made use of the fact that  $\dv {\rm u} = 0$. Noticing that 
$\Sigma_2^\eps = \tfrac{1}{2} \ka |\na \d|^2 \I - \ka \na \d \odot \na \d$, and 
the divergence free property of ${\rm u}$ implies that
\begin{equation*}
  \begin{aligned}
    \lt \dv \Sigma_1^\eps , {\rm u} \rt = - \tfrac{1}{2} \mu_4 \lt \na \u, \na {\rm u} \rt \,,
  \end{aligned}
\end{equation*}
  thus the equality \eqref{Rate-4} can be further rewritten as
\begin{equation}\label{Rate-5}
  \begin{aligned}
    & - \lt \sqrt{\rho^\eps} \u , {\rm u} \rt - \mu_4 \int_0^t \lt \na \u, \na {\rm u} \rt \dd \tau \\
    = \,& \mathscr{R}_{\rm u}^\eps - \lt \rho^\eps_0 \u_0 , {\rm u}_0 \rt - \int_0^t \lt {\rm u} , \dv \Sigma_3^\eps \rt + \lt \u, \dv \tilde{\sigma}_{\bm{\mu}} ({\rm u}, {\rm d}, \dot{\rm d}) \rt \dd \tau \\
    & + \ka \int_0^t \lt \dv \big( \na \d \odot \na \d \big) , {\rm u} \rt \dd \tau + \ka \int_0^t \lt \dv (\na {\rm d} \odot \na {\rm d}) , \u \rt \dd \tau\,,
  \end{aligned}
\end{equation}
where
\begin{equation}\label{Ru-eps}
  \begin{aligned}
    \mathscr{R}_{\rm u}^\eps =\, & \lt (\rho^\eps - \sqrt{\rho^\eps}) \u , {\rm u} \rt + \int_0^t \lt \rho^\eps \u \otimes ( \u - {\rm u} ) : \na {\rm u} \rt \dd \tau - \int_0^t \lt \rho^\eps \u - \u , \tfrac{1}{2} \mu_4 \Delta {\rm u} \rt \dd \tau \\
    & + \int_0^t \lt \rho^\eps \u , \na \pi \rt \dd \tau + \int_0^t \lt (\rho^\eps - 1 ) \u , \ka \dv (\na {\rm d} \odot \na {\rm d}) - \dv \tilde{\sigma}_{\bm{\mu}} ({\rm u}, {\rm d}, \dot{\rm d}) \rt \dd \tau \,.
  \end{aligned}
\end{equation}
The term $\mathscr{R}_{\rm u}^\eps$ contains various difference forms, which will be easily estimated later.

By taking the inner product of $\eqref{csys}_3$ with $\dot{\rm d}$, by means of integration by parts, and along with the equation $\eqref{isys}_3$, it can be deduced that
\begin{equation*}
  \begin{aligned}
    &- \lt \sqrt{\rho^\eps} \cd , \dot{\rm d} \rt  - \ka \lt \na \d , \na {\rm d} \rt\\
 = \,& \lt (\rho^\eps - \sqrt{\rho^\eps}) \cd , \dot{\rm d} \rt - \lt \rho^\eps_0 \widetilde{\rm d}^\eps_0 , \widetilde{\rm d}_0 \rt - \ka \lt \na \d_0 , \na {\rm d}_0 \rt \\
    & + \int_0^t \lt \rho^\eps \cd , {\rm u} \cdot \na \dot{\rm d} - \ka \Delta {\rm d} - \Gamma {\rm d} - \la_1 ( \dot{\rm d} + \B {\rm d} ) - \la_2 \A {\rm d} \rt \dd \tau \\
    & - \int_0^t \lt \rho^\eps \cd \otimes \u : \na \dot{\rm d} \rt \dd \tau + \ka \int_0^t \lt \cd , \Delta {\rm d} \rt \dd \tau - \ka \int_0^t \lt \u \cdot \na \d , \Delta {\rm d} \rt \dd \tau \\
    & - \ka \int_0^t \lt {\rm u} \cdot \na {\rm d} , \Delta \d \rt \dd \tau - \int_0^t \lt \Gamma^\eps \d + \la_1 ( \cd + \B^\eps \d ) + \la_2 \A^\eps \d , \dot{\rm d} \rt \dd \tau  \,,
  \end{aligned}
\end{equation*}
which can be equivalently rewritten as
\begin{equation}\label{Rate-6}
  \begin{aligned}
    - \lt \sqrt{\rho^\eps} \cd , \dot{\rm d} \rt - \ka \lt \na \d , \na {\rm d} \rt =\,& \mathscr{R}^\eps_{\rm d} - \lt \rho^\eps_0 \widetilde{\rm d}^\eps_0 , \widetilde{\rm d}_0 \rt - \ka \lt \na \d_0 , \na {\rm d}_0 \rt - 2 \la_1 \int_0^t \lt \cd , \dot{\rm d} \rt \dd \tau \\
    & - \int_0^t \lt \cd , \la_1 \B {\rm d} + \la_2 \A {\rm d} \rt \dd \tau - \int_0^t \lt \dot{\rm d} , \la_1 \B^\eps \d + \la_2 \A^\eps \d \rt \dd \tau \\
    & - \ka \int_0^t \lt \u \cdot \na \d , \Delta {\rm d} \rt \dd \tau - \ka \int_0^t \lt {\rm u} \cdot \na {\rm d} , \Delta \d \rt \dd \tau \,,
  \end{aligned}
\end{equation}
where
\begin{equation}\label{Rd-eps}
  \begin{aligned}
    \mathscr{R}_{\rm d}^\eps =\, & \lt \sqrt{\rho^\eps} ( \sqrt{\rho^\eps} - 1 ) \u, {\rm u} \rt - \int_0^t \lt \rho^\eps \cd \otimes (  \u - {\rm u} ) : \na \dot{\rm d} \rt \dd \tau \\
    & - \int_0^t \lt (\rho^\eps - 1) \cd , \ka \Delta {\rm d} \rt \dd \tau - \int_0^t \lt \rho^\eps \cd , \Gamma {\rm d} \rt + \lt \dot{\rm d} , \Gamma^\eps \d \rt \dd \tau \\
    & - \int_0^t \lt (\rho^\eps - 1) \cd , \la_1 ( \dot{\rm d} + \B {\rm d} ) + \la_2 \A {\rm d} \rt \dd \tau \,.
  \end{aligned}
\end{equation}
We emphasize that every term in $\mathscr{R}_{\rm d}^\eps$ contains a difference, which will be easily controlled, excluding the term $ - \int_0^t \lt \rho^\eps \cd , \Gamma {\rm d} \rt + \lt \dot{\rm d} , \Gamma^\eps \d \rt \dd \tau $. However, it can be also transformed into a difference form by using the geometric constraints $|\d| = 1$ and $|{\rm d}| = 1$. The detailed derivations will be given later.

Noticing that
  \begin{align*}
    \lt \u , \dv (\na {\rm d} \odot \na {\rm d}) \rt =\, & \lt \u_i \p_i {\rm d} , \p_j \p_j {\rm d} \rt + \lt \u_i , \p_i \p_j {\rm d} \cdot \p_j {\rm d} \rt \\
    =\, & \lt \u \cdot \na {\rm d} , \Delta {\rm d} \rt - \lt (\rho^\eps - 1) \u, \na ( \tfrac{1}{2} |\na {\rm d}|^2 ) \rt + \lt \rho^\eps \u, \na ( \tfrac{1}{2} |\na {\rm d}|^2 ) \rt
  \end{align*}
and
\begin{equation*}
  \begin{aligned}
    \lt {\rm u}, \dv (\na \d \odot \na \d) \rt = \lt {\rm u} \cdot \na \d , \Delta \d \rt + \lt {\rm u}, \na (\tfrac{1}{2} |\na \d|^2) \rt = \lt {\rm u} \cdot \na \d , \Delta \d \rt \,,
  \end{aligned}
\end{equation*}
where the constraint condition  $\dv {\rm u} = 0$ is utilized, we can derive
\begin{equation}\label{Rate-Cancel}
  \begin{aligned}
    & \ka \int_0^t \lt \dv \big( \na \d \odot \na \d \big) , {\rm u} \rt \dd \tau + \ka \int_0^t \lt \dv (\na {\rm d} \odot \na {\rm d}) , \u \rt \dd \tau \\
    & - \ka \int_0^t \lt \u \cdot \na \d , \Delta {\rm d} \rt \dd \tau - \ka \int_0^t \lt {\rm u} \cdot \na {\rm d} , \Delta \d \rt \dd \tau \\
    =\, & \ka \int_0^t \lt {\rm u} \cdot \na (\d - {\rm d}) , \Delta (\d - {\rm d}) \rt - \lt (\u - {\rm u}) \cdot \na (\d - {\rm d}) , \Delta {\rm d} \rt \dd \tau \\
    & - \ka \int_0^t \lt (\rho^\eps - 1) \u , \na (\tfrac{1}{2} |\na {\rm d}|^2) \rt - \lt \rho^\eps \u , \na (\tfrac{1}{2} |\na {\rm d}|^2) \rt \dd \tau \,.
  \end{aligned}
\end{equation}
Summing up for \eqref{Rate-2}, \eqref{Rate-3}, \eqref{Rate-5}, \eqref{Rate-6} and utilizing the cancellation \eqref{Rate-Cancel} imply that
\begin{equation}\label{Rate-7}
  \begin{aligned}
    & \tfrac{1}{2} \| \sqrt{\rho^\eps} \u - {\rm u} \|^2_{L^2} + \tfrac{1}{2} \| \sqrt{\rho^\eps} \cd - \dot{\rm d} \|^2_{L^2} + \tfrac{1}{2} \ka \| \na \d - \na {\rm d} \|^2_{L^2} + \lt \Pi^\eps , 1 \rt \\
    & + \tfrac{1}{2} \mu_4 \int_0^t \| \na \u - \na {\rm u} \|^2_{L^2} \dd \tau + (\tfrac{1}{2} \mu_4 + \xi ) \int_0^t \| \dv \u \|^2_{L^2} \dd \tau \\
    =\, & \mathscr{C}_{\rm disp} + \mathscr{R}_{\rm u}^\eps + \mathscr{R}_{\rm d}^\eps + \mathscr{R}_{\rm sum}^\eps + \tfrac{1}{2} \| \sqrt{\rho^\eps_0} \u_0 - {\rm u}_0 \|^2_{L^2} + \tfrac{1}{2} \| \sqrt{\rho^\eps_0} \widetilde{\rm d}^\eps_0 - \widetilde{\rm d}_0 \|^2_{L^2} \\
    & + \tfrac{1}{2} \| \na \d_0 - \na {\rm d}_0 \|^2_{L^2} + \lt \Pi^\eps_0 , 1 \rt - \lt (\rho^\eps_0 - \sqrt{\rho^\eps_0}) \u_0 , {\rm u}_0 \rt - \lt (\rho^\eps_0 - \sqrt{\rho^\eps_0}) \widetilde{\rm d}^\eps_0 , \widetilde{\rm d}_0 \rt \,,
  \end{aligned}
\end{equation}
where $\mathscr{R}_{\rm u}^\eps$ and  $\mathscr{R}_{\rm d}^\eps$ are defined in \eqref{Ru-eps} and \eqref{Rd-eps}, respectively, and $\mathscr{R}_{\rm sum}^\eps$ is given as
\begin{equation}\label{Rsum-eps}
  \begin{aligned}
    \mathscr{R}_{\rm sum}^\eps =\, & \ka \int_0^t \lt {\rm u} \cdot \na (\d - {\rm d}) , \Delta (\d - {\rm d}) \rt - \lt (\u - {\rm u}) \cdot \na (\d - {\rm d}) , \Delta {\rm d} \rt \dd \tau \\
    & - \ka \int_0^t \lt (\rho^\eps - 1) \u , \na (\tfrac{1}{2} |\na {\rm d}|^2) \rt - \lt \rho^\eps \u , \na (\tfrac{1}{2} |\na {\rm d}|^2) \rt \dd \tau \\
    & + \int_0^t \lt \cd - \dot{\rm d} , \la_1 \B (\d - {\rm d}) + \la_2 \A (\d - {\rm d}) \rt \dd \tau \,,
  \end{aligned}
\end{equation}
which also contains some difference factors what we need in every term, and $\mathscr{C}_{\rm disp}$ reads
\begin{equation}\label{C-disp}
  \begin{aligned}
    \mathscr{C}_{\rm disp} =\, & \int_0^t \la_1 \| \cd - \dot{\rm d} \|^2_{L^2} \dd \tau + \int_0^t \lt \cd - \dot{\rm d} , \la_1 (\B^\eps - \B) \d + \la_2 (\A^\eps - \A) \d \rt \dd \tau \\
    & + \int_0^t \lt \u - {\rm u} , \dv \big( \Sigma_3^\eps - \tilde{\sigma}_{\bm{\mu}} ({\rm u}, {\rm d}, \dot{\rm d}) \big) \rt \dd \tau \,.
  \end{aligned}
\end{equation}

We notice that there is a difference form $\d - {\rm d}$ in $\mathscr{R}_{\rm sum}^\eps$. Thus, a norm $\| \d - {\rm d} \|^2_{L^2}$ is required in the left-hand side of the equality \eqref{Rate-7} to control the difference form $\d - {\rm d}$. Since $\p_t \d = \cd - \u \cdot \na \d$ and $\p_t {\rm d} = \dot{\rm d} - {\rm u} \cdot \na {\rm d}$, we have
\begin{equation*}
  \begin{aligned}
    \p_t (\d - {\rm d}) = (\cd - \dot{\rm d}) - ( \u \cdot \na \d - {\rm u} \cdot \na {\rm d} ) \,.
  \end{aligned}
\end{equation*}
Multiplying by $\d - {\rm d}$ in the previous equality and integrating over $[0,t] \times \R^n$, we obtain that
\begin{equation}\label{Rate-8-RL2-eps}
  \begin{aligned}
    \tfrac{1}{2} \| \d - {\rm d} \|^2_{L^2} = \tfrac{1}{2} \| \d_0 - {\rm d}_0 \|^2_{L^2} + \underbrace{ \int_0^t \lt (\cd - \dot{\rm d}) - ( \u \cdot \na \d - {\rm u} \cdot \na {\rm d} ) , \d - {\rm d} \rt \dd \tau }_{\mathscr{R}_{\rm L2}^\eps} \,.
  \end{aligned}
\end{equation}
By adding \eqref{Rate-8-RL2-eps} into the equality \eqref{Rate-7}, we deduce that
\begin{equation}\label{Rate-9}
  \begin{aligned}
    & \tfrac{1}{2} \| \sqrt{\rho^\eps} \u - {\rm u} \|^2_{L^2} + \tfrac{1}{2} \| \sqrt{\rho^\eps} \cd - \dot{\rm d} \|^2_{L^2} + \tfrac{1}{2} \ka \| \na \d - \na {\rm d} \|^2_{L^2} + \tfrac{1}{2} \| \d - {\rm d} \|^2_{L^2} \\
    & + \lt \Pi^\eps , 1 \rt + \tfrac{1}{2} \mu_4 \int_0^t \| \na \u - \na {\rm u} \|^2_{L^2} \dd \tau + (\tfrac{1}{2} \mu_4 + \xi ) \int_0^t \| \dv \u \|^2_{L^2} \dd \tau \\
    & = \mathscr{C}_{\rm disp} + \mathscr{R}_{\rm 0}^\eps + \mathscr{R}_{\rm u}^\eps + \mathscr{R}_{\rm d}^\eps + \mathscr{R}_{\rm sum}^\eps + \mathscr{R}_{\rm L2}^\eps \,,
  \end{aligned}
\end{equation}
where the term $\mathscr{R}_{\rm 0}^\eps$ is given as
\begin{equation}\label{R0-eps}
  \begin{aligned}
    \mathscr{R}_{\rm 0}^\eps = \,& \tfrac{1}{2} \| \sqrt{\rho^\eps_0} \u_0 - {\rm u}_0 \|^2_{L^2} + \tfrac{1}{2} \| \sqrt{\rho^\eps_0} \widetilde{\rm d}^\eps_0 - \widetilde{\rm d}_0 \|^2_{L^2} + \tfrac{1}{2} \| \na \d_0 - \na {\rm d}_0 \|^2_{L^2} + \lt \Pi^\eps_0 , 1 \rt \\
    & + \tfrac{1}{2} \| \d_0 - {\rm d}_0 \|^2_{L^2} - \lt (\rho^\eps_0 - \sqrt{\rho^\eps_0}) \u_0 , {\rm u}_0 \rt - \lt (\rho^\eps_0 - \sqrt{\rho^\eps_0}) \widetilde{\rm d}^\eps_0 , \widetilde{\rm d}_0 \rt \,.
  \end{aligned}
\end{equation}

Next, we will estimate the terms $\mathscr{C}_{\rm disp}$, $\mathscr{R}_{\rm 0}^\eps$, $\mathscr{R}_{\rm u}^\eps$, $\mathscr{R}_{\rm d}^\eps$, $\mathscr{R}_{\rm sum}^\eps$ and $\mathscr{R}_{\rm L2}^\eps$ in \eqref{Rate-9}. We first give the following three lemmas.

\begin{lem}\label{Lmm-rho-Rt}
	Under the same assumptions in Theorem \ref{Thm-Convergence-Rate}, we have
	\begin{equation*}
	  \begin{aligned}
	    \| \sqrt{\rho^\eps} - 1 \|_{L^2} \lesssim \eps \lt \Pi^\eps , 1 \rt^\frac{1}{2} \,, \quad  \| \sqrt{\rho^\eps_0} - 1 \|_{L^2} \lesssim \eps \lt \Pi^\eps_0 , 1 \rt^\frac{1}{2} \lesssim \eps^{1 + \tfrac{\alpha_0}{2}} \,.
	  \end{aligned}
	\end{equation*}
\end{lem}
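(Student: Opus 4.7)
The plan is to relate $\sqrt{\rho^\eps} - 1$ pointwise to $\rho^\eps - 1 = \eps \phi^\eps$, and then use a Taylor expansion to compare $|\phi^\eps|^2$ with the pressure potential density defining $\Pi^\eps$. The same chain of estimates applied at $t=0$, combined with the assumption \eqref{IC-CVRT}, will give the second inequality.

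First I would observe the elementary identity
\begin{equation*}
\sqrt{\rho^\eps} - 1 = \frac{\rho^\eps - 1}{\sqrt{\rho^\eps} + 1} = \frac{\eps \phi^\eps}{\sqrt{\rho^\eps} + 1},
\end{equation*}
which, together with $\rho^\eps \geq 0$, yields the pointwise bound $|\sqrt{\rho^\eps} - 1|^2 \leq \eps^2 |\phi^\eps|^2$. Hence the lemma reduces to controlling $\| \phi^\eps \|_{L^2}$ by $\lt \Pi^\eps, 1\rt^{1/2}$.

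Next, I would introduce the auxiliary function $h(s) := s^\gamma - \gamma(s-1) - 1$, so that $\Pi^\eps = \frac{\tilde a}{\eps^2 (\gamma - 1)} h(\rho^\eps)$. Since $h(1) = h'(1) = 0$ and $h''(s) = \gamma(\gamma - 1) s^{\gamma - 2}$, Taylor's theorem with integral remainder gives
\begin{equation*}
h(\rho^\eps) = \gamma(\gamma-1) (\rho^\eps - 1)^2 \int_0^1 (1-\theta)\, \big(1 + \theta(\rho^\eps - 1)\big)^{\gamma - 2} \, \dd \theta.
\end{equation*}
By the uniform bound \eqref{Unif-Bnd-rho} we have $\rho^\eps \approx 1$ uniformly in $\eps$ and $(t,x)$, so the integrand is bounded below by an $\eps$-independent positive constant. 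Combined with $\rho^\eps - 1 = \eps \phi^\eps$, this gives $\Pi^\eps \gtrsim |\phi^\eps|^2$ pointwise, and therefore $\| \phi^\eps \|_{L^2}^2 \lesssim \lt \Pi^\eps , 1 \rt$. Inserting this into the pointwise bound above yields the first claimed inequality.

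The second inequality follows by running exactly the same argument at $t = 0$: the initial condition $\| \phi^\eps_0 \|_{L^\infty} \leq \tfrac{1}{2}$ from \eqref{phi-initial} ensures $\rho^\eps_0$ stays in a fixed compact subinterval of $(0, \infty)$ so that the Taylor remainder comparison is uniform, giving $\| \sqrt{\rho^\eps_0} - 1 \|_{L^2} \lesssim \eps \lt \Pi^\eps_0 , 1 \rt^{1/2}$; the assumption $\lt \Pi^\eps_0 , 1 \rt \lesssim \eps^{\alpha_0}$ from \eqref{IC-CVRT} then produces the rate $\eps^{1 + \alpha_0/2}$. There is no real obstacle here; the only point requiring a little care is ensuring the Taylor remainder is controlled uniformly in $\eps$, which is precisely what the uniform density bound \eqref{Unif-Bnd-rho} supplies.
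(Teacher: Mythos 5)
Your proposal is correct, but it proves the key comparison $\Pi^\eps \gtrsim |\phi^\eps|^2$ by a different mechanism than the paper. The paper never invokes the uniform pointwise bounds on $\rho^\eps$: it splits $\R^n$ into the sets $\{|\rho^\eps-1|\leq \tfrac12\}$ and $\{|\rho^\eps-1|\geq \tfrac12\}$ and uses two pairs of elementary power inequalities ($|\sqrt{x}-1|^2\leq K_1|x-1|^2$ for all $x\geq 0$ together with $x^\gamma-\gamma(x-1)-1\geq K_2|x-1|^2$ on the near set, and the corresponding $|x-1|^\gamma$ versions on the far set), so that only $\rho^\eps\geq 0$ and finiteness of $\lt\Pi^\eps,1\rt$ are needed. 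You instead lean on the uniform two-sided bound $r_1\leq\rho^\eps\leq r_2$ (available here from Proposition \ref{Prop-Local} and its global extension; note that \eqref{Unif-Bnd-rho} as stated is only the upper bound, and for $\gamma>2$ your Taylor-remainder lower bound genuinely needs the lower bound $\rho^\eps\geq r_1>0$ as well) to run a single Taylor expansion with integral remainder valid everywhere. Both routes are sound in this setting; yours is shorter and avoids the case split, while the paper's is the more robust relative-entropy-style argument that survives in contexts (e.g.\ weak solutions) where no $L^\infty$ control of the density is known. Your treatment of the $t=0$ statement and the use of \eqref{IC-CVRT} to extract the rate $\eps^{1+\alpha_0/2}$ match the paper.
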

\allowdisplaybreaks[4]
\begin{lem}\label{Lmm-Rmnd-Cntrl}
	Under the same assumptions in Theorem \ref{Thm-Convergence-Rate}, the quantities $\mathscr{R}_{\rm 0}^\eps$, $\mathscr{R}_{\rm u}^\eps$, $\mathscr{R}_{\rm d}^\eps$, $\mathscr{R}_{\rm sum}^\eps$, $ \mathscr{R}_{\rm L2}^\eps $ defined in \eqref{R0-eps}, \eqref{Ru-eps}, \eqref{Rd-eps}, \eqref{Rsum-eps}, \eqref{Rate-8-RL2-eps}, respectively, can be bounded as
	\begin{equation*}
	  \begin{aligned}
	    \mathscr{R}_{\rm 0}^\eps \lesssim \,& \eps^{\alpha_0} + \eps^{1 + \tfrac{\alpha_0}{2}} \,, \\
	    \mathscr{R}_{\rm u}^\eps \lesssim \,& (1 + T) \eps^2 + \eps^{1 + \tfrac{\alpha_0}{2}} + \eta_1 \lt \Pi^\eps , 1 \rt + \int_0^t \| \sqrt{\rho^\eps} \u - {\rm u} \|^2_{L^2} + \lt \Pi^\eps , 1 \rt \dd \tau \,, \\
	    \mathscr{R}_{\rm d}^\eps \lesssim\, & (1 + T) \eps^2 + \eps^{1+\tfrac{\alpha_0}{2}} + \eta_1 \lt \Pi^\eps , 1 \rt + \eta_1 \int_0^t \| \na \u - \na {\rm u} \|^2_{L^2} \dd \tau \\
	    & + \int_0^t \lt \Pi^\eps , 1 \rt + \| \sqrt{\rho^\eps} \cd - \dot{\rm d} \|^2_{L^2} + \| \na \d - \na {\rm d} \|^2_{L^2} + \| \d - {\rm d} \|^2_{L^2} \dd \tau \,, \\
	    \mathscr{R}_{\rm sum}^\eps \lesssim \,& (1+T) \eps^2 + \eps^{1+\tfrac{\alpha_0}{2}} + \eta_1 \lt \Pi^\eps , 1 \rt + \int_0^t \lt \Pi^\eps , 1 \rt + \| \sqrt{\rho^\eps} \u - {\rm u} \|^2_{L^2} \dd \tau \\
	    & + \int_0^t \| \sqrt{\rho^\eps} \cd - \dot{\rm d} \|^2_{L^2} + \| \na \d - \na {\rm d} \|^2_{L^2} + \| \d - {\rm d} \|^2_{L^2} \dd \tau \,, \\
	    \mathscr{R}_{\rm L2}^\eps \lesssim \,& \int_0^t \lt \Pi^\eps , 1 \rt + \| \sqrt{\rho^\eps} \u - {\rm u} \|^2_{L^2} + \| \sqrt{\rho^\eps} \cd - \dot{\rm d} \|^2_{L^2} \dd \tau \\
	    & + \int_0^t \| \na \d - \na {\rm d} \|^2_{L^2} + \| \d - {\rm d} \|^2_{L^2} \dd \tau \,,
	  \end{aligned}
	\end{equation*}
	for all $t \in [0,T]$ and $0 < \eps \leq 1$, where $T > 0$ is an arbitrary number and $\eta_1 > 0$ is small to be determined.
\end{lem}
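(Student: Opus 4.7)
\textbf{Proof proposal for Lemma \ref{Lmm-Rmnd-Cntrl}.} The general strategy is to identify within each remainder term either (i) an explicit factor of one of the quantities absorbed by the modulated energy on the left-hand side of \eqref{Rate-9}, namely $\|\sqrt{\rho^\eps}\u-{\rm u}\|_{L^2}^2$, $\|\sqrt{\rho^\eps}\cd-\dot{\rm d}\|_{L^2}^2$, $\ka\|\na\d-\na{\rm d}\|_{L^2}^2$, $\|\d-{\rm d}\|_{L^2}^2$, $\lt\Pi^\eps,1\rt$, and $\tfrac12\mu_4\int_0^t\|\na\u-\na{\rm u}\|_{L^2}^2\dd\tau$; or (ii) an explicit factor of $\rho^\eps-1$ or $\sqrt{\rho^\eps}-1$, which by Lemma \ref{Lmm-rho-Rt} is controlled by $\eps\lt\Pi^\eps,1\rt^{1/2}$. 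After isolating such a factor via Cauchy--Schwarz or H\"older, the smooth factors coming from the limit solution $({\rm u},{\rm d},\dot{\rm d},\pi)$ are bounded in $L^\infty$ (in space) by the uniform bound \eqref{Bnd-Limit} of Theorem \ref{Thm-Limit}, while the compressible-side factors $(\phi^\eps,\u,\d,\cd)$ are controlled by the uniform estimates \eqref{Unif-Bnd-1}--\eqref{Unif-Bnd-4} of Theorem \ref{Thm-global}. Young's inequality, with a small parameter $\eta_1>0$ in front of the terms that remain on the left-hand side, produces the stated bounds.

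For $\mathscr{R}_0^\eps$, the first five pieces in \eqref{R0-eps} are the initial modulated energies and are $O(\eps^{\alpha_0})$ directly by the hypothesis \eqref{IC-CVRT}. For the two cross terms, I factor $\rho^\eps_0-\sqrt{\rho^\eps_0}=\sqrt{\rho^\eps_0}(\sqrt{\rho^\eps_0}-1)$ and use the $L^\infty$-boundedness of $\rho^\eps_0,{\rm u}_0,\widetilde{\rm d}_0$ together with the initial-data part of Lemma \ref{Lmm-rho-Rt}, which gives $\|\sqrt{\rho^\eps_0}-1\|_{L^2}\lesssim\eps^{1+\alpha_0/2}$.

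For $\mathscr{R}_{\rm u}^\eps$ (and analogously $\mathscr{R}_{\rm sum}^\eps$, $\mathscr{R}_{\rm L2}^\eps$) I split each integral as follows. Linear terms of the form $\int_0^t\lt(\rho^\eps-1)\u,X\rt\dd\tau$, with $X$ smooth in $L^\infty$ from the limit system (e.g., $\tfrac12\mu_4\Delta{\rm u}$, $\na\pi$, $\dv\tilde\sigma_{\bm\mu}$, $\na(\tfrac12|\na{\rm d}|^2)$), are bounded by $\|(\rho^\eps-1)\u\|_{L^2}\|X\|_{L^\infty}\lesssim\eps\,\lt\Pi^\eps,1\rt^{1/2}\,\|X\|_{L^\infty}$, which Young's inequality converts into $\eta_1\lt\Pi^\eps,1\rt+C\eps^2\|X\|_{L^\infty}^2$; integrating in time yields the $(1+T)\eps^2$ contribution. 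The nonlinear term $\int_0^t\lt\rho^\eps\u\otimes(\u-{\rm u}):\na{\rm u}\rt\dd\tau$ is treated by writing $\rho^\eps(\u-{\rm u})=\sqrt{\rho^\eps}(\sqrt{\rho^\eps}\u-{\rm u})+\sqrt{\rho^\eps}(\sqrt{\rho^\eps}-1){\rm u}$ to transfer the difference to the modulated velocity, and using $\|\na{\rm u}\|_{L^\infty}\lesssim\|{\rm u}\|_{H^s}$; one piece contributes $\int_0^t\|\sqrt{\rho^\eps}\u-{\rm u}\|_{L^2}^2\dd\tau$ and the other an additional $(1+T)\eps^{1+\alpha_0/2}$. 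Viscous terms are handled by absorbing $\tfrac12\mu_4\|\na\u-\na{\rm u}\|_{L^2}^2$ into the left-hand side through Young's inequality (producing the $\eta_1\int_0^t\|\na\u-\na{\rm u}\|_{L^2}^2\dd\tau$ in $\mathscr{R}_{\rm d}^\eps$). The $\mathscr{R}_{\rm L2}^\eps$ bound follows from writing $\u\cdot\na\d-{\rm u}\cdot\na{\rm d}=(\u-{\rm u})\cdot\na\d+{\rm u}\cdot\na(\d-{\rm d})$, pairing with $\d-{\rm d}$ and integrating by parts on the second piece.

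The main obstacle is the term $-\int_0^t[\lt\rho^\eps\cd,\Gamma{\rm d}\rt+\lt\dot{\rm d},\Gamma^\eps\d\rt]\dd\tau$ appearing in $\mathscr{R}_{\rm d}^\eps$, since at first sight it contains neither a difference of unknowns nor a factor of $\rho^\eps-1$. The key observation is the geometric constraints $|\d|=|{\rm d}|=1$, which upon differentiation yield $\cd\cdot\d=0$ and $\dot{\rm d}\cdot{\rm d}=0$; hence
\[
\lt\rho^\eps\cd,\Gamma{\rm d}\rt+\lt\dot{\rm d},\Gamma^\eps\d\rt=\lt\rho^\eps\Gamma\cd-\Gamma^\eps\dot{\rm d},\,{\rm d}-\d\rt,
\]
which now carries the difference $\d-{\rm d}$. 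I further split $\rho^\eps\Gamma\cd-\Gamma^\eps\dot{\rm d}=\rho^\eps\Gamma(\cd-\dot{\rm d})+(\rho^\eps-1)\Gamma\dot{\rm d}+(\Gamma-\Gamma^\eps)\dot{\rm d}$, and expand $\Gamma-\Gamma^\eps=\eps\phi^\eps|\cd|^2+(|\cd|^2-|\dot{\rm d}|^2)+\ka(|\na{\rm d}|^2-|\na\d|^2)-\la_2({\rm d}^\top\A{\rm d}-\d^\top\A^\eps\d)$, each summand of which is a genuine difference controllable by the modulated energy (the last one is handled by adding and subtracting $\d^\top\A\d$ to distribute the error between $\u-{\rm u}$ and $\d-{\rm d}$). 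Cauchy--Schwarz plus Young's inequality, using $\|\dot{\rm d}\|_{L^\infty}+\|\Gamma\|_{L^\infty}\lesssim 1$ from \eqref{Bnd-Limit} and $\|\cd\|_{L^\infty}+\|\na\d\|_{L^\infty}\lesssim 1$ from \eqref{Unif-Bnd-1}, produces the quadratic modulated-energy terms listed in the statement. The remaining pieces of $\mathscr{R}_{\rm d}^\eps$ follow the same pattern as $\mathscr{R}_{\rm u}^\eps$.
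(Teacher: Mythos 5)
Your overall scheme (isolate either a modulated-energy difference or a factor of $\rho^\eps-1$, then invoke Lemma \ref{Lmm-rho-Rt}, Cauchy--Schwarz and Young) is the right one, and your handling of the hardest piece, $-\int_0^t\big[\lt\rho^\eps\cd,\Gamma{\rm d}\rt+\lt\dot{\rm d},\Gamma^\eps\d\rt\big]\dd\tau$, via the orthogonality $\cd\cdot\d=\dot{\rm d}\cdot{\rm d}=0$ is correct and is precisely the paper's key idea (the paper attaches the difference to $\cd-\dot{\rm d}$ rather than to $\d-{\rm d}$, but the two identities are equivalent). There are, however, genuine gaps elsewhere.

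First, your dichotomy does not cover the terms carrying the \emph{full} density rather than $\rho^\eps-1$: the pressure term in \eqref{Ru-eps} is $\int_0^t\lt\rho^\eps\u,\na\pi\rt\dd\tau$, not $\int_0^t\lt(\rho^\eps-1)\u,\na\pi\rt\dd\tau$, and \eqref{Rsum-eps} contains $\ka\int_0^t\lt\rho^\eps\u,\na(\tfrac12|\na{\rm d}|^2)\rt\dd\tau$. The leftover part $\lt\u,\na\pi\rt=-\lt\dv\u,\pi\rt$ does not vanish (the compressible velocity is not divergence-free), and the crude bound $\|\dv\u\|_{L^2}\lesssim\eps$ from \eqref{Unif-Bnd-4} only yields $O(T\eps)$, which is not $\lesssim(1+T)\eps^2$ and destroys the rate $\beta_0$ whenever $\alpha_0>1$. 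The paper's device is to use the continuity equation, $\lt\rho^\eps\u,\na f\rt=-\lt\dv(\rho^\eps\u),f\rt=\lt\p_t(\rho^\eps-1),f\rt$, and then integrate by parts in time; this manufactures the factor $\rho^\eps-1$ in the interior term and produces exactly the contributions $\eta_1\lt\Pi^\eps,1\rt$ and $\eps^{1+\alpha_0/2}$ (see \eqref{Rsum-4} and \eqref{Ru-5}). The same issue infects your convective term: the splitting $\rho^\eps(\u-{\rm u})=\sqrt{\rho^\eps}(\sqrt{\rho^\eps}\u-{\rm u})-\sqrt{\rho^\eps}(\sqrt{\rho^\eps}-1){\rm u}$ (note the sign) leaves a piece that is only \emph{linear} in $\sqrt{\rho^\eps}\u-{\rm u}$, so Young's inequality yields an $O(1)$ constant and hence $O(T)$ after time integration rather than $O(T\eps^2)$. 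One must extract the genuinely quadratic piece $\lt(\sqrt{\rho^\eps}\u-{\rm u})\otimes(\sqrt{\rho^\eps}\u-{\rm u}):\na{\rm u}\rt$ and route the residue $\lt{\rm u},\rho^\eps\u\cdot\na{\rm u}\rt$ through the same continuity-equation trick, as in \eqref{Ru-2}.

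Second, $\mathscr{R}_{\rm sum}^\eps$ contains $\ka\int_0^t\lt{\rm u}\cdot\na(\d-{\rm d}),\Delta(\d-{\rm d})\rt\dd\tau$, which involves second derivatives of the difference and is covered by neither of your two mechanisms; it must be integrated by parts using $\dv{\rm u}=0$ to become $-\ka\int_0^t\lt\na{\rm u}\,\na(\d-{\rm d}),\na(\d-{\rm d})\rt\dd\tau\lesssim\int_0^t\|\na\d-\na{\rm d}\|^2_{L^2}\dd\tau$, as in \eqref{Rsum-1}. This step should be stated explicitly rather than subsumed under "analogously".
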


\begin{lem}\label{Lmm-Contrl-C-disp}
	Under the same assumptions in Theorem \ref{Thm-Convergence-Rate}, the term $\mathscr{C}_{\rm disp}$ defined in \eqref{C-disp} can be rewritten as
	\begin{equation}\label{Cdisp-equ}
	  \begin{aligned}
	    \mathscr{C}_{\rm disp} = \,& -\mu_1 \int_0^t \| {\d}^\top({\A}^\eps-\A)\d \|^2_{L^2} {\dd} \tau +  \la_1 \int_0^t \big\| \cd - \dot{\rm d} + (\B^\eps - \B) \d + \tfrac{\la_2}{\la_1} (\A^\eps - \A) \d \big\|^2_{L^2} \dd \tau \\
	    & - ( \mu_5 + \mu_6 + \tfrac{\la_2^2}{\la_1} ) \int_0^t \| (\A^\eps - \A) \d \|^2_{L^2} \mathrm{d} \tau + \mathscr{R}_{\rm{\Sigma}}^\eps \,,
	  \end{aligned}
	\end{equation}
	where the term $\mathscr{R}_{\rm{\Sigma}}^\eps  = \int_0^t \lt \u - {\rm u} , \dv ( \widehat{\Sigma}_3^\eps - \tilde{\sigma}_{\bm{\mu}} ({\rm u}, {\rm d}, \dot{\rm d}) ) \rt \dd \tau $ and $ \widehat{\Sigma}_3^\eps $ is given  below in \eqref{Sigma-3-hat}. Moreover, the term $  \mathscr{R}_{\rm{\Sigma}}^\eps $ can be bounded by
	\begin{equation*}
	  \begin{aligned}
	    \mathscr{R}_{\rm{\Sigma}}^\eps \lesssim \eta_1 \int_0^t \| \na \u - \na {\rm u} \|^2_{L^2} \dd \tau + \int_0^t \| \d - {\rm d} \|^2_{L^2} \dd \tau
	  \end{aligned}
	\end{equation*}
	for all $t \in [0,T]$, in which $\eta_1 > 0$ is small to be determined and $T > 0$ is an any fixed number.
\end{lem}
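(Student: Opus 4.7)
\textbf{Proof proposal for Lemma \ref{Lmm-Contrl-C-disp}.}

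The plan is to exploit the algebraic cancellation identity \eqref{cs2}, which is the foundation of the basic energy law, and apply it separately to the compressible triple $(\u,\d,\cd,\A^\eps,\B^\eps)$ and to the incompressible triple $({\rm u},{\rm d},\dot{\rm d},\A,\B)$. Each application produces exactly the quadratic dissipative structure (involving $\mu_1$, $\la_1$, and $\mu_5+\mu_6+\tfrac{\la_2^2}{\la_1}$) evaluated at the corresponding solution. First I would write down
\begin{equation*}
\lt\dv \Sigma_3^\eps,\u\rt+\la_1\|\cd\|^2_{L^2}+\la_1\lt\cd,\B^\eps\d\rt+\la_2\lt\cd,\A^\eps\d\rt= -\mu_1\|\d^\top\A^\eps\d\|^2_{L^2}+\la_1\bigl\|\cd+\B^\eps\d+\tfrac{\la_2}{\la_1}\A^\eps\d\bigr\|^2_{L^2}-(\mu_5+\mu_6+\tfrac{\la_2^2}{\la_1})\|\A^\eps\d\|^2_{L^2},
\end{equation*}
and its analogue with $(\u,\d,\cd,\A^\eps,\B^\eps)$ replaced by $({\rm u},{\rm d},\dot{\rm d},\A,\B)$ and $\Sigma_3^\eps$ replaced by $\tilde\sigma_{\bm\mu}({\rm u},{\rm d},\dot{\rm d})$.

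Next I would subtract these two identities and match them against $\mathscr{C}_{\rm disp}$. The bookkeeping is straightforward but delicate: expanding $\la_1\|\cd-\dot{\rm d}+(\B^\eps-\B)\d+\tfrac{\la_2}{\la_1}(\A^\eps-\A)\d\|^2_{L^2}$ produces the pure squared difference $\la_1\|\cd-\dot{\rm d}\|^2_{L^2}$, the cross term $\la_1\lt\cd-\dot{\rm d},(\B^\eps-\B)\d\rt+\la_2\lt\cd-\dot{\rm d},(\A^\eps-\A)\d\rt$ (which is exactly the second piece in \eqref{C-disp}), plus additional quadratic terms in $(\B^\eps-\B)\d$ and $(\A^\eps-\A)\d$. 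These leftover quadratics, together with the analogous subtractions of $\mu_1\|\d^\top\A^\eps\d\|^2-\mu_1\|{\rm d}^\top\A{\rm d}\|^2$ and $\|\A^\eps\d\|^2-\|\A{\rm d}\|^2$, reassemble into the pure-difference quadratics $-\mu_1\|\d^\top(\A^\eps-\A)\d\|^2$ and $-(\mu_5+\mu_6+\tfrac{\la_2^2}{\la_1})\|(\A^\eps-\A)\d\|^2$ \emph{modulo} cross terms in which $\d$ is replaced by ${\rm d}$ (or the other way around). All such cross terms can be collected and re-expressed as $\lt\u-{\rm u},\dv(\widehat\Sigma_3^\eps-\tilde\sigma_{\bm\mu}({\rm u},{\rm d},\dot{\rm d}))\rt$ after integration by parts; this identifies $\widehat\Sigma_3^\eps$ as a stress tensor whose entries differ from $\tilde\sigma_{\bm\mu}({\rm u},{\rm d},\dot{\rm d})$ only through replacements of ${\rm d}$ by $\d$ (but keeping $\A$, $\B$, $\dot{\rm d}$ as the incompressible quantities). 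Concretely, one takes
\begin{equation}\label{Sigma-3-hat}
\widehat\Sigma_3^\eps=\tilde\sigma_{\bm\mu}({\rm u},\d,\dot{\rm d})
\end{equation}
(or a closely related symmetrized version dictated by the bookkeeping), so that $\widehat\Sigma_3^\eps-\tilde\sigma_{\bm\mu}({\rm u},{\rm d},\dot{\rm d})$ is multilinear in the differences $\d-{\rm d}$ together with ${\rm u},{\rm d},\dot{\rm d}$ and is, crucially, \emph{linear in} $\na{\rm u}$ rather than in $\na\u-\na{\rm u}$.

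Once the decomposition $\mathscr{C}_{\rm disp}=(\text{three quadratic dissipative terms})+\mathscr{R}_{\Sigma}^\eps$ is established, the estimate of $\mathscr{R}_{\Sigma}^\eps$ is routine: since every entry of $\widehat\Sigma_3^\eps-\tilde\sigma_{\bm\mu}({\rm u},{\rm d},\dot{\rm d})$ is a finite sum of products of $\d-{\rm d}$ with smooth (uniformly-in-$\eps$ bounded in $H^s$) factors drawn from $\{{\rm u},{\rm d},\dot{\rm d},\na{\rm u}\}$, an integration by parts in $x$, followed by H\"older's inequality, the Sobolev embedding $H^{s-1}\hookrightarrow L^\infty$, and the global bound \eqref{Bnd-Limit} from Theorem \ref{Thm-Limit}, gives
\begin{equation*}
\mathscr{R}_{\Sigma}^\eps\lesssim \int_0^t\|\na\u-\na{\rm u}\|_{L^2}\|\d-{\rm d}\|_{L^2}\,\dd\tau\lesssim\eta_1\int_0^t\|\na\u-\na{\rm u}\|^2_{L^2}\,\dd\tau+\tfrac{1}{\eta_1}\int_0^t\|\d-{\rm d}\|^2_{L^2}\,\dd\tau,
\end{equation*}
with Young's inequality absorbing the dissipative contribution into the left-hand side of \eqref{Rate-9}.

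The \textbf{main obstacle} I expect is purely combinatorial: the tensor $\tilde\sigma_{\bm\mu}$ is a sum of five quartic/cubic terms in $(\d,\A^\eps,\B^\eps,\cd)$, so expanding the difference of the two cancellation identities produces a large number of mixed products, and one has to show that every such product either contributes to one of the three dissipative squares or can be written as $\dv(\text{something linear in }\d-{\rm d})$ paired with $\u-{\rm u}$. Parodi's relation \eqref{Parodi-1} will be essential in making the cross terms collapse to the stated symmetric squares, exactly as in the derivation of the original identity \eqref{cs2} in \cite{JLT}; once this regrouping is done the remainder bound is immediate.
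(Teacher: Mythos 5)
Your identification of the key objects is correct: $\widehat{\Sigma}_3^\eps=\tilde{\sigma}_{\bm{\mu}}({\rm u},\d,\dot{\rm d})$ is exactly the tensor \eqref{Sigma-3-hat} used in the paper, and your remainder estimate for $\mathscr{R}_{\Sigma}^\eps$ (integrate by parts, bound $\|\widehat{\Sigma}_3^\eps-\tilde{\sigma}_{\bm{\mu}}({\rm u},{\rm d},\dot{\rm d})\|_{L^2}\lesssim\|\d-{\rm d}\|_{L^2}$ using $H^{s}\hookrightarrow L^\infty$ and \eqref{Bnd-Limit}, then apply Young) coincides with the paper's. Where you diverge is in how the three dissipative squares are produced. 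You propose writing the diagonal identity \eqref{cs2} twice, for $(\u,\d,\cd)$ and for $({\rm u},{\rm d},\dot{\rm d})$, and subtracting; but the difference of two quadratic forms is not the quadratic form of the difference, while $\mathscr{C}_{\rm disp}$ already carries the polarized cross-term structure (it contains $\lt\cd-\dot{\rm d},\la_1(\B^\eps-\B)\d\rt$, not $\la_1\lt\cd,\B^\eps\d\rt-\la_1\lt\dot{\rm d},\B{\rm d}\rt$), so the ``delicate bookkeeping'' you defer is not merely tedious --- the subtraction step itself would have to be replaced by a polarization argument. The paper's route removes this obstacle entirely: since $\tilde{\sigma}_{\bm{\mu}}(\cdot,\d,\cdot)$ is \emph{linear} in the pair (velocity gradient, material derivative of the director) once the director $\d$ is held fixed, one has the exact identity that $\Sigma_3^\eps-\widehat{\Sigma}_3^\eps$ equals $\tilde{\sigma}_{\bm{\mu}}$ evaluated at $(\A^\eps-\A,\,\B^\eps-\B,\,\cd-\dot{\rm d})$ with director $\d$; inserting $\pm\widehat{\Sigma}_3^\eps$ into the last term of \eqref{C-disp} and applying the cancellation \eqref{cs2} a \emph{single} time, to these difference quantities, yields the three squares in \eqref{Cdisp-equ} with no leftover mixed products, and the insertion term is precisely $\mathscr{R}_{\Sigma}^\eps$. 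If you restructure your argument around this one application of the identity to the difference solution, the rest of your proposal (the choice of $\widehat{\Sigma}_3^\eps$, the $L^2$ bound on the tensor difference, and the Young inequality) goes through verbatim; as written, the combinatorial regrouping you flag as the main obstacle remains an unproved step.
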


Now, by employing Lemmas \ref{Lmm-rho-Rt}, \ref{Lmm-Rmnd-Cntrl} and \ref{Lmm-Contrl-C-disp} to the equality \eqref{Rate-9}, we deduce that for any fixed $T > 0$,

\begin{equation}\label{Rate-10}
  \begin{aligned}
    & \tfrac{1}{2} \| \sqrt{\rho^\eps} \u - {\rm u} \|^2_{L^2} + \tfrac{1}{2} \| \sqrt{\rho^\eps} \cd - \dot{\rm d} \|^2_{L^2} + \tfrac{1}{2} \ka \| \na \d - \na {\rm d} \|^2_{L^2} + \tfrac{1}{2} \| \d - {\rm d} \|^2_{L^2} \\
    & + \lt \Pi^\eps , 1 \rt + \tfrac{1}{2} \mu_4 \int_0^t \| \na \u - \na {\rm u} \|^2_{L^2} \dd \tau + (\tfrac{1}{2} \mu_4 + \xi ) \int_0^t \| \dv \u \|^2_{L^2} \dd \tau \\
    & + \mu_1 \int_0^t \| {\d}^\top({\A}^\eps-\A)\d \|^2_{L^2} {\dd} \tau \\
    & - \la_1 \int_0^t \big\| \cd - \dot{\rm d} + (\B^\eps - \B) \d + \tfrac{\la_2}{\la_1} (\A^\eps - \A) \d \big\|^2_{L^2} \dd \tau \\
    & + ( \mu_5 + \mu_6 + \tfrac{\la_2^2}{\la_1} ) \int_0^t \| (\A^\eps - \A) \d \|^2_{L^2} \dd \tau \\
    \lesssim\, & \eps^{\alpha_0} + \eps^{1+\tfrac{\alpha_0}{2}} + (1+T) \eps^2 + \eta_1 \lt \Pi^\eps , 1 \rt + \eta_1 \int_0^t \| \na \u - \na {\rm u} \|^2_{L^2} \dd \tau \\
    & + \int_0^t \lt \Pi^\eps ,  1 \rt + \| \sqrt{\rho^\eps} \u - {\rm u} \|^2_{L^2} + \| \sqrt{\rho^\eps} \cd - \dot{\rm d} \|^2_{L^2} + \| \na \d - \na {\rm d} \|^2_{L^2} + \| \d - {\rm d} \|^2_{L^2} \dd \tau
  \end{aligned}
\end{equation}
for all $t \in [0,T]$ and $0 < \eps \leq 1$, where $\eta_1 > 0$ is small to be determined. We then take $\eta_1 > 0$ is sufficiently small such that \eqref{Rate-10} reduces to
\begin{equation*}
  \begin{aligned}
    f (t) \leq f(t) + \int_0^t g (\tau) \dd \tau \leq C (1 + T) \eps^{\beta_0} + C \int_0^t \mathit{f} (\tau) \dd \tau
  \end{aligned}
\end{equation*}
for all $t \in [0,T]$, where $\beta_0 = \min\{ 2, \alpha_0, 1 + \tfrac{\alpha_0}{2} \} > 0$, $C > 0$ is an $\eps$-independent constant, the functional $f(t)$ is given as
\begin{equation*}
  \begin{aligned}
    f(t) = \lt \Pi^\eps ,  1 \rt + \| \sqrt{\rho^\eps} \u - {\rm u} \|^2_{L^2} + \| \sqrt{\rho^\eps} \cd - \dot{\rm d} \|^2_{L^2} + \| \na \d - \na {\rm d} \|^2_{L^2} + \| \d - {\rm d} \|^2_{L^2} \,,
  \end{aligned}
\end{equation*}
and $g(t) \geq 0$ reads
\begin{equation*}
  \begin{aligned}
    g(t)  =\, & \tfrac{1}{4} \mu_4 \int_0^t \| \na \u - \na {\rm u} \|^2_{L^2} \dd \tau + (\tfrac{1}{2} \mu_4 + \xi ) \int_0^t \| \dv \u \|^2_{L^2} \dd \tau \\
    & + \mu_1 \int_0^t \| {\d}^\top({\A}^\eps-\A)\d \|^2_{L^2} {\dd} \tau  
 - \la_1 \int_0^t \big\| \cd - \dot{\rm d} + (\B^\eps - \B) \d + \tfrac{\la_2}{\la_1} (\A^\eps - \A) \d \big\|^2_{L^2} \dd \tau \\
    & + ( \mu_5 + \mu_6 + \tfrac{\la_2^2}{\la_1} ) \int_0^t \| (\A^\eps - \A) \d \|^2_{L^2} \dd \tau \,.
  \end{aligned}
\end{equation*}
Then, the Gr\"onwall's inequality tells us that
\begin{equation*}
  \begin{aligned}
    f(t) \leq C (1 + T) \eps^{\beta_0} \exp (C t) \leq C (1 + T) \exp (CT) \eps^{\beta_0}
  \end{aligned}
\end{equation*}
for all $t \in [0,T]$, which concludes  Theorem \ref{Thm-Convergence-Rate}. \end{proof}

It remains to prove Lemmas \ref{Lmm-rho-Rt}, \ref{Lmm-Rmnd-Cntrl} and \ref{Lmm-Contrl-C-disp}.

\begin{proof}[Proof of Lemma \ref{Lmm-rho-Rt}]
	It is easy to justify that the following elementary inequalities
	\begin{align*}
	  & | \sqrt{x} - 1 |^2 \leq K_1 |x-1|^\gamma \,, \quad | x - 1 | \geq 1/2 \,, \;\ \  \gamma > 1 \,,\\
	  & | \sqrt{x} - 1 |^2 \leq K_1 | x - 1 |^2 \,, \quad x \geq 0 \,, \\
  	  & x^\gamma - \gamma ( x - 1 ) - 1 \geq
	  \left\{
	    \begin{array}{ll}
	      K_2 | x - 1 |^2 \,,\  \ & |x-1| \leq 1/2 \,,\ \  \; \gamma > 1 \,, \\
	      K_2 | x - 1 |^\gamma \,, \ \ & | x - 1 | \geq 1/2 \,,\  \ \; \gamma > 1 \,,
	    \end{array}
	  \right.
	\end{align*}
	hold for some positive constants $K_1, K_2$, depending only on $\gamma$. We thereby have
	\begin{equation*}
	  \begin{aligned}
	    \| \sqrt{\rho^\eps} - 1 \|^2_{L^2} =\, & \big\| ( \sqrt{\rho^\eps} - 1) \mathbf{1}_{\{|\rho^\eps - 1| \leq \tfrac{1}{2}\}} \big\|^2_{L^2} + \big\| ( \sqrt{\rho^\eps} - 1) \mathbf{1}_{\{|\rho^\eps - 1| \geq \tfrac{1}{2}\}} \big\|^2_{L^2} \\
	    \lesssim\, & \big\| (\rho^\eps - 1) \mathbf{1}_{\{|\rho^\eps - 1| \leq \tfrac{1}{2}\}} \big\|^2_{L^2} + \big\| (\rho^\eps - 1) \mathbf{1}_{\{|\rho^\eps - 1| \geq \tfrac{1}{2}\}} \big\|^\gamma_{L^\gamma} \\
	    \lesssim\, & \eps^2 \big\lt \tfrac{1}{\eps^2} \tfrac{ \tilde{a} }{\gamma - 1} \big( (\rho^\eps)^\gamma - \gamma (\rho^\eps - 1) - 1 \big) , 1 \big\rt = \eps^2 \lt \Pi^\eps, 1 \rt \,,
	  \end{aligned}
	\end{equation*}
	which means that
	\begin{align}\label{sro}
	  \| \sqrt{\ro} - 1 \|_{L^2} \lesssim \e \lt \Pi^\eps, 1 \rt^{\frac{1}{2}} \,.
	\end{align}
	Similarly, combining the initial conditions in Theorem \ref{Thm-Convergence-Rate}, we can easily prove that $\| \sqrt{\rho^\eps_0} - 1 \|_{L^2} \lesssim \eps \lt \Pi^\eps_0 , 1 \rt^\frac{1}{2} \lesssim \eps^{1 + \tfrac{\alpha_0}{2}}$. Thus the proof of Lemma \ref{Lmm-rho-Rt} is finished.
\end{proof}

\begin{proof}[Proof of Lemma \ref{Lmm-Rmnd-Cntrl}]
	We will justify this lemma by three steps: 1) to prove the bounds of $\mathscr{R}_{\rm 0}^\eps$, $\mathscr{R}_{\rm L2}^\eps$ and $\mathscr{R}_{\rm sum}^\eps$; 2) to prove the bound of $\mathscr{R}_{\rm u}^\eps$; 3) to prove the bound of $\mathscr{R}_{\rm d}^\eps$. We emphasize that, in what follows, the difference forms
	\begin{equation*}
	  \begin{aligned}
	    \sqrt{\rho^\eps} - 1 \,, \ \sqrt{\rho^\eps} \u - {\rm u} \,, \ \sqrt{\rho^\eps} \cd - \dot{\rm d} \,, \ \na \d - \na {\rm d} \,, \ \d - {\rm d}
	  \end{aligned}
	\end{equation*}
	are the most important terms, and the other terms without difference forms, like $\u$, ${\rm u}$, $\rho^\eps$, $\cd$, $\na \d$, $\dot{\rm d}$, $\na {\rm d}$, $\d$, ${\rm d}$, etc., can be bounded by the norms $\| \u \|_{H^s}$, $\| \u \|_{H^s}$, $\| \rho^\eps \|_{L^\infty}$, $\| \rho^\eps \|_{\dot{H}^s} = \eps \| \phi^\eps \|_{\dot{H}^s} \leq \| \phi^\eps \|_{H^s}$, $\| \cd \|_{H^s}$, $\| \na \d \|_{H^s}$, $\| \dot{\rm d} \|_{H^s}$, $\| \na {\rm d} \|_{H^s}$, and $|\d| = |{\rm d}| = 1$ via utilizing the Moser-type calculus inequalities in Lemma \ref{lem1}. From the uniform bounds \eqref{Unif-Bnd-1} and \eqref{Unif-Bnd-rho} in Theorem \ref{Thm-global} and the energy bound \eqref{Bnd-Limit} in Theorem \ref{Thm-Limit},  these norms will be bounded by some constants. Consequently, for simplicity, we will focus on the difference forms and control the other terms by some harmless constants in the estimates later. 

	\vspace*{1.2mm}
	
	 {\em Step 1. Estimates for $\mathscr{R}_{\rm 0}^\eps$, $\mathscr{R}_{\rm L2}^\eps$ and $\mathscr{R}_{\rm sum}^\eps$.}
		First, it is derived from Lemma \ref{Lmm-rho-Rt} and the Moser-type calculus inequalities in Lemma \ref{lem1} that
	\begin{equation*}
	  \begin{aligned}
	    & - \lt (\rho^\eps_0 - \sqrt{\rho^\eps_0}) \u_0 , {\rm u}_0 \rt - \lt (\rho^\eps_0 - \sqrt{\rho^\eps_0}) \widetilde{\rm d}^\eps_0 , \widetilde{\rm d}_0 \rt \\
	    &\qquad \lesssim \| \sqrt{\rho^\eps_0} - 1 \|_{L^2} \big( \| \u_0 \|_{H^s} \| {\rm u}_0 \|_{H^s} + \| \widetilde{\rm d}^\eps_0 \|_{H^s} \| \widetilde{\rm d}_0 \|_{H^s} \big) \lesssim \eps^{1+\tfrac{\alpha_0}{2}} \,,
	  \end{aligned}
	\end{equation*}
	which, combining with the initial conditions given in Theorem \ref{Thm-Convergence-Rate}, implies that
	\begin{equation*}
	  \begin{aligned}
	    \mathscr{R}_{\rm 0}^\eps \lesssim \eps^{\alpha_0} + \eps^{1+\tfrac{\alpha_0}{2}} \,.
	  \end{aligned}
	\end{equation*}
	
	Then, we estimate the term $\mathscr{R}_{\rm L2}^\eps$ defined in \eqref{Rate-8-RL2-eps}. Notice that
	\begin{equation*}
	  \begin{aligned}
	    \mathscr{R}_{\rm L2}^\eps =\, & \int_0^t \lt \sqrt{\rho^\eps} \cd - \dot{\rm d}, \d - {\rm d} \rt - \lt (\sqrt{\rho^\eps} - 1) \cd , \d - {\rm d} \rt + \lt \u \cdot \na (\d - {\rm d}) , \d - {\rm d} \rt \dd \tau \\
	    & + \int_0^t \lt (\sqrt{\rho^\eps} \u - {\rm u}) \cdot \na {\rm d} , \d - {\rm d} \rt - \lt (\sqrt{\rho^\eps} - 1) \u \cdot \na {\rm d} , \d - {\rm d} \rt \dd \tau \\
	    \lesssim\, & \int_0^t \big( \| \sqrt{\rho^\eps} - 1 \|_{L^2} + \| \sqrt{\rho^\eps} \u - {\rm u} \|_{L^2} + \| \sqrt{\rho^\eps} \cd - \dot{\rm d} \|_{L^2}   + \| \na \d - \na {\rm d} \|_{L^2} \big) \| \d - {\rm d} \|_{L^2} \dd \tau \\
	    \lesssim\, & \int_0^t \Big( \eps^2 \lt \Pi^\eps , 1 \rt + \| \sqrt{\rho^\eps} \u - {\rm u} \|_{L^2}^2 + \| \sqrt{\rho^\eps} \cd - \dot{\rm d} \|_{L^2}^2   + \| \na \d - \na {\rm d} \|_{L^2}^2 + \| \d - {\rm d} \|_{L^2}^2 \Big) \dd \tau \,,
	  \end{aligned}
	\end{equation*}
	where the last inequality is derived from the Young's inequality and Lemma \ref{Lmm-rho-Rt}.
	
	Finally, we estimate the term $\mathscr{R}_{\rm sum}^\eps$ given in \eqref{Rsum-eps}. For the term $\ka \int_0^t \lt {\rm u} \cdot \na (\d - {\rm d}) , \Delta (\d - {\rm d}) \rt \dd \tau$, we have
	\begin{equation}\label{Rsum-1}
	  \begin{aligned}
	    \ka \int_0^t \lt {\rm u} \cdot \na (\d - {\rm d}) , \Delta (\d - {\rm d}) \rt \dd \tau =\, & - \ka \int_0^t \lt \na {\rm u} \na (\d - {\rm d}) , \na (\d - {\rm d}) \rt \dd \tau \\
	    \lesssim \,& \int_0^t \| \na (\d - {\rm d}) \|^2_{L^2} \dd \tau \,.
	  \end{aligned}
	\end{equation}
	where the first equality is derived from the condition $\dv {\rm u} = 0$. For the term $\ka \int_0^t \lt (\u - {\rm u}) \cdot \na (\d - {\rm d}) , \Delta {\rm d} \rt \dd \tau$, we have
	\begin{equation}\label{Rsum-2}
	  \begin{aligned}
	    & \ka \int_0^t \lt (\u - {\rm u}) \cdot \na (\d - {\rm d}) , \Delta {\rm d} \rt \dd \tau \\
	    = \,& \ka \int_0^t \lt \big( (\sqrt{\rho^\eps}\u - {\rm u}) - (\sqrt{\rho^\eps} - 1) \u \big) \cdot \na (\d - {\rm d}) , \Delta {\rm d} \rt \dd \tau \\
	    \lesssim \,& \int_0^t \big( \| \sqrt{\rho^\eps} \u - {\rm u} \|_{L^2} + \| \sqrt{\rho^\eps} - 1 \|_{L^2} \big) \| \na (\d - {\rm d} ) \|_{L^2} \dd \tau \\
	    \lesssim \,& \int_0^t \eps^2 \lt \Pi^\eps , 1 \rt + \| \sqrt{\rho^\eps} \u - {\rm u} \|^2_{L^2} + \| \na \d - \na {\rm d} \|^2_{L^2} \dd \tau \,,
	  \end{aligned}
	\end{equation}
	where the last inequality is implied by Lemma \ref{Lmm-rho-Rt}. For the term $- \ka \int_0^t \lt (\rho^\eps - 1) \u , \na ( \tfrac{1}{2} |\na {\rm d}|^2 ) \rt \dd \tau$, we have
	\begin{equation}\label{Rsum-3}
	  \begin{aligned}
	   & - \ka \int_0^t \lt (\rho^\eps - 1) \u , \na ( \tfrac{1}{2} |\na {\rm d}|^2 ) \rt \dd \tau\\
 =\,&  - \ka \int_0^t \lt (\sqrt{\rho^\eps} - 1) (\sqrt{\rho^\eps} + 1) \u , \na ( \tfrac{1}{2} |\na {\rm d}|^2 ) \rt \dd \tau \\
	    \lesssim \, & \int_0^t \| \sqrt{\rho^\eps} - 1 \|_{L^2} \dd \tau \lesssim \int_0^t \eps \lt \Pi^\eps , 1 \rt^\frac{1}{2} \dd \tau \lesssim \eps^2 T + \int_0^t \lt \Pi^\eps , 1 \rt \dd \tau
	  \end{aligned}
	\end{equation}
	for all $t \in [0,T]$, where the last two inequalities is implied by Lemma \ref{Lmm-rho-Rt}. For the term $ \ka \int_0^t \lt \rho^\eps \u , \na (\tfrac{1}{2} |\na {\rm d}|^2) \rt \dd \tau$, we have
	\begin{equation}\label{Rsum-4}
	  \begin{aligned}
	    & \ka \int_0^t \lt \rho^\eps \u , \na (\tfrac{1}{2} |\na {\rm d}|^2) \rt \dd \tau\\
 =\, & - \ka \int_0^t \lt \dv (\rho^\eps \u ) , \tfrac{1}{2} |\na {\rm d}|^2 \rt \dd \tau \\
	    =\, & \ka \int_0^t \lt \p_t (\rho^\eps - 1) , \tfrac{1}{2} |\na {\rm d}|^2 \rt \dd \tau \\
	    = \,& \ka \lt (\rho^\eps - 1) , \tfrac{1}{2} |\na {\rm d}|^2 \rt - \ka \lt (\rho^\eps_0 - 1) , \tfrac{1}{2} |\na {\rm d}_0|^2 \rt - \ka \int_0^t \lt (\rho^\eps - 1), \na {\rm d} : \na \p_t {\rm d} \rt \dd \tau \\
	    \lesssim \,& \| \sqrt{\rho^\eps} - 1 \|_{L^2} + \| \sqrt{\rho^\eps_0} - 1 \|_{L^2} + \int_0^t \| \sqrt{\rho^\eps} - 1 \|_{L^2} \dd \tau \\
	    \lesssim \,& \eps \lt \Pi^\eps , 1 \rt^\frac{1}{2} + \eps^{1 + \tfrac{\alpha_0}{2}} + \int_0^t \eps \lt \Pi^\eps , 1 \rt^\frac{1}{2} \dd \tau \\
	    \lesssim \,& (1 + T) \eps^2 + \eps^{1 + \tfrac{\alpha_0}{2}} + \eta_1 \lt \Pi^\eps , 1 \rt + \int_0^t \lt \Pi^\eps , 1 \rt \dd \tau
	  \end{aligned}
	\end{equation}
	for all $t \in [0, T]$, where $\eta_1 > 0$ is small to be determined and the last second inequality is derived from Lemma \ref{Lmm-rho-Rt}. For the term $ \int_0^t \lt \cd - \dot{\rm d} , \la_1 \B (\d - {\rm d}) + \la_2 \A (\d - {\rm d}) \rt \dd \tau $, we have
	\begin{equation}\label{Rsum-5}
	  \begin{aligned}
	    & \int_0^t \lt \cd - \dot{\rm d} , \la_1 \B (\d - {\rm d}) + \la_2 \A (\d - {\rm d}) \rt \dd \tau \\
	    =\, & \int_0^t \lt (\sqrt{\rho^\eps} \cd - \dot{\rm d}) - (\sqrt{\rho^\eps} - 1) \cd , \la_1 \B (\d - {\rm d}) + \la_2 \A (\d - {\rm d}) \rt \dd \tau \\
	    \lesssim \,& \int_0^t \big( \| \sqrt{\rho^\eps} \cd -\dot{\rm d} \|_{L^2} + \| \sqrt{\rho^\eps} - 1 \|_{L^2} \big) \| \d - {\rm d} \|_{L^2} \dd \tau \\
	    \lesssim \,& \int_0^t \eps^2 \lt \Pi^\eps , 1 \rt + \| \sqrt{\rho^\eps} \cd -\dot{\rm d} \|_{L^2}^2 + \| \d - {\rm d} \|^2_{L^2} \dd \tau \,,
	  \end{aligned}
	\end{equation}
	where the last inequality is implied by the Young's inequality and Lemma \ref{Lmm-rho-Rt}. Consequently, the bounds \eqref{Rsum-1}-\eqref{Rsum-5} tell us that 
	\begin{equation*}
	  \begin{aligned}
	    \mathscr{R}_{\rm sum}^\eps \lesssim \,& (1+T) \eps^2 + \eps^{1+\tfrac{\alpha_0}{2}} + \eta_1 \lt \Pi^\eps , 1 \rt + \int_0^t \lt \Pi^\eps , 1 \rt + \| \sqrt{\rho^\eps} \u - {\rm u} \|^2_{L^2} \dd \tau \\
	    & + \int_0^t \| \sqrt{\rho^\eps} \cd - \dot{\rm d} \|^2_{L^2} + \| \na \d - \na {\rm d} \|^2_{L^2} + \| \d - {\rm d} \|^2_{L^2} \dd \tau \,.
	  \end{aligned}
	\end{equation*}
	
	\vspace*{1.2mm}
		{\em Step 2. Estimates for the term $\mathscr{R}_{\rm u}^\eps$ in \eqref{Ru-eps}.}
		First, we derive from Lemma \ref{Lmm-rho-Rt} that
	\begin{equation}\label{Ru-1}
	  \begin{aligned}
	    \lt (\rho^\eps - \sqrt{\rho^\eps}) \u, {\rm u} \rt \lesssim \| \sqrt{\rho^\eps} - 1 \|_{L^2} \lesssim \eps \lt \Pi^\eps , 1 \rt^\frac{1}{2} \lesssim \eps^2 + \eta_1 \lt \Pi^\eps , 1 \rt
	  \end{aligned}
	\end{equation}
	for a small $\eta_1 > 0$ to be determined. By the divergence free property of ${\rm u}$, one has
	\begin{equation*}
	  \begin{aligned}
	    & \int_0^t \lt \rho^\eps \u \otimes ( \u - {\rm u} ) : \na {\rm u} \rt \dd \tau \\
	    = \,&  \int_0^t \lt {\rm u} , \rho^\eps \u \cdot \na {\rm u} \rt \dd \tau + \int_0^t \lt \sqrt{\rho^\eps} \u - {\rm u} , ( \sqrt{\rho^\eps} \u - {\rm u} ) \cdot \na {\rm u} \rt \dd \tau \\
	    & - \int_0^t \lt {\rm u} \otimes (\sqrt{\rho^\eps} - 1) \sqrt{\rho^\eps} \u + (\sqrt{\rho^\eps} - 1) \sqrt{\rho^\eps} \u \otimes {\rm u} : \na {\rm u} \rt \dd \tau  \,.
	  \end{aligned}
	\end{equation*}
	Taking  the similar arguments to that in \eqref{Rsum-4}, we have
	\begin{equation*}
	  \begin{aligned}
	    \int_0^t \lt {\rm u} , \rho^\eps \u \cdot \na {\rm u} \rt \dd \tau \lesssim & (1 + T) \eps^2 + \eps^{1 + \tfrac{\alpha_0}{2}} + \eta_1 \lt \Pi^\eps , 1 \rt + \int_0^t \lt \Pi^\eps , 1 \rt \dd \tau
	  \end{aligned}
	\end{equation*}
	for small $\eta_1 > 0$ to be determined. Furthermore, we estimate
	\begin{equation*}
	  \begin{aligned}
	    & \int_0^t \lt \sqrt{\rho^\eps} \u - {\rm u} , ( \sqrt{\rho^\eps} \u - {\rm u} ) \cdot \na {\rm u} \rt \dd \tau \\
	    & \qquad   - \int_0^t \lt {\rm u} \otimes (\sqrt{\rho^\eps} - 1) \sqrt{\rho^\eps} \u + (\sqrt{\rho^\eps} - 1) \sqrt{\rho^\eps} \u \otimes {\rm u} : \na {\rm u} \rt \dd \tau \\
	    \lesssim\, & \int_0^t \| \sqrt{\rho^\eps} \u - {\rm u} \|^2_{L^2} + \| \sqrt{\rho^\eps} - 1 \|_{L^2} \dd \tau \lesssim T \eps^2 + \int_0^t \lt \Pi^\eps , 1 \rt + \| \sqrt{\rho^\eps} \u - {\rm u} \|^2_{L^2} \dd \tau \,.
	  \end{aligned}
	\end{equation*}
	Consequently, we have
	\begin{equation}\label{Ru-2}
	  \begin{aligned}
	    & \int_0^t \lt \rho^\eps \u \otimes ( \u - {\rm u} ) : \na {\rm u} \rt \dd \tau \\
	    & \qquad \lesssim    (1 + T) \eps^2 + \eps^{1 + \tfrac{\alpha_0}{2}} + \eta_1 \lt \Pi^\eps , 1 \rt + \int_0^t \lt \Pi^\eps , 1 \rt + \| \sqrt{\rho^\eps} \u - {\rm u} \|^2_{L^2} \dd \tau
	  \end{aligned}
	\end{equation}
	for all $t \in [0,T]$. Moreover, Lemma \ref{Lmm-rho-Rt} implies
	\begin{equation}\label{Ru-3}
	  \begin{aligned}
	    & - \int_0^t \lt \rho^\eps \u - \u , \tfrac{1}{2} \mu_4 \Delta {\rm u} \rt \dd \tau \\
=\, & - \int_0^t \lt ( \sqrt{\rho^\eps} - 1 ) ( \sqrt{\rho^\eps} + 1 ) \u , \tfrac{1}{2} \mu_4 \Delta {\rm u} \rt \dd \tau \\
	    \lesssim \,& \int_0^t \| \sqrt{\rho^\eps} - 1 \|_{L^2} \dd \tau \lesssim \int_0^t \eps \lt \Pi^\eps , 1 \rt^\frac{1}{2} \dd \tau \lesssim T \eps^2 + \int_0^t \lt \Pi^\eps , 1 \rt \dd \tau \,,
	  \end{aligned}
	\end{equation}
	and similarly
	\begin{equation}\label{Ru-4}
	  \begin{aligned}
	    \int_0^t \lt (\rho^\eps - 1) \u , \ka \dv (\na {\rm d} \odot \na {\rm d}) - \dv \tilde{\sigma}_{\bm{\mu}} ({\rm u}, {\rm d}, \dot{\rm d}) \rt \dd \tau \lesssim T \eps^2 + \int_0^t \lt \Pi^\eps , 1 \rt \dd \tau \,.
	  \end{aligned}
	\end{equation}
	It is also deduced from the analogous arguments  in \eqref{Rsum-4} that
	\begin{equation}\label{Ru-5}
	  \begin{aligned}
	    \int_0^t \lt \rho^\eps \u , \na \pi \rt \dd \tau \lesssim (1 + T) \eps^2 + \eps^{1 + \tfrac{\alpha_0}{2}} + \eta_1 \lt \Pi^\eps , 1 \rt + \int_0^t \lt \Pi^\eps , 1 \rt \dd \tau
	  \end{aligned}
	\end{equation}
	for all $t \in [0,T]$ and $\eta_1 > 0$ is small to be determined. As a result, by collecting the bounds \eqref{Ru-1}-\eqref{Ru-5} together, we deduce that
	\begin{equation*}
	  \begin{aligned}
	    \mathscr{R}_{\rm u}^\eps \lesssim (1 + T) \eps^2 + \eps^{1 + \tfrac{\alpha_0}{2}} + \eta_1 \lt \Pi^\eps , 1 \rt + \int_0^t \| \sqrt{\rho^\eps} \u - {\rm u} \|^2_{L^2} + \lt \Pi^\eps , 1 \rt \dd \tau \,.
	  \end{aligned}
	\end{equation*}
	
	\vspace*{1.2mm}
	
	 {\em Step 3. Estimates for the term $\mathscr{R}_{\rm d}^\eps$ in \eqref{Rd-eps}.}
	 	Taking the analogous estimates to that in \eqref{Ru-1} and \eqref{Ru-2}, we can derive that
	\begin{equation}\label{Rd-1}
	  \begin{aligned}
	    \lt \sqrt{\rho^\eps} (\sqrt{\rho^\eps} - 1) \u , {\rm u} \rt \lesssim \eps^2 + \eta_1 \lt \Pi^\eps , 1 \rt
	  \end{aligned}
	\end{equation}
	and
	\begin{equation}\label{Rd-2}
	  \begin{aligned}
	    & - \int_0^t \lt \rho^\eps \cd \otimes (\u - {\rm u}) : \na \dot{\rm d} \rt \dd \tau \\
	    &\qquad \lesssim (1 + T) \eps^2 + \eps^{1 + \tfrac{\alpha_0}{2}} + \eta_1 \lt \Pi^\eps , 1 \rt + \int_0^t \lt \Pi^\eps , 1 \rt + \| \sqrt{\rho^\eps} \u - {\rm u} \|^2_{L^2} + \| \sqrt{\rho^\eps} \cd - \dot{\rm d} \|^2_{L^2} \dd \tau
	  \end{aligned}
	\end{equation}
	for all $t \in [0,T]$, where $\eta_1 > 0$ is small to be determined. For the term $- \int_0^t \lt (\rho^\eps - 1) \cd , \ka \Delta {\rm d} \rt \dd \tau$, we have
	\begin{equation}\label{Rd-3}
	  \begin{aligned}
	    - \int_0^t \lt (\rho^\eps - 1) \cd , \ka \Delta {\rm d} \rt \dd \tau =\, & - \int_0^t \lt (\sqrt{\rho^\eps} - 1) (\sqrt{\rho^\eps} + 1) \cd , \ka \Delta {\rm d} \rt \dd \tau \\
	    \lesssim \,& \int_0^t \| \sqrt{\rho^\eps} - 1 \|_{L^2} \dd \tau \lesssim \int_0^t \eps \lt \Pi^\eps , 1 \rt^\frac{1}{2} \dd \tau \lesssim T \eps^2 + \int_0^t \lt \Pi^\eps , 1 \rt \dd \tau
	  \end{aligned}
	\end{equation}
	for all $t \in [0,T]$. Similarly, we also have
	\begin{equation}\label{Rd-4}
	  \begin{aligned}
	    - \int_0^t \lt (\rho^\eps - 1) \cd , \la_1 (\dot{\rm d} + \B {\rm d}) + \la_2 \A {\rm d} \rt \dd \tau \lesssim T \eps^2 + \int_0^t \lt \Pi^\eps , 1 \rt \dd \tau  \,.
	  \end{aligned}
	\end{equation}
	
	It remains to control the term $- \int_0^t \lt \rho^\eps \cd , \Gamma {\rm d} \rt + \lt \dot{\rm d} , \Gamma^\eps \d \rt \dd \tau$. It is easy to know that
	\begin{equation*}
	  \begin{aligned}
	    & - \int_0^t \lt \rho^\eps \cd , \Gamma {\rm d} \rt + \lt \dot{\rm d} , \Gamma^\eps \d \rt \dd \tau \\
	    =\, & - \int_0^t \lt (\rho^\eps - 1) \cd , \Gamma {\rm d} \rt \dd \tau - \int_0^t \lt \cd , \Gamma {\rm d} \rt + \lt \dot{\rm d} , \Gamma^\eps \d \rt \dd \tau \,.
	  \end{aligned}
	\end{equation*}
	By the similar arguments in \eqref{Rd-3}, one has
	\begin{equation}\label{Rd-52}
	  \begin{aligned}
	    - \int_0^t \lt (\rho^\eps - 1) \cd , \Gamma {\rm d} \rt \dd \tau \lesssim T \eps^2 + \int_0^t \lt \Pi^\eps , 1 \rt \dd \tau
	  \end{aligned}
	\end{equation}
	for all $t \in [0, T]$. Since $|\d| = |{\rm d}| = 1$, we have $\cd \cdot \d = \dot{\rm d} \cdot {\rm d} = 0$. Thus, we can calculate that
	\begin{equation*}
	  \begin{aligned}
	    - \int_0^t \lt \cd , \Gamma {\rm d} \rt + \lt \dot{\rm d} , \Gamma^\eps \d \rt \dd \tau = \int_0^t \lt \cd - \dot{\rm d} , \Gamma^\eps (\d - {\rm d}) \rt \dd \tau + \int_0^t \lt \cd - \dot{\rm d} , ( \Gamma^\eps - \Gamma ) {\rm d} \rt \dd \tau \,.
	  \end{aligned}
	\end{equation*}
	For the quantity $\int_0^t \lt \cd - \dot{\rm d} , \Gamma^\eps (\d - {\rm d}) \rt \dd \tau$, we have
	\begin{equation}\label{Rd-54}
	  \begin{aligned}
	    \int_0^t \lt \cd - \dot{\rm d} , \Gamma^\eps (\d - {\rm d}) \rt \dd \tau = \,& \int_0^t \lt (\sqrt{\rho^\eps} \cd - \dot{\rm d}) - (\sqrt{\rho^\eps} - 1) \cd , \Gamma^\eps (\d - {\rm d}) \rt \dd \tau \\
	    \lesssim \,& \int_0^t ( \| \sqrt{\rho^\eps} \cd - \dot{\rm d} \|_{L^2} + \| \sqrt{\rho^\eps} - 1 \|_{L^2} ) \| \d - {\rm d} \|_{L^2} \dd \tau \\
	    \lesssim \,& \int_0^t \eps^2 \lt \Pi^\eps, 1 \rt + \| \sqrt{\rho^\eps} \cd - \dot{\rm d} \|^2_{L^2} + \| \d - {\rm d} \|^2_{L^2} \dd \tau \,,
	  \end{aligned}
	\end{equation}
	where the last inequality is derived from the Young's inequality and Lemma \ref{Lmm-rho-Rt}. Furthermore, we similarly have
	\begin{equation}\label{Rd-55}
	  \begin{aligned}
	    \int_0^t \lt \cd - \dot{\rm d} , ( \Gamma^\eps - \Gamma ) {\rm d} \rt \dd \tau \lesssim \int_0^t ( \| \sqrt{\rho^\eps} \cd - \dot{\rm d} \|_{L^2} + \eps \lt \Pi^\eps , 1 \rt^\frac{1}{2} ) \| \Gamma^\eps - \Gamma \|_{L^2} \dd \tau \,.
	  \end{aligned}
	\end{equation}
	Recalling the definitions of $\Gamma^\eps$ and $\Gamma$ in \eqref{Ga-eps} and \eqref{Gal}, respectively, one easily has
	\begin{equation}\label{Rd-56}
	  \begin{aligned}
	    \| \Gamma^\eps - \Gamma \|_{L^2} \lesssim \,& \| (\sqrt{\rho^\eps} \cd - \dot{{\rm d}}) \cdot (\sqrt{\rho^\eps} \cd + \dot{{\rm d}}) \|_{L^2} + \| \na (\d - {\rm d}) : \na (\d + {\rm d}) \|_{L^2} \\
	    & + \| {\d}^\top (\A^\eps - \A) \d \|_{L^2} + \| (\d - {\rm d})^\top \A \d \|_{L^2} + \| {\rm d}^\top \A (\d - {\rm d}) \|_{L^2} \\
	    \lesssim\, & \| \sqrt{\rho^\eps} \cd - \dot{\rm d} \|_{L^2} + \| \na \d - \na {\rm d} \|_{L^2} + \| \d - {\rm d} \|_{L^2} + \| \na \u - \na {\rm u} \|_{L^2} \,.
	  \end{aligned}
	\end{equation}
	By substituting the bound \eqref{Rd-56} into \eqref{Rd-55} and using the Young's inequality, we deduce that
	\begin{equation}\label{Rd-57}
	  \begin{aligned}
	    \int_0^t \lt \cd - \dot{\rm d} , ( \Gamma^\eps - \Gamma ) \d \rt \dd \tau \lesssim\, & \eta_1 \int_0^t \| \na \u - \na {\rm u} \|^2_{L^2} \dd \tau + \int_0^t \eps^2 \lt \Pi^\eps , 1 \rt \dd \tau \\
	    & + \int_0^t \| \sqrt{\rho^\eps} \cd - \dot{\rm d} \|^2_{L^2} + \| \na \d - \na {\rm d} \|^2_{L^2} + \| \d - {\rm d} \|^2_{L^2} \dd \tau \,,
	  \end{aligned}
	\end{equation}
	where $\eta_1 > 0$ is small to be determined. Collecting the bounds \eqref{Rd-52}, \eqref{Rd-54} and \eqref{Rd-57}
together, we get
	\begin{equation}\label{Rd-5}
	  \begin{aligned}
	    - \int_0^t \lt \rho^\eps \cd , \Gamma {\rm d} \rt + \lt \dot{\rm d} , \Gamma^\eps & \d \rt \dd \tau \lesssim T \eps^2 + \eta_1 \int_0^t \| \na \u - \na {\rm u} \|^2_{L^2} \dd \tau + \int_0^t \eps^2 \lt \Pi^\eps , 1 \rt \dd \tau \\
	    & + \int_0^t \| \sqrt{\rho^\eps} \cd - \dot{\rm d} \|^2_{L^2} + \| \na \d - \na {\rm d} \|^2_{L^2} + \| \d - {\rm d} \|^2_{L^2} \dd \tau \,.
	  \end{aligned}
	\end{equation}
	Finally, putting  the inequalities \eqref{Rd-1}-\eqref{Rd-4} and \eqref{Rd-5} together, we have
	\begin{equation*}
	  \begin{aligned}
	    \mathscr{R}_{\rm d}^\eps \lesssim \,& (1 + T) \eps^2 + \eps^{1+\tfrac{\alpha_0}{2}} + \eta_1 \lt \Pi^\eps , 1 \rt + \eta_1 \int_0^t \| \na \u - \na {\rm u} \|^2_{L^2} \dd \tau \\
	    & + \int_0^t \lt \Pi^\eps , 1 \rt + \| \sqrt{\rho^\eps} \cd - \dot{\rm d} \|^2_{L^2} + \| \na \d - \na {\rm d} \|^2_{L^2} + \| \d - {\rm d} \|^2_{L^2} \dd \tau \,.
	  \end{aligned}
	\end{equation*}
	Thus the proof of Lemma \ref{Lmm-Rmnd-Cntrl} is completed.
\end{proof}

	\vspace*{1.2mm}
	
\begin{proof}[Proof of Lemma \ref{Lmm-Contrl-C-disp}]
	We first introduce a tensor
	\begin{equation}\label{Sigma-3-hat}
	  \begin{aligned}
	    \widehat{\Sigma}_3^\eps =\, & \mu_1 ( {\d}^\top \A \d ) \d \otimes \d + \mu_2 \d \otimes ( \dot{\rm d} + \B \d ) + \mu_3 ( \dot{\rm d} + \B \d ) \otimes \d \\
	    & + \mu_5 \d \otimes (\A \d) + \mu_6 (\A \d ) \otimes \d \,.
	  \end{aligned}
	\end{equation}
	By taking  the same derivations in Section 2 of \cite{JL}, we can infer that
	\begin{equation*}
	  \begin{aligned}
	    & \int_0^t \la_1 \| \cd - \dot{\rm d} \|^2_{L^2} + \lt \cd - \dot{\rm d} , \la_1 (\B^\eps - \B) \d + \la_2 (\A^\eps - \A) \d \rt \dd \tau \\
	    & + \int_0^t \lt \u - {\rm u} , \dv (\Sigma_3^\eps - \widehat{\Sigma}_3^\eps) \rt \dd \tau \\
	    = & - \mu_1 \int_0^t \| {\d}^\top({\A}^\eps-\A)\d \|^2_{L^2} {\dd} \tau \\
 &+ \la_1 \int_0^t \big\| \cd - \dot{\rm d} + (\B^\eps - \B) \d + \tfrac{\la_2}{\la_1} (\A^\eps - \A) \d \big\|^2_{L^2} \dd \tau \\
	    & - (\mu_5 + \mu_6 + \tfrac{\la_2^2}{\la_1}) \int_0^t \| (\A^\eps - \A) \d \|^2_{L^2} \dd \tau \,.
	  \end{aligned}
	\end{equation*}
	Denote by $ \mathscr{R}_{\rm{\Sigma}}^\eps = \int_0^t \lt \u - {\rm u} , \dv ( \widehat{\Sigma}_3^\eps - \tilde{\sigma}_{\bm{\mu}} ({\rm u}, {\rm d}, \dot{\rm d}) ) \rt \dd \tau $.
Then, by the definition of $\mathscr{C}_{\rm disp}$ in \eqref{C-disp}, we know that the equality \eqref{Cdisp-equ} holds. We only need to estimate the term $ \mathscr{R}_{\rm{\Sigma}}^\eps $.
	
	First, we have
	\begin{equation}\label{R-Sigma-1}
	  \begin{aligned}
	    \mathscr{R}_{\rm{\Sigma}}^\eps = & - \int_0^t \lt \na \u - \na {\rm u} : \widehat{\Sigma}_3^\eps - \tilde{\sigma}_{\bm{\mu}} ({\rm u}, {\rm d}, \dot{\rm d}) \rt \dd \tau \\
	    & \lesssim \int_0^t \| \na \u - \na {\rm u} \|_{L^2} \| \widehat{\Sigma}_3^\eps - \tilde{\sigma}_{\bm{\mu}} ({\rm u}, {\rm d}, \dot{\rm d}) \|_{L^2} \dd \tau \,.
	  \end{aligned}
	\end{equation}
	Noticing that
	\begin{equation*}
	  \begin{aligned}
	    & \mu_1 ({\d}^\top \A \d) \d \otimes \d - \mu_1 ( {\rm d}^\top \A {\rm d} ) {\rm d} \otimes {\rm d} \\
	    = \,& \mu_1 ( (\d - {\rm d})^\top \A \d ) \d \otimes \d + ({\rm d}^\top \A (\d - {\rm d})) \d \otimes \d \\
	    & + \mu_1 ({\rm d}^\top \A {\rm d}) (\d - {\rm d}) \otimes \d + \mu_1 ({\rm d}^\top \A {\rm d}) {\rm d} \otimes ( \d - {\rm d} ) \,,
	  \end{aligned}
	\end{equation*}
	we bound it as
	\begin{equation*}
	  \begin{aligned}
	    \| \mu_1 ({\d}^\top \A \d) \d \otimes \d - \mu_1 ( {\rm d}^\top \A {\rm d} ) {\rm d} \otimes {\rm d} \|_{L^2} \lesssim \| \d - {\rm d} \|_{L^2} \,.
	  \end{aligned}
	\end{equation*}
	Similarly, we have
	\begin{equation*}
	  \begin{aligned}
	   \| \mu_2 \d \otimes ( \dot{\rm d} + \B \d ) - \mu_2 {\rm d} \otimes ( \dot{\rm d} + \B {\rm d} ) \|_{L^2} \lesssim \,& \| \d - {\rm d} \|_{L^2} \,, \\
	    \| \mu_3 ( \dot{\rm d} + \B \d ) \otimes \d - \mu_3 ( \dot{\rm d} + \B {\rm d} ) \otimes {\rm d} \|_{L^2} \lesssim \,&\| \d - {\rm d} \|_{L^2} \,, \\
	    \| \mu_5 \d \otimes (\A \d) - \mu_5 {\rm d} \otimes (\A {\rm d}) \|_{L^2} \lesssim \,& \| \d - {\rm d} \|_{L^2} \,, \\
	      \| \mu_6 (\A \d) \otimes \d - \mu_6 (\A {\rm d}) \otimes {\rm d} \|_{L^2} \lesssim\, & \| \d - {\rm d} \|_{L^2} \,.
	  \end{aligned}
	\end{equation*}
	Then, we obtain that
	\begin{equation}\label{R-Sigma-2}
	  \begin{aligned}
	    \| \widehat{\Sigma}_3^\eps - \tilde{\sigma}_{\bm{\mu}} ({\rm u}, {\rm d}, \dot{\rm d}) \|_{L^2} \lesssim \| \d - {\rm d} \|_{L^2} \,.
	  \end{aligned}
	\end{equation}
	As a consequence, plugging the bound \eqref{R-Sigma-2} into \eqref{R-Sigma-1} and employing the Young's inequality, we infer that
	\begin{equation*}
	  \begin{aligned}
	    \mathscr{R}_{\rm{\Sigma}}^\eps \lesssim \eta_1 \int_0^t \| \na \u - \na {\rm u} \|^2_{L^2} \dd \tau + \int_0^t \| \d - {\rm d} \|^2_{L^2} \dd \tau
	  \end{aligned}
	\end{equation*}
	for small $\eta_1 > 0$ to be determined. Then the proof of Lemma \ref{Lmm-Contrl-C-disp} is finished.	
\end{proof}

\section*{Acknowledgments}

Liang Guo is supported by Doctoral Research Foundation of  Henan University (Grant No. CX3050A0970029).
Ning Jiang is supported by NSFC (Grant Nos. 11471181 and 11731008).
 Fucai Li is supported in part by NSFC (Grant Nos.  12071212 and 11971234) and a project funded by the priority academic program development of Jiangsu higher education institutions.
Y.-L. Luo is supported by grants from the NSFC (Grant No. 12201220), the Guang Dong Basic and Applied Basic Research Foundation (Grant No. 2021A1515110210), and the Science and Technology Program of Guangzhou (Grant No.  202201010497.

\vskip 5mm
\textbf{Conflict of Interest: The authors declare that they have no conflict of interest.}

\vskip5mm

\textbf{Data Availability Statements}:
\textbf{Data sharing not applicable to this article as no datasets were generated or analysed during the current study.
}
\vskip 5mm

\bibliographystyle{plain}

\begin{thebibliography}{aaa}

\bibitem{Adams-Fournier-2003-BOOK} R. A. Adams and J. F. Fournier. {\em Sobolev spaces. Second edition}. Pure and Applied Mathematics (Amsterdam), {\bf 140}. Elsevier/Academic Press, Amsterdam, 2003.

\bibitem{Al} T. Alazard, Low Mach number limit of the full Navier-Stokes equations, {\em Arch. Ration. Mech. Anal.}, {\bf 180} (2006), 1--73.

\bibitem{bie} Q.  Bie, H. Cui, Q.  Wang, Z.-A. Yao,  Incompressible limit for the compressible flow of liquid crystals in Lp type critical Besov spaces. {\em Discrete Contin. Dyn. Syst.}, {\bf 38} (2018), no. 6, 2879--2910.

\bibitem{Boyer-Fabrie-2013-BOOK} F. Boyer and  P. Fabrie.  {\em Mathematical tools for the study of the incompressible Navier-Stokes equations and related models}. Applied Mathematical Sciences, {\bf 183}, Springer, New York, 2013.

\bibitem{CW} Y. Cai, W. Wang, Global well-posedness for the three dimensional simplified inertial Ericksen-Leslie system near equilibrium, preprint, \emph{J. Funct. Anal.}, \textbf{279} (2020), no.2, 108521, 38 pp.


\bibitem{d} R. Danchin, Low Mach number limit for viscous compressible flows, {\em M2AN Math. Model. Numer. Anal.}, {\em 39} (2005), 459--475. MR2157145

\bibitem{DeAnna-Zarnescu-2018} F. De Anna and A. Zarnescu, Global well-posedness and twist-wave solutions for the inertial Qian-Sheng model of liquid crystals. {\em  J. Differential Equations}, {\bf 264} (2018), no. 2, 1080-1118.


\bibitem{Gen} P. G. de Gennes, The physics of liquid crystals. Oxford University Press, Oxford, 1974.


\bibitem{DHWZ} S.  Ding, J.  Huang, H.  Wen and R.  Zi, Incompressible limit of the compressible nematic liquid crystal flow, {\em J. Funct. Anal.}, {\bf 264} (2013), 1711-1756.



%

\bibitem{Eri1} J. L. Ericksen, Conservation laws for liquid crystals, {\em Trans. Soc. Rheology}, {\bf 5} (1961), 23-34.

\bibitem{Eri2} J. L. Ericksen, Hydrostatic theory of liquid crystals, {\em Arch. Rational Mech. Anal.}, {\bf 9} (1962), 371-378.

\bibitem{Feireisl-etal-2018} E. Feireisl, E. Rocca, G. Schimperna and A. Zarnescu, On a hyperbolic system arising in liquid crystals modeling. {\em J. Hyperbolic Differ. Equ.}, {\bf 15} (2018), no. 1, 15-35.

\bibitem{fn} E. Feireisl and A. Novotn\'{y}, {\em Singular limits in thermodynamics of viscous fluids}.   Advances in Mathematical Fluid Mechanics. Birkhäuser/Springer, Cham, 2017.

\bibitem{Grenier} E. Grenier, Oscillatory perturbations of the Navier-Stokes equations. {\em J. Math. Pures Appl.} (9) {\bf 76} (1997), no. 6, 477-498.

\bibitem{HL} Y. H. Hao and X. G. Liu, Incompressible limit of a compressible liquid crystals system, {\em Acta Mathematica Scientia}, {\bf 33} (2013), 781-796.

    \bibitem{HP} M. Hieber, J. W.  Pr\"{u}ss,
Modeling and analysis of the Ericksen-Leslie equations for nematic liquid crystal flows. in  Handbook of mathematical analysis in mechanics of viscous fluids, 1075--1134, Springer, Cham, 2018.


\bibitem{HJLZ-Incmp} J. X. Huang, N. Jiang, Y.-L. Luo and L. F. Zhao, Small data global regularity for 3-D Ericksen-Leslie's hyperbolic liquid crystal model without kinematic transport. {\em SIAM J. Math. Anal.} {\bf 53} (2021), no. 1, 530-573.

\bibitem{HJLZ-Cmp} J. X. Huang, N. Jiang, Y.-L. Luo and L. F. Zhao, Small data global regularity for simplified 3-D Ericksen-Leslie's compressible hyperbolic liquid crystal model. {\em J. Hyperbolic Differ. Equ.} {\bf 19} (2022), no. 4, 717-773.

\bibitem{JL} N. Jiang and Y.-L. Luo, On well-posedness of Ericksen-Leslie's hyperbolic incompressible liquid crystal model, {\em SIAM J. Math. Anal.}, {\bf 51} (2019), 403-434.

\bibitem{JL-2019} N. Jiang and Y.-L. Luo,
The zero inertia limit of Ericksen-Leslie's model for liquid crystals.  {\em J. Funct. Anal.} {\bf 282} (2022), no. 1, Paper No. 109280, 62 pp.

\bibitem{JLT-NA} N. Jiang, Y.-L. Luo and S. J. Tang,
Zero inertia density limit for the hyperbolic system of Ericksen-Leslie's liquid crystal flow with a given velocity. {\em Nonlinear Anal. Real World Appl.}, {\bf 45} (2019), 590-608.

\bibitem{JLT} N. Jiang, Y.-L. Luo and S. J. Tang, On well-posedness of Ericksen-Leslie's parabolic-hyperbolic liquid crystal model in compressible flow, {\em Math. Models Methods Appl. Sci.}, {\bf 29} (2019), 121-183.

\bibitem{JLTZ-CMS2019} N. Jiang, Y.-L. Luo, S. J. Tang and A. Zarnescu,
A scaling limit from the wave map to the heat flow into $\mathbb{S}^2$. {\em Commun. Math. Sci.} {\bf 17} (2019), no. 2, 353-375.

\bibitem{JJL} S. Jiang, Q.  Ju and F.  Li, Incompressible limit of the compressible magnetohydrodynamic equations with periodic boundary conditions, {\em Comm. Math. Phys.}, {\bf 297} (2010), 371-400.

\bibitem{JJL16} S. Jiang, Q.  Ju and F.  Li, Incompressible limit of the nonisentropic ideal magnetohydrodynamic equations.
{\em SIAM J. Math. Anal.}, {\bf 48} (2016), no. 1, 302--319.


\bibitem{JJLX} S. Jiang, Q.  Ju, F.  Li and Z.-P. Xin, Low Mach number limit for the full compressible magnetohydrodynamic equations with general initial data, {\em Adv. Math.}, {\bf 259} (2014), 384-420.



\bibitem{KM-81} S. Klainerman and A. Majda, Singular limits of quasilinear hyperbolic systems with large parameters and the incompressible limit of compressible fluids, {\em Comm. Pure and Appl. Math.}, {\bf 34} (1981), 481-524.

\bibitem{KM-82} S. Klainerman and A. Majda, Compressible and incompressible fluids. {\em Comm. Pure Appl. Math.}, {\bf 35} (1982), no. 5, 629-651.

\bibitem{Les} F. M. Leslie, Some constitutive equations for liquid crystals, {\em Arch. Rational Mech. Anal.}, {\bf 28} (1968), 265-283.

\bibitem{Lions} P.-L. Lions, {\em Mathematical topics in fluid mechanics. Vol. 1}, Oxford Lecture Series in Mathematics and its Applications 3, The Clarendon Press, Oxford University Press, New York, 1996.

\bibitem{Lions-Masmoudi-1998} P.-L. Lions and N. Masmoudi, Incompressible limit for a viscous compressible fluid, {\em J. Math. Pures Appl.}, {\bf 77} (1998), 585-627.

\bibitem{ld} Q.  Liu,  C. Dou,  Incompressible limit of the compressible nematic liquid crystal flows in a bounded domain with perfectly conducting boundary. {\em J. Math. Anal. Appl.}, {\bf  479} (2019), no. 2, 1417--1440.

\bibitem{Lin-Liu-2001} F-H. Lin and C. Liu,
Static and dynamic theories of liquid crystals. {\em J. Partial Differential Equations} {\bf 14} (2001), no. 4, 289-330.

\bibitem{LW} F. Lin  and C. Wang,
Recent developments of analysis for hydrodynamic flow of nematic liquid crystals.
Philos. {\em Trans. R. Soc. Lond. Ser. A Math. Phys. Eng. Sci.},  {\bf 372}  (2014), no. 2029, 20130361, 18 pp.


\bibitem{ma} N. Masmoudi,
Examples of singular limits in hydrodynamics  in  {\em  Handbook of differential equations: evolutionary equations. Vol. III}, 195–275,
  Elsevier/North-Holland, Amsterdam, 2007.


\bibitem{Metivier-Schochet} G. M\'etivier and S. Schochet, The incompressible limit of the non-isentropic Euler equations. {\em Arch. Ration. Mech. Anal.}, {\bf 158} (2001), no. 1, 61-90.

\bibitem{N} L. Nirenberg, On elliptic partial differential equations, {\em Ann. Scuola Norm. Sup. Pisa}, {\bf 13} (1959), 115-162.

\bibitem{QX} G. Qi, J. Xu,
The low Mach number limit for the compressible flow of liquid crystals.
{\em Appl. Math. Comput.}, {\bf 297} (2017), 39-49.

\bibitem{Simon-1987-AMPA} J. Simon. Compact sets in the space $L^p (0,T;B)$. {\em Ann. Mat. Pura Appl.}, {\bf 146}(1987), 65-96.

\bibitem{sch86} S. Schochet,
The compressible Euler equations in a bounded domain: existence of solutions and the incompressible limit.
{\em Comm. Math. Phys.}, {\bf 104} (1986), no. 1, 49--75.

\bibitem{Schochat-1994} S. Schochet,  Fast singular limits of hyperbolic PDEs. {\em J. Differential Equations} {\bf 114} (1994), no. 2, 476-512.


\bibitem{S} J. Simon, Nonhomogeneous viscous incompressible fluids: existence of velocity, density, and pressure, {\em SIAM J. Math. Anal.}, {\bf 21} (1990), 1093-1117.


\bibitem{Ste} I. Stewart, The Static and Dynamic Continuum Theory of Liquid Crystals. Taylor \& Francis,
London, New York, 2004.

\bibitem{Ukai} S. Ukai, The incompressible limit and the initial layer of the compressible Euler equation. {\em J. Math. Kyoto Univ.}, {\bf 26} (1986), no. 2, 323-331.


\bibitem{WY} D. H. Wang and C. Yu, Incompressible limit for the compressible flow of liquid crystals, {\em J. Math. Fluid Mech.}, {\bf 16} (2014), 771-786.

\bibitem{Wu-Xu-Liu-ARMA2013} H. Wu, X. Xu and C. Liu,
On the general Ericksen-Leslie system: Parodi's relation, well-posedness and stability. {\em Arch. Ration. Mech. Anal.}, {\bf 208} (2013), no. 1, 59-107.

\bibitem{Wu-Xu-Zarnescu-ARMA2019} H. Wu, X. Xu and A. Zarnescu,
Dynamics and flow effects in the Beris-Edwards system modeling nematic liquid crystals. {\em Arch. Ration. Mech. Anal.}, {\bf 231} (2019), no. 2, 1217-1267.



\bibitem{yang} X.  Yang,   Uniform well-posedness and low Mach number limit to the compressible nematic liquid crystal flows in a bounded domain. {\em  Nonlinear Anal.}, {\bf 120} (2015), 118--126.


\bibitem{zeng}L.  Zeng, G. Ni, X. Ai,  Low Mach number limit of global solutions to 3-D compressible nematic liquid crystal flows with Dirichlet boundary condition. {\em Math. Methods Appl. Sci.}, {\bf 42} (2019), no. 6, 2053--2068.





\end{thebibliography}

\end{document}